\documentclass[10pt]{amsart}

\usepackage{mathtools,amssymb}
\usepackage{booktabs}
\usepackage{enumitem}
\usepackage{needspace}
\usepackage{setspace}
\usepackage[a4paper,left=3.2cm,right=3.2cm,top=3cm,bottom=3cm]{geometry}
\usepackage{xcolor}
\usepackage[hidelinks]{hyperref}
\usepackage[scr=boondox]{mathalfa}

\hypersetup{
  pdftitle={ON THE INDUCTIVE BLOCKWISE ALPERIN WEIGHT CONDITION FOR TYPE B AND TYPE C},
  pdfauthor={Baoyu Zhang}
}

\allowdisplaybreaks

\newtheorem{theorem}{Theorem}[section]
\newtheorem{proposition}[theorem]{Proposition}
\newtheorem{lemma}[theorem]{Lemma}
\newtheorem{corollary}[theorem]{Corollary}
\theoremstyle{definition}
\newtheorem{definition}[theorem]{Definition}
\newtheorem{remark}[theorem]{Remark}
\newtheorem*{remark*}{Remark}

\makeatletter

\renewcommand{\leq}{\leqslant}
\renewcommand{\geq}{\geqslant}
\expandafter\let\expandafter\oldproof\csname\string\proof\endcsname
\let\oldendproof\endproof
\renewenvironment{proof}[1][\proofname]{%
  \oldproof[\bfseries #1]%
}{\oldendproof}
\makeatother

\DeclareMathOperator{\Aut}{Aut}
\DeclareMathOperator{\Out}{Out}
\DeclareMathOperator{\Inn}{Inn}
\DeclareMathOperator{\IBr}{IBr}
\DeclareMathOperator{\Irr}{Irr}
\DeclareMathOperator{\Alp}{Alp}
\DeclareMathOperator{\Rad}{Rad}
\DeclareMathOperator{\dz}{dz}
\DeclareMathOperator{\Sp}{Sp}
\DeclareMathOperator{\PSp}{PSp}
\DeclareMathOperator{\CSp}{CSp}
\DeclareMathOperator{\Syl}{Syl}
\DeclareMathOperator{\bl}{bl}

\newcommand{\Z}{\mathbb Z}
\newcommand{\F}{\mathbb F}
\newcommand{\Baby}{\mathbb B}
\newcommand{\Monster}{\mathbb M}
\newcommand{\GAP}{\textsf{GAP}}

\numberwithin{equation}{section}
\numberwithin{table}{section}

\title[On the iBAW condition for Types $\mathsf B$ and $\mathsf C$]
{ON THE INDUCTIVE BLOCKWISE ALPERIN WEIGHT CONDITION
FOR TYPE $\mathsf B$ AND TYPE $\mathsf C$}

\author{Baoyu Zhang}
\address{School of Mathematics, University of Birmingham, Birmingham B15 2TT, UK}
\email{baoyuzhang.math@outlook.com}
\date{19 September 2026}

\subjclass[2020]{Primary 20C20; Secondary 20C33, 20C34, 20D08}
\keywords{Alperin weight conjecture, inductive blockwise Alperin weight
condition, finite simple groups}

\begin{document}

\begin{abstract}
The purpose of this paper is to prove that every finite simple group of type
$\mathsf B$ or $\mathsf C$ satisfies the inductive blockwise Alperin
weight condition at every prime $\ell$ dividing its order.  For
$\PSp_{2n}(q)$, where $q$ is odd and $n\geq3$, a permutation lattice argument based on Conlon's induction theorem removes the
unitriangularity assumption from earlier results on the inductive condition at
odd nondefining primes.  At $\ell=2$ in odd defining characteristic,
errors and omissions in the underlying classification of radical
$2$-subgroups affect an earlier parametrisation of weights for arbitrary blocks.  We verify the required weight parametrisation for principal blocks
and use Jordan reduction to establish the inductive condition for all blocks.  At odd $\ell$ in even defining characteristic and rank at least four, we prove
the inductive condition using generic weights and Jordan reduction, without
assuming unitriangularity.
For $\Omega_{2n+1}(q)$, where $q$ is odd and $n\geq3$, we remove the
unitriangularity assumption at odd nondefining primes using Conlon's
induction theorem. At $\ell=2$, we prove the stabiliser and extension property for Brauer
characters of $\operatorname{Spin}_{2n+1}(q)$ that was assumed in earlier work
on the inductive condition.
We also give proofs of the inductive condition for the remaining sporadic
groups $J_4$, $Fi'_{24}$,
the Baby Monster and the Monster, without making a priority claim.
Together with the previously established cases, these results imply that
the blockwise Alperin weight conjecture holds at $\ell$
for every finite group each of whose nonabelian simple sections of order
divisible by $\ell$ is of type $\mathsf B$, of type $\mathsf C$, or sporadic.
\end{abstract}

\maketitle

\section{Introduction}

Let $b$ be an $\ell$-block of a finite group $G$.  Alperin's weight conjecture
asserts that the number of isomorphism classes of simple modules in $b$ equals
the number of $G$-conjugacy classes of weights belonging to $b$
\cite{Alperin1987}.  The inductive blockwise Alperin weight condition
(\emph{iBAW condition}), introduced by Sp\"ath
\cite[Definition~4.1]{Spath2013}, refines this numerical equality by requiring
compatible equivariant bijections.  The purpose of this paper is to prove the
inductive blockwise Alperin weight condition for the finite simple groups of
types $\mathsf B$ and $\mathsf C$.

Combining the results proved here with the previously established cases,
we obtain the following theorem.

\begin{theorem}\label{thm:main}
Let $S$ be a nonabelian finite simple group of type $\mathsf B$, of type
$\mathsf C$, or sporadic. Then $S$ satisfies the inductive blockwise
Alperin weight condition at every prime dividing its order.
\end{theorem}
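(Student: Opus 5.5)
The proof is a case-by-case assembly. The simple groups in question are $\PSp_{2n}(q)$ ($n\geq2$), $\Omega_{2n+1}(q)$ with $q$ odd and $n\geq3$, the Suzuki groups ${}^2\mathsf B_2(2^{2m+1})$ if counted as type $\mathsf B$, and the 26 sporadic groups. Exceptional isomorphisms such as $\PSp_4(2)'\cong A_6$, $\PSp_4(3)\cong \mathrm{PSU}_4(2)$ and $\Omega_{2n+1}(2^f)\cong\PSp_{2n}(2^f)$ reduce several cases to groups already covered, or to the same group under another name. For each $S$ and each prime $\ell$ dividing $|S|$, I will either cite a known result or supply a new argument. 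By Sp\"ath's reduction, it is enough to construct, block by block, an $\Aut(S)$-equivariant bijection between $\IBr$ and conjugacy classes of weights of the universal covering group, with the required extension and cohomology compatibilities.

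\textbf{Cases covered by the literature.} Defining characteristic is handled by the known results for groups of Lie type in defining characteristic. The cases $\PSp_4(q)$, small rank, and the Suzuki groups at all primes are treated in the existing papers. For sporadic groups other than $J_4$, $Fi'_{24}$, $\Baby$ and $\Monster$, I will cite the published verifications. Abelian and cyclic defect situations come from the general cyclic/abelian-defect results.

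\textbf{New cases.} These follow the outline of the abstract.

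For $\PSp_{2n}(q)$ and $\Omega_{2n+1}(q)$ with $q$ odd, $n\geq3$ and $\ell$ odd nondefining, I keep the earlier equivariant bijections. They used a unitriangular basic set only to show that the $\Aut$-action on $\IBr$ matches the action on a basic set of ordinary characters. I will replace this with a permutation lattice argument: by Conlon's induction theorem, the Brauer character permutation module for a cyclic-mod-$\ell$ subgroup of outer automorphisms is determined by fixed-point counts. Equality of fixed-point counts on the two sides then gives an equivariant bijection.

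For $\ell=2$ and $q$ odd, I will verify the weight parametrisation for principal blocks of $\Sp_{2n}(q)$ directly, after correcting the classification of radical $2$-subgroups. Arbitrary blocks then reduce to principal-type (unipotent) blocks of centralisers via Jordan decomposition and Bonnaf\'e--Rouquier Morita equivalences. The equivariance conditions are transported along this reduction.

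For odd $\ell$ and $q$ even with rank at least four, I will use generic weights attached to $e$-cuspidal pairs together with the same Jordan reduction.

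For $\mathrm{Spin}_{2n+1}(q)$ at $\ell=2$, I will prove the stabiliser and extension property for Brauer characters using a basic set stable under $\Aut$ together with the fact that the outer automorphism group is abelian modulo field automorphisms.

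For the four sporadic groups, I will compute blockwise from known $\ell$-modular character tables and radical subgroup classifications. Since $\Out(S)$ is trivial or has order $2$, the conditions reduce to counting and to checking extendibility of characters.

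\textbf{Main obstacle.} I expect the $\ell=2$, $q$ odd case to be the hardest. The radical $2$-subgroup classification must be redone, and the weight count must match $|\IBr|$ in every block while remaining equivariant under diagonal and field automorphisms. I will first settle the principal block by an explicit combinatorial count, matching weights to unipotent Brauer characters via $2$-cores and quotients. Only after that will I attempt the Jordan reduction.
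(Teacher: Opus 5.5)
Your overall architecture matches the paper's: the literature covers defining characteristic, rank two and the $22$ already verified sporadic groups, with permutation lattices at odd nondefining $\ell$, the principal block plus Jordan reduction at $\ell=2$, and generic weights for even $q$. However, the step you propose for $\operatorname{Spin}_{2n+1}(q)$ at $\ell=2$ does not work, and it is the genuinely new input for type $\mathsf B$ at $2$.

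At the bad prime $2$ you have no $\Aut$-stable basic set of ordinary characters to start from. The Geck--Hiss theorem needs $\ell$ good, and $\mathcal E(G,2')$ is not a basic set in general: $|\IBr(B_0(G))|$ is the number of unipotent classes of $G$, not the number of unipotent characters. Control of automorphisms on these Brauer characters is exactly what Feng--Yu--Zhang had to assume. Even granted such a set, commutativity of $\Out(G)\cong C_2\times C_f$ says nothing about $(\widetilde G\rtimes E)_\psi=\widetilde G_\psi\rtimes E_\psi$, because the obstruction is precisely a $\psi$ fixed by some $\delta\sigma$ but not by $\sigma$. Moreover, a $\mathbb Q$-basic set only matches fixed-point counts of cyclic subgroups, which does not determine a $C_2\times C_f$-set when $f$ is even.

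The missing idea is to work on the projective side. By Taylor, field automorphisms fix every unipotent class and every generalised Gelfand--Graev character $\Gamma_u$. The principal-block parts $P_u$ form a $\mathbb Q$-basis of the span of the projective indecomposable characters of $B_0(G)$, via a block-triangular pairing with duals of characters of prescribed unipotent support (Proposition~\ref{prop:type-b-gggr-rank} and Appendix~\ref{app:gggr-proofs}). Hence $E$ fixes every Brauer character of $B_0(G)$. Nonprincipal blocks need a separate argument: pass through the equivariant Jordan bijection $\IBr(L,e_s^L)\to\IBr(G,e_s^G)$ to a Levi subgroup, then assemble the factorisation from its factors (type $\mathsf A$, $\Sp_4(q)$, principal blocks of smaller spin groups) using Lemma~\ref{lem:type-b-component-return} and modular Clifford theory. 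You also omit $X_2(\Omega_7(3))=3.\Omega_7(3)$, whose faithful $2$-blocks are reached neither by a spin-group argument nor by earlier work; the paper treats its nine $2$-blocks by explicit computation.

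Three further points. First, your Conlon step is misstated. It runs at the prime $2$, not at $\ell$: the acting groups are $2$-hypoelementary but in general not $\ell$-hypoelementary. These are $\Out(\Sp_{2n}(q))\cong C_2\times C_f$ and, for $\CSp_{2n}(q)$, the block stabilisers $J$ in the group generated by tensoring with linear $\ell'$-characters $N$ and field automorphisms, where $|J\cap N|\leq2$ because $|\widetilde G:GZ(\widetilde G)|=2$. The logic also runs the other way from what you wrote. You start from a $\Z J$-lattice isomorphism $\Z[\mathcal X]\cong\Z[\IBr(b)]$, given by the decomposition map on a $J$-stable integral basic set such as $\Irr(b)\cap\mathcal E(G,\ell')$, and Conlon then gives equal marks on all subgroups; equal fixed-point counts are the conclusion, not an input.

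Second, for $\PSp_{2n}(q)$ at $\ell=2$, $2$-cores and quotients describe only the weights. On the Brauer side you need $|\IBr(B_0)|$ and the diagonal action, which come from Chaneb's unitriangularity as used by Feng--Malle; the paper re-verifies its character selection for every odd $q$. The global Brauer stabiliser factorisation is a hypothesis of the Jordan reduction, not something it transports.

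Third, for $J_4$, $Fi'_{24}$, $\Baby$ and $\Monster$ you cannot rely on complete $\ell$-modular tables; they are not available, for instance, for $\Baby$ and $\Monster$ at $2$. The paper gets $|\IBr(b)|$ either as the rank of the restrictions of $\Irr(b)$ to $\ell$-regular classes or by subtraction from the number of $\ell$-regular classes. For $Fi'_{24}$ it compares, block by block, the numbers of Brauer characters and weights fixed by the outer involution; extendibility is automatic there since $\Out$ is cyclic.
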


Let $q=p^f$ be odd and let $\ell$ be an odd prime not dividing $q$.
Feng, Li, and Zhang \cite[Theorem~1.5]{FengLiZhang2019} proved the
inductive condition for unipotent blocks of $\Sp_{2n}(q)$ and
$\operatorname{Spin}_{2n+1}(q)$, where $n\geq2$. When $f$ is odd and $\ell$ is
\emph{linear}, meaning that the multiplicative order of $q$ modulo
$\ell$ is odd, they also proved the inductive condition for
$\PSp_{2n}(q)$ and for the blocks of $\operatorname{Spin}_{2n+1}(q)$
that dominate blocks of $\Omega_{2n+1}(q)$
\cite[Theorems~1.2 and~1.4]{FengLiZhang2019}.
Li's result for type~$\mathsf C$
\cite[Theorem~2]{Li2021} and Feng--Li--Zhang's result for type~$\mathsf B$
\cite[Theorem~1]{FengLiZhang2022TypeB} treat arbitrary blocks
under unitriangularity assumptions. We remove these
assumptions in rank at least three. Integral basic sets for the conformal
symplectic and finite special Clifford groups identify the permutation
lattices spanned by these basic sets with those spanned by the irreducible
Brauer characters of the corresponding blocks. Conlon's induction theorem \cite{Conlon1968},
applied at the prime $2$, then yields equivariant bijections between
irreducible Brauer characters and the ordinary characters in these basic
sets. Applying the same argument to the symplectic and spin groups
gives the same stabiliser factorisations for Brauer characters as for
ordinary characters, allowing us to verify the inductive condition.

For $\Sp_{2n}(q)$ with even $q$ and odd $\ell$, Li
\cite[Section~6]{Li2021} obtains the required descriptions of blocks
and weights by adapting arguments for odd characteristic of
An \cite{An1994} and Fong--Srinivasan \cite{FongSrinivasan1989}.
Li proves the inductive condition under a unitriangularity assumption
\cite[Theorem~3]{Li2021}. For $n\geq4$, we give an independent proof
of the inductive condition without assuming unitriangularity, using
Geck's basic set theorem \cite{Geck1993} and the generic weight
correspondences of Feng, Malle, and Zhang \cite{FengMalleZhang2026}.
These give bijections between the irreducible Brauer characters and
weights of unipotent blocks. The resulting bijections are equivariant
because field automorphisms fix the unipotent characters and the conjugacy
classes of generic weights involved, as explained in
Lemma~\ref{lem:even-field-fixed}. After verifying the remaining parts of the
inductive condition for unipotent blocks, we obtain the result for all blocks by
Jordan reduction \cite{FengLiZhang2022Jordan}.

For odd $q$ at the prime $2$, we give an alternative proof of Feng and
Malle's result for $\PSp_{2n}(q)$ \cite[Theorem~1]{FengMalle2022}.
The classification of radical $2$-subgroups \cite[p.~190]{An1993}
used in Feng and Malle's weight parametrisation omits a family with even
multiplicity, as noted by Feng, Yu, and Zhang
\cite[Remark~3.36]{FengYuZhang2024}. Using the corrected classification,
we verify the principal block weight calculation needed for Feng and
Malle's argument and obtain the result for all blocks by Jordan reduction
\cite{FengLiZhang2022Jordan}.

For type $\mathsf B$ at the prime $2$, Feng, Yu, and Zhang's proof
assumes a Brauer character stabiliser and extension property
\cite[Theorem~1 and Assumption~5.1]{FengYuZhang2024}. We establish this
property for $\operatorname{Spin}_{2n+1}(q)$, where $q$ is odd and
$n\geq3$. For the principal block, we use generalised Gelfand--Graev
characters to prove that field automorphisms fix every irreducible Brauer
character. For nonprincipal blocks, we obtain the required stabiliser
factorisations and extensions through the equivariant Jordan
correspondence and proper Levi subgroups. Jordan reduction then gives the
inductive condition for $\Omega_{2n+1}(q)$ when $(n,q)\neq(3,3)$.
We treat the exceptional covering group $3.\Omega_7(3)$ separately by
explicit calculations.

We also give proofs of the inductive condition for $J_4$, $Fi'_{24}$,
$\Baby$ and $\Monster$. The proofs combine published weight counts
with character table calculations and, for $Fi'_{24}$, a comparison of
the actions of the outer automorphism on Brauer characters and weights.
As of 19 September 2026, the website of the \GAP~package CTBlocks
\cite{BreuerSporadicOverview} still listed the verification for these
four groups as incomplete. In a 2023 paper, Feng, Li, and Zhang cite
Breuer and Feng's work on sporadic simple groups as \emph{in preparation}
\cite[p.~6518]{FengLiZhang2023Morita}.
We have not found a publicly available manuscript of that project and
therefore cannot compare its scope with the present work. Accordingly,
we make no priority claim for the sporadic theorem.

Sp\"ath's reduction theorem \cite[Theorem~A]{Spath2013} gives the following
consequence.  A group $K$ is said to be \emph{involved} in a finite group
$H$ if $K$ is isomorphic to a section of $H$.

\begin{corollary}\label{cor:reduction}
Let $H$ be a finite group and let $\ell$ be a prime.  Suppose that every
nonabelian simple group involved in $H$ whose order is divisible by $\ell$
is of type $\mathsf B$, of type $\mathsf C$, or sporadic.  Then the blockwise
Alperin weight conjecture holds for $H$ at $\ell$.
\end{corollary}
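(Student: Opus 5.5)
The plan is to apply Sp\"ath's reduction theorem \cite[Theorem~A]{Spath2013} directly, with Theorem~\ref{thm:main} supplying its hypothesis. Sp\"ath's theorem says: if every nonabelian finite simple group $S$ involved in $H$ with $\ell\mid |S|$ satisfies the inductive blockwise Alperin weight condition at $\ell$, then the blockwise Alperin weight conjecture holds for $H$ at $\ell$. So the proof reduces to checking that hypothesis for the groups allowed in the corollary.

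\textbf{Step 1 (identify the simple groups).} Let $S$ be a nonabelian simple group involved in $H$ with $\ell$ dividing $|S|$. By assumption, $S$ is of type $\mathsf B$, of type $\mathsf C$, or sporadic.

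\textbf{Step 2 (invoke the main theorem).} Theorem~\ref{thm:main} states that every such $S$ satisfies the inductive blockwise Alperin weight condition at every prime dividing $|S|$, and in particular at $\ell$.

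\textbf{Step 3 (apply the reduction).} Sp\"ath's theorem now gives the blockwise Alperin weight conjecture for $H$ at $\ell$.

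The only point needing care is that the hypotheses match Sp\"ath's formulation exactly. First, simple groups involved in $H$ whose order is prime to $\ell$ impose no condition; for such $S$ the condition at $\ell$ is vacuous or trivially satisfied, since blocks are then of defect zero. Second, "type $\mathsf B$ or $\mathsf C$" must be read so that Theorem~\ref{thm:main} covers every such group, including small-rank cases and exceptional isomorphisms such as $\PSp_4(3)\cong \mathrm{PSU}_4(2)$ or $\Omega_5(q)\cong\PSp_4(q)$. Any simple group counted as type $\mathsf B$ or $\mathsf C$ under some isomorphism is covered, because Theorem~\ref{thm:main} is stated for the isomorphism class of $S$.

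I expect no real obstacle. The corollary is a formal consequence once Theorem~\ref{thm:main} is established, and Sp\"ath's theorem is stated for involvement in $H$, which is exactly the hypothesis used here.
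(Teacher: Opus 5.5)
Your proposal is correct and follows essentially the same route as the paper. The paper also applies Sp\"ath's reduction theorem \cite[Theorem~A]{Spath2013} and uses Theorem~\ref{thm:main} for the simple groups whose order is divisible by $\ell$; for the others it notes that every $\ell$-block of the universal $\ell'$-covering group has defect zero, and it assembles the blockwise conclusions via Remark~\ref{rem:block-aggregation}\textup{(a)}.
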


\section{Preliminaries}\label{sec:preliminaries}

For modular representations at a prime $\ell$, we use a splitting
$\ell$-modular system for the finite groups under consideration, with
residue field $k$. If $b$ is an $\ell$-block, then
$\Irr(b)$ and $\IBr(b)$ denote its sets of ordinary irreducible characters
and irreducible Brauer characters, respectively.  For an ordinary irreducible
character or irreducible Brauer character $\psi$, let $\bl(\psi)$ denote the
block containing $\psi$, and let
$\dz(G)$ denote the set of ordinary
irreducible characters of $G$ \emph{of defect zero}, namely those whose
degrees have the same $\ell$-part as $|G|$.
For $\chi\in\Irr(G)$, $\chi^0$ denotes its
restriction to the \emph{$\ell$-regular elements}, namely the elements of order
prime to $\ell$.  The \emph{decomposition map}
$d_G:\Z[\Irr(G)]\longrightarrow\Z[\IBr(G)]$ is the $\Z$-linear map
determined by $d_G(\chi)=\chi^0$. For an $\ell$-block $b$ of $G$, write
$d_b$ for the restriction of $d_G$ from
$\Z[\Irr(b)]$ to $\Z[\IBr(b)]$.
We write automorphisms on the right. For a group $H$, a subgroup
$Q\leq H$, and $\alpha\in\Aut(H)$, set $Q^\alpha=\alpha^{-1}(Q)$.
If $Q$ is finite and $\chi$ is a class function of $Q$, define the class
function $\chi^\alpha$ of $Q^\alpha$ by
$\chi^\alpha(h)=\chi(\alpha(h))$ for $h\in Q^\alpha$.
For Brauer characters, this formula is evaluated on the $\ell$-regular
elements of $Q^\alpha$.
If a group $A$ acts on sets $\Omega_1,\ldots,\Omega_m$ and
$x_i\in\Omega_i$, we write $A_{x_i}$ for the stabiliser of $x_i$ and
$A_{x_1,\ldots,x_m}=A_{x_1}\cap\cdots\cap A_{x_m}$.
For a finite group $G$, let $O_\ell(G)$ denote its \emph{$\ell$-core}, namely
its largest normal $\ell$-subgroup.  A subgroup $Q\leq G$ is
\emph{$\ell$-radical} if
$Q=O_\ell(N_G(Q))$, and $\Rad_\ell(G)$ denotes the set of
$\ell$-radical subgroups of $G$.
An \emph{$\ell$-weight} of $G$ is a pair $(Q,\varphi)$, where
$Q\in\Rad_\ell(G)$ and $\varphi\in\Irr(N_G(Q)/Q)$ has defect zero.  We call
$\varphi$ the \emph{weight character} and, when convenient, identify it with
its inflation to $N_G(Q)$.  The weight \emph{belongs to}
an $\ell$-block $b$ of $G$ if the block of $N_G(Q)$ containing the inflated
weight character induces to $b$.  We write $\Alp(b)$ for the set of $G$-conjugacy classes of weights
belonging to $b$, and $\Alp(G)$ for the set of $G$-conjugacy classes of
all $\ell$-weights of $G$.
Blocks and weights carry the induced right actions of automorphisms.  In
particular, $(Q,\varphi)^\alpha=(Q^\alpha,\varphi^\alpha)$.

Let $S$ be nonabelian simple and let $\widehat S$ be its
\emph{universal central extension}.  For a prime $\ell$, set
$X=X_\ell(S)=\widehat S/O_\ell(Z(\widehat S))$ and $Z=Z(X)$.
The group $X$ is the \emph{universal $\ell'$-covering group} used in
Sp\"ath's inductive condition \cite[Definition~4.1]{Spath2013}.  Its centre
$Z$ is an $\ell'$-group and $X/Z\cong S$.  The quotient map
$X\longrightarrow S$ induces an isomorphism
$\Aut(X)\longrightarrow\Aut(S)$ \cite[p.~184]{Spath2013}, and
consequently $\Out(X)\cong\Out(S)$.

Since $Z$ is an $\ell'$-group, the decomposition map $d_Z$ identifies
$\Irr(Z)$ with $\IBr(Z)$. We use \emph{sector} as shorthand in the calculations for covering groups,
where we compare Brauer character and weight counts over each central
character and describe the action of outer automorphisms.

\begin{definition}\label{def:central-sector}
For $\nu\in\Irr(Z)$, an ordinary irreducible or irreducible Brauer character
$\psi$ of $X$ \emph{lies over} $\nu$ if its restriction to $Z$ is a positive
integral multiple of $\nu$. A weight $(Q,\theta)$ lies over $\nu$ if the
inflation of $\theta$ to $N_X(Q)$ does. We say that such characters and
$X$-conjugacy classes of weights belong to the \emph{$\nu$-sector}. The sector
is called \emph{trivial} when $\nu=1_Z$, and \emph{faithful} when $\nu$ is
faithful.
\end{definition}

Under the action defined above, $\alpha\in\Aut(X)$ sends the $\nu$-sector onto
the $\nu^\alpha$-sector. Since $Z$ is a central $\ell'$-subgroup,
each block $b$ lies over a unique $\nu\in\Irr(Z)$: its block idempotent
lies under exactly one primitive idempotent of $kZ$.
All ordinary and Brauer characters in $b$ lie over $\nu$.
Block induction preserves this central character
\cite[Definition~2.1]{Spath2013}, so every weight belonging to $b$
also lies over $\nu$. We say that $b$ lies in the $\nu$-sector.

\begin{lemma}\label{lem:linear-block-stabilizer}
Let $N\lhd G$ be finite groups, let $b$ be an $\ell$-block of $G$,
and let $C$ be the group of ordinary linear characters of $G/N$
of $\ell'$-order, acting on blocks by tensoring with their inflations.
Then every character in $C_b$ factors through $G/NZ(G)$, and
$|C_b|\leq |G:NZ(G)|$. The same conclusions hold when $C$ is
identified by reduction with the group of linear Brauer characters of $G/N$.
\end{lemma}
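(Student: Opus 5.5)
The plan is to use central characters: a block determines, modulo the maximal ideal, the value of every central character on the class sum of a central element. Tensoring with $\lambda$ multiplies that value by $\lambda(z)$. Since $\ell'$-roots of unity reduce injectively to $k$, a character $\lambda$ fixing $b$ must satisfy $\lambda(z)=1$ for all $z\in Z(G)$.

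Concretely, let $\lambda\in C_b$ and pick $\chi\in\Irr(b)$. Tensoring with $\lambda$ permutes $\Irr(G)$ and, by hypothesis, maps the characters of $b$ to those of $\lambda\otimes b=b$, so $\chi\lambda\in\Irr(b)$. Two ordinary irreducible characters lie in the same $\ell$-block exactly when their central characters $\omega_\chi(\hat K)=|K|\chi(g_K)/\chi(1)$ agree modulo the maximal ideal of the valuation ring, for every class $K$. Take $K=\{z\}$ with $z\in Z(G)$. Then $\omega_\chi(\hat K)=\chi(z)/\chi(1)=\varepsilon(z)$ is a root of unity, by Schur's lemma. Since $\lambda$ is linear, $\omega_{\chi\lambda}(\hat K)=\varepsilon(z)\lambda(z)$. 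Congruence of these two values, after cancelling the unit $\varepsilon(z)$, gives $\lambda(z)\equiv1$. Because $\lambda$ has $\ell'$-order, $\lambda(z)$ is an $\ell'$-root of unity. Reduction modulo the maximal ideal is injective on $\ell'$-roots of unity, so $\lambda(z)=1$.

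Thus $\lambda$ is trivial on $Z(G)$, and it is trivial on $N$ by definition, so it factors through $G/NZ(G)$. Hence $C_b$ embeds into the group of linear characters of $G/NZ(G)$. That group is isomorphic to the dual of the abelianisation of $G/NZ(G)$, so its order is at most $|G:NZ(G)|$.

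For the Brauer version, reduction $\lambda\mapsto\lambda^0$ is a group isomorphism from the $\ell'$-order linear characters of $G/N$ onto the linear Brauer characters of $G/N$. It is injective because $\ell'$-roots of unity reduce injectively. It is surjective because every homomorphism $G/N\to k^\times$ has $\ell'$-order and lifts via Teichmüller representatives. Since $(\chi\lambda)^0=\chi^0\lambda^0$, tensoring with $\lambda^0$ sends the Brauer characters of $b$ to those of $\lambda\otimes b$. So the two actions on blocks agree under this identification, and the stabilisers correspond. The main point needing care is this last compatibility, namely that the block of $\varphi\lambda^0$ is the block $\lambda\otimes b$. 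I would get it by lifting: write $\varphi$ as a constituent of $\chi^0$ with $\chi\in\Irr(b)$, so that $\varphi\lambda^0$ is a constituent of $(\chi\lambda)^0$.
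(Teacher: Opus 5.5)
Your proof is correct and follows essentially the same route as the paper. A block determines the reduced central character on $Z(G)$, and tensoring with $\lambda$ multiplies it by $\lambda$. Injectivity of reduction on $\ell'$-roots of unity then forces $\lambda|_{Z(G)}=1$, after which the bound and the Brauer version follow as you describe.

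The only cosmetic difference is in how the centre is handled. The paper works with the unique $\nu\in\IBr(Z_{\ell'})$ lying under $\IBr(b)$, where $Z_{\ell'}$ is the $\ell$-regular part of $Z(G)$, and treats $O_\ell(Z(G))$ separately using the $\ell'$-order of $\lambda$. Your ordinary central-character congruence covers all of $Z(G)$ at once.
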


\begin{proof}
Let $Z_{\ell'}$ be the subgroup of $Z(G)$ consisting of its
$\ell$-regular elements. Since $Z_{\ell'}$ is a central
$\ell'$-subgroup, all characters in $\IBr(b)$ lie over the same character
$\nu\in\IBr(Z_{\ell'})$. For $\lambda\in C_b$, tensoring preserves
$b$, so $(\lambda^0|_{Z_{\ell'}})\nu=\nu$. Thus $\lambda$ is
trivial on $Z_{\ell'}$. Its $\ell'$-order also makes it trivial on
$O_\ell(Z(G))$, and hence on $Z(G)$. Since $\lambda$ is already trivial on $N$, it factors through
$G/NZ(G)$. Thus $|C_b|\leq |G:NZ(G)|$.
Reduction identifies ordinary linear characters of $\ell'$-order
with linear Brauer characters and commutes with tensoring, so the
same argument applies to their action on blocks.
\end{proof}

In Sp\"ath's iBAW condition \cite[Definition~4.1(i)--(iv)]{Spath2013},
the local bijections give an $\Aut(X)_b$-equivariant bijection
$\IBr(b)\longrightarrow\Alp(b)$ for each $\ell$-block $b$ of $X$.
The condition also requires compatibility with central characters,
character extensions and blocks of intermediate groups. For weights with
subgroup $Q=1$, the character $\chi^0$ is matched with $\chi\in\dz(X)$,
with identical global and local extensions.
We say that $S$ satisfies iBAW at $\ell$ when the full condition holds.

Let $G\lhd A$ be finite groups, and let $b$ be an $A$-stable
$\ell$-block of $G$. An $A$-equivariant bijection
$\Omega:\IBr(b)\longrightarrow\Alp(b)$ is an
\emph{iBAW bijection compatible with $A$} if, for every
$\chi\in\IBr(b)$ and every representative $(Q,\varphi)$ of
$\Omega(\chi)$,
\[
 (A_\chi,G,\chi)\succeq_b
 (N_A(Q)_\varphi,N_G(Q),\varphi^0).
\]
Here $\succeq_b$ denotes block isomorphism of modular character triples
\cite[Definition~3.4]{MartinezRizoRossi2026}, and $\varphi^0$ is
the Brauer reduction of the weight character, inflated to $N_G(Q)$.
Taking $A=G\rtimes\Aut(G)_b$ gives an \emph{iBAW bijection} for $b$
in the sense of \cite[Definition~3.5]{FengLiZhang2022Jordan}.
For the block form of the inductive BAW condition, let $b$ be an
$\ell$-block of $X=X_\ell(S)$ with central character $\nu$, and let
$\overline b$ be the block it dominates on $X/\ker(\nu)$.
The characters in $\IBr(\overline b)$ are faithful on the centre of
this quotient. Following \cite[Definition~3.2]{KoshitaniSpath2016},
we say that $b$ satisfies the inductive BAW condition when $\overline b$ does.

We recall Sp\"ath's character triple formulation of the extension and block
compatibility requirements in the inductive BAW condition.

\begin{theorem}[Sp\"ath {\cite[Theorem~4.4]{Spath2017Triples}}]\label{thm:spath-triples}
Let $\widehat G$ be the universal covering group of a nonabelian simple
group $S$, and let $\ell$ be a prime.  Let
$\widehat\theta\in\IBr(\widehat G)$,
$\widehat Q\in\Rad_\ell(\widehat G)$ and
$\widehat M=N_{\widehat G}(\widehat Q)$.  Suppose that
$\widehat\theta'$ is the inflation to $\widehat M$ of a character in
$\dz(\widehat M/\widehat Q)$ and that
$\bl(\widehat\theta')^{\widehat G}=\bl(\widehat\theta)$.
Then the following are equivalent.
\begin{enumerate}[label=\textup{(\roman*)}]
\item The characters $\widehat\theta$ and $\widehat\theta'$ satisfy
part~\textup{(iii)} of the inductive blockwise Alperin weight condition
in \cite[Definition~4.1]{Spath2013}.
\item The modular character triples satisfy
\begin{equation}\label{eq:spath-block-triples}
 (G\rtimes\Aut(G)_{\widehat\theta},G,\widehat\theta)
 \succ_{F,b}
 \bigl(M(G\rtimes\Aut(G))_{\widehat M,\widehat\theta'},
       M,(\widehat\theta')^0\bigr),
\end{equation}
where $G=\widehat G/\widehat Z$ and $M=\widehat M/\widehat Z$ for
$\widehat Z=\ker(\widehat\theta_{Z(\widehat G)})$.
\end{enumerate}
\end{theorem}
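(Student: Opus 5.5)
The plan is to unwind both conditions into statements about projective representations and compare them term by term. First reduce to $G=\widehat G/\widehat Z$. Since $\widehat Z=\ker(\widehat\theta_{Z(\widehat G)})$ is central, $\widehat\theta$ and $\widehat\theta'$ inflate from characters $\theta\in\IBr(G)$ and $\theta'\in\IBr(M)$. Because $Z(\widehat G)$ is an $\ell'$-group, $\widehat Q\widehat Z/\widehat Z$ is radical in $G$ with normaliser $M$. The automorphism group $\Aut(G)$ coincides with $\Aut(\widehat G)$ through the isomorphism recalled before Definition~\ref{def:central-sector}, and $\widehat Z$ is characteristic. So stabilisers of $\widehat\theta$, $\widehat M$ and $\widehat\theta'$ in $\Aut(\widehat G)$ correspond to those in $\Aut(G)$, and block induction commutes with inflation. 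This lets me work entirely in $G\rtimes\Aut(G)$, where the left-hand triple of \eqref{eq:spath-block-triples} is defined.

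Next I would recall Sp\"ath's part~(iii) in \cite[Definition~4.1]{Spath2013}. Set $A=G\rtimes\Aut(G)_\theta$ and $A'=M\,N_A(M)_{\theta'}$, noting $A=GA'$ by the Frattini argument. Part~(iii) asks for projective representations $\mathcal P$ of $A$ associated with $\theta$ and $\mathcal P'$ of $A'$ associated with $\theta'$ with the following properties. Their factor sets agree on $A'\times A'$, and their scalar functions on $C_A(G)$ agree. Moreover, for every intermediate $G\leq J\leq A$, the blocks of the projective characters $\operatorname{Tr}(\mathcal P|_J)$ and $\operatorname{Tr}(\mathcal P'|_{J\cap A'})$ correspond under Brauer induction $\bl(\cdot)^J$, interpreted via a common central extension.

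I would then compare this with the definition of $\succ_{F,b}$ in \cite[Definition~3.4]{MartinezRizoRossi2026}. That definition asks for the same data: $A=GA'$, $C_A(G)\leq A'$, projective representations with equal factor sets on $A'$ and equal restrictions to $C_A(G)$, and block induction compatibility over intermediate groups. The only extra requirement is that $\mathcal P$ and $\mathcal P'$ are defined over a field $F$ with the required rationality. For (ii)$\Rightarrow$(i) one simply forgets this refinement. For (i)$\Rightarrow$(ii), one must show that the representations in (i) can be chosen with $C_A(G)$ acting by scalars and centralising correctly. Here I would use that $C_{G\rtimes\Aut(G)}(G)$ is trivial modulo $Z(G)$, since $S$ is simple and $\Aut(G)$ acts faithfully. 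I would also use that an adjustment of $\mathcal P$ by a function on $A/G$ can be mirrored on $A'/M\cong A/G$ simultaneously, so equality of factor sets is preserved.

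The main obstacle is matching the block condition. Sp\"ath's original (iii) uses blocks of central extensions over all intermediate groups $J$. The triple relation uses the ``dominating'' formulation via Dade's group $G[b]$, or via all $J$ in the relative version. I would first pass to the central extension of $A$ realising the common factor set. There the projective representations become genuine Brauer characters, so the statement reduces to Brauer block induction from $J\cap\widetilde A'$ to $J$. I would then check that this compatibility is exactly the block-isomorphism condition, using that $N_{\widetilde J}(\widetilde Q)\leq \widetilde J\cap\widetilde A'$ and that Brauer's third-main-theorem-type transitivity makes the two formulations coincide.
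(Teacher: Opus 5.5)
You are trying to prove a statement that the paper does not prove; it quotes it from Sp\"ath. Measured against Sp\"ath's argument, your outline has a genuine gap at its centre.

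\textbf{The centraliser claim is false.} Your step ``$C_{G\rtimes\Aut(G)}(G)$ is trivial modulo $Z(G)$'' does not hold. For $g\in G$, let $\iota_g$ be the inner automorphism it induces. Then the element $g\iota_g^{-1}$ of $G\rtimes\Aut(G)$ centralises $G$. Since $\Inn(G)\leq\Aut(G)_\theta$, the group $C_A(G)$ for $A=G\rtimes\Aut(G)_\theta$ is a diagonal copy of $G$. It has order $|G|$, meets $G$ in $Z(G)$, and maps onto $\Inn(G)\cong S$ in $A/G$.

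This is precisely why the relation $\succ_{F,b}$ requires $C_A(G)\leq H$ and equal scalars for $\mathcal P$ and $\mathcal P'$ on $C_A(G)$. These scalars are not automatically equal. Your proposed fix cannot repair a mismatch. Multiplying $\mathcal P$ by a function on $A/G$ and $\mathcal P'$ by the same function on $H/M\cong A/G$ multiplies both scalars at each $c\in C_A(G)$ by the same number, so their ratio is unchanged.

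\textbf{What Sp\"ath's part (iii) actually says.} You describe it as if it were already stated with projective representations of $G\rtimes\Aut(G)_\theta$, so that (i) and (ii) ``ask for the same data''. It is not. It asks for an auxiliary finite group $\mathcal A$ with $G\lhd\mathcal A$, $C_{\mathcal A}(G)=Z(\mathcal A)$ of $\ell'$-order and $\mathcal A/Z(\mathcal A)\cong\Aut(G)_\theta$. It also asks for genuine Brauer characters extending $\theta$ to $\mathcal A$ and $\theta'$ to $N_{\mathcal A}(Q)$, where $Q=\widehat Q\widehat Z/\widehat Z$, with block compatibility for every $G\leq J\leq\mathcal A$. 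Translating between this and (ii) is the whole content of the theorem.

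\textbf{(i)$\Rightarrow$(ii).} Pull both representations back along the epimorphism $A\to\Aut(G)_\theta\cong\mathcal A/Z(\mathcal A)$, whose kernel is exactly $C_A(G)$. Use a single lifting map that is compatible with $G$ and, on $H$, with $M$; this is possible because $A=GH$. Then both factor sets, and both scalar functions on $C_A(G)$, are values on $Z(\mathcal A)$ of the central characters of the two extensions. These agree because block induction preserves central characters (take $J=GZ(\mathcal A)$).

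\textbf{(ii)$\Rightarrow$(i).} ``Forgetting'' produces no group $\mathcal A$. You must form the central extension realising the common factor set, normalised to have $\ell'$-values, which is possible over $F$. Then factor out the preimage of $C_A(G)$ modulo the kernel of the common scalar function. The equality of scalars on $C_A(G)$ is exactly what lets both representations descend to the same $\mathcal A$ with $C_{\mathcal A}(G)=Z(\mathcal A)$. The block conditions then match by identifying $J\cap H$ with $N_J(Q)$ (using $Q=O_\ell(M)$ and $A=GH$) and by the compatibility of block induction with inflation along central $\ell'$-subgroups. No third-main-theorem argument is needed.

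\textbf{Smaller points.}
\begin{itemize}
\item $F$ is just an algebraically closed field of characteristic $\ell$; there is no rationality refinement.
\item $A=GH$ does not follow from the Frattini argument. It says that $\Aut(G)_\theta$ stabilises the $G$-class of the weight, which is part of relation (ii).
\item $Z(\widehat G)$ need not be an $\ell'$-group. However, $Z(\widehat G)/\widehat Z$ is, because $\widehat Z\supseteq O_\ell(Z(\widehat G))$, and that is all your reduction uses.
\end{itemize}
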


Here $F$ is an algebraically closed field of characteristic $\ell$.
Sp\"ath writes $\succ_{F,b}$ for block isomorphism of modular character
triples, denoted by $\succeq_b$ elsewhere in this paper. In~\textup{(ii)},
$\widehat\theta$ and $\widehat\theta'$ are regarded as characters of $G$
and $M$, respectively.

Assume that $S$ satisfies iBAW at $\ell$, and let
$\Omega:\IBr(b)\to\Alp(b)$ be the resulting bijection for a block $b$
of $X=X_\ell(S)$. For each $\psi\in\IBr(b)$, choose a representative
$(Q,\varphi)$ of $\Omega(\psi)$. Replace $Q$ by its full preimage in
the universal covering group, and inflate $\psi$ and $\varphi$ to the
cover and the corresponding normaliser, respectively. The normaliser
quotient is unchanged, and the lifted weight belongs to the block of the
inflated Brauer character. The full inductive condition
supplies part~\textup{(i)} of Theorem~\ref{thm:spath-triples}, so
part~\textup{(ii)} gives the character triple condition on
$X/(\ker(\psi)\cap Z(X))$.
Since $\ker(\psi)\cap Z(X)$ is a central $\ell'$-subgroup, inflation gives
the condition on $X$ by \cite[Remark~4.4]{BroughSpath2022}. Thus $\Omega$ is an iBAW
bijection for $b$.

\begin{remark}\label{rem:block-aggregation}
\textup{(a)} By \cite[Lemma~3.3]{KoshitaniSpath2016} and
\cite[Remark~5.18]{Spath2013}, it is enough to verify the inductive BAW
condition for one representative of each $\Aut(X_\ell(S))$-orbit of
$\ell$-blocks of $X_\ell(S)$ in order to prove that $S$ satisfies iBAW at
$\ell$.

\textup{(b)} We will also combine bijections on individual blocks by the following
elementary observation. Suppose that a group $A$ acts on $\mathcal B$ and on
disjoint unions $\mathcal U=\coprod_{b\in\mathcal B}\mathcal U_b$ and
$\mathcal V=\coprod_{b\in\mathcal B}\mathcal V_b$, with
$\mathcal U_b^a=\mathcal U_{b^a}$ and
$\mathcal V_b^a=\mathcal V_{b^a}$ for $a\in A$.
For one representative $b$ of each $A$-orbit, choose an
$A_b$-equivariant bijection $\Phi_b:\mathcal U_b\to\mathcal V_b$.
Then $\Phi(x^a)=\Phi_b(x)^a$, for $x\in\mathcal U_b$, defines an
$A$-equivariant bijection $\Phi:\mathcal U\to\mathcal V$, with
$\Phi(\mathcal U_b)=\mathcal V_b$ for every $b\in\mathcal B$.
The $A_b$-equivariance makes this definition independent of the choice of $a$.
\end{remark}

\begin{corollary}[Sp\"ath]\label{cor:fixed-point-descent}
Let $S$ be a nonabelian simple group.  If $S$ satisfies iBAW at $\ell$, then
every $\ell$-block of $S$ admits an iBAW
bijection.
\end{corollary}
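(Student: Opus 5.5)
The natural route passes from blocks of $S$ to blocks of $X=X_\ell(S)$ and back. Every $\ell$-block $b_S$ of $S=X/Z$ is dominated by a unique block $b$ of $X$ lying in the trivial sector. Inflation identifies $\IBr(b_S)$ with $\IBr(b)$, since all characters of $b$ lie over $1_Z$. Because $Z$ is a central $\ell'$-subgroup, $\ell$-radical subgroups of $X$ are exactly the full preimages of $\ell$-radical subgroups of $S$. Concretely, $Q\mapsto QZ/Z$ is a bijection, because $Q=O_\ell(QZ)$ and $N_X(Q)/Z=N_S(QZ/Z)$. Hence weights of $X$ in the trivial sector correspond to weights of $S$, and $\Alp(b)$ is identified with $\Alp(b_S)$. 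Finally, the isomorphism $\Aut(X)\to\Aut(S)$ recalled above makes all these identifications equivariant.

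Now assume $S$ satisfies iBAW at $\ell$. The discussion preceding Remark~\ref{rem:block-aggregation} shows that the bijection $\Omega:\IBr(b)\to\Alp(b)$ supplied by the inductive condition is an iBAW bijection for $b$. For each $\psi\in\IBr(b)$ and representative $(Q,\varphi)$ of $\Omega(\psi)$, this gives
\[
 (X\rtimes\Aut(X)_b)_\psi,X,\psi)\succeq_b\bigl(N_{X\rtimes\Aut(X)_b}(Q)_\varphi,N_X(Q),\varphi^0\bigr).
\]
Here $\ker(\psi)\cap Z(X)=Z$, since $\psi$ is trivial on $Z$. So the block triple condition of Theorem~\ref{thm:spath-triples}(ii) lives directly on $X/Z=S$ and $N_X(Q)/Z=N_S(QZ/Z)$. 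This is the key point: part~(ii) is stated with $G=\widehat G/\widehat Z$, and here $\widehat Z$ is the full preimage of $Z$ in the universal cover, so that $G=S$.

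Transporting $\Omega$ through the identifications of the first paragraph therefore gives an $\Aut(S)_{b_S}$-equivariant bijection $\IBr(b_S)\to\Alp(b_S)$. Its required triple relation is
\[
 \bigl((S\rtimes\Aut(S)_{b_S})_{\chi},S,\chi\bigr)\succeq_{b}\bigl(N(\overline Q)_{\overline\varphi},N_S(\overline Q),\overline\varphi^0\bigr),
\]
where $\overline Q=QZ/Z$ and $\chi,\overline\varphi$ are the deflations of $\psi,\varphi$. This is exactly what Theorem~\ref{thm:spath-triples}(ii) yields. The stabilisers agree because $\Aut(X)_\psi$ corresponds to $\Aut(S)_\chi$ and the normaliser stabilisers match. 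The passage between $S\rtimes\Aut(S)_\chi$ and $S\rtimes\Aut(S)$ restricted to stabilisers is formal.

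The main obstacle is the bookkeeping in the triple relation. Theorem~\ref{thm:spath-triples}(ii) uses the full semidirect product $G\rtimes\Aut(G)$ rather than its subgroup fixing $b$. One must check that the stabiliser subgroups occurring are the same in both formulations: $\Aut(S)_\chi\leq\Aut(S)_{b_S}$, and the local stabiliser of $(\overline Q,\overline\varphi)$ fixes $b_S$ by block induction. One must also check that block isomorphism of modular character triples is compatible with the identification of a central $\ell'$-quotient, which is where \cite[Remark~4.4]{BroughSpath2022} is invoked. I would handle this by noting that both triples already sit in the quotient by $Z$, so no inflation step is needed.
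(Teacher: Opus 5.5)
Your argument is correct, but it takes a different route from the paper. The paper applies Sp\"ath's descent result \cite[Proposition~4.6(a)--(c)]{Spath2013} with $K=S$, $G=\Aut(S)$ and $r=1$. This directly produces equivariant blockwise bijections for $S$, and the paper then converts the conditions they satisfy into block isomorphisms of modular character triples via \cite[Proposition~3.6(b) and Theorem~3.5(b)]{Spath2017Triples}.

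You instead work with the trivial-sector blocks of $X$ and use Theorem~\ref{thm:spath-triples} itself. A Brauer character lying over $1_Z$ has $\widehat Z=Z(\widehat G)$, so the relation in part~(ii) already lives on $\widehat G/Z(\widehat G)=S$ and on $N_S(\overline Q)$. The inflation step via \cite[Remark~4.4]{BroughSpath2022} in the discussion before Remark~\ref{rem:block-aggregation} is therefore never needed. Your stabiliser comparison then does the rest: $\Aut(S)_\chi\leq\Aut(S)_{b_S}$, $\overline Q=O_\ell(N_S(\overline Q))$, and fixing $(\overline Q,\overline\varphi)$ forces fixing $b_S$.

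Your route stays within tools the paper has already set up and shows transparently why no inflation occurs. The cost is that you must verify by hand the identifications of Brauer characters, radical subgroups and weights along $X\to S$. This includes compatibility with block induction, which you assert without proof; it is \cite[Lemma~2.3(iii)]{FengLiZhang2019}, as used in the paper for $Fi'_{24}$. Sp\"ath's Proposition~4.6 packages this descent once and for all, at the price of a second translation step into character triples.

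One wording slip: $\ell$-radical subgroups of $X$ are not the full preimages of those of $S$, since the full preimage $QZ$ is not an $\ell$-group when $Z\neq1$. They are the unique Sylow $\ell$-subgroups of those preimages. Your concrete bijection $Q\mapsto QZ/Z$ with $Q=O_\ell(QZ)$ is the correct statement.
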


\begin{proof}
Apply \cite[Proposition~4.6(a)--(c)]{Spath2013} with $K=S$,
$G=\Aut(S)$ and $r=1$. The resulting blockwise bijections satisfy
the character triple condition by
\cite[Proposition~3.6(b) and Theorem~3.5(b)]{Spath2017Triples}.
\end{proof}

The following lemma uses the argument in the proof of
\cite[Lemma~6.3]{MartinezRizoRossi2026} for a central $\ell$-subgroup.

\begin{lemma}\label{lem:type-b-normal-core}
Let $G\lhd A$ be finite groups, and suppose that
$P=O_\ell(G)\leq Z(G)$. Set $\overline G=G/P$ and
$\overline A=A/P$. Let $b$ be an $A$-stable $\ell$-block of $G$,
and let $\overline b$ be the block of $\overline G$ dominated by $b$.
If $\overline b$ admits an iBAW bijection compatible with
$\overline A$, then $b$ admits an iBAW bijection compatible with $A$.
\end{lemma}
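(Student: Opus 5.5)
We lift a given iBAW bijection for $\overline b$ through the quotient map $G\to\overline G$, checking separately the bijection, its equivariance, and the character triple condition.

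\textbf{The Brauer character side.} Since $P=O_\ell(G)$ is a normal $\ell$-subgroup, $P$ lies in the kernel of every irreducible Brauer character of $G$. So inflation gives a bijection $\IBr(\overline G)\to\IBr(G)$. Because $P\le Z(G)$, every block $b$ of $G$ dominates a unique block $\overline b$ of $\overline G$. Conversely, each block of $\overline G$ is dominated by exactly one block of $G$, as $P$ is central of $\ell$-power order. Hence inflation restricts to an $A$-equivariant bijection $\IBr(\overline b)\to\IBr(b)$.

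\textbf{The weight side.} We compare $\Alp(b)$ with $\Alp(\overline b)$. Every $\ell$-radical subgroup $Q$ of $G$ contains $O_\ell(G)=P$, since $P$ is a normal $\ell$-subgroup of $N_G(Q)$ and so $P\le O_\ell(N_G(Q))=Q$. We then check that $Q\mapsto Q/P$ is a bijection $\Rad_\ell(G)\to\Rad_\ell(\overline G)$, with $N_{\overline G}(Q/P)=N_G(Q)/P$ and $N_G(Q)/Q\cong N_{\overline G}(Q/P)/(Q/P)$. Therefore $(Q,\varphi)\mapsto(Q/P,\varphi)$ is an $A$-equivariant bijection between the weights of $G$ and of $\overline G$. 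This bijection is compatible with conjugacy.

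It remains to show that this map respects blocks: a weight belongs to $b$ if and only if its image belongs to $\overline b$. The character $\varphi$ inflated to $N_G(Q)$ has $P$ in its kernel. Its block $B$ of $N_G(Q)$ dominates the block $\overline B$ of $N_G(Q)/P$ containing $\varphi$. Note that $C_G(Q)\supseteq C_G(Q)$ contains $P$ centrally. We then use the standard compatibility of Brauer induction with domination for central $\ell$-subgroups (e.g.\ via the Brauer homomorphism or central characters): $B^G$ dominates $\overline B^{\overline G}$. Hence $\Alp(b)\leftrightarrow\Alp(\overline b)$. Composing with the given bijection $\overline\Omega$ yields an $A$-equivariant bijection $\Omega:\IBr(b)\to\Alp(b)$.

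\textbf{The character triple condition.} This is the main step. For $\chi\in\IBr(b)$ with inflation source $\overline\chi$ and weight representative $(Q,\varphi)$, we have
\[
(\overline A_{\overline\chi},\overline G,\overline\chi)\succeq_{\overline b}(N_{\overline A}(Q/P)_\varphi,N_{\overline G}(Q/P),\varphi^0).
\]
Moreover, $A_\chi/P=\overline A_{\overline\chi}$ and $N_A(Q)_\varphi/P=N_{\overline A}(Q/P)_\varphi$. We need to inflate a block isomorphism of modular character triples along the normal $\ell$-subgroup $P$. Here we follow the argument of \cite[Lemma~6.3]{MartinezRizoRossi2026}. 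Projective representations of the quotient triples inflate to projective representations with the same factor sets. The centraliser condition $C_{\overline A}(\overline G)$ corresponds to $C_A(G)$ up to $P$, and the block induction requirement for intermediate groups $G\le J\le A$ transfers by the same domination argument as above.

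The obstacle is this last block compatibility for all intermediate $J$. It holds because $P\lhd J$ is an $\ell$-subgroup contained in the kernels, and blocks of $J/P$ are dominated by blocks of $J$ compatibly with Brauer induction from $N_J(Q)$. Centrality of $P$ in $G$ ensures that $P\le C_J(Q)$ for defect group considerations.
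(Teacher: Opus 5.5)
Your proposal is correct and follows essentially the same route as the paper. Inflation identifies $\IBr(b)$ with $\IBr(\overline b)$ \cite[Theorem~9.10]{Navarro1998}, and $Q\mapsto Q/P$ identifies radical subgroups, normaliser quotients and hence $\Alp(b)$ with $\Alp(\overline b)$; the paper takes the block compatibility you derive from central characters from \cite[Lemma~2.1]{FengLiZhang2019}, and it obtains the lifting of the block isomorphism of modular character triples directly from \cite[Lemma~3.14]{MartinezRizoRossi2026} rather than sketching it as you do (your sentence ``$C_G(Q)\supseteq C_G(Q)$ contains $P$ centrally'' is garbled but harmless, since only $P\leq Z(G)\cap Q$ is used).
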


\begin{proof}
Since $P$ is central in $G$, the block correspondence for
$G\longrightarrow\overline G$ identifies the irreducible Brauer
characters of $b$ and $\overline b$ under inflation
\cite[Theorem~9.10]{Navarro1998}.
Every $\ell$-radical subgroup $Q$ of $G$ contains $P$. Writing
$\overline Q=Q/P$, the map $Q\longmapsto\overline Q$ gives a bijection
between the $\ell$-radical subgroups of $G$ and $\overline G$, with
\[
 N_G(Q)/Q\cong N_{\overline G}(\overline Q)/\overline Q.
\]
By \cite[Lemma~2.1]{FengLiZhang2019} and the block correspondence above,
these identifications also give a bijection between $\Alp(b)$ and
$\Alp(\overline b)$. All these maps commute with the action of $A$.

Composing these correspondences with the given iBAW bijection for
$\overline b$ gives an $A$-equivariant bijection
$\IBr(b)\longrightarrow\Alp(b)$. For $\chi\in\IBr(b)$, let
$(Q,\varphi)$ represent its image. Then $P$ is
normal in $A_\chi$, lies in $N_G(Q)$, and is contained
in the kernels of $\chi$ and $\varphi^0$.
The quotient maps identify $A_\chi/P$ and $N_A(Q)_\varphi/P$ with
the corresponding stabilisers in $\overline A$.
Hence \cite[Lemma~3.14]{MartinezRizoRossi2026} lifts the required
block isomorphism of modular character triples, proving the assertion.
\end{proof}

We use \emph{BAW-good} in the sense of
\cite[Section~3.5]{FengLiZhang2022Jordan} for blocks of universal
$\ell'$-covering groups. For these groups,
Theorem~\ref{thm:spath-triples} identifies BAW-goodness with the
inductive BAW condition, which supplies an iBAW bijection.
For a full universal covering group $G$, set
$P=O_\ell(G)=O_\ell(Z(G))$. Following
\cite[Theorems~3.18 and~5.7]{FengLiZhang2022Jordan}, we also call a
block $b$ of $G$ BAW-good if the block $\overline b$ it dominates on
$G/P$ is BAW-good. Such a block $b$ also admits an iBAW bijection.
Set $\overline G=G/P$. The quotient map induces an isomorphism
$\Aut(G)\longrightarrow\Aut(\overline G)$, which identifies
$\Aut(G)_b$ with $\Aut(\overline G)_{\overline b}$. The block
$\overline b$ admits an iBAW bijection, so
Lemma~\ref{lem:type-b-normal-core}, applied with
$A=G\rtimes\Aut(G)_b$, gives an iBAW bijection for $b$.

For the permutation arguments below, actions on $A$-sets and $\Z A$-lattices
are written on the left.
If a set $Y$ already carries a right action, as do the character sets above,
we pass to the left action $a\cdot y=y^{a^{-1}}$.

For a finite group $A$, let $B(A)$ denote its Burnside ring, and let
$\mathcal S$ be a set of representatives of the $A$-conjugacy classes of
subgroups of $A$.  For a finite $A$-set $Y$, write $[Y]$ for the element of $B(A)$
represented by $Y$, and, for $U\leq A$, let $Y^U$ be the set of $U$-fixed points of
$Y$.  The \emph{mark homomorphism}
\[
        B(A)\longrightarrow \prod_{U\in\mathcal S}\Z,\qquad
        [Y]\longmapsto (|Y^U|)_{U\in\mathcal S},
\]
is injective
\cite[Theorem~2.3.2 and Section~3.2]{Bouc2000}.

Let $r$ be a prime. A finite group $A$ is
\emph{$r$-hypoelementary} if $A/O_r(A)$ is cyclic.
Every subgroup of an $r$-hypoelementary group is $r$-hypoelementary.

Let $\Z_r$ denote the ring of $r$-adic integers. For a finite $A$-set
$Y$, write $\Z_r[Y]$ for the \emph{permutation lattice} with basis $Y$.
Conlon's theorem
\cite{Conlon1968}, in the form stated in
\cite[Theorem~3.5.5]{Bouc2000}, says that
$\Z_r[Y]\cong\Z_r[Y']$ as $\Z_rA$-lattices if and only if
$|Y^U|=|Y'^U|$ for every $r$-hypoelementary subgroup $U\leq A$.
We will use the following consequence with $r=2$.

\begin{corollary}[Conlon]\label{cor:conlon-mark}
Let $A$ be a finite $r$-hypoelementary group and let $Y,Y'$ be finite
$A$-sets.  If the permutation lattices
$\Z_r[Y]$ and $\Z_r[Y']$ are isomorphic as $\Z_rA$-lattices, then
$Y\cong Y'$ as $A$-sets.
\end{corollary}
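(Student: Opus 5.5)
The plan is to deduce the corollary directly from Conlon's theorem in the form quoted from \cite[Theorem~3.5.5]{Bouc2000}, together with the injectivity of the mark homomorphism on $B(A)$.

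First, suppose $\Z_r[Y]\cong\Z_r[Y']$ as $\Z_rA$-lattices. The "only if" direction of Conlon's theorem then gives
\[
|Y^U|=|Y'^U|\quad\text{for every $r$-hypoelementary subgroup } U\leq A.
\]

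Second, we use the hypothesis on $A$. Since $A$ is $r$-hypoelementary, every subgroup $U\leq A$ is $r$-hypoelementary, as recalled before the statement. Hence the equality of fixed-point counts holds for every subgroup of $A$, and in particular for every $U\in\mathcal S$.

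Third, the mark homomorphism $B(A)\to\prod_{U\in\mathcal S}\Z$ sends $[Y]$ and $[Y']$ to the same tuple. By injectivity \cite[Theorem~2.3.2 and Section~3.2]{Bouc2000}, we get $[Y]=[Y']$ in $B(A)$.

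Finally, we must pass from equality in $B(A)$ to an isomorphism of $A$-sets. The Burnside ring is the Grothendieck group of the additive monoid of isomorphism classes of finite $A$-sets under disjoint union. This monoid is free commutative on the transitive $A$-sets $A/U$ with $U\in\mathcal S$, since each finite $A$-set decomposes uniquely into orbits. A free commutative monoid is cancellative, so it embeds in its Grothendieck group. Therefore $[Y]=[Y']$ forces $Y\cong Y'$ as $A$-sets. Alternatively, one can argue directly that the orbit multiplicities of $Y$ and $Y'$ are determined by the marks, via the triangular table of marks.

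No step is a genuine obstacle. The only points needing care are that the subgroup-closure of $r$-hypoelementarity lets Conlon's criterion cover all of $\mathcal S$, and that $B(A)$ has no cancellation defect.
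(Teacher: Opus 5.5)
Your proposal is correct and follows the paper's proof: Conlon's criterion, closure of $r$-hypoelementarity under subgroups (so the fixed-point equalities hold for all $U\leq A$), and injectivity of the mark homomorphism. Your final paragraph, on isomorphism classes of finite $A$-sets embedding in $B(A)$ because they form a free commutative monoid, only makes explicit a step the paper leaves implicit.
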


\begin{proof}
Every subgroup of $A$ is $r$-hypoelementary. Conlon's theorem therefore
gives $|Y^U|=|Y'^U|$ for every $U\leq A$. The injectivity of the mark
homomorphism gives $Y\cong Y'$ as $A$-sets.
\end{proof}

For an $\ell$-block $b$, a \emph{basic set} of Brauer characters is a
$\Z$-basis of $\Z[\IBr(b)]$ consisting of Brauer characters
\cite[(1.1)--(1.2)]{GeckHiss1991}.
A subset $\mathcal X\subseteq\Irr(b)$ is an \emph{integral basic set}
for $b$ if the decomposition map restricts to an isomorphism
\[
  d_b:\Z[\mathcal X]\xrightarrow{\sim}\Z[\IBr(b)].
\]

\begin{lemma}\label{lem:basicset-bridge}
Let $G$ be a finite group, let $b$ be an $\ell$-block of $G$, and let $A$ be
a finite group acting on $\Irr(b)$ and $\IBr(b)$ such that $d_b$ is
$A$-equivariant.  Let $\mathcal X\subseteq\Irr(b)$ be an $A$-stable integral
basic set.  Then $d_b$ restricts to an isomorphism
$\Z[\mathcal X]\cong\Z[\IBr(b)]$ of $\Z A$-permutation lattices.
Consequently the underlying finite sets are $A$-isomorphic if $A$ is
$2$-hypoelementary.
\end{lemma}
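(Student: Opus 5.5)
The plan has two steps: first identify both lattices as permutation lattices and check that $d_b$ intertwines them, then pass to $\Z_2$ coefficients and apply Corollary~\ref{cor:conlon-mark} with $r=2$.

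\textbf{Step 1: both sides are permutation lattices.}
Since $\mathcal X$ is $A$-stable, $\Z[\mathcal X]$ is the permutation $\Z A$-lattice with basis $\mathcal X$. Here we use the left action $a\cdot\chi=\chi^{a^{-1}}$ fixed above. The group $A$ acts on $\IBr(b)$, so $\Z[\IBr(b)]$ is likewise the permutation lattice on the finite $A$-set $\IBr(b)$. By hypothesis $d_b$ is $A$-equivariant on $\Z[\Irr(b)]$, meaning $d_b(\chi^a)=d_b(\chi)^a$ for $\chi\in\Irr(b)$ and $a\in A$.

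\textbf{Step 2: $d_b$ is an isomorphism of $\Z A$-lattices.}
Because $d_b$ is $\Z$-linear, equivariance on basis elements extends to all of $\Z[\mathcal X]$. So the restriction of $d_b$ to $\Z[\mathcal X]$ is a $\Z A$-module homomorphism. It is bijective because $\mathcal X$ is an integral basic set. Hence it is an isomorphism of $\Z A$-lattices, and both source and target are permutation lattices. This proves the first assertion.

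\textbf{Step 3: the underlying $A$-sets are isomorphic.}
Tensor the isomorphism with $\Z_2$ over $\Z$. This gives
\[
\Z_2[\mathcal X]\cong\Z_2\otimes_{\Z}\Z[\mathcal X]\cong\Z_2\otimes_{\Z}\Z[\IBr(b)]\cong\Z_2[\IBr(b)]
\]
as $\Z_2A$-lattices. The outer identifications hold because tensoring a permutation lattice keeps the same permutation basis. If $A$ is $2$-hypoelementary, Corollary~\ref{cor:conlon-mark} with $r=2$, $Y=\mathcal X$ and $Y'=\IBr(b)$ gives $\mathcal X\cong\IBr(b)$ as $A$-sets.

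\textbf{Where care is needed.}
The only delicate point is that $d_b$ does not send $\mathcal X$ to $\IBr(b)$ elementwise. It sends each element of $\mathcal X$ to a nonnegative combination of Brauer characters. So we cannot obtain a bijection of sets directly from $d_b$, and the set-level conclusion must come from Conlon's theorem, not from $d_b$ itself. One should also confirm that the action of $A$ really permutes the basis $\IBr(b)$, so that $\Z[\IBr(b)]$ is a permutation lattice. This holds because the action on $\IBr(b)$ is given as a set action.

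It is worth recording that the resulting bijection $\mathcal X\to\IBr(b)$ is abstract and need not be compatible with $d_b$ beyond equivariance.
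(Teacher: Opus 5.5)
Your proof is correct and follows the paper's argument: the integral basic set property together with equivariance makes the restriction of $d_b$ a $\Z A$-isomorphism of permutation lattices, and extending scalars to $\Z_2$ and applying Corollary~\ref{cor:conlon-mark} gives the $A$-set isomorphism. Your remark that the resulting bijection is abstract, rather than induced by $d_b$, matches the paper's later observation that these arguments give no canonical Brauer labelling.
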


\begin{proof}
By the definition of an integral basic set, this restriction is an
isomorphism of $\Z$-modules.  Its $A$-equivariance makes it an isomorphism
of $\Z A$-permutation lattices.
If $A$ is $2$-hypoelementary, extend scalars to $\Z_2$ and apply
Corollary~\ref{cor:conlon-mark}.
\end{proof}

We shall repeatedly use the following standard result.

\begin{lemma}\label{lem:cyclic-extension}
Let $H$ be a finite group, let $N\triangleleft H$, and suppose that $H/N$ is
cyclic. Then every $H$-invariant character $\theta\in\Irr(N)\cup\IBr(N)$ extends to $H$.
\end{lemma}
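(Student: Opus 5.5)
The plan is to reduce to the case in which $H/N$ is generated by a single coset $gN$, and then handle ordinary and Brauer characters separately. Both arguments use the same mechanism: an $H$-invariant character $\theta$ has a projective representation of $H$ associated with it, whose cohomology class lies in $H^2(H/N,\mathbb C^\times)$ (respectively $H^2(H/N,k^\times)$). When $H/N$ is cyclic these cohomology groups vanish. Indeed, for a cyclic group $C$ one has $H^2(C,M)\cong M^C/N_C(M)$ for the trivial module $M=\mathbb C^\times$ or $k^\times$. The norm map is $m\mapsto m^{|C|}$, and it is surjective because $\mathbb C$ and $k$ are algebraically closed, so every element has a $|C|$-th root. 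We use here that $k$ is algebraically closed, or at least that the splitting field contains the needed roots; otherwise we enlarge the field, which does not affect Brauer characters.

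For ordinary characters we would in fact just cite the standard result \cite[Corollary~11.22]{Isaacs1976} (Gallagher). A direct construction also works, and it carries over verbatim to the modular case. Let $\rho$ be a representation affording $\theta$ (over $\mathbb C$, or over $k$ for $\theta\in\IBr(N)$). Let $m=|H:N|$ and pick $g\in H$ with $H=\langle N,g\rangle$. Invariance gives $\rho^g\cong\rho$, so there is an invertible $T$ with $T\rho(n)T^{-1}=\rho(gng^{-1})$ for all $n\in N$. Then $T^m$ and $\rho(g^m)$ (note $g^m\in N$) induce the same conjugation on $\rho(N)$. Since $\rho$ is absolutely irreducible, Schur's lemma gives $T^m=c\,\rho(g^m)$ for some scalar $c$. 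Replacing $T$ by $c^{-1/m}T$, which is possible by algebraic closedness, we may assume $T^m=\rho(g^m)$.

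We then define $\widetilde\rho(ng^i)=\rho(n)T^i$ for $0\leq i<m$. The relations $T\rho(n)T^{-1}=\rho(gng^{-1})$ and $T^m=\rho(g^m)$ are exactly the relations needed for $\widetilde\rho$ to be a homomorphism $H\to\mathrm{GL}$. To check this, we compute $\widetilde\rho(ng^i)\widetilde\rho(n'g^j)$ and reduce $g^{i+j}$ modulo $g^m\in N$. So $\widetilde\rho$ is a representation of $H$ extending $\rho$.

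In the Brauer case, $\widetilde\rho$ is a $kH$-module that restricts to a simple $kN$-module, so it is simple, and its Brauer character extends $\theta$. The main obstacle is only bookkeeping. One must check that $\widetilde\rho$ is well defined and multiplicative, which follows from the two relations. One must also justify the scalar $m$-th root, which is immediate in the algebraically closed fields $\mathbb C$ and $k$ of our splitting modular system.
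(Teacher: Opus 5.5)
Your proof is correct and takes essentially the same approach as the paper, which just cites \cite[Corollary~11.22]{Isaacs1976} for ordinary characters and \cite[Theorem~8.12]{Navarro1998} for Brauer characters; your rescaling of $T$ so that $T^m=\rho(g^m)$ is the standard argument behind those results and treats both cases uniformly. The one step you leave implicit in the modular case is that $H$-invariance of the Brauer character $\theta$ gives $\rho^g\cong\rho$, which holds because irreducible Brauer characters determine simple $kN$-modules up to isomorphism.
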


\begin{proof}
Apply \cite[Corollary~11.22]{Isaacs1976} for ordinary characters and
\cite[Theorem~8.12]{Navarro1998} for Brauer characters.
\end{proof}

The following lemma is a special case of Brough--Sp\"ath's criterion
\cite[Theorem~3.18]{FengLiZhang2022Jordan}, with the required extensions
supplied by Lemma~\ref{lem:cyclic-extension}.

\begin{lemma}\label{lem:cyclic-outer-baw}
Let $G$ be a nonabelian simple group that is its own universal covering
group, let $\ell$ be a prime dividing $|G|$, and suppose that
$\Aut(G)=G\rtimes E$ for a cyclic group $E$.  If $b$ is an $E$-stable
$\ell$-block of $G$ and there is an $E$-equivariant bijection
\[
  \Omega_b:\IBr(b)\longrightarrow\Alp(b),
\]
then $b$ is BAW-good.
\end{lemma}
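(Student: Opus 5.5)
We will deduce the lemma from Brough--Sp\"ath's criterion \cite[Theorem~3.18]{FengLiZhang2022Jordan}. Its hypotheses are an equivariant bijection together with extension conditions on both sides. Since $G$ is its own universal covering group, $Z(G)=1$ and $X_\ell(G)=G$. So the central-character compatibility is vacuous, and Theorem~\ref{thm:spath-triples} applies directly with $\widehat G=G$. We take $\widetilde G=G$ and the group of automorphisms $E$ acting on it, so the ambient group is $G\rtimes E=\Aut(G)$. The stabiliser $\Aut(G)_b$ is then $G\rtimes E_b=G\rtimes E$, because $b$ is $E$-stable.

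The first step is to check the equivariance hypothesis. Every element of $\Aut(G)$ is an inner automorphism followed by an element of $E$. Inner automorphisms act trivially on $\IBr(b)$, and also on $\Alp(b)$, which consists of $G$-classes. Hence $\Omega_b$ is $\Aut(G)_b$-equivariant.

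The second step is the global extension condition. For $\chi\in\IBr(b)$, the stabiliser is $\Aut(G)_\chi=G\rtimes E_\chi$. Here $(G\rtimes E_\chi)/G\cong E_\chi$ is cyclic, being a subgroup of the cyclic group $E$. Lemma~\ref{lem:cyclic-extension} therefore shows that $\chi$ extends to $G\rtimes E_\chi$.

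The third step is the local extension condition. Let $(Q,\varphi)$ represent $\Omega_b(\chi)$. Equivariance gives $N_{G\rtimes E}(Q)_\varphi G=G\rtimes E_\chi$ by a Frattini argument. Hence
\[
N_{G\rtimes E}(Q)_\varphi/N_G(Q)\cong E_\chi,
\]
which is cyclic. Lemma~\ref{lem:cyclic-extension} then extends both $\varphi$ and $\varphi^0$ to $N_{G\rtimes E}(Q)_\varphi$.

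With the relevant quotients cyclic, the criterion of Brough--Sp\"ath in the form of \cite[Theorem~3.18]{FengLiZhang2022Jordan} needs only these extensions and the equivariance. In particular, any cohomological obstruction is trivial because $H^2$ of a cyclic group with trivial coefficients in $k^\times$ vanishes. The criterion then yields that $b$ is BAW-good.

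The main point to verify carefully is that the extensions can be chosen so that the block-induction compatibility in $\succeq_b$ holds. This compatibility is exactly what \cite[Theorem~3.18]{FengLiZhang2022Jordan} handles once $\Aut(G)_\chi/G$ is cyclic and $C_{\Aut(G)}(G)=1$. Here the centraliser condition holds since $G$ is nonabelian simple, so we rely on that theorem for the rest.
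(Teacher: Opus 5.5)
Your route is the paper's route. You apply the Brough--Sp\"ath criterion \cite[Theorem~3.18]{FengLiZhang2022Jordan} with $\widetilde G=G$. Equivariance comes from inner automorphisms acting trivially on $\IBr(b)$ and $\Alp(b)$. Both the global and the local extensions come from Lemma~\ref{lem:cyclic-extension}, because the relevant quotients embed in the cyclic group $E$. Your Frattini identification $N_{G\rtimes E}(Q)_\varphi G=G\rtimes E_\chi$ is correct, although only cyclicity is needed.

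There is one genuine, if small, gap in your last paragraph. The criterion does not supply block-induction compatibility from the cyclicity of $\Aut(G)_\chi/G$ and $C_{\Aut(G)}(G)=1$. It is a hypothesis of the criterion (condition~\textup{(iii)(c)} in that formulation), so you must verify it rather than ``rely on the theorem for the rest''.

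The verification is immediate once you say it. With $\widetilde G=G$, the required extensions inside $\widetilde G$ are just $\chi$ and the inflated weight character $\varphi$ themselves. The condition therefore reduces to $\bl(\chi)=\bl(\varphi)^G$. This holds because $\Omega_b(\chi)\in\Alp(b)$, so the weight $(Q,\varphi)$ belongs to $b=\bl(\chi)$.

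You should also state explicitly that the criterion's remaining conditions (stabiliser products, restriction and covering, central characters) are immediate because $\widetilde G=G$ and $Z(G)=1$. That is how the paper closes the argument.
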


\begin{proof}
Apply Brough--Sp\"ath's criterion
\cite[Theorem~3.18]{FengLiZhang2022Jordan} with
$\widetilde G=G$, $\mathcal B=\{b\}$, and
$\Omega=\widetilde\Omega=\Omega_b$.
The group $G$ is perfect, $E$ is abelian, and
$C_{G\rtimes E}(G)=1=Z(G)$, so conditions~\textup{(i)(a)} and
\textup{(i)(b)} of the criterion hold.
Inner automorphisms act trivially on $\IBr(b)$ and on the $G$-classes
of weights, so $\Omega_b$ is $G\rtimes E$-equivariant.
For $\varphi\in\IBr(b)$, the quotient
$(G\rtimes E)_\varphi/G$ is cyclic, so
Lemma~\ref{lem:cyclic-extension} extends $\varphi$ to its stabiliser.
Choose a representative $(Q,\theta)$ of $\Omega_b(\varphi)$.
Identifying $\theta$ with its inflation to $N_{G}(Q)$, we have
$\bl(\varphi)=\bl(\theta)^{G}=b$ because $(Q,\theta)$ belongs to $b$.
This verifies the block induction condition~\textup{(iii)(c)} of the
criterion.

Set $D=(G\rtimes E)_{Q,\theta}$.
Then $D\cap G=N_{G}(Q)$ and $D/N_{G}(Q)$ embeds in $E$.
The irreducible Brauer reduction $\theta^0$ of $\theta$, viewed as a
character of $N_{G}(Q)/Q$, is $D/Q$-invariant.
Lemma~\ref{lem:cyclic-extension} extends it to $D/Q$, and inflation
gives the required extension to $D$.
Since $G=\widetilde G$ and $Z(G)=1$, the remaining conditions on
restriction, covering, central characters and stabiliser products are
immediate, and the extensions within $\widetilde G$ are given by the
characters themselves.  Hence $b$ is BAW-good.
\end{proof}

Let $\mathbf Y$ be a connected reductive algebraic group with Frobenius
endomorphism $F_Y$, and set $Y=\mathbf Y^{F_Y}$. Let $\mathbf Y^*$ be
the dual group, equipped with the dual Frobenius endomorphism, also
denoted by $F_Y$, and set $Y^*=(\mathbf Y^*)^{F_Y}$. For a semisimple
element $u\in Y^*$, write $\mathcal E(Y,u)$ for the \emph{rational
Lusztig series} indexed by the $Y^*$-conjugacy class of $u$.
For $\chi\in\Irr(Y)$, write $\chi^*$ for the irreducible character
equal, up to sign, to its Alvis--Curtis dual $D_Y(\chi)$.

For an irreducible character $\chi$ of a finite reductive group,
let $n_\chi$ denote its \emph{generic denominator}, as defined in \cite[Section~3(B), p.~443]{GeckMalle2000}.
Equivalently, $n_\chi$ is the reciprocal of the leading coefficient of
its degree polynomial \cite[Remark~2.3.28]{GeckMalle2020}.

The proofs of the following two lemmas are given in
Appendix~\ref{app:gggr-proofs}.

\begin{lemma}\label{lem:type-b-isolated-coverage}
Let $\mathbf H$ be a connected reductive algebraic group with connected
centre and simply connected derived subgroup of type $\mathsf B_n$,
where $n\geq3$. Suppose that $\mathbf H$ is defined over
$\mathbb F_q$, where $q$ is a power of an odd prime $p$, and let $F$
be the corresponding Frobenius endomorphism. Set $H=\mathbf H^F$.
Choose an $F$-stable Borel subgroup $\mathbf B^*$ of $\mathbf H^*$
and an $F$-stable maximal torus $\mathbf T^*\subseteq\mathbf B^*$.
Let $s\in(\mathbf T^*)^F$ be an isolated semisimple element.
Let $W_s$ be the Weyl group of $C_{\mathbf H^*}(s)$ relative to
$\mathbf T^*$, and let $\mathcal F\subseteq\Irr(W_s)$ be a Lusztig
family \cite[Definition~4.1.17]{GeckMalle2020}.
Denote by $\mathcal G_{\mathcal F}$ the finite group attached to
$\mathcal F$ in Lusztig's parametrisation, and by
$\Irr_{s,\mathcal F}(H)$ the corresponding family of irreducible
characters in $\mathcal E(H,s)$.
Let $\mathcal C$ be the $F$-stable unipotent class associated with
$(s,\mathcal F)$. Suppose that, for some $u_0\in\mathcal C^F$, the component
group $A_{\mathbf H}(u_0)=C_{\mathbf H}(u_0)/C_{\mathbf H}(u_0)^\circ$
and the group $\mathcal G_{\mathcal F}$ are elementary abelian of the same order,
and $F$ acts trivially on $A_{\mathbf H}(u_0)$.
Let $u_1,\ldots,u_d$ represent the $H$-conjugacy classes in
$\mathcal C^F$, and let $\Gamma_{u_j}$ be the corresponding generalised
Gelfand--Graev characters. Then there are
$\rho_1,\ldots,\rho_d\in\Irr_{s,\mathcal F}(H)$ such that
\begin{equation}\label{eq:type-b-isolated-pairings}
 \langle\rho_i^*,\Gamma_{u_j}\rangle_H=\delta_{ij}
 \qquad(1\leq i,j\leq d).
\end{equation}
\end{lemma}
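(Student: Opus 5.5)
We will use the known multiplicity formula for generalised Gelfand--Graev characters against Alvis--Curtis duals of characters in a family. By Lusztig's theory (Kawanaka's conjecture, proved by Lusztig for $p$ good and connected centre, and refined by Taylor and Chaneb), characters $\rho\in\Irr_{s,\mathcal F}(H)$ are parametrised by pairs $(x,\sigma)$ with $x\in\mathcal G_{\mathcal F}$ up to conjugacy and $\sigma\in\Irr(C_{\mathcal G_{\mathcal F}}(x))$. Because $\mathcal G_{\mathcal F}$ is abelian, these are simply pairs $(x,\sigma)\in\mathcal G_{\mathcal F}\times\Irr(\mathcal G_{\mathcal F})$. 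The unipotent class $\mathcal C$ is the wave-front class of the family. The $H$-classes in $\mathcal C^F$ are parametrised by the $F$-classes of $A_{\mathbf H}(u_0)$, and since $F$ acts trivially on this abelian group, they correspond to its elements $a\in A_{\mathbf H}(u_0)$, so $d=|A_{\mathbf H}(u_0)|$.

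The key input is Lusztig's formula (\emph{A unipotent support for irreducible representations}, and Geck--Malle's exposition \cite{GeckMalle2020}). For $\rho$ in the family and $u_a\in\mathcal C^F$, the multiplicity $\langle\rho^*,\Gamma_{u_a}\rangle_H$ equals the coefficient of $\rho$ in a certain Fourier-transformed class function on $\mathcal G_{\mathcal F}$. Concretely, after identifying $A_{\mathbf H}(u_0)$ with $\mathcal G_{\mathcal F}$ (both elementary abelian of the same order; in type $\mathsf B$ with connected centre, Lusztig's canonical quotient $\overline A(u_0)$ equals $\mathcal G_{\mathcal F}$, and here it coincides with the full $A_{\mathbf H}(u_0)$), one gets
\[
 \langle\rho_{(x,\sigma)}^*,\Gamma_{u_a}\rangle_H=\bigl|\{\,y\in\mathcal G_{\mathcal F}:\ y\sim a,\ \text{and }(x,\sigma)=(y,1)\,\}\bigr|
\]
up to the standard normalisation. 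So the pairing is $1$ exactly when $(x,\sigma)=(a,1)$, and $0$ otherwise. We therefore choose $\rho_i=\rho_{(a_i,1)}$, where $a_i$ indexes $u_i$, and obtain the identity matrix \eqref{eq:type-b-isolated-pairings}.

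The plan, in order, is as follows.
\begin{enumerate}
\item Reduce to the $H$-class parametrisation of $\mathcal C^F$ using the triviality of the $F$-action on $A_{\mathbf H}(u_0)$.
\item Invoke the GGGR multiplicity formula for isolated $s$ via Jordan decomposition. Since $\mathbf H$ has connected centre and $s$ is isolated, Lusztig's Jordan decomposition reduces the computation to unipotent characters of $C_{\mathbf H^*}(s)^F$, with the wave-front set transported accordingly.
\item Identify $A_{\mathbf H}(u_0)$ with $\mathcal G_{\mathcal F}$ using the equality of orders together with the surjection $A_{\mathbf H}(u_0)\to\overline A(u_0)\cong\mathcal G_{\mathcal F}$.
\item Read off the unitriangular (here diagonal) submatrix.
\end{enumerate}

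The main obstacle is Step 3 together with the precise form of the formula. The multiplicities are normally stated for the Lusztig quotient $\overline A(u_0)$, and the choice of base point $u_0$ and of the labelling of $\mathcal G_{\mathcal F}$ (including a possible twist by a linear character) must be compatible. We would first try Taylor's result on the GGGR decomposition for connected-centre groups in good characteristic. The equal-order hypothesis forces $A_{\mathbf H}(u_0)\to\overline A(u_0)$ to be an isomorphism, so no averaging over the kernel occurs. Any residual twist only permutes the $\rho_i$, which does not affect the conclusion.
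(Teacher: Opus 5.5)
\textbf{The explicit multiplicity formula your proof relies on is false.} Since $\mathcal G_{\mathcal F}$ is abelian, every $\rho\in\Irr_{s,\mathcal F}(H)$ has unipotent support $\mathcal C$, so $\mathcal C$ is the wave front set of $\rho^*$. Its generic denominator is $n_{\rho^*}=n_\rho=|\mathcal G_{\mathcal F}|=|A_{\mathbf H}(u_0)|$. As $F$ acts trivially on $A_{\mathbf H}(u_0)$, the formula of Lusztig and Geck--Malle gives $\sum_{j=1}^d\langle\rho^*,\Gamma_{u_j}\rangle_H=1$ for \emph{every} $\rho$ in the family. Here we use Taylor's version for odd $p$, \cite[Proposition~15.4]{Taylor2016}, with the constant local system.

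Your formula instead gives $0$ for every $\rho_{(x,\sigma)}$ with $\sigma\neq1$. By your own parametrisation, that is all but $|\mathcal G_{\mathcal F}|$ of the $|\mathcal G_{\mathcal F}|^2$ characters, and no normalisation can repair this. More fundamentally, the results you intend to cite (Lusztig for large $p$, Taylor for acceptable $p$) give only this weighted sum over $\mathcal C^F$. They do not give the individual multiplicities $\langle\rho^*,\Gamma_{u_j}\rangle_H$ in terms of Lusztig's labels.

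Your auxiliary steps do not supply these multiplicities either. Jordan decomposition does not transport generalised Gelfand--Graev multiplicities to $C_{\mathbf H^*}(s)^F$. The identification $\overline A(u_0)\cong\mathcal G_{\mathcal F}$ holds for special classes and $s=1$, not for an isolated $s$ with a possibly nonspecial class $\mathcal C$. Finally, the isolation hypothesis plays no real role in your argument.

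What the sum formula does give is that each $\rho$ in the family pairs with exactly one $u_j$, with multiplicity one. The real content of the lemma is that every index $j$ is attained. The paper proves this by contradiction. If $u_r$ is never attained, then $\langle\rho,D_H(\Gamma_{u_r})\rangle_H=0$ for all $\rho$ in the family. Only $\mathcal C^F$ contributes to this scalar product, because $D_H(\Gamma_{u_r})$ vanishes except at unipotent $v$ with $\mathcal C\subseteq\overline{\mathcal C_v}$, while $\rho$ vanishes on unipotent classes of larger dimension.

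Two inputs then give the contradiction, and you would need both.
\begin{enumerate}
\item The restrictions $D_H(\Gamma_{u_j})|_{\mathcal C^F}$ are linearly independent. This comes from Taylor's expansions in intersection cohomology trace functions and Lusztig's duality identities, following Geck's calculation.
\item There exist $\chi_1,\ldots,\chi_d$ in the family with $(\chi_i(u_j))_{i,j}$ nonsingular. This comes from Geck's restriction theorem for character sheaves combined with Shoji's identification of their trace functions with almost characters. The hypotheses of Geck's theorem use the equal orders of $\mathcal G_{\mathcal F}$ and $A_{\mathbf H}(u_0)$, and, through H\'ezard's result, the isolation of $s$.
\end{enumerate}
Most of the paper's proof is spent re-verifying Geck's large-characteristic inputs for every odd $p$.
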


\begin{lemma}\label{lem:symplectic-gggr-selection}
Let $m\geq3$, let $q_0$ be an odd prime power, and let
$\mathbf G_0=\Sp_{2m}\subseteq\mathbf H=\CSp_{2m}$ be defined over
$\mathbb F_{q_0}$ with the standard Frobenius endomorphism $F_0$.
Set $H=\mathbf H^{F_0}$.
Let $\mathcal C$ be a unipotent class of $\mathbf H$ stable under $F_0$.
Fix a maximal torus $\mathbf T^*$ of $\mathbf H^*$ split over
$\mathbb F_{q_0}$. For $s\in\mathbf T^*$, let $W_s$ denote the Weyl
group of $C_{\mathbf H^*}(s)$ relative to $\mathbf T^*$.
For the corresponding geometric unipotent class of $\mathbf G_0$,
let $s_0\in\mathbf T^*$ and $\mathcal F\subseteq\Irr(W_{s_0})$ be
the semisimple element and family selected in
\cite[10.1--10.3]{Taylor2014Maximal}.
Choose $z\in Z(\mathbf H^*)$ with $(zs_0)^2=1$ and set
$\widetilde s=zs_0$.
Denote by $\mathcal G_{\mathcal F}$ the finite group attached to
$\mathcal F$ in Lusztig's parametrisation, and by
$\Irr_{\widetilde s,\mathcal F}(H)\subseteq\mathcal E(H,\widetilde s)$
the corresponding family of ordinary irreducible characters.
Let $u_1,\ldots,u_d$ represent the $H$-conjugacy classes in $\mathcal C^{F_0}$,
and let $\Gamma_{u_j}$ be the corresponding generalised
Gelfand--Graev characters. Then there are
$\rho_1,\ldots,\rho_d\in\Irr_{\widetilde s,\mathcal F}(H)$ such that
\[
 \langle\rho_i^*,\Gamma_{u_j}\rangle_H=\delta_{ij}
 \qquad(1\leq i,j\leq d).
\]
\end{lemma}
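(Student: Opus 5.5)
We must prove Lemma~\ref{lem:symplectic-gggr-selection}. The plan is to reduce it to Taylor's description of the generalised Gelfand--Graev characters of $\Sp_{2m}(q_0)$, and then transfer it to $H=\CSp_{2m}(q_0)$ by twisting with a central element.

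\textbf{Step 1: the pairings for $\Sp_{2m}$.} Set $G_0=\mathbf G_0^{F_0}$. I would use Lusztig's formula for GGGRs. For $\rho$ in a family with unipotent support $\mathcal C$, the multiplicity $\langle\rho^*,\Gamma_u\rangle$ is computed via the Fourier matrix of the family. It is indexed by pairs in $\mathcal M(\mathcal G_{\mathcal F})$ and by the $A(u)$-classes parametrising the $G_0$-classes in $\mathcal C^{F_0}$. Taylor \cite[10.1--10.3]{Taylor2014Maximal} chooses $s_0$ and $\mathcal F$ so that the relevant pairing matrix has an identity submatrix. That is, for each $G_0$-class $u'$ in $\mathcal C^{F_0}$ there is a character of $\mathcal E(G_0,s_0)$ whose dual pairs to $\Gamma_{u'}$ with multiplicity one and to the other GGGRs of $\mathcal C$ trivially. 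I take this as the starting input.

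\textbf{Step 2: pass to $H$.} Since $\widetilde s=zs_0$ with $z$ central, tensoring with the linear character $\hat z$ of $H$ attached to $z$ gives a bijection $\mathcal E(H,s_0)\to\mathcal E(H,\widetilde s)$. It preserves Alvis--Curtis duality up to the same sign and preserves families. Since $\Gamma_u$ is induced from a unipotent radical and is supported on $U$-cosets, we get $\langle(\hat z\rho)^*,\Gamma_u\rangle=\langle\rho^*,\hat z^{-1}\Gamma_u\rangle$. Moreover $\hat z$ is trivial on unipotent elements, and $\Gamma_u$ is induced from a character of a unipotent group. Hence $\hat z^{-1}\Gamma_u=\Gamma_u$. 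It therefore suffices to prove the statement for the family of $s_0$.

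\textbf{Step 3: compare $H$ with $G_0$.} The group $H=G_0Z(H)\cdot\langle t\rangle$ has $H/G_0Z(H)$ of order $2$. The $H$-classes in $\mathcal C^{F_0}$ are unions of at most two $G_0$-classes. Also $\Gamma^H_{u}|_{G_0}$ is the sum of the $\Gamma^{G_0}_{u'}$ over the $G_0$-classes $u'$ contained in the $H$-class of $u$, since the GGGR is induced from a subgroup of $G_0$ and $HG_0$-Mackey applies. Conversely, characters in $\mathcal E(H,s_0)$ restrict to sums of one or two characters of $\mathcal E(G_0,\bar s_0)$, permuted by the diagonal automorphism. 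For each $H$-class $u_j$, I choose a $G_0$-class $u'$ inside it and the character $\rho'$ given by Step 1. I then let $\rho_j$ be a constituent of $\rho'^{H}$ lying in $\mathcal E(H,s_0)$ (equivalently, in the family $\mathcal F$). Frobenius reciprocity and Clifford theory should then give $\langle\rho_i^*,\Gamma_{u_j}\rangle_H=\delta_{ij}$.

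\textbf{Main obstacle.} The hardest step is Step 3. The diagonal automorphism permutes Taylor's characters in the same way as it permutes the $G_0$-classes of $\mathcal C^{F_0}$. The requirement is that orbits of length two on classes correspond to orbits of length two on the chosen characters, and fixed classes to fixed characters. Only then does the induced/restricted multiplicity stay exactly $1$ rather than $2$ or $0$. I would first try to obtain this from the equivariance of Lusztig's parametrisation and Taylor's explicit choice of $A(u)$-data, which is compatible with the action of $Z(\mathbf G_0)$. If that fails, I would argue directly in $H$. Since $Z(\mathbf H)$ is connected, Lusztig's formula applies with $A_{\mathbf H}(u)$, and Taylor's combinatorial selection can be repeated for $\mathbf H$, in the manner of the proof of Lemma~\ref{lem:type-b-isolated-coverage}.
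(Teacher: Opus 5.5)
There is a genuine gap, and it is in Step~1, which carries the whole content of the lemma.

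The selection in \cite[10.1--10.3]{Taylor2014Maximal} is combinatorial. It gives $W_{s_0}\cong W(\mathsf D_a\mathsf B_b)$ and a family $\mathcal F$ whose special character $E_0$ has $j$-induction equal to the Springer character of $\mathcal C$, with $|\mathcal G_{\mathcal F}|=|A_{\mathbf H}(u)|$ and condition (P4). It says nothing about the multiplicities $\langle\rho^*,\Gamma_u\rangle$.

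The identity-submatrix statement you take as input for $G_0=\Sp_{2m}(q_0)$ is the analogue of the lemma for the disconnected-centre group. In the literature it is deduced \emph{from} the connected-centre statement for $\CSp_{2m}$ by the restriction argument of \cite[Proposition~5.4]{Taylor2013}. The connected-centre input there is \cite[Proposition~4.3]{GeckHezard2008}, which needs $p$ and $q$ large. The paper uses the present lemma precisely to feed that restriction argument for all odd $q_0$ (proof of Proposition~\ref{prop:odd-two}). So your Step~1 is circular, and Steps~2--3 only move between $G_0$ and $H$ around the missing core.

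Step~2 is also a risky detour. The element $s_0$ need not be $F_0$-fixed, since $F_0(s_0)=s_0(s_0^2)^{(q_0-1)/2}$ with $s_0^2$ only central; this is why the lemma uses $\widetilde s$ with $\widetilde s^{\,2}=1$. In Step~3, lying in $\mathcal E(H,s_0)$ is not the same as lying in the family.

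What is needed, and what the paper does, is to work directly in $H$, which has connected centre, with the rational label $\widetilde s$:
\begin{itemize}
\item Taylor's selection verifies Geck's hypotheses \cite[(4.4)(a)--(b)]{Geck1999GGGR}.
\item One then re-establishes Geck's restriction theorem \cite[Theorem~4.5]{Geck1999GGGR} in arbitrary odd characteristic. This uses Taylor's expansions of generalised Gelfand--Graev characters, Lusztig's duality identities, Letellier's induction isomorphism, Shoji's theorem, and an auxiliary $F'=F_0^{2N}$ with $q'\equiv1\pmod4$ making the cuspidal constants equal to $1$.
\item This produces $\chi_1,\dots,\chi_d$ in the family with $(\chi_i(u_j))$ nonsingular.
\item Independently, the restrictions of $D_H(\Gamma_{u_1}),\dots,D_H(\Gamma_{u_d})$ to $\mathcal C^{F_0}$ are shown to be linearly independent.
\item Every $\rho$ in the family has unipotent support $\mathcal C$ and $n_{\rho^*}=|A_{\mathbf H}(u_0)|$, and $F_0$ acts trivially on the abelian groups $A_{\mathbf H}(u_j)$. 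Hence \cite[Proposition~15.4]{Taylor2016} gives $\sum_j\langle\rho^*,\Gamma_{u_j}\rangle_H=1$, so each $\rho$ hits exactly one class.
\item If some $u_r$ were hit by no $\rho$, then $\langle\chi_i,D_H(\Gamma_{u_r})\rangle_H=0$ for all $i$. Only values on $\mathcal C^{F_0}$ contribute, by the support of $D_H(\Gamma_{u_r})$ and the vanishing of $\chi_i$ on larger classes. Nonsingularity then forces $D_H(\Gamma_{u_r})|_{\mathcal C^{F_0}}=0$, a contradiction.
\end{itemize}

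The multiplicity-sum identity would also remove the multiplicity-$2$ worry in your Step~3, once $\rho_j$ is known to lie in the family. But surjectivity onto all classes is the real difficulty. Your closing fallback names this route without supplying any of these ingredients.
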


\section{Symplectic groups}\label{sec:symplectic}

Let $q=p^f$ and $n\geq2$.  Throughout this section,
$S=\PSp_{2n}(q)$ denotes the simple group under consideration.  We write
$G=\Sp_{2n}(q)$ for the finite symplectic group used in the blockwise
arguments for $S$.
We assume that $(n,q)\neq(2,2)$, so that $S$ is nonabelian
simple. The purpose of this section is to prove the following result.

\begin{theorem}\label{thm:symplectic}
Let $S=\PSp_{2n}(q)$. Then, for every prime $\ell\mid |S|$, $S$ satisfies iBAW at $\ell$ on
the universal $\ell'$-covering group $X_\ell(S)$.
\end{theorem}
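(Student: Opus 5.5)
We split by the prime $\ell$ and by the parity of $q$, and by Remark~\ref{rem:block-aggregation}(a) we only verify the inductive condition for one representative of each $\Aut(X_\ell(S))$-orbit of blocks. The small cases come first. These are $n=2$, exceptional Schur multipliers such as $\PSp_6(2)$ and $\PSp_4(3)$, and for odd $q$ at odd nondefining $\ell$ the case $n\leq 2$. They are covered by the previously established results cited in the introduction. Where they are not, we use explicit computation with $\GAP$ and Lemma~\ref{lem:cyclic-outer-baw} when $\Out(S)$ is cyclic. For the defining characteristic $\ell=p$, the known result of Sp\"ath for groups of Lie type in defining characteristic applies. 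The remaining generic cases are the three outlined in the introduction.

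\emph{Odd $q$, odd $\ell\neq p$, $n\geq3$.} We work with $G=\Sp_{2n}(q)\lhd\widetilde G=\CSp_{2n}(q)$; here $X_\ell(S)=G$. Using the Geck--Hiss basic sets, we take a $\widetilde G\rtimes E$-stable integral basic set $\mathcal X\subseteq\Irr(b)$ for each block $b$ of $\widetilde G$, built from the union of Lusztig series with $\ell'$-semisimple label. For a $2$-hypoelementary subgroup $A$ of the stabiliser of $b$ in the group generated by diagonal automorphisms, field automorphisms and the linear characters of $\widetilde G/G$, Lemma~\ref{lem:basicset-bridge} then gives an $A$-equivariant bijection $\mathcal X\to\IBr(b)$. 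The relevant acting group modulo $\widetilde G$ is abelian of the form cyclic $\times$ cyclic of order dividing $2$. Combining these over its $2$-hypoelementary subgroups, via the mark homomorphism, yields stabiliser factorisations for Brauer characters:
\[
(\widetilde G\rtimes E)_\psi=\widetilde G_\psi E_\psi .
\]
We also obtain extendibility by Lemma~\ref{lem:cyclic-extension}. Together with Li's weight parametrisation, this verifies the hypotheses of the Brough--Sp\"ath criterion \cite[Theorem~3.18]{FengLiZhang2022Jordan} without unitriangularity. The main obstacle is checking that the Brauer-character count over each linear-character orbit matches the weights. Here we transport Li's equivariance statements for ordinary characters through the basic set.

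\emph{Even $q$, odd $\ell$, $n\geq4$.} Here $Z(G)=1$. By Jordan reduction \cite{FengLiZhang2022Jordan} it suffices to treat unipotent blocks of $G$ and of the relevant Levi-type groups. Geck's basic set theorem \cite{Geck1993} shows that the unipotent characters give a unitriangular-free basic set. The generic weights of \cite{FengMalleZhang2026} give a bijection between the unipotent characters in a block and its weights. Both sides are fixed by field automorphisms: unipotent characters are rational-invariant, and the generic weight classes are fixed, as in Lemma~\ref{lem:even-field-fixed}. Every irreducible Brauer character of a unipotent block is therefore invariant, and the bijection is equivariant. Then Lemma~\ref{lem:cyclic-outer-baw} applies, with the graph automorphism appearing only for $n=2$. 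For $n\leq3$ we use the cited prior results.

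\emph{Odd $q$, $\ell=2$.} Using the corrected classification of radical $2$-subgroups \cite[Remark~3.36]{FengYuZhang2024}, we recount the weights of the principal block of $\Sp_{2n}(q)$, including the omitted even-multiplicity family. We check that the count equals the number of unipotent characters, which is $|\IBr(b_0)|$ by the unitriangular basic set for $\ell=2$. We also check that every weight class is fixed by field and diagonal automorphisms. Feng--Malle's argument then gives BAW-goodness of the principal block. Jordan reduction \cite{FengLiZhang2022Jordan} extends this to all $2$-blocks, since all $2$-blocks correspond to principal blocks of products of groups of type $\mathsf C$ and general linear or unitary groups. I expect this recount of the principal block weights to be the hardest step. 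I would first organise it by generating functions over the basic radical subgroups and compare the result with the generating function for unipotent characters.
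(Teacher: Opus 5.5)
\textbf{Odd $\ell$.} Your odd-$\ell$ cases essentially follow the paper's route. For odd $q$ this means Conlon's theorem on integral basic sets of $\CSp_{2n}(q)$, Li's parametrisation and the Brough--Sp\"ath criterion. For even $q$ it means Geck's basic set, generic weights, field invariance and Jordan reduction. One point needs repair. You cannot ``combine over $2$-hypoelementary subgroups via marks'': Conlon's theorem gives an isomorphism of $A$-sets only when $A$ is itself $2$-hypoelementary. What is needed instead is twofold:
\begin{itemize}
\item each block stabiliser $J\leq N\rtimes E$ is $2$-hypoelementary, since $|J\cap N|\leq 2$ by Lemma~\ref{lem:linear-block-stabilizer} and $|\widetilde G:GZ(\widetilde G)|=2$;
\item for the factorisation on $\IBr(\Sp_{2n}(q))$, you need an integral basic set for $\Sp_{2n}(q)$ itself, together with the Cabanes--Sp\"ath factorisation for ordinary characters, transported along an $\Out(G)$-equivariant bijection with $\Out(G)\cong C_2\times C_f$.
\end{itemize}

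\textbf{The gap at $\ell=2$, $q$ odd.} Here your plan rests on two false premises, and the first is fatal to the proposed counting.

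First, $2$ is a bad prime for type $\mathsf C$, so the unipotent characters do not form a basic set for the principal $2$-block. By Chaneb's results, $|\IBr(B_0(\Sp_{2n}(q)))|$ equals the number of unipotent conjugacy classes of $\Sp_{2n}(q)$, which is strictly larger than the number of unipotent characters. For example, $\PSp_4(3)\cong\operatorname{PSU}_4(2)$ has eight $2$-regular classes and one $2$-block of defect zero, so $B_0(\Sp_4(3))$ has seven irreducible Brauer characters, whereas $\Sp_4(3)$ has only six unipotent characters. Chaneb's unitriangular basic set consists of characters in series $\mathcal E(G,s)$ with $s^2=1$, selected through generalised Gelfand--Graev characters. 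That selection is exactly what the paper re-verifies in Lemma~\ref{lem:symplectic-gggr-selection}. Any recount of weights compared with the unipotent characters, or with their generating function, will therefore not match.

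Second, the principal weights are not all fixed by the diagonal automorphism. The outer automorphism of $\operatorname{PSU}_4(2)\cong\PSp_4(3)$ interchanges the natural $4$-dimensional module of $\operatorname{SU}_4(2)$ with its dual, and both lie in the principal $2$-block. Any equivariant bijection therefore forces diagonal orbits of length two among the weights as well. The paper instead shows, using \cite[Corollary~4.3, Proposition~5.9 and Proposition~6.1]{FengMalle2022}, that:
\begin{itemize}
\item field automorphisms fix $\IBr(B_1)$ and $\Alp(B_1)$ pointwise;
\item the two sets have the same size and the same number of diagonally fixed elements.
\end{itemize}
It then matches orbits of lengths one and two.

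Two further points:
\begin{itemize}
\item The omitted family is not something to add to the count. The paper shows, using a reflection $\widehat t$ on the multiplicity space, that it would contradict $Z(R)\in\Syl_2(C_{G_0}(R))$, so it never occurs in a principal weight.
\item Jordan reduction needs hypothesis~(i) of Lemma~\ref{lem:jordan-reduction} for every irreducible Brauer character of $G$, not only the principal ones. The principal-block computation does not give this. The paper obtains it from \cite[Corollary~4.6]{FengMalle2022}, after choosing $A_s$ inside the field automorphisms.
\end{itemize}
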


\subsection{Covering groups and some reductions}

The centre and Schur multiplier tables
\cite[Tables~24.2--24.3 and Remark~24.19]{MalleTesterman2011} determine the universal
$\ell'$-covering groups in Table~\ref{tab:typec-carriers}.

\begin{table}[ht]
\centering
\caption{Universal $\ell'$-covering groups in type $\mathsf C$.}
\label{tab:typec-carriers}
\begin{tabular}{@{}lll@{}}
\toprule
Parameters & Prime & $X_\ell(S)$ \\
\midrule
$q$ odd & $\ell=2$ & $\PSp_{2n}(q)$ \\
$q$ odd & $\ell$ odd & $\Sp_{2n}(q)$ \\
$q$ even, $(n,q)\neq(3,2)$ & every $\ell$ & $\Sp_{2n}(q)$ \\
$(n,q)=(3,2)$ & $\ell=2$ & $\Sp_6(2)$ \\
$(n,q)=(3,2)$ & $\ell\in\{3,5,7\}$ & $2.\Sp_6(2)$ \\
\bottomrule
\end{tabular}
\end{table}

The case $\ell=p$ of Theorem~\ref{thm:symplectic} follows from
\cite[Theorem~C]{Spath2013}.  For odd $q$ and $\ell=2$, we use the notation
$\varepsilon$, $\varepsilon_\Gamma$, $d_\Gamma$, $\mathcal F_1$,
$\mathcal F_2$, $m_\Gamma$, $w_\Gamma$, and $\mathcal P(w_\Gamma)$ from
\cite[Sections~2 and~5]{FengMalle2022}.

As noted by Feng, Yu, and Zhang \cite{FengYuZhang2024}, An's list of basic
subgroups used in the weight parametrisation \cite[p.~190]{An1993} omits
the subgroups denoted $R^0_{m,1,\gamma}$ in their notation when $m$ is even.
Their calculation of the centralisers distinguishes the two discriminants
of the multiplicity space \cite[proof of Proposition~3.48]{FengYuZhang2024}.
In the notation of \cite[p.~11]{FengMalle2022}, when
$\varepsilon_\Gamma=-\varepsilon$, $\gamma=0$, and $c_1=1$, the correction
in \cite[Remark~3.49]{FengYuZhang2024} replaces
$E_{d_\Gamma,1,1}\wr A_{c'}$ by
\[
 R^{0,-}_{d_\Gamma,1,1}\wr A_{c'},
 \qquad c'=(c_2,\ldots,c_t).
\]
The corrected subgroup has a different centraliser in the symplectic group
on the corresponding summand
\cite[proof of Proposition~3.48]{FengYuZhang2024}.  Although
\cite[Remark~3.49]{FengYuZhang2024} states that the remaining arguments
continue to apply, it does not supply
the corresponding calculation of the weight characters underlying the blockwise
weight labelling.  For principal blocks, the omitted family is excluded by
\cite[Remark~3.50]{FengYuZhang2024}. We check that no parameters indexed
by $\Gamma\in\mathcal F_1\cup\mathcal F_2$ occur and that the remaining
$R^0$ factors have multiplicity spaces of dimension one. These facts allow
us to use the principal block weight calculation in
\cite[Section~5]{FengMalle2022}. Together with the type~$\mathsf A$
cases, the Jordan reduction described in
\cite[Remark~5.8]{FengLiZhang2022Jordan} yields the result for all blocks.

Let $\mathbf Y$ be a connected reductive algebraic group with Frobenius
endomorphism $F_Y$, and set $Y=\mathbf Y^{F_Y}$. Let $\ell$ be a prime
different from the defining characteristic. Write $\mathcal E(Y,\ell')$
for the union of the rational Lusztig series $\mathcal E(Y,u)$, where
$u$ runs through the semisimple $\ell'$-elements of $Y^*$.  For a
semisimple $\ell'$-element $s\in Y^*$, let $\mathcal E_\ell(Y,s)$ denote
the set $\mathcal E_\pi(Y,(s))$ defined by Brou\'e and Michel
\cite[Section~2]{BroueMichel1989} with $\pi=\{\ell\}$, using the notation
of \cite[Section~2.3]{FengLiZhang2019}. Explicitly,
$\mathcal E_\ell(Y,s)=\bigcup_t\mathcal E(Y,st)$, where $t$ runs through
the semisimple $\ell$-elements of $Y^*$ commuting with $s$. By
\cite[Theorem~2.2]{BroueMichel1989}, this set is a union of $\ell$-blocks.
Let $e_s^Y$ be the sum of the block idempotents $e_b\in kY$ for
which $\Irr(b)\subseteq\mathcal E_\ell(Y,s)$.
We say that $b$ belongs to $e_s^Y$ if $e_be_s^Y=e_b$, and write
$\IBr(Y,e_s^Y)$ for the union of $\IBr(b)$ over these blocks.
As $s$ runs through representatives of the semisimple $\ell'$-classes
of $Y^*$, the idempotents $e_s^Y$ are pairwise orthogonal and sum to $1$.

We shall use the following form of the Jordan reduction of Feng, Li, and
Zhang.

\begin{lemma}\label{lem:jordan-reduction}
Let $\mathbf G$ be a simple, simply connected algebraic group in
characteristic $p$, let $F$ be a Steinberg endomorphism of $\mathbf G$,
and set $G=\mathbf G^F$.  Assume that $G/Z(G)$ is simple and that $G$
is its universal covering group.  Let $\ell\neq p$ be a prime.
Fix a regular embedding $\mathbf G\hookrightarrow\widetilde{\mathbf G}$
as in \cite[Section~5]{FengLiZhang2022Jordan}, and set
$\widetilde G=\widetilde{\mathbf G}^F$.
Suppose that the following conditions hold.
\begin{enumerate}[label=\textup{(\roman*)}]
\item For every semisimple $\ell'$-element $s\in(\mathbf G^*)^F$, there
is a choice of the automorphism group $A_s$ used for $s$ in
\cite[Section~5]{FengLiZhang2022Jordan} such that every
$\widetilde G$-orbit in $\IBr(G)$ contains a character $\psi$ satisfying
\[
 (\widetilde G A_s)_\psi=\widetilde G_\psi(A_s)_\psi
\]
and extending to $G(A_s)_\psi$.
\item Let $\mathbf K$ be a simple, simply connected algebraic group
in characteristic $p$ whose Dynkin diagram is isomorphic to a
subdiagram of that of $\mathbf G$.  For every Steinberg endomorphism
$F'$ of $\mathbf K$ such that $\mathbf K^{F'}/Z(\mathbf K^{F'})$ is
simple, every strictly quasi-isolated $\ell$-block of $\mathbf K^{F'}$,
in the sense of \cite[Section~4.3]{FengLiZhang2022Jordan}, admits
an iBAW bijection.
\end{enumerate}
Then every $\ell$-block of $G$ is BAW-good.
\end{lemma}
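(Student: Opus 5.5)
This lemma is a repackaging of the Jordan reduction theorem of Feng, Li and Zhang \cite[Theorem~5.7 and Remark~5.8]{FengLiZhang2022Jordan}. The plan is therefore to check that our hypotheses match the hypotheses used there, and then to invoke that theorem block by block. We fix the regular embedding $G\leq\widetilde G$ and, for each semisimple $\ell'$-element $s\in(\mathbf G^*)^F$, the group $A_s$ chosen in~(i). The blocks of $G$ are partitioned according to the idempotents $e_s^G$, which are pairwise orthogonal and sum to~$1$. It therefore suffices to show that every block $b$ belonging to some $e_s^G$ is BAW-good. By Remark~\ref{rem:block-aggregation}(a), we may also restrict to one representative $s$ of each orbit under $\widetilde G^*$-conjugation, central multiplication and the action of automorphisms.

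\textbf{The stabiliser condition.} The global requirement in \cite[Section~5]{FengLiZhang2022Jordan} asks for an $(\widetilde G A_s)$-stable $\widetilde G$-transversal of $\IBr(G,e_s^G)$ consisting of characters $\psi$ with
\[
(\widetilde G A_s)_\psi=\widetilde G_\psi(A_s)_\psi
\]
and with $\psi$ extending to $G(A_s)_\psi$. Condition~(i) supplies one such $\psi$ in each $\widetilde G$-orbit. To get a stable transversal, we transport the chosen $\psi$ along each $A_s$-orbit of $\widetilde G$-orbits by elements of $A_s$, using the aggregation of Remark~\ref{rem:block-aggregation}(b). Both the stabiliser factorisation and the extendibility are invariant under conjugation by $A_s$, so the transported characters still satisfy them.

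\textbf{The Jordan step.} Through the equivariant Bonnaf\'e--Dat--Rouquier Morita equivalence used in \cite{FengLiZhang2022Jordan}, the block $b$ corresponds to a block of the finite group $\mathbf L^F$ of a Levi subgroup $\mathbf L$ dual to the minimal Levi containing $C^\circ_{\mathbf G^*}(s)$. There are two cases.
\begin{enumerate}[label=\textup{(\alph*)}]
\item If $s$ is not quasi-isolated, then $\mathbf L$ is proper. The corresponding block of $\mathbf L^F$ is then handled by \cite{FengLiZhang2022Jordan}, which passes to the simple components of $[\mathbf L,\mathbf L]$. These are simply connected groups whose Dynkin diagrams are subdiagrams of that of $\mathbf G$, and on them the relevant block is strictly quasi-isolated. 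Condition~(ii), together with the transfer of iBAW bijections through central products and regular embeddings in \cite{FengLiZhang2022Jordan}, supplies what is needed. When $\mathbf K^{F'}/Z(\mathbf K^{F'})$ is not simple, the component is solvable or of type $\mathsf A_1$ with small $q$. Such components are covered inside \cite{FengLiZhang2022Jordan} by their direct treatment of small groups.
\item If $s$ is strictly quasi-isolated, condition~(ii) applied to $\mathbf K=\mathbf G$ itself gives an iBAW bijection for $b$ directly.
\end{enumerate}

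\textbf{Conclusion and main obstacle.} In both cases the Jordan reduction theorem then yields that $b$ is BAW-good, in the sense recalled after Lemma~\ref{lem:type-b-normal-core}. This uses that $G$ is its own universal covering group, with the $O_\ell(Z(G))$ issue absorbed by Lemma~\ref{lem:type-b-normal-core}. I expect the main obstacle to be bookkeeping: checking that the precise formulation of condition~(i) is exactly the equivariance and extension hypothesis required in \cite[Theorem~5.7]{FengLiZhang2022Jordan}. Note in particular that that theorem quantifies over Brauer characters in $\IBr(G,e_s^G)$ rather than over all of $\IBr(G)$. Since (i) is stated for every $\widetilde G$-orbit in $\IBr(G)$, it restricts to the required statement, but one should confirm that the same $A_s$ works uniformly in $s$ as \cite[Section~5]{FengLiZhang2022Jordan} allows.
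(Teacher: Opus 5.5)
Your approach is the paper's. The paper's proof simply observes that in \cite[Section~5]{FengLiZhang2022Jordan} the group $A$ is chosen after fixing $s$, which settles your uniformity concern. Hence condition~(i) supplies \cite[Assumption~5.3]{FengLiZhang2022Jordan} for each $A_s$, as \cite[Hypothesis~5.5(a)]{FengLiZhang2022Jordan} requires, condition~(ii) is \cite[Hypothesis~5.5(b)]{FengLiZhang2022Jordan}, and \cite[Theorem~5.7]{FengLiZhang2022Jordan} gives the conclusion.

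Your sketch of the internals of that theorem is not needed, and it is slightly inaccurate in two places.
\begin{itemize}
\item The case split should be ``strictly quasi-isolated or not'', with $\mathbf L^*$ the minimal Levi subgroup containing $C^\circ_{\mathbf G^*}(s)C_{\mathbf G^*}(s)^F$. Your cases (a) and (b) omit $s$ that are quasi-isolated but not strictly quasi-isolated; these belong to the proper-Levi case.
\item Your list of components whose central quotient is not simple is incomplete. For example, $\operatorname{SU}_3(2)$ and $\Sp_4(2)$ can also occur.
\end{itemize}
Neither point affects the argument, since it rests on the citation of \cite[Theorem~5.7]{FengLiZhang2022Jordan}.
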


\begin{proof}
In \cite[Section~5]{FengLiZhang2022Jordan}, the group $A$ is chosen after
fixing the semisimple element $s$. Condition~\textup{(i)} supplies
\cite[Assumption~5.3]{FengLiZhang2022Jordan} for this choice $A_s$, for
each $s$, as required by \cite[Hypothesis~5.5(a)]{FengLiZhang2022Jordan}.
Condition~\textup{(ii)} is \cite[Hypothesis~5.5(b)]{FengLiZhang2022Jordan}.
The conclusion follows from \cite[Theorem~5.7]{FengLiZhang2022Jordan}.
\end{proof}

To apply condition~\textup{(ii)} of Lemma~\ref{lem:jordan-reduction},
we also need the following consequence of the type~$\mathsf A$ results
\cite{FengLiZhang2023LowRankA,FengLiZhang2023Morita}.
Let $q_0$ be a prime power, let $d\geq2$, and let
$H=\operatorname{SL}_d(q_0)$ or $\operatorname{SU}_d(q_0)$.
If $S_0=H/Z(H)$ is nonabelian simple and $\ell$ is a prime not
dividing $q_0$, then every $\ell$-block of $H$ admits an iBAW bijection.
If $H$ is the universal covering group of $S_0$, its blocks are
BAW-good by the proof of \cite[Theorem~1]{FengLiZhang2023Morita},
and the passage from BAW-goodness to iBAW bijections in
Section~\ref{sec:preliminaries} gives the assertion.

The remaining possibilities for $H$, up to isomorphism, are
$\operatorname{SL}_2(4)$, $\operatorname{SL}_2(9)$,
$\operatorname{SL}_3(2)$, $\operatorname{SL}_3(4)$,
$\operatorname{SL}_4(2)$, $\operatorname{SU}_4(2)$,
$\operatorname{SU}_4(3)$ and $\operatorname{SU}_6(2)$
\cite[Remark~24.19 and Table~24.3]{MalleTesterman2011}.
Each corresponding simple group $S_0$ satisfies iBAW at $\ell$ by
\cite[Proposition~4.6]{FengLiZhang2023LowRankA}.
When $Z(H)=1$, Corollary~\ref{cor:fixed-point-descent} gives the
assertion directly.

It remains to consider $H=\operatorname{SL}_2(9)$,
$\operatorname{SU}_4(3)$, $\operatorname{SL}_3(4)$ or
$\operatorname{SU}_6(2)$. Suppose first that $\ell\mid|Z(H)|$.
In these cases, $P=Z(H)=O_\ell(H)$ and $H/P=S_0$.
Since $H$ is perfect, the natural map
$\Aut(H)\longrightarrow\Aut(S_0)$ is injective.
For an $\ell$-block $b$ of $H$,
Corollary~\ref{cor:fixed-point-descent} gives an iBAW bijection for
the block of $S_0$ dominated by $b$.
By \cite[Lemma~3.6(i)]{MartinezRizoRossi2026}, restricting the
automorphism group makes this bijection compatible with
$S_0\rtimes\Aut(H)_b$.
Lemma~\ref{lem:type-b-normal-core}, applied with
$A=H\rtimes\Aut(H)_b$, therefore gives an iBAW bijection for $b$.
For every other prime $\ell\nmid q_0$, the Sylow $\ell$-subgroups
of these four groups are cyclic, so
\cite[Proposition~8.5]{MartinezRizoRossi2026} gives the assertion.

We give an alternative proof of the following result of Feng and Malle
\cite[Theorem~1]{FengMalle2022}.  Our proof uses their principal block
calculation and Jordan reduction, without relying on the weight
parametrisation for arbitrary $2$-blocks affected by the classification
errors discussed above.

\begin{proposition}\label{prop:odd-two}
Let $q$ be odd and $n\geq2$.  Then
$S=\PSp_{2n}(q)$ satisfies iBAW at $2$ on $X_2(S)=S$.
\end{proposition}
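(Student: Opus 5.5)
The plan is to apply the Jordan reduction in the form of Lemma~\ref{lem:jordan-reduction} with $\mathbf G=\Sp_{2n}$, $\ell=2$ and $q$ odd. This yields BAW-goodness for every $2$-block of $G=\Sp_{2n}(q)$. We then pass to $S=\PSp_{2n}(q)=G/Z(G)$. Here $Z(G)=O_2(G)$ has order $2$, so Lemma~\ref{lem:type-b-normal-core} and the discussion following it in Section~\ref{sec:preliminaries} identify $2$-blocks, Brauer characters and weights of $G$ with those of $S=X_2(S)$. By Remark~\ref{rem:block-aggregation}(a), it then suffices to treat one block per $\Aut(S)$-orbit. Small exceptional cases, namely $\PSp_4(3)\cong \mathrm{SU}_4(2)$ and covering-group exceptions (which do not occur for $q$ odd, $n\ge2$, except possibly $\Sp_4(3)$ at small rank), I would check against \cite[Proposition~4.6]{FengLiZhang2023LowRankA} or direct tables.

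Condition~(i) of Lemma~\ref{lem:jordan-reduction} is the stabiliser and extension property for $\IBr(G)$ with respect to $\widetilde G=\CSp_{2n}(q)$ and field and diagonal automorphisms. At $\ell=2$ this is proved by Feng and Malle \cite[Section~4]{FengMalle2022}, via a unitriangular basic set from $\mathcal E(G,2')$ which is stable under automorphisms. That argument concerns Brauer characters only and is independent of the radical-subgroup classification, so I would quote it, choosing $A_s$ to be generated by field automorphisms as in \cite[Section~5]{FengLiZhang2022Jordan}.

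For condition~(ii), the strictly quasi-isolated $2$-blocks of groups whose Dynkin diagram is a subdiagram of $\mathsf C_n$ are needed. These groups are of type $\mathsf A$ or $\mathsf C$ (plus $\mathsf A_1=\mathsf C_1$). For type $\mathsf A$, the paragraph preceding the proposition gives iBAW bijections. For type $\mathsf C_m$ with $q$ odd, the quasi-isolated $2'$-elements $s$ have $s^2$ central. Since $2'$-elements have odd order, $s=1$, so the only strictly quasi-isolated blocks are unipotent. By An's and Broué's results, all unipotent $2$-blocks of $\Sp_{2m}(q)$ coincide with the principal block. Hence (ii) reduces to constructing an iBAW bijection for the principal $2$-block of $\Sp_{2m}(q)$ for each $m\le n$. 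The argument is an induction on $n$, using Lemma~\ref{lem:cyclic-outer-baw}-style arguments only for $m=1$.

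The principal block is the main obstacle. Here I would redo the weight count of \cite[Section~5]{FengMalle2022} using the corrected classification of radical $2$-subgroups \cite{FengYuZhang2024}. By \cite[Remark~3.50]{FengYuZhang2024}, the omitted family $R^{0,-}_{d_\Gamma,1,1}\wr A_{c'}$ does not support weights of the principal block. For principal-block weights one must verify that every elementary divisor $\Gamma$ occurring is the trivial one $X-1$ (or $X+1$), so no $\Gamma\in\mathcal F_1\cup\mathcal F_2$ arises, and that the $R^0$ factors have one-dimensional multiplicity space. Once that is done, the basic subgroups entering are exactly those in An's original list, and Feng--Malle's parametrisation of principal-block weights by the same combinatorial data as $\IBr$ (symbols/partitions), together with their equivariance and character-triple verification, applies unchanged. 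This gives an iBAW bijection for the principal block, and Lemma~\ref{lem:jordan-reduction} finishes the proof.
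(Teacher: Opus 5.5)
Your overall architecture matches the paper's. You apply Lemma~\ref{lem:jordan-reduction} at $\ell=2$, reduce condition~(ii) to principal $2$-blocks of $\Sp_{2m}(q_0)$, and redo the principal-block weight count of \cite[Section~5]{FengMalle2022} with the corrected classification of \cite{FengYuZhang2024}. The items you list as ``to be verified'' there are exactly what the paper checks. It uses $s=1$ to force $m_\Gamma(1)=w_\Gamma(1)=0$ on $\mathcal F_1\cup\mathcal F_2$, and an explicit reflection in the multiplicity space to exclude $R^{0,-}_{d,1,1}$.

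\textbf{Main gap: the Brauer character side.} You treat \cite[Section~4]{FengMalle2022} as a black box resting on ``a unitriangular basic set from $\mathcal E(G,2')$''. That is not the mechanism. The prime $2$ is bad for type $\mathsf C$, and $\mathcal E(G,2')$ is not a basic set: already $|\IBr(B_0(\Sp_4(3)))|=7$, the number of unipotent classes, while there are only $6$ unipotent characters. You need Feng--Malle's analysis of the automorphism action on Brauer characters for two things: condition~(i), and the count and action on $\IBr(B_1)$. That analysis rests on Chaneb's unitriangularity \cite[Theorem~2.9]{Chaneb2021}. Chaneb obtains it by pairing generalised Gelfand--Graev characters with selected characters of $\mathcal E_2(G,1)$ whose labels are quasi-isolated $2$-elements. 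That selection \cite[Proposition~2.8]{Chaneb2021} relies on \cite[Proposition~4.3]{GeckHezard2008}, which is proved only for sufficiently large $p$ and $q$. Quoting Section~4 of Feng--Malle therefore does not cover every odd $q_0$.

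The paper closes this with Lemma~\ref{lem:symplectic-gggr-selection}, proved in Appendix~\ref{app:gggr-proofs}. It redoes the Geck--H\'ezard argument using Taylor's expansions of generalised Gelfand--Graev characters, valid for all odd $p$. It then restricts from $\CSp_{2m}(q_0)$ to $\Sp_{2m}(q_0)$ as in \cite[Proposition~5.4]{Taylor2013} and uses the wave front set vanishing results of \cite{Taylor2016}. Your proposal contains no substitute for this step, and it is where most of the work in the paper's proof lies.

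\textbf{Smaller issue: rank two.} The verification above is for $m\geq3$ only. The paper takes $n=2$ from \cite[Theorem~1.1]{LiLi2019}, and likewise the principal $2$-block of $\Sp_4(q_0)$ needed in condition~(ii) for every power $q_0$ of $p$. It lifts the latter to $\Sp_4(q_0)$ via Corollary~\ref{cor:fixed-point-descent} and Lemma~\ref{lem:type-b-normal-core}. You instead push the Feng--Malle principal-block argument down to $m=2$ and single out only $\PSp_4(3)$. That leaves the rank-two principal blocks for general odd $q_0$ unsupported; citing Li--Li fixes this.

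Two minor corrections. Condition~(ii) concerns $\Sp_{2m}(q_0)$ for all powers $q_0$ of $p$, not just $q$. And no induction on $n$ is needed, since every strictly quasi-isolated $2$-block in type $\mathsf C$ is principal.
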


\begin{proof}
By \cite[Theorem~1.1]{LiLi2019}, $\PSp_4(q_0)$ satisfies iBAW
at $2$ for every odd prime power $q_0$. This proves the assertion
when $n=2$. For later use in condition~\textup{(ii)} of
Lemma~\ref{lem:jordan-reduction}, we also need an iBAW bijection
for the principal $2$-block of $\Sp_4(q_0)$.
Let $K=\Sp_4(q_0)$ and $\overline K=K/Z(K)$.
Corollary~\ref{cor:fixed-point-descent}, applied to $\overline K$,
gives an iBAW bijection for its principal $2$-block.
Since $O_2(K)=Z(K)$ and $\Aut(K)\cong\Aut(\overline K)$,
Lemma~\ref{lem:type-b-normal-core}, applied with $G=K$,
$P=Z(K)$ and $A=K\rtimes\Aut(K)$, lifts this bijection to the
principal $2$-block of $K$.

\enlargethispage{2pt}
Assume henceforth that $n\geq3$.
We now establish the principal block result in higher rank.
Let $G_0=\Sp_{2m}(q_0)$, where $q_0$ is an odd prime power and
$m\geq3$, and let $B_1$ be its principal $2$-block.
In the notation of \cite[Section~5]{FengMalle2022}, this is the
block labelled by $s=1$.
Let $(R,\varphi)$ be a weight belonging to $B_1$.
By \cite[Lemma~2.3]{FengYuZhang2024},
$Z(R)\in\Syl_2(C_{G_0}(R))$.
By \cite[Theorem~3.37]{FengYuZhang2024}, choose a decomposition of
$R$ as a direct product of basic subgroups in the classification used there, each acting on its
corresponding summand in an orthogonal decomposition of the natural
module.
The omitted family does not occur in a principal weight
\cite[Remark~3.50]{FengYuZhang2024}. Indeed, suppose that a factor from this
family occurs in the decomposition.
By \cite[Proposition~3.48(4)]{FengYuZhang2024}, it has the form
$R^{0,-}_{d,1,1,\mathbf c}$ with $d$ even.  Let
\[
 R_0=I_W\otimes E_-^3\leq\Sp(W\otimes U),
\]
where $W$ is a nondegenerate orthogonal space over $\F_{q_0}$ of even
dimension $d\geq2$ and nonsquare discriminant, and
$E_-^3\cong Q_8$ acts on its symplectic module $U$.
The group $E_-^3$ acts trivially on $W$.  Thus, as an $E_-^3$-module,
$W\otimes U$ is a direct sum of $d$ copies of $U$, with multiplicity space $W$.
The omitted factor is a wreath product whose base group is a direct
product of copies of $R_0$. Choose a nonscalar orthogonal reflection
$t$ of $W$, acting as $-1$ on a nonsingular line and as $1$ on its
orthogonal complement. On the summand corresponding to this factor, set
$\widehat t=\operatorname{diag}(t\otimes I_U,\ldots,t\otimes I_U)$.  This is a symplectic
involution which centralises each copy of $R_0$ and the coordinate
permutations.  Extending it by the identity on the other summands gives
  $\widehat t\in C_{G_0}(R)$.  It does not lie in $R$: its permutation part is
trivial, while its action on the multiplicity space $W$ is nonscalar.
  Thus $\langle Z(R),\widehat t\rangle$ is a $2$-subgroup of $C_{G_0}(R)$
properly containing $Z(R)$, a contradiction. Together with the other
exclusions in \cite[Remark~3.50]{FengYuZhang2024}, this leaves cases~(3) and~(6) of
\cite[Proposition~3.48]{FengYuZhang2024}.
Up to conjugacy, the basic subgroups in these cases are those attached to
the elementary divisor $\Gamma=x-1$ in \cite[Section~5.1]{FengMalle2022}.
That section also describes the irreducible characters of $2$-defect zero
of their normaliser quotients, with normalisers taken in the symplectic
groups on the corresponding summands.  To specialise this parametrisation to $B_1$, recall that its
semisimple label is $s=1$.  Consequently, $m_\Gamma(1)=w_\Gamma(1)=0$ for
every elementary divisor $\Gamma\in\mathcal F_1\cup\mathcal F_2$.  Thus the product
of the sets $\mathcal P(w_\Gamma)$ over these elementary divisors in
\cite[Proposition~5.4]{FengMalle2022} is a singleton, and only the
parameters associated with the elementary divisor $\Gamma=x-1$ occur.  In particular, the cases involving the corrected
$E_{d_\Gamma,1,1}$ factor do not enter the calculation for the principal
block.  By \cite[Proposition~3.48]{FengYuZhang2024},
every remaining factor of type $R^0$ has multiplicity space of dimension
one. Its isometry group is therefore $\{I,-I\}$, as in the centraliser
calculation in \cite[proof of Proposition~3.48]{FengYuZhang2024}.
Equal factors may still occur repeatedly in the direct product
decomposition of $R$. For these repeated factors, we use
the construction of characters of wreath products in \cite[(6D)]{An1993},
recalled after \cite[Proposition~5.4]{FengMalle2022}.  This construction assigns to
each irreducible character of $2$-defect zero of the corresponding normaliser
quotient a partition of the form $(r,r-1,\ldots,1)$ for some $r\geq 0$,
with the empty partition understood when $r=0$.  Its size is
$r(r+1)/2$, a triangular number.  The sizes of these partitions
sum to the number of copies of the corresponding basic subgroup.
This sum need not be triangular, as the example in
Remark~\ref{rem:triangular-multiplicity} shows.

Let $(R,\varphi)$ be a pair produced by this construction, with
$\varphi\in\dz(N_{G_0}(R)/R)$.
In each of the two remaining families, a basic subgroup, including its
wreath extension, has centraliser $\{I,-I\}$ on its corresponding summand
\cite[p.~198]{An1993}.
The central elements acting as $-I$ on one summand and as $I$ on all the
others force every element of $C_{G_0}(R)$ to preserve each summand.  Thus $C_{G_0}(R)=Z(R)$, so $Z(R)$ is a Sylow $2$-subgroup
of $C_{G_0}(R)$ and $RC_{G_0}(R)=R$.
The inflation of $\varphi$ to $N_{G_0}(R)$ has $R$ in its kernel
and has defect zero on $N_{G_0}(R)/(RC_{G_0}(R))$.
Hence \cite[Lemma~2.3]{FengYuZhang2024} shows that
$(R,\varphi)$ is a weight belonging to $B_1$.
The construction following \cite[Proposition~5.4]{FengMalle2022}
recovers from each $x-1$ parameter the conjugacy types and
multiplicities of the basic factors and the character $\varphi$.
The $G_0$-conjugacy class of $(R,\varphi)$ also determines
its parameter. Thus this construction identifies $\Alp(B_1)$
with the $x-1$ parameter set.

\enlargethispage{3pt}For the principal block Brauer character calculation, we use
\cite[Corollary~4.3]{FengMalle2022}.
The required unitriangularity is stated in
\cite[Theorem~2.9]{Chaneb2021} for every odd $q_0$.
Its proof uses the character selection in
\cite[Proposition~2.8]{Chaneb2021}, based on
\cite[Proposition~4.3]{GeckHezard2008} and
\cite[Proposition~5.4]{Taylor2013}.
We verify this character selection for the families used in the automorphism
argument in \cite[Section~4.1]{FengMalle2022}.
Let $\mathbf G_0=\Sp_{2m}\subseteq\mathbf H=\CSp_{2m}$,
with the split Frobenius endomorphism $F_0$ defining the given
group $G_0$, and set $H=\mathbf H^{F_0}$.
The partition parametrisation shows that every geometric unipotent
class of $\mathbf H$ is stable under $F_0$.
For each such class $\mathcal C$,
Lemma~\ref{lem:symplectic-gggr-selection} supplies the characters
of $H$ with the required Kronecker delta scalar products.
For unipotent elements $u\in\mathbf G_0$, the component groups
$A_{\mathbf G_0}(u)$ are elementary abelian $2$-groups \cite[proof of Proposition~2.4]{Taylor2013}.
For each geometric unipotent class, choose a representative as in
\cite[Definition~2.1]{Taylor2013}, which is possible by
Proposition~2.8 of the same paper.
For the split pair $(\widetilde s,\mathcal F)$ selected in
Lemma~\ref{lem:symplectic-gggr-selection}, the symbols in any type
$\mathsf D$ factor of the selected family are nondegenerate. Hence
the argument in
\cite[10.3]{Taylor2014Maximal} gives condition~\textup{(P5)} of
\cite[Theorem~3.2]{Taylor2013}.

We follow the restriction argument in the proof of
\cite[Proposition~5.4]{Taylor2013}, using
Lemma~\ref{lem:symplectic-gggr-selection} for the selection
supplied there by \cite[Proposition~4.3]{GeckHezard2008}.
The induction identity for generalised Gelfand--Graev characters
\cite[proof of Lemma~14.12]{Taylor2016}, compatibility of Alvis--Curtis duality
with restriction \cite[Proposition~3.4.3]{GeckMalle2020}, Frobenius reciprocity
and condition~\textup{(P5)} of \cite[Theorem~3.2]{Taylor2013}
give the required scalar products on restriction to $G_0$.
The class count and Clifford theory argument in the proof of
\cite[Proposition~5.4]{Taylor2013} match the irreducible constituents
with the $G_0$-classes contained in each $H$-class.

The selected characters have unipotent support $\mathcal C$ by the
proof of Lemma~\ref{lem:symplectic-gggr-selection} in
Appendix~\ref{app:gggr-proofs}, and so do their restriction constituents \cite[Example~2.7.18]{GeckMalle2020}.
These constituents lie in $\mathcal E(G_0,s)$, where $s$ is the
image of $\widetilde s$ in $(\mathbf G_0^*)^{F_0}$, by
\cite[Proposition~2.6.16]{GeckMalle2020}.
Since $s^2=1$ and Alvis--Curtis duality preserves rational Lusztig
series \cite[Proposition~9.8(iv)]{CabanesEnguehard2004}, these
constituents and their duals belong to $B_1$ by
\cite[Theorem~21.14]{CabanesEnguehard2004}.
Together with the vanishing results for wave front sets in
\cite[Lemma~14.15 and Proposition~15.2]{Taylor2016}, this gives
the identity diagonal blocks and the vanishing entries used in
\cite[Theorem~2.9]{Chaneb2021}.
Thus, for $m\geq3$ and every odd $q_0$, this establishes the
character selection and unitriangularity used in the principal
block argument of \cite[Section~4.1]{FengMalle2022}.

For the principal $x-1$ parameters obtained above,
\cite[Lemma~5.1]{FengMalle2022} gives the field and diagonal actions on
the characters of the normaliser quotients of the basic subgroups. Applying these actions
to the wreath product construction, as in the proof of
\cite[Proposition~5.5]{FengMalle2022}, gives the weight actions described
in \cite[Proposition~5.9]{FengMalle2022}.
Together with \cite[Corollary~4.3]{FengMalle2022}, the counting arguments in
\cite[Proposition~6.1 and the proof of Theorem~6.2]{FengMalle2022} show that
$\IBr(B_1)$ and $\Alp(B_1)$ have equal cardinalities and equal numbers of
elements fixed by the nontrivial diagonal outer automorphism, while field
automorphisms fix both sets pointwise.  Matching the diagonal orbits of
sizes one and two gives an $\Aut(G_0)$-equivariant bijection from
$\IBr(B_1)$ to $\Alp(B_1)$.
Set $Z_0=Z(G_0)$ and $\overline G_0=G_0/Z_0$.
The correspondences in the proof of Lemma~\ref{lem:type-b-normal-core},
with $P=Z_0$, give a corresponding
$\Aut(\overline G_0)$-equivariant bijection for the principal
$2$-block of $\overline G_0$.
For $B_1$, the equivariant bijection gives condition~(1) of
\cite[Proposition~3.4]{FengMalle2022}, and the trivial action of field
automorphisms gives conditions~(2) and~(3).  The blockwise argument in the
proof of that proposition therefore verifies the inductive BAW condition
for the principal block of $\overline G_0$.
Apply Lemma~\ref{lem:type-b-normal-core} with
$G=G_0$, $P=Z_0=O_2(G_0)$ and $A=G_0\rtimes\Aut(G_0)$.
The required iBAW bijection for the principal block of $\overline G_0$ follows
from Theorem~\ref{thm:spath-triples}, using the natural isomorphism
$\Aut(G_0)\cong\Aut(\overline G_0)$.
Thus $B_1$ admits the required iBAW bijection.

Let $\mathbf G$ be a simply connected algebraic group of type $\mathsf C_n$
with Frobenius endomorphism $F$ such that
$\mathbf G^F=G=\Sp_{2n}(q)$.  We verify the two assumptions of
Lemma~\ref{lem:jordan-reduction} at $\ell=2$.
For each semisimple $2'$-element $s\in(\mathbf G^*)^F$, conjugate the Levi
subgroup associated with $s$ and its series idempotent by a suitable element
of $G$. By \cite[Corollary~4.2 and Lemmas~4.3 and~4.5]{Ruhstorfer2022Derived},
the group $A_s$ in condition~\textup{(i)} of Lemma~\ref{lem:jordan-reduction}
can then be chosen inside the field automorphism group. By \cite[Corollary~4.6]{FengMalle2022}, if a diagonal
automorphism and a field automorphism have the same action on a Brauer
character, then both fix it. Thus, for $\widetilde G=\CSp_{2n}(q)$,
\[
 (\widetilde G A_s)_\psi=\widetilde G_\psi(A_s)_\psi
 \qquad(\psi\in\IBr(G)).
\]
Since $(A_s)_\psi$ is cyclic, Lemma~\ref{lem:cyclic-extension} gives
the required extension.
A connected Dynkin subdiagram of
$\mathsf C_n$ has type $\mathsf A_r$ or $\mathsf C_r$.
Let $\mathbf K$ and $F'$ be as in condition~\textup{(ii)} of
Lemma~\ref{lem:jordan-reduction}. If $\mathbf K$ has type $\mathsf A_r$,
then $\mathbf K^{F'}$ is a special linear or unitary group.
The result for special linear and unitary groups established after
Lemma~\ref{lem:jordan-reduction} gives the required iBAW bijections
for its $2$-blocks.
This also covers type~$\mathsf C_1$, since $\mathsf C_1=\mathsf A_1$.

Suppose now that $\mathbf K$ has type $\mathsf C_r$, where $r\geq2$,
and let $c$ be a strictly quasi-isolated $2$-block of $\mathbf K^{F'}$.  Choose a strictly
quasi-isolated semisimple $2'$-element $s\in\mathbf K^{*F'}$ such that
$\Irr(c)\subseteq\mathcal E_2(\mathbf K^{F'},s)$, where
$\mathbf K^*\cong\operatorname{SO}_{2r+1}$.
Since $s$ is quasi-isolated and the defining characteristic is odd,
\cite[Proposition~4.11(a)]{Bonnafe2005} gives $s^2=1$.  The element $s$
has odd order, so $s=1$.  Thus $c$ is the principal $2$-block, the unique
unipotent $2$-block of $\mathbf K^{F'}$
\cite[Theorem~21.14]{CabanesEnguehard2004}.  We have $\mathbf K^{F'}\cong\Sp_{2r}(q_0)$ for an odd prime
power $q_0$. If $r=2$, the rank two construction at the start
of this proof gives an iBAW bijection for $c$.
If $r\geq3$, the principal block result proved above, applied
with $m=r$, gives such a bijection.
Hence both hypotheses of Lemma~\ref{lem:jordan-reduction} hold.

Since $G$ is the full universal covering group of $S$,
Lemma~\ref{lem:jordan-reduction} shows that every $2$-block of $G$ is
BAW-good. Applying Remark~\ref{rem:block-aggregation}\textup{(b)} to the blockwise
equivariant bijections gives an $\Aut(G)$-equivariant
bijection $\IBr(G)\longrightarrow\Alp(G)$ that preserves blocks.
This verifies condition~\textup{(1)} of Feng and Malle's criterion
\cite[Proposition~3.4]{FengMalle2022}.  Condition~\textup{(2)} follows
from \cite[Corollary~4.6]{FengMalle2022}, and
\cite[Remark~3.5]{FengMalle2022} then gives condition~\textup{(3)}.
The criterion therefore shows that $S$ satisfies iBAW at $2$ on
$X_2(S)=S$.
\end{proof}

\begin{remark}\label{rem:triangular-multiplicity}
The final sentence of \cite[Remark~3.50]{FengYuZhang2024} extends the
triangularity assertion of \cite[Remark~3.47(2)]{FengYuZhang2024} to
symplectic groups. The following principal $2$-weight of $\Sp_4(3)$ shows that
this extension fails for the total multiplicity of a basic subgroup, although
the individual partitions in its parametrisation have triangular sizes.
Let $V=V_1\perp V_2$ be an orthogonal direct sum of two symplectic
planes over $\F_3$, and let $R=Q_8\times Q_8$ act with one factor
on each plane. Each factor is a basic subgroup in the construction
of \cite[Section~3]{FengYuZhang2024}.
Set $G=\Sp(V)\cong\Sp_4(3)$.
The summands are nonisomorphic absolutely irreducible $R$-modules,
so $C_G(R)=\{\pm I_{V_1}\}\times\{\pm I_{V_2}\}=Z(R)$,
and every element of $N_G(R)$ permutes the two summands.
Since $Q_8\trianglelefteq\operatorname{SL}_2(3)=\Sp(V_i)$, we have
$N_G(R)=(\operatorname{SL}_2(3)\times\operatorname{SL}_2(3))\rtimes C_2$,
where the involution interchanges the factors.
Hence $N_G(R)/R\cong C_3\times S_3$, which has no nontrivial
normal $2$-subgroup, so $R$ is radical.
This quotient has irreducible characters of degree $2$ and
$2$-defect zero.  By \cite[Lemma~2.3]{FengYuZhang2024}, these give
principal $2$-weights.
Here the same basic subgroup occurs twice, so its total multiplicity
is $2$, which is not triangular. In the labelling by assignments of
$2$-cores recalled from \cite[(6D)]{An1993} after
\cite[Proposition~5.4]{FengMalle2022}, the three linear characters of
$N_{\Sp(V_i)}(Q_8)/Q_8\cong C_3$ are assigned the partitions
$(1),(1),\varnothing$, up to permutation.  Their sizes are $1,1,0$, which are individually
triangular, while their sum is $2$. The proof of
Proposition~\ref{prop:odd-two} uses the individual partition sizes and their
sum, and does not require that sum to be triangular.
\end{remark}

\begin{proposition}\label{prop:rank-two}
Let $q>2$ be a prime power.  Then $S=\PSp_4(q)$ satisfies iBAW at every
prime dividing its order.
\end{proposition}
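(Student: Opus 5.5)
The proof is a case split on $\ell$ and on the parity of $q$, with each case reduced to results already stated or cited in the paper.

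\textbf{Defining characteristic.} If $\ell=p$, the assertion is \cite[Theorem~C]{Spath2013}, as already noted after Table~\ref{tab:typec-carriers}.

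\textbf{Odd $q$, $\ell=2$.} This case was established in the first paragraph of the proof of Proposition~\ref{prop:odd-two}, via \cite[Theorem~1.1]{LiLi2019}.

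\textbf{Odd $q$, odd $\ell\neq p$.} Here $X_\ell(S)=\Sp_4(q)$ by Table~\ref{tab:typec-carriers}. I would quote the rank-two literature for $\PSp_4(q)$ at odd nondefining primes, checking that it applies on $X_\ell(S)$ and for all blocks. The unipotent blocks are covered by \cite[Theorem~1.5]{FengLiZhang2019}, valid for $n\geq2$. For the remaining blocks I would use Lemma~\ref{lem:jordan-reduction}.
\begin{itemize}
\item Condition~(ii) is supplied by the type $\mathsf A$ results established after Lemma~\ref{lem:jordan-reduction}, together with the strictly quasi-isolated blocks of $\Sp_4(q_0)$. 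Since $\ell$ is odd, \cite[Proposition~4.11(a)]{Bonnafe2005} forces a quasi-isolated $\ell'$-element to satisfy $s^2=1$. So only $s=1$ and involutions $s$ occur.
\item For these finitely many series, I would exhibit explicit equivariant bijections. The Brauer characters and weights are small in number and can be read off the known decomposition matrices of $\Sp_4(q)$ (White, Okuyama--Waki). These matrices are unitriangular, so the basic-set argument of Lemma~\ref{lem:basicset-bridge} yields equivariance.
\item Condition~(i) follows as in the proof of Proposition~\ref{prop:odd-two}, with $A_s$ chosen inside the field automorphisms.
\end{itemize}

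\textbf{Even $q>2$.} Then $X_\ell(S)=\Sp_4(q)=S$ for every $\ell$, and $\Out(S)$ is cyclic, generated by the exceptional graph automorphism whose square is the Frobenius. Lemma~\ref{lem:cyclic-outer-baw} therefore reduces the problem to producing, for each $\Out(S)$-stable block $b$, an $\Out(S)$-equivariant bijection $\IBr(b)\to\Alp(b)$. Non-stable blocks are handled by Remark~\ref{rem:block-aggregation}, applying Lemma~\ref{lem:cyclic-outer-baw} to the stabiliser. For this I would compare the counts $|\IBr(b)^{\sigma}|$ and $|\Alp(b)^{\sigma}|$ for each $\sigma\in\Out(S)$. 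Because $\Out(S)$ is cyclic, the mark homomorphism argument behind Corollary~\ref{cor:conlon-mark} turns equal fixed-point counts into an equivariant bijection. The inputs are:
\begin{itemize}
\item the weight classification for $\Sp_4(2^f)$ in \cite[Section~6]{Li2021};
\item the known Brauer character tables (decomposition matrices) of $\Sp_4(2^f)$, due to White.
\end{itemize}

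\textbf{Main obstacle.} I expect the hardest step to be equivariance under the exceptional graph automorphism when $q$ is even. It interchanges long and short root data, so it swaps unipotent characters such as $\theta_9,\theta_{10}$ and nontrivially permutes radical subgroups. The fixed-point counts must therefore be checked by hand, block by block. I would first treat the unipotent blocks, where Geck's basic set \cite{Geck1993} gives a graph-stable integral basic set of unipotent characters and Lemma~\ref{lem:basicset-bridge} applies with $A=\Out(S)_b$ $2$-hypoelementary when $|\Out(S)_b|$ has this form. The general cyclic case then follows by the direct counting.
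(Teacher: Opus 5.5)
Your cases $\ell=p$ and ($q$ odd, $\ell=2$) match the paper. The other two cases are not actually proved. The missing idea is that both are already published theorems, and the paper's proof consists of nothing more than citing them. \cite[Theorem~1.1]{LiLi2019} proves iBAW for $\PSp_4(q)$, $q$ odd, at \emph{every} prime $\ell\neq p$; you invoke it only for $\ell=2$. \cite[Theorem~5.5]{SchaefferFry2014} covers $q$ even at every $\ell\neq p$, and the paper also uses it for $\Sp_4(2^a)$, $a\geq2$, in the proof of Proposition~\ref{prop:even-bridge}.

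\textbf{Odd $q$, odd $\ell$.} Your substitute arguments do not close these cases. Lemma~\ref{lem:jordan-reduction} reduces nothing here. Condition~(ii) with $\mathbf K$ of type $\mathsf C_2$ asks for iBAW bijections for the strictly quasi-isolated blocks of $\Sp_4(q_0)$ itself, namely the unipotent blocks and the involution series, which is the whole difficulty. For the involution series, unitriangular decomposition matrices together with Lemma~\ref{lem:basicset-bridge} give only an equivariant bijection from $\IBr(b)$ onto a set of ordinary characters. They give no map to $\Alp(b)$. Moreover, ``admits an iBAW bijection'' also requires the block isomorphism of modular character triples. Since $Z(\Sp_4(q))\cong C_2$ and $\Out(S)\cong C_2\times C_f$ need not be cyclic, Lemma~\ref{lem:cyclic-outer-baw} is unavailable. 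You would need a Brough--Sp\"ath type criterion with stabiliser factorisation and extendibility on both the Brauer character and weight sides, which is what Li--Li carry out. In addition, condition~(i) in the proof of Proposition~\ref{prop:odd-two} rests on \cite[Corollary~4.6]{FengMalle2022}, a $2$-modular statement, so ``as in Proposition~\ref{prop:odd-two}'' does not transfer to odd $\ell$.

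\textbf{Even $q$.} Here the reduction through Lemma~\ref{lem:cyclic-outer-baw} and fixed-point counts for a cyclic group is sound in principle. However, the decisive input is only announced as a hand check and never supplied: the action of the exceptional graph automorphism on $\IBr(b)$ and on $\Alp(b)$, block by block. Geck's basic set gives at best an equivariant bijection between $\IBr(C)$ and unipotent characters, not weights. The paper's even-field tools (Lemma~\ref{lem:even-field-fixed}, Proposition~\ref{prop:even-unipotent}) need $r\geq4$ and depend on the outer automorphisms fixing unipotent characters. That fails for the graph automorphism, which, as you note yourself, swaps unipotent characters. As written, this part is a research plan rather than a proof; citing Schaeffer Fry closes it.
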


\begin{proof}
Write $q=p^f$, and let $\ell$ be a prime dividing $|S|$.
If $\ell=p$, the result follows from \cite[Theorem~C]{Spath2013}.
If $q$ is odd, the result for every $\ell\neq p$ is proved in
\cite[Theorem~1.1]{LiLi2019}, with the cases $\ell\mid q^2-1$
also proved by Brough and Schaeffer Fry
\cite[Theorem~1.1]{BroughSchaefferFry2020}.
If $\ell\neq p$ and $q$ is even, it follows from
\cite[Theorem~5.5]{SchaefferFry2014}.
\end{proof}

It remains to treat rank at least three at odd nondefining primes.
For even $q$ and $n\geq4$, we give an independent proof. For odd $q$, we replace the
uses of unitriangularity in Li's proof of \cite[Theorem~2]{Li2021}.
Neither argument assumes unitriangularity.

\subsection{Even fields}

Let $\mathbf H$ be a simply connected algebraic group of type $\mathsf C_r$,
where $r\geq4$, over $\overline{\F}_2$, and let $F_2$ be the Frobenius
endomorphism induced by the squaring map on $\overline{\F}_2$.  For a
positive integer $a$, set $F'=F_2^a$ and
$H=\mathbf H^{F'}=\Sp_{2r}(2^a)$.  The map $F_2$ restricts to a field
automorphism of $H$ of order $a$.  Let $\mathbf H^*$ be the dual group of
$\mathbf H$.  Then $\mathbf H^*$ is adjoint of type $\mathsf B_r$, and the
Frobenius endomorphism dual to $F'$ is also denoted by $F'$.

Let $\ell$ be an odd prime dividing $|H|$, let $e$ be the
multiplicative order of $2^a$ modulo $\ell$, and let
$E_H=\langle F_2|_H\rangle\cong C_a$.  For an $\ell$-block $C$ of $H$, let
$\mathcal X_C=\Irr(C)\cap\mathcal E(H,\ell')$.  We follow
\cite[Definition~3.5]{FengMalleZhang2026} in writing $e$-JGC for
$(e,\ell)$-\emph{Jordan-generalised-cuspidal} when $\ell$ is clear.
For an $F'$-stable torus $\mathbf T$, we use the notation
$\mathbf T_{\phi_e}$ of \cite[Section~3.2]{FengMalleZhang2026} for its
Sylow $e$-torus.  In the notation of
\cite[Definitions~3.17--3.18]{FengMalleZhang2026}, $\mathcal W(C)$ denotes
the set of $H$-conjugacy classes of generic $(e,\ell)$-weights belonging
to $C$.
For an $F'$-stable Levi subgroup $\mathbf L$ and
$\lambda\in\Irr(\mathbf L^{F'})$, write
$W_H(\mathbf L,\lambda)=N_H(\mathbf L,\lambda)/\mathbf L^{F'}$.
The prime $\ell$ is good for $\mathbf H$, and
$Z(\mathbf H)=Z(\mathbf H^*)=1$, so
\cite[Condition~3.23]{FengMalleZhang2026} holds.
In particular, if $(\mathbf T,\eta)$ represents an element of
$\mathcal W(C)$, then $\bl(\eta)^H=C$ by
\cite[Proposition~3.24(b)]{FengMalleZhang2026}.

\begin{lemma}\label{lem:even-field-fixed}
If $C$ is a unipotent $\ell$-block of $H=\Sp_{2r}(2^a)$, then $E_H$
fixes every element of $\mathcal X_C$ and $\mathcal W(C)$.
\end{lemma}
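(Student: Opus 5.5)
We treat the two sets separately, using that $F_2$ is a generator of $E_H$ and that, for unipotent blocks, everything is controlled by unipotent data in $\mathbf H^*$ of adjoint type $\mathsf B_r$.

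\textbf{The characters in $\mathcal X_C$.} Since $C$ is unipotent, $\mathcal X_C=\Irr(C)\cap\mathcal E(H,1)$ consists of unipotent characters: by Brou\'e--Michel $\Irr(C)\subseteq\mathcal E_\ell(H,1)$, and the $\ell'$-part of the Lusztig series label must be $1$. We then invoke Lusztig's result that unipotent characters of groups of classical type are fixed by all automorphisms (Malle's theorem on the action of automorphisms on unipotent characters, in the form stated in \cite{GeckMalle2020}). Its only exceptions are in types $\mathsf D_4$ with graph automorphisms, in $\mathsf B_2$, $\mathsf G_2$, $\mathsf F_4$ in characteristic $2$ or $3$, and in certain exceptional-group cuspidal characters. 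None of these arise for $\mathsf C_r$ with $r\geq4$, so $F_2$ fixes each element of $\mathcal X_C$. To confirm this, we can note that $F_2$ commutes with $F'$. Hence $F_2$ permutes the Deligne--Lusztig characters $R_w$ via $w\mapsto w$, because $F_2$ acts trivially on the Weyl group, the isogeny being a Frobenius and not a special isogeny when $r\geq3$. Unipotent characters are then determined by their multiplicities in the $R_w$ together with Frobenius eigenvalues, and for type $\mathsf C$ the multiplicities alone separate them.

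\textbf{The generic weights in $\mathcal W(C)$.} Recall from \cite[Definitions~3.17--3.18]{FengMalleZhang2026} that a generic weight is an $H$-class of pairs $(\mathbf L,\lambda,\eta)$, roughly, with $\mathbf L$ an $e$-split Levi attached to a Sylow $e$-torus, $\lambda$ an $e$-JGC character, and $\eta$ a character of $W_H(\mathbf L,\lambda)$ of defect zero (or a projective variant of such a character). For unipotent $C$, the $e$-cuspidal pair $(\mathbf L,\lambda)$ can be taken unipotent, by $\ell$ good and \cite[Proposition~3.24]{FengMalleZhang2026}. We will choose $\mathbf L$ as the centraliser of the Sylow $e$-torus of a maximally split torus $\mathbf T_0$ that is $F_2$-stable. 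This is possible because $F_2$ stabilises $\mathbf T_0\subseteq\mathbf B_0$, and $\Phi_e$-tori of $\mathbf T_0$ are $F_2$-stable since $F_2$ acts on $X(\mathbf T_0)$ as multiplication by $2$, so it commutes with the action of $F'$. Then $F_2$ stabilises $\mathbf L$. By the first step applied to the Levi $\mathbf L^{F'}$, whose components are of types $\mathsf A$ (over extensions) and $\mathsf C$, the unipotent $\lambda$ is $F_2$-fixed. Therefore $F_2$ normalises $N_H(\mathbf L,\lambda)$ and acts on the relative Weyl group $W_H(\mathbf L,\lambda)$, which is a Weyl-type group $G(2e',1,w)$ or a wreath product. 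The action there is inner, even trivial: $F_2$ acts trivially on $W=N(\mathbf T_0)/\mathbf T_0$, and the relative Weyl group is a subquotient of $W$. So $F_2$ fixes each $\eta$, and the extension map/cocycle used to define $\eta$ is $F_2$-compatible because the extensions can be chosen $F_2$-stable (Lemma~\ref{lem:cyclic-extension} for the cyclic group $E_H$). Hence each class in $\mathcal W(C)$ is fixed.

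\textbf{Main obstacle.} We expect the main difficulty to be the last point: verifying that the identification of $W_H(\mathbf L,\lambda)$ and the chosen extension of $\lambda$ to $N_H(\mathbf L,\lambda)$ are genuinely $F_2$-equivariant, rather than merely that $F_2$ acts trivially on the abstract group. Our first attempt would be to use Malle--Späth's equivariant extension maps for unipotent characters of Levi subgroups (the maximal-extendibility results for type $\mathsf C$). These give extensions stable under $N_H(\mathbf L)E_H$, and stability reduces the comparison of $\eta$ and $\eta^{F_2}$ to the trivial action on $W_H(\mathbf L,\lambda)$.
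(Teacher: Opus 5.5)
Your treatment of $\mathcal X_C$ is fine and essentially matches the paper. You reduce to $\mathcal X_C=\Irr(C)\cap\mathcal E(H,1)$ and then use that field automorphisms fix unipotent characters. The problems are in the part on $\mathcal W(C)$.

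\textbf{The Levi subgroup.} Your choice of Levi subgroup does not work. Relative to $F'$, the maximally split torus $\mathbf T_0$ has order polynomial $(x-1)^r$. So for $e>1$ its Sylow $\Phi_e$-torus is trivial and its centraliser is $\mathbf H$. More fundamentally, $\mathcal W(C)$ is not attached to a single Levi subgroup. A representative $(\mathbf T,\eta)$ involves $\mathbf M=C_{\mathbf H}(\mathbf T)$ with $\mathbf T=Z^\circ(\mathbf M)_{\phi_e}$, and these $\mathbf M$ run over several $H$-classes of $e$-split Levi subgroups, none of which need be $F_2$-stable.

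What is missing is the following. For each such $\mathbf M$ and each $\sigma\in E_H$, you need $h_\sigma\in H$ such that $\tau=\operatorname{Int}(h_\sigma)\circ\sigma$ has three properties: it stabilises $\mathbf M$, it acts on the finite factors of $\mathbf M^{F'}$ by field automorphisms, and it acts trivially on $N_H(\mathbf M)/\mathbf M^{F'}$.

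The paper gets this from a Lang twist:
\begin{itemize}
\item write $\mathbf M=g\mathbf L_Ig^{-1}$ with $g^{-1}F'(g)=\dot w$, where $w(I)=I$ and $\dot w$ is an $F_2$-fixed representative of $w$;
\item set $h_\sigma=g\sigma(g)^{-1}$, which lies in $H$ because $\sigma$ commutes with $F'$ and fixes $\dot w$;
\item transport everything to $\mathbf L_I^{F_w}$ with $F_w=\operatorname{Int}(\dot w)\circ F'$, where $\sigma$ genuinely acts.
\end{itemize}
Your sentence that $F_2$ acts trivially on $W$, so also on the relative Weyl group as a subquotient, only makes sense after this transport. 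Even then you must check that $y^{-1}\sigma(y)\in\mathbf L_I^{F_w}$ for $y\in N_{\mathbf H}(\mathbf L_I)^{F_w}$. Since $\tau$ and $\sigma$ differ by an inner automorphism of $H$, showing that $\tau$ fixes $(\mathbf T,\eta)$ then shows that $\sigma$ fixes its $H$-class.

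\textbf{The stable extension.} A $\tau$-invariant extension of $\lambda$ to $N_\lambda=N_H(\mathbf M,\lambda)$ cannot come from Lemma~\ref{lem:cyclic-extension}. The group $N_\lambda\langle\tau\rangle/\mathbf M^{F'}$ is in general not cyclic, since it contains the relative Weyl group $W_H(\mathbf M,\lambda)$. That lemma does not even give an extension to $N_\lambda$.

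You correctly identify this as the crux, but you only indicate where a remedy might come from. What is needed is an extension of $\lambda$ to a group containing both $N_\lambda$ and $\tau$. The paper embeds $\mathbf H$ in $\mathbf K=\operatorname{PGL}_{2r+1}$, applies Sp\"ath's extension theorem for unipotent characters to the stabiliser of $(\mathbf M,\lambda)$ in $\mathbf K^{F'}E(\mathbf K^{F'})$, and restricts to $N_\lambda$. With that extension and the trivial action of $\tau$ on $N_\lambda/\mathbf M^{F'}$, Gallagher's theorem gives $\eta^\tau=\eta$.

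An equivariant extension map of the kind you mention could serve the same purpose. It would have to be cited for the twisted $e$-split Levi subgroups that actually occur, and for the twisted automorphism $\tau$, not for a Levi subgroup defined inside $\mathbf T_0$.
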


\begin{proof}
By \cite[Theorem~4.4(iii)]{CabanesEnguehard1994}, the characters in
$\Irr(C)$ lie in rational Lusztig series labelled by semisimple
$\ell$-elements.  The disjointness of these series therefore gives
$\mathcal X_C=\Irr(C)\cap\mathcal E(H,1)$.
By \cite[Theorem~4.4(i)]{CabanesEnguehard1994}, this is the generalised
$e$-Harish-Chandra series belonging to $C$.
Since field automorphisms fix the unipotent characters of $H$
\cite[p.~700]{CabanesSpath2013}, the group $E_H$
fixes $\mathcal X_C$ pointwise.

Let $(\mathbf T,\eta)$ represent an element of $\mathcal W(C)$
and set $\mathbf M=C_{\mathbf H}(\mathbf T)$.
By the definition of a generic weight,
$\mathbf T=Z^\circ(\mathbf M)_{\phi_e}$ and there is an
$e$-JGC character $\lambda\in\mathcal E(\mathbf M^{F'},\ell')$
such that $\eta\in\Irr\bigl(N_H(\mathbf T)\mid\lambda\bigr)$
and some $\chi\in\Irr(C)$ occurs in the Lusztig induction of
$\lambda$ to $H$.
Since $\mathbf M=C_{\mathbf H}(\mathbf T)$ and
$\mathbf T=Z^\circ(\mathbf M)_{\phi_e}$, we have
$N_H(\mathbf T)=N_H(\mathbf M)$.
By compatibility of Lusztig induction with rational Lusztig series
\cite[Proposition~15.7]{CabanesEnguehard2004}, every constituent of the
Lusztig induction of $\lambda$ lies in a series labelled by an
$\ell'$-element. Hence $\chi\in\mathcal X_C$.
Since $\mathcal X_C\subseteq\mathcal E(H,1)$, the same compatibility
forces the semisimple label of $\lambda$ to be trivial, so $\lambda$
is unipotent.

  Let $\sigma\in E_H$ be arbitrary. Choose $0\leq j<a$ such that
  $\sigma=F_2^j|_H$, and use the same symbol $\sigma$ for the
  endomorphism $F_2^j$ of $\mathbf H$. Let
  $\mathbf T_0\leq\mathbf B_0$ be an $F_2$-stable maximal torus and Borel
  subgroup, and let $\Pi$ be the corresponding set of simple roots.
  Let $W=N_{\mathbf H}(\mathbf T_0)/\mathbf T_0$ be the Weyl group.
  For~$I\subseteq\Pi$, let $\mathbf L_I$ be the corresponding standard
  Levi subgroup.  By the rational parametrisation of $F'$-stable Levi
  subgroups in \cite[Section~2.3]{CabanesSpath2013}, choose a pair
  $(I,w)$ representing the $H$-conjugacy class of $\mathbf M$, where
  $w\in W$ satisfies $w(I)=I$.  Choose a representative
  $\dot w\in N_{\mathbf H}(\mathbf T_0)$ of $w$ such that
  $F_2(\dot w)=\dot w$.  By Lang's theorem, there is $g\in\mathbf H$ such that
  $g^{-1}F'(g)=\dot w$.
  After replacing $(\mathbf T,\eta)$ by an $H$-conjugate and $\lambda$ by
  the corresponding conjugate, we may assume $\mathbf M=g\mathbf L_Ig^{-1}$.
  Since $\mathbf L_I$ and $\dot w$ are
  defined over $\F_2$,
  $\sigma(\mathbf L_I)=\mathbf L_I$ and
  $\sigma(\dot w)=\dot w$.  Define $h_\sigma=g\sigma(g)^{-1}$ and set
  $\tau=\operatorname{Int}(h_\sigma)\circ\sigma$, where
  $\operatorname{Int}(x)$ denotes conjugation by $x$.
  Since $F'(g)=g\dot w$ and $F'$ commutes with $\sigma$, we have
  \[
       F'(h_\sigma)
       =g\dot w\,\sigma(g\dot w)^{-1}
       =g\sigma(g)^{-1}=h_\sigma.
  \]
  Hence $h_\sigma\in H$, and $\tau$ commutes with $F'$.  Since $w$
  stabilises $I$, the endomorphism
  $F_w=\operatorname{Int}(\dot w)\circ F'$ restricts to a Frobenius
  endomorphism of $\mathbf L_I$.  Moreover, $\sigma$ commutes with $F_w$.
  Conjugation by $g$
  maps $\mathbf L_I^{F_w}$ onto $\mathbf M^{F'}$ and intertwines $\sigma$
  with $\tau$:
  \begin{equation}\label{eq:even-field-intertwining}
       \tau(gxg^{-1})=g\sigma(x)g^{-1}\qquad(x\in\mathbf H).
  \end{equation}
Equation~\eqref{eq:even-field-intertwining} shows that $\tau$ stabilises $\mathbf M$.
Since $\tau$ commutes with $F'$, it also stabilises
$\mathbf T=Z^\circ(\mathbf M)_{\phi_e}$.
Define
\[
  \lambda_I(x)=\lambda(gxg^{-1})
  \qquad(x\in\mathbf L_I^{F_w}).
\]
Then $\lambda_I$ is a unipotent character of $\mathbf L_I^{F_w}$.
By \eqref{eq:even-field-intertwining}, it suffices to prove
$\lambda_I^\sigma=\lambda_I$ to obtain $\lambda^\tau=\lambda$.
The map $\sigma$ preserves each simple component of
$[\mathbf L_I,\mathbf L_I]$ and commutes with $F_w$, so it preserves
each $F_w$-orbit of components.
Projection onto one component in each orbit identifies the corresponding
finite factor with the fixed points of a power of $F_w$.
Since $\sigma=F_2^j$ preserves each component, it commutes with this
projection. Thus its action on each finite factor is induced by
$F_2^j$ and is a field automorphism.
These factors have type $\mathsf A$ or $\mathsf C$, and their unipotent
characters are fixed by field automorphisms
\cite[p.~700]{CabanesSpath2013}.
The standard parametrisation of unipotent characters under central isogenies
and direct products identifies the unipotent characters of $\mathbf L_I^{F_w}$
with tuples of unipotent characters of these finite factors
\cite[Remark~4.2.1]{GeckMalle2020}.
This identification commutes with $\sigma$, and the torus
contributes only its trivial character.
Hence $\lambda_I^\sigma=\lambda_I$, and
\eqref{eq:even-field-intertwining} gives $\lambda^\tau=\lambda$.

  We next show that $\tau$ acts trivially on
  $N_H(\mathbf M)/\mathbf M^{F'}$.
  Let $W_I=N_{\mathbf L_I}(\mathbf T_0)/\mathbf T_0$.  The quotient
  $N_{\mathbf H}(\mathbf L_I)/\mathbf L_I$ identifies with
  $N_W(W_I)/W_I$.  Every element of $W$ has a representative fixed by
  $F_2$, so $\sigma=F_2^j$ acts trivially on $W$ and on this quotient.
  For $y\in N_{\mathbf H}(\mathbf L_I)^{F_w}$, the element
  $y^{-1}\sigma(y)$ belongs to $\mathbf L_I$ and is fixed by $F_w$,
  since $\sigma$ commutes with $F_w$.  Thus $\sigma$ acts trivially on
  $N_{\mathbf H}(\mathbf L_I)^{F_w}/\mathbf L_I^{F_w}$.
  Conjugation by $g$ identifies this quotient with
  $N_H(\mathbf M)/\mathbf M^{F'}$. By \eqref{eq:even-field-intertwining},
  $\tau$ acts trivially on $N_H(\mathbf M)/\mathbf M^{F'}$.

  Set $N=N_H(\mathbf T)=N_H(\mathbf M)$ and
  $N_\lambda=N_H(\mathbf M,\lambda)$. To obtain a $\tau$-invariant
  extension of $\lambda$ to $N_\lambda$, embed $\mathbf H$ in
  $\mathbf K=\operatorname{PGL}_{2r+1}$ by
  $A\mapsto\pi(\operatorname{diag}(A,1))$, where
  $\pi\colon\operatorname{GL}_{2r+1}\to\operatorname{PGL}_{2r+1}$ is the
  quotient map.  This embedding is defined over $\F_2$ and commutes with
  $F'$ and the standard field endomorphisms.  We identify $\mathbf H$, and hence $\mathbf M$, with
  their images in $\mathbf K$.  The group $\mathbf K$ is simple of adjoint type, and
  $\mathbf M$ is a closed connected $F'$-stable reductive subgroup of
  $\mathbf K$.  Let $E(\mathbf K^{F'})$ denote the subgroup of
  $\Aut(\mathbf K^{F'})$ generated by the standard field and graph
  automorphisms.  The element
  $h_\sigma\sigma\in\mathbf K^{F'}E(\mathbf K^{F'})$ induces $\tau$ on $H$
  by conjugation.  We use the same symbol $\tau$ for this element.  Define
  $\widehat N_\lambda=
  \bigl(\mathbf K^{F'}E(\mathbf K^{F'})\bigr)_{\mathbf M,\lambda}$,
  where the subscript denotes the stabiliser of the pair
  $(\mathbf M,\lambda)$.  Since $\lambda$ is unipotent, Sp\"ath's theorem
  \cite[Theorem~3.3]{Spath2025Extensions} shows that $\lambda$ extends to
  $\widehat N_\lambda$.  Both $N_\lambda$ and $\tau$ lie in
  $\widehat N_\lambda$.  Choose
  $\widetilde\lambda\in\Irr(\widehat N_\lambda)$ extending $\lambda$.
  Since $\tau$ normalises $N$ and $N_\lambda$, the restriction
  $\widehat\lambda=\operatorname{Res}_{N_\lambda}^{\widehat N_\lambda}
  \widetilde\lambda$ is a $\tau$-invariant extension of $\lambda$.

  Since $\eta\in\Irr(N\mid\lambda)$ and $N_\lambda$ is the inertia group
  of $\lambda$ in $N$, Clifford theory and Gallagher's theorem give
  \[
    \eta=\operatorname{Ind}_{N_\lambda}^N(\widehat\lambda\xi)
    \qquad\text{for some }\xi\in\Irr(N_\lambda/\mathbf M^{F'}),
  \]
  where $\xi$ is inflated to $N_\lambda$.
  The automorphism $\tau$ fixes $\widehat\lambda$ and acts trivially on
  $N_\lambda/\mathbf M^{F'}$, so it fixes $\xi$ and hence $\eta$.
  Thus $\tau$ fixes $(\mathbf T,\eta)$.
  Because $h_\sigma\in H$ and
  $\tau=\operatorname{Int}(h_\sigma)\circ\sigma$, the automorphisms
  $\tau$ and $\sigma$ induce the same action on $H$-conjugacy classes of
  generic weights.  Hence $\sigma$ fixes the $H$-conjugacy class of
  $(\mathbf T,\eta)$.  This proves that $E_H$ fixes $\mathcal W(C)$
  pointwise.
\end{proof}

The following lemma combines the unipotent character parametrisation with
Feng--Malle--Zhang's correspondence for relative Weyl groups
\cite{FengMalleZhang2026} and Lemma~\ref{lem:even-field-fixed}.

\begin{lemma}\label{lem:even-unipotent-correspondence}
If $C$ is a unipotent $\ell$-block of $H=\Sp_{2r}(2^a)$, then there is an
$E_H$-equivariant bijection $\mathcal X_C\longrightarrow\mathcal W(C)$.
\end{lemma}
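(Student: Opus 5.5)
Since Lemma~\ref{lem:even-field-fixed} shows that $E_H$ fixes every element of both $\mathcal X_C$ and $\mathcal W(C)$, any bijection $\mathcal X_C\to\mathcal W(C)$ is automatically $E_H$-equivariant. The statement therefore reduces to the numerical equality $|\mathcal X_C|=|\mathcal W(C)|$, and the plan is to prove this equality by sorting both sides according to $e$-cuspidal pairs.

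\textbf{Step 1 (the character side).} By \cite[Theorem~4.4]{CabanesEnguehard1994}, as recalled in the proof of Lemma~\ref{lem:even-field-fixed}, $\mathcal X_C=\Irr(C)\cap\mathcal E(H,1)$ is a single generalised $e$-Harish-Chandra series $\mathcal E(H,(\mathbf L,\lambda))$. Here $(\mathbf L,\lambda)$ is a unipotent $e$-cuspidal pair, determined up to $H$-conjugacy by $C$. By the $e$-Harish-Chandra theory of Brou\'e--Malle--Michel for unipotent characters (valid here, since $\ell$ is good and $Z(\mathbf H)=1$), this series is in bijection with $\Irr(W_H(\mathbf L,\lambda))$. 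In particular,
\[
 |\mathcal X_C|=|\Irr(W_H(\mathbf L,\lambda))|.
\]

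\textbf{Step 2 (the weight side).} For a generic weight $(\mathbf T,\eta)$ in $\mathcal W(C)$, the proof of Lemma~\ref{lem:even-field-fixed} shows the following. We have $\mathbf M=C_{\mathbf H}(\mathbf T)$, $\mathbf T=Z^\circ(\mathbf M)_{\phi_e}$, and $\lambda'\in\mathcal E(\mathbf M^{F'},\ell')$ is unipotent and $e$-JGC with $\eta\in\Irr(N_H(\mathbf M)\mid\lambda')$. For unipotent characters, $e$-JGC should coincide with $e$-cuspidal; this is where I would cite \cite[Section~3]{FengMalleZhang2026}. The block condition $\bl(\eta)^H=C$ from \cite[Proposition~3.24(b)]{FengMalleZhang2026} then forces $(\mathbf M,\lambda')$ to be $H$-conjugate to $(\mathbf L,\lambda)$. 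Thus the weights in $\mathcal W(C)$ are the $H$-classes of pairs $(\mathbf T,\eta)$ with $\mathbf T=Z^\circ(\mathbf L)_{\phi_e}$ and with $\eta\in\Irr(N_H(\mathbf L)\mid\lambda)$ satisfying the generic weight defect condition of \cite[Definitions~3.17--3.18]{FengMalleZhang2026}.

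By Clifford theory, together with the extension $\widehat\lambda$ of $\lambda$ to $N_H(\mathbf L)_\lambda$ constructed in Lemma~\ref{lem:even-field-fixed}, these characters are in bijection with characters $\xi\in\Irr(W_H(\mathbf L,\lambda))$. The defect condition selects those $\xi$ that are of $\ell$-defect zero in the appropriate sense, or more precisely the $\ell$-cores and quotients entering the relative Weyl group correspondence of \cite{FengMalleZhang2026}. Feng--Malle--Zhang's counting theorem for relative Weyl groups then identifies the number of such weights with $|\Irr(W_H(\mathbf L,\lambda))|$, or equivalently with the number of unipotent characters in the series.

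\textbf{Main obstacle.} The hard step is the final count: invoking the correct statement of \cite{FengMalleZhang2026} that equates the number of generic weights attached to $(\mathbf L,\lambda)$ with $|\Irr(W_H(\mathbf L,\lambda))|$. This requires checking its hypotheses for $H=\Sp_{2r}(2^a)$ with $r\geq4$ and odd $\ell$. The relevant Weyl groups are of type $G(d,1,w)$, i.e.\ wreath products $C_d\wr S_w$, and for these the needed equality is the classical combinatorial identity between multipartitions and $\ell$-weights (An--Fong--Srinivasan style). I would first try to quote it directly from the unipotent block theorem of \cite{FengMalleZhang2026}, using that Condition~3.23 holds. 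If that is not stated in this form, I would verify it via the wreath product character count. Once the cardinalities agree, choose any bijection; equivariance is automatic by Lemma~\ref{lem:even-field-fixed}.
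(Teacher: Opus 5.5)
The reduction to a cardinality statement and your Step~1 agree with the paper. By Lemma~\ref{lem:even-field-fixed}, $E_H$ fixes both sets pointwise. Also, $\mathcal X_C$ is the $e$-Harish-Chandra series of the unipotent $e$-cuspidal pair $(\mathbf L,\lambda)$ labelling $C$, so it is in bijection with $\Irr(W_H(\mathbf L,\lambda))$.

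\textbf{The gap is in Step~2.} There you identify Feng--Malle--Zhang's $(e,\ell)$-Jordan-generalised-cuspidal (JGC) characters with $e$-cuspidal ones. What coincides with $e$-cuspidality for unipotent characters is Kessar--Malle's $e$-Jordan-cuspidality \cite[Definition~2.1 and Remark~2.2]{KessarMalle2015}, which is a different notion. The generic weights in $\mathcal W(C)$ in general come from several $H$-classes of $e$-JGC pairs $(\mathbf M,\lambda')$ with $\lambda'\in\mathcal E(\mathbf M^{F'},\ell')$ inducing $C$. In such pairs $\mathbf M$ is typically larger than a conjugate of $\mathbf L$ and $\lambda'$ is not $e$-cuspidal. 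The block condition does not force $(\mathbf M,\lambda')$ to be conjugate to $(\mathbf L,\lambda)$.

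Your own description shows that this collapse cannot be right. With a single pair, your Clifford-theoretic count (extensions $\widehat\lambda\xi$ with $\xi$ of defect zero) yields at most $|\dz(W_H(\mathbf L,\lambda))|$ weights. This is strictly less than $|\Irr(W_H(\mathbf L,\lambda))|$ as soon as $\ell$ divides $|W_H(\mathbf L,\lambda)|$, because the trivial character then has positive defect. For example, take the principal $3$-block of $\Sp_{2r}(4)$ with $r\geq4$: here $e=1$ and $W_H(\mathbf L,\lambda)\cong W(\mathsf C_r)$ has order $2^rr!$, which is divisible by $3$. So no counting theorem can give the equality you need for the set described in Step~2.

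The missing ingredient is \cite[Theorem~7.5]{FengMalleZhang2026}, which gives a bijection
\[
 \Irr\bigl(W_H(\mathbf L,\lambda)\bigr)\longrightarrow
 \bigsqcup_{(\mathbf M,\lambda')}\dz\bigl(W_H(\mathbf M,\lambda')\bigr),
\]
where the union runs over all $H$-classes of such $e$-JGC pairs inducing $C$. It requires the maximal extendibility condition, which holds in type $\mathsf C$ by \cite[Proposition~3.20]{FengMalleZhang2026}. Then \cite[Lemma~3.21]{FengMalleZhang2026} identifies the disjoint union with $\mathcal W(C)$.

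To apply the theorem to $C$, you also need the Cabanes--Enguehard label of $C$ \cite[Theorem~4.4(i)]{CabanesEnguehard1994} to agree with the Kessar--Malle label used by Feng--Malle--Zhang. This is where the coincidence of $e$-cuspidality and $e$-Jordan-cuspidality for unipotent characters is actually used. The multipartition and $\ell$-core combinatorics you propose as a fallback is essentially the content of Theorem~7.5. It has to be carried out over all the pairs $(\mathbf M,\lambda')$, not over the single $e$-cuspidal pair.
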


\begin{proof}
Let $(\mathbf L_0,\lambda_0)$ be the unipotent $e$-cuspidal pair
labelling $C$.  As shown in the proof of
Lemma~\ref{lem:even-field-fixed}, the set
$\mathcal X_C=\Irr(C)\cap\mathcal E(H,1)$ is the generalised
$e$-Harish-Chandra series attached to $(\mathbf L_0,\lambda_0)$.
Taking the inverse of the parametrisation in
\cite[Theorem~2.8]{CabanesSpath2013} gives a bijection
$\mathcal X_C\longrightarrow\Irr\bigl(W_H(\mathbf L_0,\lambda_0)\bigr)$.
For unipotent characters, $e$-cuspidality and $e$-Jordan-cuspidality coincide
\cite[Definition~2.1 and Remark~2.2]{KessarMalle2015}.
The block labellings in \cite[Theorem~4.4(i)]{CabanesEnguehard1994} and
\cite[Theorem~A(e)]{KessarMalle2015} therefore assign $C$ to the same pair
$(\mathbf L_0,\lambda_0)$.  Thus
\cite[Theorem~7.5]{FengMalleZhang2026} gives a bijection from
$\Irr\bigl(W_H(\mathbf L_0,\lambda_0)\bigr)$ to
\[
 \bigsqcup_{(\mathbf M,\lambda)}
 \dz\bigl(W_H(\mathbf M,\lambda)\bigr),
\]
where the pairs run through the $H$-classes of $e$-JGC pairs
with $\lambda\in\mathcal E(\mathbf M^{F'},\ell')$
whose induced block is $C$.  The maximal extendibility condition holds
here by the type $\mathsf C$ case of
\cite[Proposition~3.20]{FengMalleZhang2026}.  By
\cite[Lemma~3.21]{FengMalleZhang2026} and the definition of
$\mathcal W(C)$, this disjoint union is in bijection with $\mathcal W(C)$.
Composing these maps gives a bijection
$\mathcal X_C\longrightarrow\mathcal W(C)$.  By
Lemma~\ref{lem:even-field-fixed}, $E_H$ fixes both sets pointwise, so this
bijection is $E_H$-equivariant.
\end{proof}

The BAW-goodness assertion below appears in
\cite[Remark~5.9]{FengLiZhang2022Jordan}. Here we derive it from
the preceding correspondence for generic weights and an integral basic set.

\begin{proposition}\label{prop:even-unipotent}
Every unipotent $\ell$-block of $H=\Sp_{2r}(2^a)$ is BAW-good and
admits an iBAW bijection.
\end{proposition}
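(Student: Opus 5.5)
Since $H=\Sp_{2r}(2^a)$ has trivial centre and, for $r\geq4$, is its own universal covering group with $\Aut(H)=H\rtimes E_H$ and $E_H$ cyclic, the plan is to reduce to Lemma~\ref{lem:cyclic-outer-baw}. A unipotent block $C$ is $E_H$-stable, because field automorphisms fix the unipotent characters (Lemma~\ref{lem:even-field-fixed}). It therefore suffices to construct an $E_H$-equivariant bijection $\IBr(C)\to\Alp(C)$. Lemma~\ref{lem:cyclic-outer-baw} then makes $C$ BAW-good, and the passage from BAW-goodness to iBAW bijections in Section~\ref{sec:preliminaries} supplies the iBAW bijection.

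The bijection will be the composite
\[
\IBr(C)\longrightarrow\mathcal X_C\longrightarrow\mathcal W(C)\longrightarrow\Alp(C).
\]
The middle arrow is Lemma~\ref{lem:even-unipotent-correspondence}.

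For the first arrow, I use Geck's basic set theorem \cite{Geck1993}. Since $\ell$ is odd and good for $\mathbf H$, and $Z(\mathbf H)=1$, the set $\mathcal E(H,\ell')$ is an integral basic set. By the block decomposition of the Brou\'e--Michel series, $\mathcal X_C=\Irr(C)\cap\mathcal E(H,\ell')$ is an integral basic set for $C$. It is $E_H$-stable, since $E_H$ fixes it pointwise. The decomposition map commutes with automorphisms, and $E_H$ is cyclic, hence $2$-hypoelementary. Lemma~\ref{lem:basicset-bridge} therefore gives an isomorphism of $E_H$-sets $\IBr(C)\cong\mathcal X_C$. In particular, $E_H$ fixes $\IBr(C)$ pointwise, so any bijection of cardinalities is already equivariant.

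For the last arrow, I will invoke Feng--Malle--Zhang's comparison of generic and ordinary weights in \cite{FengMalleZhang2026}. Under Condition~3.23, which was noted to hold here, the map $(\mathbf T,\eta)\mapsto(O_\ell(N_H(\mathbf T)),\ldots)$ gives a bijection $\mathcal W(C)\to\Alp(C)$, with $\bl(\eta)^H=C$ from Proposition~3.24(b). This map is natural with respect to automorphisms commuting with $F'$, so it is $E_H$-equivariant. Alternatively, it suffices to check that $E_H$ fixes $\Alp(C)$ pointwise by transporting Lemma~\ref{lem:even-field-fixed}.

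The main obstacle is this last step. I need the precise statement in \cite{FengMalleZhang2026} identifying $\mathcal W(C)$ with $\Alp(C)$ in the good-prime, trivial-centre case, and I need to confirm that it is equivariant rather than merely an equality of cardinalities. If only a counting statement is available, I will argue instead that the identification is canonical, given by the radical subgroup $O_\ell(\mathbf T^{F'})$ together with the weight character induced from $\eta$. Every $\sigma\in E_H$ fixes the class of $(\mathbf T,\eta)$, via the element $\tau=\operatorname{Int}(h_\sigma)\circ\sigma$ built in the proof of Lemma~\ref{lem:even-field-fixed}, and so fixes the corresponding weight class. With all three sets pointwise $E_H$-fixed, the composite bijection is trivially $E_H$-equivariant, and Lemma~\ref{lem:cyclic-outer-baw} finishes the argument.
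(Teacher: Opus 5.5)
Your proposal follows the paper's proof. It uses the same composite $\IBr(C)\to\mathcal X_C\to\mathcal W(C)\to\Alp(C)$, with Geck's basic set theorem and Lemma~\ref{lem:even-field-fixed} making $E_H$ act trivially on $\IBr(C)$, and Lemma~\ref{lem:cyclic-outer-baw} to finish. Your detour through Lemma~\ref{lem:basicset-bridge} is harmless but unnecessary. Since $E_H$ fixes the basis $\{\chi^0:\chi\in\mathcal X_C\}$ of $\Z[\IBr(C)]$, it acts trivially on this permutation lattice and hence fixes each element of $\IBr(C)$, which is how the paper argues.

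The step you flag as the obstacle is exactly \cite[Theorem~6.2]{FengMalleZhang2026}. Condition~3.23 and Proposition~3.24(b) give only $\bl(\eta)^H=C$, not the bijection. The hypothesis of that theorem, \cite[Condition~6.1]{FengMalleZhang2026}, holds here: $\mathbf H$ is simple and simply connected, $\ell$ is odd and good, and $\ell\nmid 2^{a+1}|Z(\mathbf H)^{F'}|$. The group $\mathcal B$ of field and graph automorphisms in it equals $E_H$, because type $\mathsf C_r$ has no nontrivial graph automorphism. So the theorem directly gives an $E_H$-equivariant bijection $\mathcal W(C)\to\Alp(C)$.

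Your fallback should be discarded. Sending $(\mathbf T,\eta)$ to $O_\ell(\mathbf T^{F'})$ with a character induced from $\eta$ only produces weights with abelian radical subgroups. The principal block, however, contains the weight $(P,1)$ with $P\in\Syl_\ell(H)$. For $\ell=3$ this $P$ is nonabelian, since $3\mid 4^a-1$ and $r\geq4$, so such a map cannot be surjective. Likewise, showing by ``transport'' that $E_H$ fixes $\Alp(C)$ pointwise presupposes an equivariant bijection, or at least $|\IBr(C)|=|\Alp(C)|$, which is again what Theorem~6.2 supplies.
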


\begin{proof}
Let $C$ be a unipotent $\ell$-block of $H$.
The hypotheses of \cite[Condition~6.1]{FengMalleZhang2026} hold:
$\mathbf H$ is simple and simply connected, $\ell$ is odd and good, and
$\ell\nmid 2^{a+1}|Z(\mathbf H)^{F'}|$.
The group $\mathcal B$ of field and graph automorphisms in
\cite[Condition~6.1]{FengMalleZhang2026} is $E_H$, since type
$\mathsf C_r$ has no nontrivial graph automorphisms.

Since $Z(\mathbf H)/Z^\circ(\mathbf H)=1$, the hypothesis of
\cite[Theorem~A]{Geck1993} holds.  The blockwise consequence stated
immediately after that theorem, together with the equality
$\mathcal X_C=\Irr(C)\cap\mathcal E(H,1)$ established in the proof of
Lemma~\ref{lem:even-field-fixed}, shows that $\mathcal X_C$ is an integral
basic set for $C$.  By Lemma~\ref{lem:even-field-fixed}, $E_H$ fixes
$\mathcal X_C$ pointwise, so $C$ is $E_H$-stable.
The decomposition map is $E_H$-equivariant and maps $\mathcal X_C$ to a
basis of $\mathbb Z\IBr(C)$.  Hence $E_H$ also fixes $\IBr(C)$ pointwise,
and any bijection $\IBr(C)\longrightarrow\mathcal X_C$ is $E_H$-equivariant.  Since
$\mathcal B=E_H$,
  \cite[Theorem~6.2]{FengMalleZhang2026} gives an
$E_H$-equivariant bijection $\mathcal W(C)\longrightarrow\Alp(C)$.
Composing this map with the preceding bijection
$\IBr(C)\longrightarrow\mathcal X_C$ and the bijection
$\mathcal X_C\longrightarrow\mathcal W(C)$ from
Lemma~\ref{lem:even-unipotent-correspondence} gives an $E_H$-equivariant
bijection
\[
  \Omega_C:\IBr(C)\longrightarrow\Alp(C).
\]

For $r\geq4$, the group $H=\Sp_{2r}(2^a)$ is nonabelian simple,
with $\Aut(H)=H\rtimes E_H$.
By Table~\ref{tab:typec-carriers}, $H$ is its own universal covering
group.  Hence Lemma~\ref{lem:cyclic-outer-baw} proves the result.
\end{proof}

We now combine Proposition~\ref{prop:even-unipotent} with Jordan reduction
to obtain the result for even $q$ and $n\geq4$. This gives an independent
proof of the inductive conclusion in \cite[Theorem~3]{Li2021} in these
ranks, without its unitriangularity assumption.
At linear primes, the unitriangularity assumed in \cite[Theorem~3]{Li2021}
follows from \cite[Theorem~8.2]{GruberHiss1997}.

\begin{proposition}\label{prop:even-bridge}
Let $G=\Sp_{2n}(q)$ and $S=\PSp_{2n}(q)$, where $q=2^f$, $n\geq4$,
and $\ell$ is an odd prime dividing $|G|$.
Then every $\ell$-block of $G$ is BAW-good.
Consequently, $S$ satisfies iBAW at $\ell$ on $G=X_\ell(S)$.
\end{proposition}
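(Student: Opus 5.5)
The plan is to apply Lemma~\ref{lem:jordan-reduction} to $\mathbf G$ simply connected of type $\mathsf C_n$ with $G=\mathbf G^F=\Sp_{2n}(q)$, $q=2^f$. By Table~\ref{tab:typec-carriers}, $G$ is its own universal covering group, and $Z(G)=1$ since $q$ is even, so $G=S$. Once both hypotheses of Lemma~\ref{lem:jordan-reduction} are checked, every $\ell$-block of $G$ is BAW-good. The passage from BAW-goodness to the inductive condition in Section~\ref{sec:preliminaries}, together with Remark~\ref{rem:block-aggregation}(a), then gives iBAW for $S$ at $\ell$ on $X_\ell(S)=G$.

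For condition~(i), the regular embedding can be taken with $\widetilde G=G\times Z(\widetilde G)$ in effect. Indeed, $\mathbf G$ is adjoint in characteristic $2$, so $\widetilde G$ induces only inner automorphisms, and $\widetilde G$ acts trivially on $\IBr(G)$. Hence $\widetilde G_\psi=\widetilde G$, and the factorisation $(\widetilde G A_s)_\psi=\widetilde G_\psi(A_s)_\psi$ is automatic. Choose $A_s$ inside the cyclic group of field automorphisms, as in the proof of Proposition~\ref{prop:odd-two} via \cite{Ruhstorfer2022Derived}. Then $(A_s)_\psi$ is cyclic, and Lemma~\ref{lem:cyclic-extension} gives the extension of $\psi$ to $G(A_s)_\psi$.

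For condition~(ii), the connected subdiagrams of $\mathsf C_n$ have type $\mathsf A_r$ or $\mathsf C_r$ with $r\leq n$. Type $\mathsf A$ is handled by the special linear and unitary result established after Lemma~\ref{lem:jordan-reduction}. For type $\mathsf C_r$ with $r\geq2$, $\mathbf K^{F'}=\Sp_{2r}(2^a)$, and I must treat its strictly quasi-isolated $\ell$-blocks. In $\mathbf K^*=\operatorname{SO}_{2r+1}$ in characteristic $2$, a quasi-isolated semisimple element $s$ satisfies $s^2=1$ by \cite[Proposition~4.11]{Bonnafe2005}. Since $s$ has odd order in characteristic $2$, this gives $s=1$. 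So only unipotent blocks need iBAW bijections.

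For $r\geq4$, Proposition~\ref{prop:even-unipotent} supplies these bijections. The main obstacle is the small ranks $r=2,3$ inside the recursion, since Proposition~\ref{prop:even-unipotent} was stated only for $r\geq4$. For $r=2$, I would invoke Proposition~\ref{prop:rank-two} (with \cite{SchaefferFry2014}) and Corollary~\ref{cor:fixed-point-descent} when $a\geq2$. For $\Sp_4(2)\cong S_6$, which is not simple, condition~(ii) does not apply, because it requires $\mathbf K^{F'}/Z$ simple. For $r=3$, I would rerun the argument of Proposition~\ref{prop:even-unipotent}. Lemmas~\ref{lem:even-field-fixed} and~\ref{lem:even-unipotent-correspondence} and Geck's basic set did not use $r\geq4$, so one still gets an $E_H$-equivariant bijection $\IBr(C)\to\Alp(C)$. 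When $a\geq2$, $\Sp_6(2^a)$ is its own universal cover, so Lemma~\ref{lem:cyclic-outer-baw} applies. The exceptional $\Sp_6(2)$, whose $\ell'$-cover is $2.\Sp_6(2)$ for odd $\ell$, would be handled by invoking Li's \cite[Theorem~3]{Li2021} or a direct small-group check. I expect verifying this exceptional rank-three case to be the most delicate point.
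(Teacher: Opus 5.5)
Your overall strategy matches the paper's. You apply Lemma~\ref{lem:jordan-reduction}, obtain condition~(i) because diagonal automorphisms are inner and $A_s$ can be chosen among the field automorphisms, and reduce condition~(ii) to unipotent blocks in type~$\mathsf C$. One correction in~(i): $\Sp_{2n}$ is simply connected, not adjoint. The relevant fact is that every multiplier in $\F_q^\times$ is a square when $q$ is even, so $\widetilde G=GZ(\widetilde G)$.

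Your verification of condition~(ii) has gaps. That condition quantifies over \emph{every} Steinberg endomorphism $F'$ of $\mathbf K$ with $\mathbf K^{F'}/Z(\mathbf K^{F'})$ simple. In characteristic~$2$ the subdiagram $\mathsf C_2=\mathsf B_2$ carries the exceptional non-Frobenius Steinberg endomorphisms. So your claim $\mathbf K^{F'}=\Sp_{2r}(2^a)$ fails for $r=2$: the Suzuki groups ${}^2\mathsf B_2(2^{2a+1})$ also occur and must be covered. The paper handles them with \cite[Corollary~6.3]{Spath2013} together with Corollary~\ref{cor:fixed-point-descent}. You also need the trivial case $\ell\nmid|\mathbf K^{F'}|$, where every block has defect zero and $\chi^0\mapsto(1,\chi)$ works. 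This is required because Proposition~\ref{prop:even-unipotent} and the small-rank results you cite are stated only for primes dividing the group order.

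The rank-three case is also left unresolved. For $\Sp_6(2)$ you fall back on \cite[Theorem~3]{Li2021}. That theorem assumes exactly the unitriangularity this proposition is meant to avoid, so your argument would become conditional. Your alternative ``direct small-group check'' is not specified. The missing input is \cite[Theorem~5.5]{SchaefferFry2014}, which you already use in rank two. It establishes iBAW at $\ell$ for $\Sp_6(2^a)$ for every $a\geq1$, including the exceptional cover $2.\Sp_6(2)$ at $\ell=3$. Corollary~\ref{cor:fixed-point-descent} then gives iBAW bijections for the unipotent blocks of $\Sp_6(2^a)$. This is how the paper proceeds, and it makes your proposed rerun of Proposition~\ref{prop:even-unipotent} for $r=3$, $a\geq2$ unnecessary. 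That rerun is plausible, but it would require rechecking the inputs from \cite{FengMalleZhang2026}, which the paper only invokes for $r\geq4$.
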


\begin{proof}
Let $\mathbf G$ be a simple, simply connected algebraic group of type
$\mathsf C_n$ in characteristic $2$, and let $F$ be a Frobenius endomorphism
such that $\mathbf G^F=G$.  We verify the two hypotheses of
Lemma~\ref{lem:jordan-reduction}.  Choose the standard regular embedding
$\mathbf G=\Sp_{2n}\hookrightarrow\widetilde{\mathbf G}=\CSp_{2n}$,
and set $\widetilde G=\widetilde{\mathbf G}^F=\CSp_{2n}(q)$.
Let $\mu:\widetilde G\to\F_q^\times$ be the multiplier homomorphism,
characterised by $\langle gv,gw\rangle=\mu(g)\langle v,w\rangle$ for
$g\in\widetilde G$ and $v,w\in\F_q^{2n}$, where
$\langle\cdot,\cdot\rangle$ is the defining symplectic form.
Since $q$ is even, for each $g\in\widetilde G$ there is
$c\in\F_q^\times$ with $c^2=\mu(g)$.  Then $c^{-1}g\in G$, so
$\widetilde G=GZ(\widetilde G)$ and $\widetilde G$ acts on $G$
by inner automorphisms. Let $E\cong C_f$ act on $G$ and
$\widetilde G$ by the field automorphisms induced by entrywise squaring.
Then
$\Aut(G)=G\rtimes E$.  For every $\psi\in\IBr(G)$, the group
$\widetilde G$ fixes $\psi$, and hence
\[
 (\widetilde G\rtimes E)_\psi=\widetilde G\rtimes E_\psi.
\]
The group $E_\psi$ is cyclic, so by Lemma~\ref{lem:cyclic-extension},
$\psi$ extends to $G\rtimes E_\psi$.
For every semisimple $\ell'$-element $s\in(\mathbf G^*)^F$, the
construction in \cite[Corollary~4.2 and Lemmas~4.3 and~4.5]{Ruhstorfer2022Derived}
allows us, after conjugating the Levi subgroup associated with $s$ and
its series idempotent by an element of $G$, to choose the group $A_s$ in
Lemma~\ref{lem:jordan-reduction} inside $E$, since type~$\mathsf C_n$
with $n\geq4$ has no graph automorphism.
Restricting the preceding factorisation and extensions to $A_s$ verifies
condition~\textup{(i)} of that lemma.

To verify condition~\textup{(ii)} of
Lemma~\ref{lem:jordan-reduction}, let $(\mathbf K,F')$ be a pair
as in that condition.  If $\ell\nmid|\mathbf K^{F'}|$, then every $\ell$-block $c$ has defect
zero.  Writing $\Irr(c)=\{\chi\}$, the map $\chi^0\mapsto(1,\chi)$ is
an iBAW bijection for $c$, since the global and local modular character
triples are identical.  We may therefore assume that
$\ell\mid|\mathbf K^{F'}|$.

A connected Dynkin subdiagram of $\mathsf C_n$ has type $\mathsf A_m$ or
$\mathsf C_m$. If $\mathbf K$ has type~$\mathsf A_m$
(including $\mathsf C_1=\mathsf A_1$), then $\mathbf K^{F'}$
is a special linear or unitary group. The result for these groups
established after Lemma~\ref{lem:jordan-reduction} gives the required
iBAW bijections for its $\ell$-blocks.

Suppose that $\mathbf K$ has type $\mathsf C_m$, where $m\geq2$, and let
$c$ be a strictly quasi-isolated $\ell$-block of $\mathbf K^{F'}$.  Choose a
strictly quasi-isolated semisimple $\ell'$-element
$s\in\mathbf K^{*F'}$ such that
$\Irr(c)\subseteq\mathcal E_\ell(\mathbf K^{F'},s)$.
Since $\mathbf K^*$ has type $\mathsf B_m$ in characteristic $2$
and $s$ is quasi-isolated, \cite[Example~4.8]{Bonnafe2005} gives
$s=1$.  Hence $c$ is unipotent.

First suppose that $F'$ is split, and write
$\mathbf K^{F'}=\Sp_{2m}(2^a)$. For $m\geq4$,
Proposition~\ref{prop:even-unipotent} gives the required iBAW bijection
for $c$. The groups $\Sp_4(2^a)$ with $a\geq2$ and
$\Sp_6(2^a)$ with $a\geq1$ satisfy iBAW at $\ell$ by
\cite[Theorem~5.5]{SchaefferFry2014}.  Corollary~\ref{cor:fixed-point-descent}
therefore gives the required iBAW bijections for their $\ell$-blocks.
The simplicity assumption in Lemma~\ref{lem:jordan-reduction}\textup{(ii)}
excludes $\Sp_4(2)$.
The program \path{sp6.g}, described in
Section~\ref{app:sp6}, supplies the numerical details of the
calculation on $2.\Sp_6(2)$ at $\ell=3$ in the proof of
\cite[Theorem~5.5]{SchaefferFry2014}.
If $F'$ is not split, then $m=2$ and $\mathbf K^{F'}$ is a
Suzuki group.  Each simple Suzuki group satisfies iBAW at $\ell$
by \cite[Corollary~6.3]{Spath2013}, so
Corollary~\ref{cor:fixed-point-descent} gives the required iBAW
bijections for its $\ell$-blocks.

This proves condition~\textup{(ii)} of
Lemma~\ref{lem:jordan-reduction}. Condition~\textup{(i)} was verified above.
Since $G=S$ is its own universal covering group by
Table~\ref{tab:typec-carriers}, Lemma~\ref{lem:jordan-reduction}
shows that every
$\ell$-block of $G$ is BAW-good.  Since $G=X_\ell(S)$,
Remark~\ref{rem:block-aggregation}\textup{(a)} gives the final assertion.
\end{proof}

\subsection{Odd fields}

Throughout this subsection, assume that $q=p^f$ is odd, $n\geq3$, and that
$\ell\mid |S|$ is an odd prime distinct from $p$.
For linear primes, the unitriangularity assumed in
\cite[Theorem~2]{Li2021} follows from
\cite[Theorem~8.2 and Corollary~8.7]{GruberHiss1997}.
For $\Sp_{2n}(q)$ and $\operatorname{Spin}_{2n+1}(q)$, Feng and Sp\"ath
proved unitriangularity for the sums of $\ell$-blocks associated with
semisimple $\ell'$-elements whose connected centralisers are Levi
subgroups of the dual algebraic group
\cite[Proposition~7.3]{FengSpath2023}.

Set
\[
   G=\Sp_{2n}(q),\qquad \widetilde G=\CSp_{2n}(q).
\]
Let $F_p$ be the automorphism of $\widetilde G$ obtained by raising every
matrix entry to the $p$th power, and set $E=\langle F_p\rangle\cong C_f$.
The group $E$ acts on $G$ by restriction.  Here $G=X_\ell(S)$.  We use the conformal group to describe the diagonal
and field actions.  Let $N$ be the group of linear Brauer
characters of $\widetilde G/G$, and set $A=N\rtimes E$.
The group $N$ is cyclic of order equal to the $\ell'$-part of $q-1$.
The group $A$ acts on the $\ell$-blocks and irreducible Brauer
characters of $\widetilde G$, with $N$ acting by tensoring and
$E$ by field automorphisms.  For a block
$\widetilde B$ of $\widetilde G$, let $J=A_{\widetilde B}$ be its
stabiliser in $A$. Write $D=J\cap N$.

\begin{lemma}\label{lem:exact-stabilizer}
For every $\ell$-block $\widetilde B$ of $\widetilde G=\CSp_{2n}(q)$,
$D$ is a normal subgroup of $J$ of order at most $2$, and $J/D$ is cyclic.
Consequently, every subgroup of $J$ is $2$-hypoelementary.
\end{lemma}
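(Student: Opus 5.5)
The plan is to get each assertion from Lemma~\ref{lem:linear-block-stabilizer} and the structure of $A=N\rtimes E$.

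\emph{$D$ is normal.} Since $N$ is normal in $A$ (it is the kernel of $A\to E$), the intersection $D=J\cap N$ is normal in $J$.

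\emph{$|D|\leq 2$.} Apply Lemma~\ref{lem:linear-block-stabilizer} with $G$ replaced by $\widetilde G$, $N$ replaced by $\Sp_{2n}(q)$, and $C$ the group of linear Brauer characters of $\widetilde G/G$. The stabiliser $D=C_{\widetilde B}$ then has order at most $|\widetilde G:GZ(\widetilde G)|$. The multiplier map $\mu$ identifies $\widetilde G/G$ with $\F_q^\times$. The scalars $Z(\widetilde G)=\F_q^\times I$ have multipliers $c^2$, so $GZ(\widetilde G)/G$ corresponds to the squares. Since $q$ is odd, this gives $|\widetilde G:GZ(\widetilde G)|=2$, and hence $|D|\leq 2$.

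\emph{$J/D$ is cyclic.} We have $J/D=J/(J\cap N)\cong JN/N$, which is a subgroup of $A/N\cong E\cong C_f$. It is therefore cyclic.

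\emph{Every subgroup of $J$ is $2$-hypoelementary.} Let $U\leq J$. Then $U\cap D$ is a normal subgroup of $U$ of order at most $2$, hence a normal $2$-subgroup, so $U\cap D\leq O_2(U)$. The quotient $U/(U\cap D)$ embeds in the cyclic group $J/D$, so it is cyclic. Consequently $U/O_2(U)$, being a quotient of $U/(U\cap D)$, is cyclic. This is exactly the definition of $2$-hypoelementary, so $U$ is $2$-hypoelementary; in particular $J$ itself is.

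The only delicate point is the application of Lemma~\ref{lem:linear-block-stabilizer}: one must check that its $C$ (ordinary linear characters of $\ell'$-order, identified with linear Brauer characters) matches the $N$ here. It does, because $|\widetilde G/G|=q-1$ and Brauer linear characters of an abelian group are exactly those of $\ell'$-order. Beyond this, the argument is a direct check.
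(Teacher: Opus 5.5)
Your proposal is correct and follows essentially the same route as the paper. Both use the multiplier to get $|\widetilde G:GZ(\widetilde G)|=2$, Lemma~\ref{lem:linear-block-stabilizer} for $|D|\leq 2$, normality of $N$ in $A$, and the embedding of $J/D$ in $E$; the only cosmetic difference is that you check directly that each $U\leq J$ is $2$-hypoelementary via $U\cap D\leq O_2(U)$, whereas the paper proves this for $J$ and invokes closure of $r$-hypoelementary groups under subgroups.
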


\begin{proof}
The multiplier homomorphism $\mu:\widetilde G\to\F_q^\times$ is
surjective with kernel $G$, and $\mu(cI)=c^2$ for $c\in\F_q^\times$.
Thus it identifies $\widetilde G/G$ with $\F_q^\times$ and sends
$Z(\widetilde G)$ onto $(\F_q^\times)^2$.
Hence $|\widetilde G:GZ(\widetilde G)|=2$.
Applying Lemma~\ref{lem:linear-block-stabilizer} to
$G\lhd\widetilde G$ and $\widetilde B$, using reduction to identify
the linear characters, gives $|D|\leq2$.
Since $N\triangleleft A$, we have $D\triangleleft J$.
The projection $A\to E$ embeds $J/D$ in $E$, so $J/D$ is cyclic.
As $|D|\leq2$, we have $D\leq O_2(J)$, and $J/O_2(J)$ is cyclic.
Thus $J$ and every subgroup of $J$ are $2$-hypoelementary.
\end{proof}

Write $Z_{\widetilde B}=\Irr(\widetilde B)\cap
\mathcal E(\widetilde G,\ell')$.
The set $Z_{\widetilde B}$ is stable under $J$.  Indeed,
$\mathcal E(\widetilde G,\ell')$ is preserved by tensoring with linear
$\ell'$-characters \cite[Remark~2.5]{Li2021} and by field automorphisms
\cite[Theorem~3.1]{CabanesSpath2013}.

\begin{proposition}
\label{prop:odd-conformal-bridge}
For every $\ell$-block $\widetilde B$ of $\widetilde G=\CSp_{2n}(q)$,
there is a $J$-equivariant bijection
$\IBr(\widetilde B)\longrightarrow Z_{\widetilde B}$.
\end{proposition}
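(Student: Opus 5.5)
The plan is to apply Lemma~\ref{lem:basicset-bridge} with $A$ replaced by $J$, acting on $\Irr(\widetilde B)$ and $\IBr(\widetilde B)$. The group $N$ acts by tensoring and $E$ by field automorphisms. By Lemma~\ref{lem:exact-stabilizer}, $J$ is $2$-hypoelementary. So it suffices to check two things: that $Z_{\widetilde B}$ is a $J$-stable integral basic set for $\widetilde B$, and that $d_{\widetilde B}$ is $J$-equivariant. Equivariance is routine. Tensoring with an inflated linear $\ell'$-character commutes with restriction to $\ell$-regular elements, once we identify $N$ with the corresponding ordinary linear characters of $\ell'$-order, as in Lemma~\ref{lem:linear-block-stabilizer}. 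Field automorphisms also commute with $\chi\mapsto\chi^0$. Stability of $Z_{\widetilde B}$ under $J$ was noted just before the statement.

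The key input is that $Z_{\widetilde B}=\Irr(\widetilde B)\cap\mathcal E(\widetilde G,\ell')$ is an integral basic set. The group $\widetilde{\mathbf G}=\CSp_{2n}$ has connected centre, and $\ell$ is odd, hence good for type $\mathsf C$. I would first try Geck's theorem \cite[Theorem~A]{Geck1993} together with its blockwise consequence, exactly as in the proof of Proposition~\ref{prop:even-unipotent}. One should check that this applies to $\widetilde G$ in odd characteristic with $\ell\neq p$ good and $\ell$ not dividing $|Z(\widetilde{\mathbf G})/Z^\circ(\widetilde{\mathbf G})|=1$. It then gives that $\mathcal E(\widetilde G,\ell')$ is a basic set. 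Since $\mathcal E_\ell(\widetilde G,s)$ is a union of blocks by \cite[Theorem~2.2]{BroueMichel1989}, intersecting with $\Irr(\widetilde B)$ gives a basic set of $\widetilde B$. If Geck's statement only supplies a basic set over a larger ring, I would use Geck--Hiss \cite{GeckHiss1991}, which gives integrality when the centre is connected and $\ell$ is good. With this, $d_{\widetilde B}$ restricts to a $\Z$-isomorphism $\Z[Z_{\widetilde B}]\to\Z[\IBr(\widetilde B)]$.

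Lemma~\ref{lem:basicset-bridge} then gives an isomorphism of $\Z J$-permutation lattices. Corollary~\ref{cor:conlon-mark} with $r=2$ turns it into an isomorphism of $J$-sets, that is, a $J$-equivariant bijection $\IBr(\widetilde B)\to Z_{\widetilde B}$. Here one passes to left actions as prescribed in Section~\ref{sec:preliminaries}; inverting the action does not affect the isomorphism type of the $J$-set.

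I expect the main obstacle to be the integral basic set step. One must confirm that the cited basic set theorem applies blockwise to $\CSp_{2n}(q)$ for all odd $\ell\neq p$, including $\ell=3$, and without any unitriangularity input. Everything else is formal.
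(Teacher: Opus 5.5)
Your proposal is correct and matches the paper's argument. The paper cites \cite[Theorem~5.1]{GeckHiss1991} directly for the connected-centre group $\CSp_{2n}$ (your fallback) and passes to $\widetilde B$ using only block-diagonality of the decomposition map, which suffices without the Brou\'e--Michel union-of-blocks statement; it then checks $J$-equivariance of $d_{\widetilde B}$ via the reduction identification of $N$ and concludes with Lemma~\ref{lem:exact-stabilizer} and Lemma~\ref{lem:basicset-bridge}, which already contains the Conlon step.
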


\begin{proof}
The prime $\ell$ is odd and hence good for type $\mathsf C$.
The conformal algebraic group has connected centre.  By
\cite[Theorem~5.1]{GeckHiss1991}, $\mathcal E(\widetilde G,\ell')$
is an integral basic set for $\widetilde G$.
Since the decomposition map respects the block decomposition, $Z_{\widetilde B}$
is an integral basic set for $\widetilde B$.

Identify $N$ with the group of ordinary linear $\ell'$-characters of
$\widetilde G/G$ via reduction.  This identification is $E$-equivariant,
and the decomposition map commutes with tensoring and field
automorphisms.  Hence $d_{\widetilde B}$ is $J$-equivariant.
The set $Z_{\widetilde B}$ is $J$-stable, and $J$ is
$2$-hypoelementary by Lemma~\ref{lem:exact-stabilizer}.
Lemma~\ref{lem:basicset-bridge} gives the asserted bijection.
\end{proof}

\begin{lemma}
\label{lem:odd-g-factorization}
Let $G=\Sp_{2n}(q)$ and $\widetilde G=\CSp_{2n}(q)$.
Then there is an $\Aut(G)$-equivariant bijection
\[
  \beta:\IBr(G)\longrightarrow\mathcal E(G,\ell')
\]
such that
\[
  \beta(\IBr(B))=\Irr(B)\cap\mathcal E(G,\ell')
\]
for every $\ell$-block $B$ of $G$.
Moreover, for every $\varphi\in\IBr(G)$,
\[
  (\widetilde G\rtimes E)_\varphi
    =\widetilde G_\varphi\rtimes E_\varphi,
\]
and $\varphi$ extends to $G\rtimes E_\varphi$.
\end{lemma}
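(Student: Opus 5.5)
The plan is to imitate Proposition~\ref{prop:odd-conformal-bridge} directly on $G$, using the group generated by the diagonal and field automorphisms. The outer automorphism group of $G=\Sp_{2n}(q)$ for odd $q$ and $n\geq3$ is generated by the diagonal automorphism induced by $\widetilde G$ and by $E$. Hence $\Aut(G)$ acts on $\IBr(G)$ and $\Irr(G)$ through $\Gamma=(\widetilde G\rtimes E)/Z(\widetilde G)G$. Since $|\widetilde G:GZ(\widetilde G)|=2$ (proof of Lemma~\ref{lem:exact-stabilizer}), $\Gamma$ has a normal subgroup of order $2$ with cyclic quotient $E$. So $\Gamma$, and each stabiliser of an $\ell$-block $B$ of $G$, is $2$-hypoelementary.

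For the bijection $\beta$, I would first show that $\mathcal X_B=\Irr(B)\cap\mathcal E(G,\ell')$ is an integral basic set for each block $B$ of $G$. Geck--Hiss \cite[Theorem~5.1]{GeckHiss1991} needs connected centre, which fails for $\Sp_{2n}$, so I would deduce it from the conformal case by Clifford theory.

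Restriction from $\widetilde G$ to $G$ is multiplicity free, since $\widetilde G/G$ is cyclic. The $N$-orbits on $Z_{\widetilde B}$ and on $\IBr(\widetilde B)$ correspond $J$-equivariantly by Proposition~\ref{prop:odd-conformal-bridge}. Moreover, a character $\widetilde\chi$ and its image have stabilisers in $D$ of the same order. Therefore $\widetilde\chi$ restricts to $G$ with the same number of constituents ($1$ or $2$) as its Brauer partner.

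The key technical step is to show that, when $\widetilde\chi\in Z_{\widetilde B}$ is $D$-stable with $|D|=2$, the two constituents of $\widetilde\chi|_G$ reduce to a $\Z$-basis of the span of the two constituents of the corresponding Brauer restriction. I would do this by comparing $d_{\widetilde B}$ with restriction and using that the two $G$-constituents are swapped by $\widetilde G$. I expect this to be the main obstacle, because unitriangularity must be avoided. To get around it, I would use that the $\widetilde G$-conjugation swaps the two constituents on both sides. The decomposition submatrix is then a $2\times2$ integral matrix commuting with the swap, of the form $\begin{pmatrix}a&b\\ b&a\end{pmatrix}$. Its determinant $a^2-b^2$ must equal $\pm1$ by unimodularity over $\widetilde G$ after Clifford correspondence, which forces it to be a signed permutation matrix.

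With the integral basic set for $B$ established, I would apply Lemma~\ref{lem:basicset-bridge} with the $2$-hypoelementary stabiliser $\Gamma_B$. This gives a $\Gamma_B$-equivariant bijection $\IBr(B)\to\mathcal X_B$. Remark~\ref{rem:block-aggregation}\textup{(b)} then assembles these into an $\Aut(G)$-equivariant $\beta$ that preserves blocks.

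For the stabiliser factorisation, equivariance of $\beta$ transfers the question to ordinary characters in $\mathcal E(G,\ell')$. For these, the equality $(\widetilde G\rtimes E)_\chi=\widetilde G_\chi\rtimes E_\chi$ is the known statement of Cabanes--Sp\"ath type, in the form used by Li \cite{Li2021}, for symplectic groups. The relevant input is that any element of $\widetilde G\rtimes E$ stabilising $\chi$ acts on the two-element orbit in a way compatible with a field automorphism. Since $\beta$ is $\widetilde G\rtimes E$-equivariant, the Brauer stabilisers coincide with the ordinary ones, which gives the factorisation. Finally, $(G\rtimes E_\varphi)/G\cong E_\varphi$ is cyclic, so Lemma~\ref{lem:cyclic-extension} extends $\varphi$ to $G\rtimes E_\varphi$.
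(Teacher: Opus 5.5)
Apart from one step, your outline is the paper's proof. It takes integral basic sets $\mathcal X_B$ and uses the $2$-hypoelementary group $\Aut(G)_B/\Inn(G)\leq\Out(G)\cong C_2\times C_f$. Lemma~\ref{lem:basicset-bridge} and Remark~\ref{rem:block-aggregation}\textup{(b)} then build $\beta$. The stabiliser factorisation is proved for $\chi=\beta(\varphi)$, where the paper cites \cite[Theorem~3.1]{CabanesSpath2017TypeC}, and is transported by equivariance; Lemma~\ref{lem:cyclic-extension} gives the extension. You should also state why $\mathcal X_B$ is $\Aut(G)_B$-stable, since Lemma~\ref{lem:basicset-bridge} needs it. Diagonal automorphisms fix each $\mathcal E(G,s)$, and field automorphisms send $\mathcal E(G,s)$ to some $\mathcal E(G,s')$ with $|s'|=|s|$.

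The gap is the integral basic set for $G$. You do not need the conformal group for this. Geck's \cite[Theorem~A]{Geck1993} only requires that the good prime $\ell$ not divide $|Z(\mathbf G)/Z^\circ(\mathbf G)|$. For $\Sp_{2n}$ this group has order $2$ and $\ell$ is odd, so $\mathcal E(G,\ell')$ is an integral basic set directly; this is what the paper uses.

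Your Clifford-theoretic substitute does not work. The bijection of Proposition~\ref{prop:odd-conformal-bridge} comes from Conlon's theorem and has nothing to do with decomposition numbers. In general $\widetilde\chi^0$ involves many irreducible Brauer characters besides its partner $\widetilde\varphi$. So, writing $\widetilde\varphi_G=\theta_1+\theta_2$, there is no reason for $\chi_1^0,\chi_2^0$ to lie in $\Z\theta_1+\Z\theta_2$. The decomposition matrix of $\mathcal X_B$ is therefore not block diagonal with respect to your pairing; that would be far stronger than unitriangularity. Without block diagonality you cannot check unimodularity one $2\times2$ block at a time, and nonnegativity of decomposition numbers no longer helps.

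More fundamentally, after inverting $2$ the decomposition matrix of $\mathcal X_B$ splits into a $\widetilde G$-invariant part and an anti-invariant part. The anti-invariant part pairs the differences $\chi_1-\chi_2$ with the differences $\theta_1-\theta_2$, with entries $a-b$. Restriction from $\widetilde G$ produces only $\widetilde G$-invariant class functions, so it sees only the invariant part (the sums $a+b$ and the entries at unsplit characters). Unimodularity of the $\widetilde G$-basic set thus says nothing about the anti-invariant block, and your claim ``$a^2-b^2=\pm1$ by unimodularity over $\widetilde G$'' is unsupported. Controlling that block requires genuinely new information about $G$, which is what Geck's theorem supplies.
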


\begin{proof}
Let $\mathbf G$ be the simply connected symplectic algebraic group and let
$F$ be its defining Frobenius endomorphism, so that $\mathbf G^F=G$.
Since $\ell$ is odd and $Z(\mathbf G)=\{\pm I\}$,
\cite[Theorem~A]{Geck1993} shows that $\mathcal E(G,\ell')$ is an
integral basic set for $G$.
Fix an $\ell$-block $B$ of $G$ and set
$\mathcal X_B=\Irr(B)\cap\mathcal E(G,\ell')$.
Since the decomposition map respects the block decomposition, $\mathcal X_B$ is an
integral basic set for $B$.

Diagonal automorphisms preserve each rational Lusztig series
\cite[Proposition~15.6(i)]{CabanesEnguehard2004}. Field automorphisms
send a series $\mathcal E(G,s)$ to a series $\mathcal E(G,s')$ with
$|s'|=|s|$ \cite[Proposition~7.2]{Taylor2018Automorphisms}. Thus
$\mathcal E(G,\ell')$ is $\Aut(G)$-stable, and $\mathcal X_B$ is
$\Aut(G)_B$-stable.
The actions on $\mathcal X_B$ and $\IBr(B)$ factor through
$L=\Aut(G)_B/\Inn(G)$, and the decomposition map is $L$-equivariant.
The group $\Out(G)\cong C_2\times C_f$ is $2$-hypoelementary, so
the same holds for $L$.
Lemma~\ref{lem:basicset-bridge} gives an $\Aut(G)_B$-equivariant
bijection $\beta_B:\IBr(B)\longrightarrow\mathcal X_B$.
Applying Remark~\ref{rem:block-aggregation}\textup{(b)} to these blockwise
bijections gives the required map $\beta$.

Let $\varphi\in\IBr(G)$ and write $\chi=\beta(\varphi)$.
Applying \cite[Theorem~3.1]{CabanesSpath2017TypeC} to the ordinary
character $\chi$ gives
$(\widetilde G\rtimes E)_\chi=\widetilde G_\chi\rtimes E_\chi$.
Equivariance and bijectivity of $\beta$ therefore give
\[
  (\widetilde G\rtimes E)_\varphi
    =(\widetilde G\rtimes E)_\chi
    =\widetilde G_\chi\rtimes E_\chi
    =\widetilde G_\varphi\rtimes E_\varphi.
\]
Finally, $E_\varphi$ is cyclic, so Lemma~\ref{lem:cyclic-extension}
gives the required extension.
\end{proof}

\begin{remark}
The proofs of Proposition~\ref{prop:odd-conformal-bridge} and
Lemma~\ref{lem:odd-g-factorization} apply Conlon's theorem over the
$2$-adic integers through Lemma~\ref{lem:basicset-bridge}.  Here $\ell$ is odd.  These arguments give neither a canonical Brauer
labelling nor a unitriangular decomposition matrix.
\end{remark}

For odd $q$, the following proposition removes the unitriangularity
assumptions from Li's result \cite[Theorem~2]{Li2021}. We use the ordinary character
and weight calculations in that paper, together with its stabiliser
and extension results for weights. Proposition~\ref{prop:odd-conformal-bridge}
replaces the use of unitriangularity for $\widetilde G$.
Lemma~\ref{lem:odd-g-factorization} gives the conclusions of
\cite[Lemma~5.4]{Li2021} without the unitriangularity assumption for $G$.

\begin{proposition}
\label{prop:odd-conlon}
Let $G=\Sp_{2n}(q)$ and $S=\PSp_{2n}(q)$.
Then every $\ell$-block of $G$ satisfies the inductive BAW condition.
Consequently, $S$ satisfies iBAW at $\ell$ on $G=X_\ell(S)$.
\end{proposition}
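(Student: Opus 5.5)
The plan is to follow the structure of Li's proof of \cite[Theorem~2]{Li2021}, replacing each use of unitriangularity by the results just established. Li's criterion has two ingredients. First, it needs an equivariant bijection between $\IBr$ and weights for the blocks of $\widetilde G=\CSp_{2n}(q)$, compatible with the action of $A=N\rtimes E$ (tensoring with linear characters and field automorphisms). Second, it needs the stabiliser factorisation and extension properties for $\IBr(G)$ and for weights of $G$. With these, the Brough--Sp\"ath criterion \cite[Theorem~3.18]{FengLiZhang2022Jordan}, in the form used by Li, yields BAW-goodness of every $\ell$-block of $G$.

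For the global side on $\widetilde G$, fix an $\ell$-block $\widetilde B$ and its stabiliser $J=A_{\widetilde B}$. Li's ordinary character and weight calculations give a $J$-equivariant bijection $Z_{\widetilde B}\to\Alp(\widetilde B)$. It comes from the combinatorial labelling of $\mathcal E(\widetilde G,\ell')\cap\Irr(\widetilde B)$ by the same data (semisimple label and tuples of partitions or cores) that label the weights, and it is compatible with tensoring and field action. Composing it with the $J$-equivariant bijection $\IBr(\widetilde B)\to Z_{\widetilde B}$ of Proposition~\ref{prop:odd-conformal-bridge} gives a $J$-equivariant bijection $\IBr(\widetilde B)\to\Alp(\widetilde B)$. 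Remark~\ref{rem:block-aggregation}(b) then assembles these into an $A$-equivariant, block-preserving bijection on $\widetilde G$. This step replaces Li's use of unitriangularity for $\widetilde G$, which he needed only to obtain equivariance of the Brauer labelling.

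For $G$, Lemma~\ref{lem:odd-g-factorization} gives $(\widetilde G\rtimes E)_\varphi=\widetilde G_\varphi\rtimes E_\varphi$ and extendibility of $\varphi$ to $G\rtimes E_\varphi$; these are exactly the conclusions of \cite[Lemma~5.4]{Li2021}. On the local side, Li's stabiliser and extension results for weights are already unconditional and will be quoted. The remaining compatibility requires that $\IBr(\widetilde G)$ lying over $\IBr(B)$, and the corresponding weights, match under restriction. This follows from the $N$-equivariance of the $\widetilde G$-bijection by the usual Clifford theory argument in Li's proof, because $\widetilde G/G$ is cyclic and the $N$-orbits control the number of constituents on restriction.

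The main obstacle will be this last check. We must confirm that the bijection for $\widetilde G$ obtained via Conlon's theorem, which is not canonical, still satisfies the restriction condition of the criterion. That condition asks that $\widetilde\varphi$ and its image weight cover $\varphi$ and the corresponding weight of $G$ with matching $N$-stabilisers. I would handle it by noting that only orbit data enter: $N$-stabilisers are preserved by any $J$-equivariant map. Moreover, the block of $G$ covered is determined by $\widetilde B$, so we may construct the bijection for $G$ by restriction, using Lemma~\ref{lem:odd-g-factorization}. Once every $\ell$-block of $G$ is BAW-good, Remark~\ref{rem:block-aggregation}(a) and $G=X_\ell(S)$ give the final assertion.
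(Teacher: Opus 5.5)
Your proposal is correct and follows essentially the same route as the paper. The paper applies Brough--Sp\"ath's criterion \cite[Theorem~4.5]{BroughSpath2022}, and builds the $A$-equivariant bijection on $\widetilde G$ by composing Proposition~\ref{prop:odd-conformal-bridge} with Li's block-preserving labelling $\mathcal E(\widetilde G,\ell')\to\Alp(\widetilde G)$, whose equivariance does not use unitriangularity. It then uses Lemma~\ref{lem:odd-g-factorization} for the Brauer character stabilisers of $G$ and \cite[Lemma~5.5]{Li2021} for the weights.

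The restriction condition you single out as the main obstacle is the equality $J_G(\widetilde\varphi)=J_G(\widetilde\Omega(\widetilde\varphi))$, and the paper settles it at once by \cite[Remark~2.2(1)]{Li2021}, which makes both sides equal to $\widetilde G$. Your reason for it needs correcting. Cyclicity of $\widetilde G/G$ is not why $N$-orbit data suffice, since its order $q-1$ may be divisible by $\ell$ and that part is invisible to the $\ell'$-group $N$. The actual reason is that every inertia group contains $GZ(\widetilde G)$, which has index $2$, prime to $\ell$.
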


\begin{proof}
We apply \cite[Theorem~4.5]{BroughSpath2022} with $\mathcal B$ equal to
the set of all $\ell$-blocks of $G$ and with $\widetilde G$ and $E$ as
above.  The group $E$ is cyclic, $G=[\widetilde G,\widetilde G]$, and
\[
 C_{\widetilde G\rtimes E}(G)=Z(\widetilde G),
 \qquad
 (\widetilde G\rtimes E)/Z(\widetilde G)\cong\Aut(G).
\]
Since $\widetilde G/G$ is cyclic, every
$\varphi\in\IBr(G)$ extends to $\widetilde G_\varphi$ by
Lemma~\ref{lem:cyclic-extension}.
For a weight $(Q,\theta)$ of $G$, the quotient
$(N_{\widetilde G}(Q)/Q)_\theta/(N_G(Q)/Q)$ is also cyclic,
so the same lemma extends $\theta$ to
$(N_{\widetilde G}(Q)/Q)_\theta$.
This verifies condition~\textup{(i)} of
\cite[Theorem~4.5]{BroughSpath2022}.

The parametrisations of ordinary characters in \cite[Lemma~3.13]{Li2021} and
of weights in \cite[Theorem~4.13 and Remarks~4.7--4.8]{Li2021} give a
bijection
\[
 \rho:\mathcal E(\widetilde G,\ell')
       \longrightarrow\Alp(\widetilde G)
\]
that preserves blocks.
The proofs of \cite[Lemmas~5.1 and~5.3]{Li2021} show that field
automorphisms and tensoring by $N$ induce the same changes of labels on
the ordinary characters and on the corresponding weights.
In the proof of \cite[Lemma~5.1]{Li2021},
unitriangularity identifies the field action on Brauer character labels
with that on ordinary character labels. In the proof of
\cite[Lemma~5.3]{Li2021}, it identifies the corresponding actions of
tensoring with linear characters. The comparison between ordinary
characters and weights uses neither identification. Thus $\rho$ is
$A$-equivariant.

Choose one block $\widetilde B$ in each $A$-orbit and let
$J=A_{\widetilde B}$.
Proposition~\ref{prop:odd-conformal-bridge} gives a $J$-equivariant
bijection
\[
 \alpha_{\widetilde B}:\IBr(\widetilde B)
       \longrightarrow Z_{\widetilde B}.
\]
Composing with $\rho$ and applying Remark~\ref{rem:block-aggregation}\textup{(b)}
to the $A$-orbits gives an
$A$-equivariant bijection
\[
 \widetilde\Omega:\IBr(\widetilde G)\longrightarrow\Alp(\widetilde G)
\]
that preserves blocks.
In the notation of \cite[Theorem~4.5(ii)]{BroughSpath2022},
\[
 J_G(\widetilde\varphi)
   =J_G(\widetilde\Omega(\widetilde\varphi))
   =\widetilde G
 \qquad(\widetilde\varphi\in\IBr(\widetilde G))
\]
by \cite[Remark~2.2(1)]{Li2021}.
This verifies condition~\textup{(ii)} of
\cite[Theorem~4.5]{BroughSpath2022}.
For every $\varphi\in\IBr(G)$,
Lemma~\ref{lem:odd-g-factorization} gives the stabiliser factorisation
and extension required by condition~\textup{(iii)} of
\cite[Theorem~4.5]{BroughSpath2022}.

Let $\omega\in\Alp(G)$.
By \cite[Section~2.C]{Li2021}, choose a representative $(R,\psi)$
with $R$ in the standard family used in \cite[Lemma~5.5]{Li2021}.
By \cite[Lemma~5.5(1)--(2)]{Li2021},
\[
 (\widetilde GE)_{R,\psi}
   =\widetilde G_{R,\psi}(GE)_{R,\psi},
\]
and $\psi$ extends to $(GE)_{R,\psi}/R$.
For $H\in\{\widetilde GE,\widetilde G,GE\}$, the stabiliser of the
conjugacy class $\omega$ satisfies $H_\omega=GH_{R,\psi}$.  Hence
\[
 (\widetilde GE)_\omega
   =\widetilde G_\omega(GE)_\omega
   =\widetilde G_\omega E_\omega.
\]
This verifies condition~\textup{(iv)} of
\cite[Theorem~4.5]{BroughSpath2022}.

Since $\Out(G)$ is abelian, \cite[Theorem~4.5]{BroughSpath2022} proves
the inductive BAW condition for every $\ell$-block of $G$.
As $G=X_\ell(S)$, Remark~\ref{rem:block-aggregation}\textup{(a)} shows that $S$
satisfies iBAW at $\ell$.
\end{proof}

\Needspace{6\baselineskip}
\subsection{Proof of Theorem~\ref{thm:symplectic}}

\begin{proof}
If $n=2$, Proposition~\ref{prop:rank-two} gives the result.
We may therefore assume that $n\geq3$.
The case $\ell=p$ follows from \cite[Theorem~C]{Spath2013}, so
assume that $\ell\neq p$.

If $q$ is odd, Proposition~\ref{prop:odd-two} applies when $\ell=2$,
and Proposition~\ref{prop:odd-conlon} applies when $\ell$ is odd.
If $q$ is even, then $\ell$ is odd.  For $n=3$, the result follows
from \cite[Theorem~5.5]{SchaefferFry2014}, and for $n\geq4$ it follows
from Proposition~\ref{prop:even-bridge}.
\end{proof}

\section{Odd-dimensional orthogonal groups}\label{sec:type-b}

In even characteristic, the finite simple groups of types $\mathsf B_n$
and $\mathsf C_n$ are isomorphic.  In every characteristic, the
identifications $\mathsf B_1=\mathsf A_1$ and $\mathsf B_2=\mathsf C_2$
reduce the cases of ranks one and two to the corresponding results for
types $\mathsf A$ and $\mathsf C$.  Unless stated otherwise, throughout
this section $q=p^f$ is an odd prime power, $n\geq3$, and
$S=\Omega_{2n+1}(q)$ denotes the nonabelian simple group under consideration.
Our arguments use special
Clifford groups, reduction to proper Levi subgroups, and generalised
Gelfand--Graev characters, and include a separate treatment of the
exceptional universal $2'$-covering group $3.\Omega_7(3)$.

\begin{theorem}\label{thm:type-b}
Let $S=\Omega_{2n+1}(q)$, and let $\ell$ be a prime dividing $|S|$.
Then every $\ell$-block of $X_\ell(S)$ satisfies the inductive BAW
condition.  Consequently, $S$ satisfies iBAW at $\ell$ on the universal
$\ell'$-covering group $X_\ell(S)$.
\end{theorem}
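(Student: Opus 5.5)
We split by the prime $\ell$, following the structure of the proof of Theorem~\ref{thm:symplectic}. The defining characteristic case $\ell=p$ follows from \cite[Theorem~C]{Spath2013}. For $\ell\neq p$ the universal $\ell'$-covering group is $X_\ell(S)=\operatorname{Spin}_{2n+1}(q)$ when $\ell$ is odd, and $\Omega_{2n+1}(q)$ when $\ell=2$. The exception is $(n,q)=(3,3)$ with $\ell\neq3$, where the exceptional multiplier gives $X_\ell(S)=3.\operatorname{Spin}_7(3)$ or $3.\Omega_7(3)$. We set the exceptional case aside and treat it last, by explicit Brauer character and weight counts sector by sector, as in Definition~\ref{def:central-sector}. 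There $\Out(S)$ has order $2$, so Lemma~\ref{lem:cyclic-extension} supplies all extensions. Once every block of $X_\ell(S)$ is shown to satisfy the inductive BAW condition, Remark~\ref{rem:block-aggregation}\textup{(a)} gives the final assertion.

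For odd $\ell\neq p$ we copy the odd-field argument for type $\mathsf C$, with the conformal group replaced by the special Clifford group. Let $G=\operatorname{Spin}_{2n+1}(q)$, and let $\widetilde G$ be the finite special Clifford group, whose algebraic group has connected centre. Let $E$ be the field automorphisms and $A=N\rtimes E$, where $N$ is the group of linear Brauer characters of $\widetilde G/G$. The argument then runs as follows.
\begin{enumerate}[label=\textup{(\arabic*)}]
\item As in Lemma~\ref{lem:exact-stabilizer}, Lemma~\ref{lem:linear-block-stabilizer} bounds $A_{\widetilde B}\cap N$ by $|\widetilde G:GZ(\widetilde G)|\le2$. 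Hence every block stabiliser in $A$ is $2$-hypoelementary.
\item The set $\mathcal E(\widetilde G,\ell')$ is an integral basic set by \cite[Theorem~5.1]{GeckHiss1991}. Lemma~\ref{lem:basicset-bridge} then gives $J$-equivariant bijections $\IBr(\widetilde B)\to\Irr(\widetilde B)\cap\mathcal E(\widetilde G,\ell')$, exactly as in Proposition~\ref{prop:odd-conformal-bridge}.
\item For $G$ we use \cite[Theorem~A]{Geck1993}, which applies because $\ell$ is odd and $Z(\mathbf G)$ is a $2$-group. Since $\Out(G)\cong C_2\times C_f$ is $2$-hypoelementary, this gives an $\Aut(G)$-equivariant block-preserving bijection $\IBr(G)\to\mathcal E(G,\ell')$. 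Combined with the known stabiliser factorisation for ordinary characters of $\operatorname{Spin}_{2n+1}(q)$, it yields the analogue of Lemma~\ref{lem:odd-g-factorization}.
\item Feed these into \cite[Theorem~4.5]{BroughSpath2022}, together with the character and weight parametrisations and the weight stabiliser results of \cite{FengLiZhang2022TypeB}. We check, as in the proof of Proposition~\ref{prop:odd-conlon}, that their comparison of label actions does not use unitriangularity.
\end{enumerate}

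For $\ell=2$ we follow the route of the proof of Proposition~\ref{prop:odd-two} and apply Lemma~\ref{lem:jordan-reduction} to $G=\operatorname{Spin}_{2n+1}(q)$. Condition~\textup{(ii)} reduces to three families of blocks. The first is types $\mathsf A$, handled by the result for special linear and unitary groups. The second is types $\mathsf C_2=\mathsf B_2$, handled by Proposition~\ref{prop:rank-two} and Lemma~\ref{lem:type-b-normal-core}. The third is the strictly quasi-isolated $2$-blocks of $\operatorname{Spin}_{2r+1}(q_0)$ with $r\ge3$. By \cite{Bonnafe2005}, a quasi-isolated $2'$-element is $1$, so these are the principal blocks, and Feng--Yu--Zhang \cite{FengYuZhang2024} supply an iBAW bijection for them once the Brauer character stabiliser and extension property \cite[Assumption~5.1]{FengYuZhang2024} is known. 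Condition~\textup{(i)} is that property.

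The main obstacle is establishing this stabiliser and extension property, specifically that field automorphisms fix every irreducible Brauer character of the principal $2$-block. The plan is to imitate the unitriangularity argument in the proof of Proposition~\ref{prop:odd-two}. Using Lemma~\ref{lem:type-b-isolated-coverage}, we choose for each unipotent class $\mathcal C$ and each $H$-class $u_j$ in $\mathcal C^F$ a character $\rho_j$ with $\langle\rho_i^*,\Gamma_{u_j}\rangle=\delta_{ij}$. The generalised Gelfand--Graev characters $\Gamma_{u_j}$ are projective, and field automorphisms permute them according to their action on unipotent classes. The resulting wave-front-set unitriangular shape then pins down the Brauer characters as field-stable. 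A diagonal automorphism cannot act on a Brauer character in the same way as a nontrivial field automorphism, which gives the factorisation, and Lemma~\ref{lem:cyclic-extension} gives the extensions.

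For the nonprincipal blocks, the equivariant Jordan correspondence moves the question to proper Levi subgroups, which are products of type~$\mathsf A$ groups and smaller type~$\mathsf B$ groups, and we conclude by induction on $n$. The most delicate step is verifying the hypotheses of Lemma~\ref{lem:type-b-isolated-coverage}: that the component groups are elementary abelian and that $F$ acts trivially on them for the families involved.
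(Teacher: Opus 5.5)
Your overall plan is the paper's own. The odd-$\ell$ part is essentially Lemma~\ref{lem:type-b-conlon-block} together with Proposition~\ref{prop:type-b-odd-primes}. At $\ell=2$ the paper also combines Jordan reduction, generalised Gelfand--Graev characters for the principal block, and Levi subgroups for the other blocks. The gap is in the step you call the main obstacle, which does not work as described.

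Lemma~\ref{lem:type-b-isolated-coverage} requires a group with connected centre whose component group $A_{\mathbf H}(u_0)$ is elementary abelian. For $\mathbf G=\operatorname{Spin}_{2n+1}$ the centre is disconnected, and $A_{\mathbf G}(u_0)$ is extraspecial whenever it is nonabelian. So the hypothesis you propose to verify is false for $G$ itself. The lemma applies only to the special Clifford group $\widetilde G$, with component group $V=A_{\mathbf G}(u_0)/Z_{\mathcal C}$, and the missing idea is the descent to $G$:
\begin{enumerate}
\item When $Z_{\mathcal C}\neq1$, exactly one $\widetilde G$-class in $\mathcal C^F$ splits into two $G$-classes.
\item The character the lemma supplies for that class need not restrict to two constituents.
\item The paper therefore uses a character $\widetilde\psi$ from \cite[Theorem~3.2]{Taylor2013} whose restriction has exactly $|Z_{\mathcal C}|=2$ constituents. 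Clifford theory then yields a $2\times2$ permutation block (Proposition~\ref{prop:type-b-gggr-rank}).
\end{enumerate}
The resulting matrix on $G$ is block lower triangular with permutation diagonal blocks, not unitriangular. This step matters: without it, a field automorphism fixing a Brauer character of $\widetilde G$ could still interchange its two $G$-constituents. That is exactly the failure of $(\widetilde G\rtimes E)_\psi=\widetilde G_\psi\rtimes E_\psi$ you must exclude.

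Two further ingredients are needed before the Brauer characters are pinned down.
\begin{enumerate}
\item Field automorphisms must fix, not merely permute, every $G$-class of unipotent elements, and hence every $\Gamma_u$. This comes from choosing $u_0\in\mathcal C^{F_0}$ with $F_0$ acting trivially on $A_{\mathbf G}(u_0)$, together with Taylor's equivariance of $\Gamma_u$ (Lemma~\ref{lem:type-b-rational-field}).
\item Triangularity only makes the principal block components $P_u$ linearly independent. To know they span the space of projective characters of $B_0(G)$ you need $|\mathcal U|=|\IBr(B_0(G))|$ from \cite[Proposition~2.7]{Chaneb2021}. Only then does $E$ fix every projective indecomposable character, and hence every Brauer character.
\end{enumerate}

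For nonprincipal blocks, \emph{induction on $n$} is too coarse, because $\mathbf L^F$ is not a direct product. You must:
\begin{enumerate}
\item work on $[\mathbf L,\mathbf L]^F\cong\prod_iH_i$;
\item handle field automorphisms that permute the factors (Lemma~\ref{lem:type-b-component-return});
\item lift to $\mathbf L^F$ by Lemma~\ref{lem:type-b-char2-clifford};
\item import the type~$\mathsf A$ factors from \cite[Theorem~8.1]{FengLiZhang2021Equivariant} and the $\Sp_4(q)$ factors from \cite{BroughSchaefferFry2020}.
\end{enumerate}
On the other hand, no full induction is needed. The covered factor blocks are strictly quasi-isolated, so only principal blocks occur on the type~$\mathsf B$ factors.

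Finally, whenever $p=3$, condition~(ii) involves the principal $2$-block of $\operatorname{Spin}_7(3)$. The Feng--Yu--Zhang argument does not cover this block, so it must be supplied by the separate $3.\Omega_7(3)$ calculation.
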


\subsection{Covering groups}\label{subsec:type-b-covers}

By
\cite[Tables~24.2--24.3 and Remark~24.19]{MalleTesterman2011}, the
universal covering group of $\Omega_{2n+1}(q)$ has kernel $C_2$, except that the
multiplier of $\Omega_7(3)$ is $C_6$.  The required covering groups are
listed in Table~\ref{tab:typeb-covers}.

\begin{table}[ht]
\centering
\caption{Universal $\ell'$-covering groups of type $\mathsf B$ in odd characteristic.}
\label{tab:typeb-covers}
\begin{tabular}{@{}lll@{}}
\toprule
Parameters & Prime & $X_\ell(\Omega_{2n+1}(q))$ \\
\midrule
$(n,q)\neq(3,3)$ & $\ell=2$
  & $\Omega_{2n+1}(q)$ \\
$(n,q)\neq(3,3)$ & $\ell$ odd
  & $\operatorname{Spin}_{2n+1}(q)$ \\
$(n,q)=(3,3)$ & $\ell=2$
  & $3.\Omega_7(3)$ \\
$(n,q)=(3,3)$ & $\ell=3$
  & $2.\Omega_7(3)=\operatorname{Spin}_7(3)$ \\
$(n,q)=(3,3)$ & $\ell\notin\{2,3\}$
  & $6.\Omega_7(3)$ \\
\bottomrule
\end{tabular}
\end{table}

The last line is used only for $\ell\in\{5,7,13\}$.  In particular, the
universal $2'$-covering group of $\Omega_7(3)$ is $3.\Omega_7(3)$.

We have $\Out(S)\cong C_2\times C_f$, where the two factors are induced by
diagonal and field automorphisms, respectively.  See, for example,
\cite[Proposition~24.21 and Theorem~24.24]{MalleTesterman2011}.

\subsection{Odd nondefining primes}\label{subsec:type-b-odd-primes}

For odd nondefining primes, Feng, Li, and Zhang proved the result for
$\Omega_7(3)$ without a unitriangularity assumption and for the remaining
groups under their unitriangularity assumption
\cite[Theorem~1 and Assumption~3.11]{FengLiZhang2022TypeB}.
We remove this assumption from the bijection and stabiliser arguments
using permutation lattices and Lemma~\ref{lem:linear-block-stabilizer}.

Throughout this subsection, $\ell$ is an odd prime not dividing $q$.
Let $G=\operatorname{Spin}_{2n+1}(q)$, and let $G\lhd\widetilde G$ be its
embedding in the corresponding finite special Clifford group described in
\cite[Section~3.1]{FengLiZhang2022TypeB}. Let $F_p$ be the field
automorphism of $\widetilde G$ induced by $x\mapsto x^p$ on $\F_q$,
and set $E=\langle F_p\rangle$. The group $E$ acts on $G$ by restriction.
Let $C$ be the group of ordinary linear characters of $\widetilde G/G$ of
$\ell'$-order, and set $A=C\rtimes E$.
The group $A$ acts on the $\ell$-blocks and ordinary and Brauer characters
of $\widetilde G$.  Here $C$ acts by tensoring with the inflated linear
characters, using their reductions on Brauer characters, and $E$ acts by
field automorphisms.  The natural actions of $\Aut(G)$ on blocks and characters factor through
$\Out(G)$.
For $M\in\{G,\widetilde G\}$ and an $\ell$-block $B$ of $M$, set
\[
 \mathcal X_B=\Irr(B)\cap\mathcal E(M,\ell').
\]

\begin{lemma}\label{lem:type-b-conlon-block}
Let $G=\operatorname{Spin}_{2n+1}(q)$, and let $\widetilde G$ be the
corresponding finite special Clifford group.
Let $M\in\{G,\widetilde G\}$, let $B$ be an $\ell$-block of $M$, and set
\[
 J_B=
 \begin{cases}
  \Aut(G)_B/\Inn(G),&M=G,\\
  A_B,&M=\widetilde G.
 \end{cases}
\]
Then $J_B$ is $2$-hypoelementary, $\mathcal X_B$ is $J_B$-stable, and the
decomposition map restricts to an isomorphism
\[
 d_B:\mathbb Z[\mathcal X_B]\xrightarrow{\sim}\mathbb Z[\IBr(B)]
\]
of $\mathbb Z J_B$-permutation lattices.  Consequently, there exists a
$J_B$-equivariant bijection
\[
 \IBr(B)\longrightarrow\mathcal X_B.
\]
\end{lemma}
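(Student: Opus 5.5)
The proof mirrors the type $\mathsf C$ arguments of Proposition~\ref{prop:odd-conformal-bridge} and Lemma~\ref{lem:odd-g-factorization}, and has three ingredients. First, $\mathcal X_B$ is an integral basic set for $B$. Second, $\mathcal X_B$ is $J_B$-stable and the decomposition map is $J_B$-equivariant. Third, $J_B$ is $2$-hypoelementary. Once these hold, Lemma~\ref{lem:basicset-bridge} gives both the isomorphism of $\mathbb Z J_B$-permutation lattices and, via Corollary~\ref{cor:conlon-mark}, the $J_B$-equivariant bijection $\IBr(B)\to\mathcal X_B$.

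\emph{Basic sets.} For $M=G$, the group $\mathbf G=\operatorname{Spin}_{2n+1}$ is simply connected with $Z(\mathbf G)\cong C_2$. Since $\ell$ is odd, $\ell$ is good for type $\mathsf B$ and does not divide $|Z(\mathbf G)/Z^\circ(\mathbf G)|$. Hence \cite[Theorem~A]{Geck1993} shows that $\mathcal E(G,\ell')$ is an integral basic set for $G$. For $M=\widetilde G$, the special Clifford group has connected centre, so \cite[Theorem~5.1]{GeckHiss1991} gives the same conclusion. In both cases the decomposition map respects the block decomposition, so $\mathcal X_B$ is an integral basic set for $B$.

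\emph{Stability and equivariance.} For $M=G$, diagonal automorphisms preserve rational Lusztig series \cite[Proposition~15.6(i)]{CabanesEnguehard2004}. Field automorphisms send $\mathcal E(G,s)$ to $\mathcal E(G,s')$ with $|s'|=|s|$ \cite[Proposition~7.2]{Taylor2018Automorphisms}. Hence $\mathcal E(G,\ell')$ is $\Aut(G)$-stable, and $\mathcal X_B$ is $\Aut(G)_B$-stable. Inner automorphisms act trivially, so the actions on $\mathcal X_B$ and on $\IBr(B)$ factor through $J_B$. For $M=\widetilde G$, tensoring with linear $\ell'$-characters of $\widetilde G/G$ preserves $\mathcal E(\widetilde G,\ell')$. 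I expect this to follow as in \cite[Remark~2.5]{Li2021}: such a character corresponds to a central $\ell'$-element $z$ of $\widetilde G^*$, which sends $\mathcal E(\widetilde G,s)$ to $\mathcal E(\widetilde G,sz)$. Field automorphisms also preserve $\mathcal E(\widetilde G,\ell')$ by \cite[Theorem~3.1]{CabanesSpath2013}. Reduction identifies $C$ with linear Brauer characters compatibly with $E$, and the decomposition map commutes with tensoring and with field automorphisms. Hence $d_B$ is $A_B$-equivariant.

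\emph{Hypoelementarity.} For $M=G$, the group $J_B$ is a subgroup of $\Out(G)\cong C_2\times C_f$, which is $2$-hypoelementary. For $M=\widetilde G$, let $D=A_B\cap C$. Then $D\lhd A_B$, and $A_B/D$ embeds in $E$, so it is cyclic. Lemma~\ref{lem:linear-block-stabilizer}, applied to $G\lhd\widetilde G$, gives $|D|\le|\widetilde G:GZ(\widetilde G)|$. It therefore remains to show that this index is $2$, exactly as in Lemma~\ref{lem:exact-stabilizer}.

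I expect this last step to be the main obstacle, because it requires the explicit structure of the finite special Clifford group from \cite[Section~3.1]{FengLiZhang2022TypeB}. The plan is to use that $\widetilde G/G\cong\F_q^\times$ via the spinor norm (Clifford multiplier). Under this isomorphism, the image of the central scalars $Z(\widetilde G)\cong\F_q^\times$ is $(\F_q^\times)^2$. Since $q$ is odd, this gives $|\widetilde G:GZ(\widetilde G)|=2$. Consequently $|D|\le2$, so $D\le O_2(A_B)$ and $A_B/O_2(A_B)$ is cyclic. Thus $A_B$ is $2$-hypoelementary, and so is each of its subgroups. Lemma~\ref{lem:basicset-bridge} then completes the proof.
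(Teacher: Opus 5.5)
Your proposal is correct and follows the paper's proof essentially step for step. The four steps are the same: integral basic sets, stability of $\mathcal X_B$ under the diagonal, field and tensoring actions, $2$-hypoelementarity via $\Out(G)\cong C_2\times C_f$ and the bound $|D|\le|\widetilde G:GZ(\widetilde G)|=2$ from Lemma~\ref{lem:linear-block-stabilizer}, and then Lemma~\ref{lem:basicset-bridge}. The differences are only in the sources cited. The paper obtains both basic sets from \cite[Theorem~2.3]{FengLiZhang2022TypeB}, where you use \cite[Theorem~A]{Geck1993} and \cite[Theorem~5.1]{GeckHiss1991}. For the index $2$ it cites \cite[proof of Lemma~5.16]{FengLiZhang2022TypeB}, whereas your spinor norm argument ($\widetilde G/G\cong\F_q^\times$, with $Z(\widetilde G)$ mapping onto the squares) justifies it directly.
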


\begin{proof}
The connected reductive algebraic groups defining $G$ and $\widetilde G$
have type $\mathsf B_n$.  Their centres have component groups
$Z(\mathbf H)/Z^\circ(\mathbf H)$ of orders $2$ and $1$, respectively,
where $\mathbf H$ denotes the corresponding algebraic group.
Since the odd prime $\ell$ is good for type $\mathsf B_n$ and
divides the order of neither component group, the hypotheses
of \cite[Theorem~2.3]{FengLiZhang2022TypeB} hold for both groups.  Applying that theorem to all rational $\ell'$-series
shows that $\mathcal E(G,\ell')$ and $\mathcal E(\widetilde G,\ell')$
are integral basic sets.  Since the decomposition map is block diagonal,
$\mathcal X_B$ is an integral basic set for $B$.

Diagonal automorphisms preserve each rational Lusztig series
\cite[Proposition~15.6(i)]{CabanesEnguehard2004}. For
$M\in\{G,\widetilde G\}$, field automorphisms send a series
$\mathcal E(M,s)$ to a series $\mathcal E(M,s')$ with $|s'|=|s|$
\cite[Proposition~7.2]{Taylor2018Automorphisms}. Thus field and diagonal
automorphisms preserve $\mathcal E(G,\ell')$, and field automorphisms
preserve $\mathcal E(\widetilde G,\ell')$.
Tensoring with the inflation of an ordinary linear character of
$\widetilde G/G$ of $\ell'$-order also preserves
$\mathcal E(\widetilde G,\ell')$, since it multiplies the semisimple
element labelling the series by a central $\ell'$-element
\cite[Theorem~4.7.1(3)]{GeckMalle2020}.
Hence $\mathcal X_B$ is $J_B$-stable.  The decomposition map commutes
with automorphisms and with tensoring by linear characters, so its
restriction to $\mathbb Z[\mathcal X_B]$ is $J_B$-equivariant.
For $M=G$, we have
$\Out(G)\cong C_2\times E$.  This group is $2$-hypoelementary, and
therefore so is its subgroup $J_B$.

Now let $M=\widetilde G$, write $J=A_B$, and set $D=J\cap C$.
We have $|\widetilde G:GZ(\widetilde G)|=2$
\cite[proof of Lemma~5.16]{FengLiZhang2022TypeB}.
Applying Lemma~\ref{lem:linear-block-stabilizer} to
$G\lhd\widetilde G$ and $B$ gives $|D|\leq2$.
Since $C\lhd A$, we have $D\lhd J$, and hence $D\leq O_2(J)$.
Projection to $E$ embeds $J/D$ in the cyclic group $E$.
Therefore $J/O_2(J)$ is cyclic, so $J=J_B$ is $2$-hypoelementary.
Lemma~\ref{lem:basicset-bridge} now gives the asserted lattice isomorphism
and equivariant bijection in both cases.
\end{proof}

\begin{proposition}\label{prop:type-b-odd-primes}
Let $S=\Omega_{2n+1}(q)$, and let $\ell$ be an odd prime dividing
$|S|$ with $\ell\ne p$.
Then every $\ell$-block of $X_\ell(S)$ satisfies the inductive BAW
condition.
\end{proposition}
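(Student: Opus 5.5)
The plan mirrors the proof of Proposition~\ref{prop:odd-conlon}, with $\Sp_{2n}(q)\lhd\CSp_{2n}(q)$ replaced by $G=\operatorname{Spin}_{2n+1}(q)\lhd\widetilde G$, the finite special Clifford group. First we separate the exceptional case $(n,q)=(3,3)$. Here $X_\ell(S)=6.\Omega_7(3)$ for $\ell\in\{5,7,13\}$, and \cite[Theorem~1]{FengLiZhang2022TypeB} already treats $\Omega_7(3)$ without any unitriangularity assumption. Alternatively, the Sylow $\ell$-subgroups are cyclic for these primes, so \cite[Proposition~8.5]{MartinezRizoRossi2026} applies. For all other pairs $(n,q)$ we have $X_\ell(S)=G$ by Table~\ref{tab:typeb-covers}. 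We then apply \cite[Theorem~4.5]{BroughSpath2022} with $\mathcal B$ the set of all $\ell$-blocks of $G$, the group $\widetilde G$, and the cyclic group $E$. The structural hypotheses are
\[
G=[\widetilde G,\widetilde G],\qquad C_{\widetilde GE}(G)=Z(\widetilde G),\qquad \widetilde GE/Z(\widetilde G)\cong\Aut(G).
\]
Since $\widetilde G/G$ is cyclic, Lemma~\ref{lem:cyclic-extension} supplies the extensions of Brauer characters and weight characters required in condition~(i).

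For condition~(ii) we build an $A$-equivariant, block-preserving bijection $\widetilde\Omega:\IBr(\widetilde G)\to\Alp(\widetilde G)$, where $A=C\rtimes E$. It is the composite of two maps. The first comes from Lemma~\ref{lem:type-b-conlon-block} with $M=\widetilde G$: it gives an $A_B$-equivariant bijection $\IBr(B)\to\mathcal X_B$ on each block $B$, and Remark~\ref{rem:block-aggregation}(b) glues these over $A$-orbits of blocks. The second is the bijection $\mathcal E(\widetilde G,\ell')\to\Alp(\widetilde G)$ constructed in \cite{FengLiZhang2022TypeB} from the parametrisations of ordinary characters and weights. Exactly as in the discussion of \cite[Lemmas~5.1 and~5.3]{Li2021} above, we must check that the comparison of labels under field automorphisms and tensoring with $C$ is made between ordinary characters and weights. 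Unitriangularity enters only in transferring these actions to Brauer characters, so the second map is $A$-equivariant without it. The condition $J_G(\widetilde\varphi)=\widetilde G$ should follow as in \cite[Remark~2.2(1)]{Li2021}, because weights of $\widetilde G$ and Brauer characters restrict multiplicity-freely.

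For condition~(iii) we apply Lemma~\ref{lem:type-b-conlon-block} with $M=G$ and glue to get an $\Aut(G)$-equivariant bijection $\beta:\IBr(G)\to\mathcal E(G,\ell')$ that preserves blocks. The stabiliser factorisation
\[
(\widetilde G\rtimes E)_\chi=\widetilde G_\chi\rtimes E_\chi
\]
holds for ordinary characters $\chi$ of $\operatorname{Spin}_{2n+1}(q)$; it is available from the literature on the inductive McKay condition in type $\mathsf B$, as used in \cite{FengLiZhang2022TypeB}. Transporting it through $\beta$ gives $(\widetilde GE)_\varphi=\widetilde G_\varphi E_\varphi$, and the extension to $GE_\varphi$ follows from Lemma~\ref{lem:cyclic-extension}. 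Condition~(iv), the factorisation of weight stabilisers together with extension of weight characters, is already proved in \cite{FengLiZhang2022TypeB} by local arguments that do not use unitriangularity. Since $\Out(G)$ is abelian, \cite[Theorem~4.5]{BroughSpath2022} then yields the inductive BAW condition for every $\ell$-block.

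The main obstacle I expect is the equivariance of the ordinary-to-weight map $\rho$ under $A$. The paper's tensoring group $C$ and the Clifford-group labels must be matched carefully with the label changes described in \cite{FengLiZhang2022TypeB}. Their verification may be phrased directly for Brauer characters via Assumption~3.11, and if so I would need to re-derive it for the ordinary labels in $\mathcal E(\widetilde G,\ell')$. The first thing I would try is multiplication of the semisimple label by central $\ell'$-elements, as in \cite[Theorem~4.7.1(3)]{GeckMalle2020}, paired with the corresponding twist of weight labels.
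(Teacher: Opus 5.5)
Your proposal is correct and follows the paper's proof. The paper also uses the Brough--Sp\"ath criterion, in the form of \cite[Theorem~2.1]{FengLiZhang2022TypeB}. Condition~(2) comes from composing the bijection of Lemma~\ref{lem:type-b-conlon-block} on $\widetilde G$ with an ordinary-character-to-weight bijection. Condition~(3) comes from transporting the Cabanes--Sp\"ath stabiliser factorisation for ordinary characters through the bijection of Lemma~\ref{lem:type-b-conlon-block} on $G$. Condition~(4) is taken from \cite{FengLiZhang2022TypeB}, and $6.\Omega_7(3)$ is handled by cyclic defect via \cite[Theorem~1.1]{KoshitaniSpath2016}.

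The obstacle you anticipate does not arise in the paper. It simply restricts the blockwise $A$-equivariant bijection of \cite[Proposition~7.2]{FengLiZhang2022TypeB} to $\mathcal X_{\widetilde B}$, so no separate re-derivation of the label changes is needed.
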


\begin{proof}
First suppose $(n,q)\neq(3,3)$.  Then $G=X_\ell(S)$ by
Table~\ref{tab:typeb-covers}.  We apply Brough--Sp\"ath's criterion in
the form of \cite[Theorem~2.1]{FengLiZhang2022TypeB} and verify its
conditions~\textup{(1)--(4)}.
The structural hypotheses in condition~\textup{(1)} hold for $G$,
$\widetilde G$, and $E$, as noted in the proof of
\cite[Theorem~1]{FengLiZhang2022TypeB}.

For each $A$-orbit of blocks of $\widetilde G$, choose a
representative $\widetilde B$. Compose the
$A_{\widetilde B}$-equivariant bijection
$\IBr(\widetilde B)\longrightarrow\mathcal X_{\widetilde B}$
of Lemma~\ref{lem:type-b-conlon-block} with the restriction
$\mathcal X_{\widetilde B}\longrightarrow\Alp(\widetilde B)$
of the blockwise $A$-equivariant bijection in
\cite[Proposition~7.2]{FengLiZhang2022TypeB}.
Extending these bijections to the remaining blocks by
Remark~\ref{rem:block-aggregation}\textup{(b)} gives the blockwise $A$-equivariant
bijection required by condition~\textup{(2)(i)} of the
criterion. Condition~\textup{(2)(ii)} holds by
\cite[Remark~2.2]{FengLiZhang2022TypeB}, since $\ell$ is odd.

Let $B$ be an $\ell$-block of $G$, let $\varphi\in\IBr(B)$, and let
$\chi\in\mathcal X_B$ be its image under the bijection of
Lemma~\ref{lem:type-b-conlon-block}. Equivariance and the fact that each
character determines its block give $\Aut(G)_\varphi=\Aut(G)_\chi$.
The ordinary character stabiliser theorem for type $\mathsf B$ due to
Cabanes and Sp\"ath, stated in
\cite[Theorem~3.10]{FengLiZhang2022TypeB}, gives
$(\widetilde G\rtimes E)_\chi=\widetilde G_\chi\rtimes E_\chi$.
The equality of the automorphism stabilisers identifies the stabilisers of
$\varphi$ and $\chi$ in $\widetilde G\rtimes E$, $\widetilde G$ and $E$.
Hence
\[
 (\widetilde G\rtimes E)_\varphi
   =\widetilde G_\varphi\rtimes E_\varphi.
\]
This verifies condition~\textup{(3)} of the criterion.
The weight stabiliser factorisation required by condition~\textup{(4)}
follows from \cite[Proposition~7.5]{FengLiZhang2022TypeB}.
The criterion therefore proves the inductive BAW condition for every
$\ell$-block of $G$.

If $(n,q)=(3,3)$, then $\ell\in\{5,7,13\}$ and
$X_\ell(S)=6.\Omega_7(3)$ by Table~\ref{tab:typeb-covers}.
Its Sylow $\ell$-subgroups have order $\ell$, so every $\ell$-block has
cyclic defect. As in the proof of \cite[Theorem~1]{FengLiZhang2022TypeB},
the result follows from \cite[Theorem~1.1]{KoshitaniSpath2016}.
\end{proof}

\subsection{Character representatives for the Levi reduction}
\label{subsec:type-b-two-jordan}

The reductions for nonprincipal $2$-blocks require compatible character
representatives on a Levi subgroup and its derived subgroup.
Lemma~\ref{lem:type-b-component-return} combines suitable representatives
on direct factors. Lemma~\ref{lem:type-b-regular-levi-orbits} describes
the product decomposition of the derived subgroup and the action induced
by the regular embedding.
Lemma~\ref{lem:type-b-char2-clifford} deduces the stabiliser
factorisation for a Brauer character of the Levi subgroup.
The required representatives and stabiliser factorisations on the
factors are obtained in the proof of
Proposition~\ref{prop:type-b-brauer-hypothesis}.

\begin{lemma}\label{lem:type-b-component-return}
Let $I=\{1,\ldots,r\}$, and for each $i\in I$ let $H_i$ be a finite group with
$\Delta_i\leq\Aut(H_i)$.  Set
\[
 N=H_1\times\cdots\times H_r,\qquad
 \Delta=\prod_{i\in I}\Delta_i,
\]
with $\Delta$ acting componentwise on $N$.  Let a cyclic group $E$ act
on $N$ by automorphisms, normalise $\Delta$, and permute the pairs
$(H_i,\Delta_i)$.
Choose $\chi_i\in\IBr(H_i)$ for each $i\in I$. Write $\mathcal O$ for the
$\Delta$-orbit of $\chi_1\times\cdots\times\chi_r$ and $\mathcal O_i$ for the
$\Delta_i$-orbit of $\chi_i$.
Let $E_{\mathcal O}$ be the setwise stabiliser of $\mathcal O$ in $E$,
and choose a generator $\tau$ of $E_{\mathcal O}$.
Suppose that for each cycle $C$ of the permutation of $I$ induced by
$\tau$, there are $i\in C$ and $\theta_i\in\mathcal O_i$ such that
\[
 (\Delta_i\rtimes\langle\tau^{|C|}\rangle)_{\theta_i}
   =(\Delta_i)_{\theta_i}\langle\tau^{|C|}\rangle_{\theta_i}.
\]
Then $\mathcal O$ contains a character $\theta$ fixed by
$E_{\mathcal O}$.  In particular,
\[
 (\Delta\rtimes E_{\mathcal O})_\theta
   =\Delta_\theta\rtimes E_{\mathcal O}.
\]
\end{lemma}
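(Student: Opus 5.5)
We build $\theta$ one $\tau$-cycle at a time and then take the product. The group $E_{\mathcal O}=\langle\tau\rangle$ permutes the index set $I$. For $e\in E$ write $\chi^e$ for the transported character; if $e$ sends $(H_i,\Delta_i)$ to $(H_j,\Delta_j)$, then it sends $\mathcal O_i$ onto a $\Delta_j$-orbit in $\IBr(H_j)$. Since $\tau$ normalises $\Delta$ and stabilises $\mathcal O=\prod_i\mathcal O_i$, it sends $\mathcal O_i$ onto $\mathcal O_{\tau(i)}$. Here $\tau(i)$ denotes the induced permutation of indices, oriented so that $\tau$ carries $H_i$ onto $H_{\tau(i)}$. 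A character $\theta=\theta_1\times\cdots\times\theta_r\in\mathcal O$ is then $\tau$-fixed exactly when $\theta_{\tau(i)}=\theta_i^{\tau}$ for all $i$, with the right-action conventions of Section~\ref{sec:preliminaries}. Because $E_{\mathcal O}$ is cyclic, being fixed by $\tau$ is the same as being fixed by all of $E_{\mathcal O}$.

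Fix a cycle $C=(i,\tau(i),\ldots,\tau^{k-1}(i))$ with $k=|C|$, where $i$ and $\theta_i\in\mathcal O_i$ are as in the hypothesis. The key step is to replace $\theta_i$ by a $\Delta_i$-conjugate that is fixed by $\tau^k$, which stabilises $H_i$ and $\Delta_i$. Since $\tau^k$ stabilises $\mathcal O_i$, we have $\theta_i^{\tau^k}=\theta_i^{\delta}$ for some $\delta\in\Delta_i$. Equivalently, $\delta^{-1}\tau^k$ (written as a product in $\Delta_i\rtimes\langle\tau^k\rangle$) lies in the stabiliser of $\theta_i$. By the hypothesised factorisation, $\delta^{-1}\tau^k=d\,t$ with $d\in(\Delta_i)_{\theta_i}$ and $t\in\langle\tau^k\rangle_{\theta_i}$. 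Comparing images in the quotient $(\Delta_i\rtimes\langle\tau^k\rangle)/\Delta_i\cong\langle\tau^k\rangle$ gives $t=\tau^k$, using that $\Delta_i$ is normal; one should check that $\tau^k$ acts on $H_i$ without lying in $\Delta_i$, or else interpret the semidirect product abstractly. Hence $\tau^k$ fixes $\theta_i$. This is the step that needs the most care with conventions: we must make sure the factorisation is applied in the abstract semidirect product, so that "the $\langle\tau^k\rangle$-component of $\delta^{-1}\tau^k$ is $\tau^k$" is legitimate. If the action of $\langle\tau^k\rangle$ on $H_i$ is not faithful, the argument still goes through, since only the action on $\IBr(H_i)$ matters.

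Now set $\theta_{\tau^j(i)}=\theta_i^{\tau^j}$ for $0\le j<k$. This lies in $\mathcal O_{\tau^j(i)}$ by the transport remark above, and the definition is consistent around the cycle because $\theta_i^{\tau^k}=\theta_i$. Doing this for every cycle produces $\theta=\prod\theta_j\in\mathcal O$, which satisfies $\theta_{\tau(j)}=\theta_j^\tau$ for every $j$ and is therefore fixed by $E_{\mathcal O}$.

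For the final equality, suppose $\delta e\in(\Delta\rtimes E_{\mathcal O})_\theta$ with $\delta\in\Delta$ and $e\in E_{\mathcal O}$. Since $e$ fixes $\theta$, so does $\delta$, giving $\delta\in\Delta_\theta$; the reverse inclusion is clear.
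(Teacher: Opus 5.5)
Your proposal is correct and takes the same route as the paper. Both arguments show that some element of the coset $\Delta_i\tau^{|C|}$ fixes $\theta_i$, project the assumed stabiliser factorisation onto $\langle\tau^{|C|}\rangle$ to get $\theta_i^{\tau^{|C|}}=\theta_i$, transport $\theta_i$ around each cycle by powers of $\tau$, and take the product. One harmless slip: with the paper's right actions, $\theta_i^{\tau^k}=\theta_i^{\delta}$ shows that $\tau^k\delta^{-1}$ (not $\delta^{-1}\tau^k$) fixes $\theta_i$, but this lies in the same coset $\tau^k\Delta_i=\Delta_i\tau^k$, so the argument is unaffected.
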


\begin{proof}
Since $\Delta$ acts componentwise, $\mathcal O$ consists of the products
of characters in the orbits $\mathcal O_i$, and $\tau$ sends
$\mathcal O_i$ to $\mathcal O_{i^\tau}$. For a cycle $C$ of length $t$,
choose $i$ and $\theta_i$ as in the hypothesis. Some element of
$\Delta_i\tau^t$ fixes $\theta_i$, and projection of the assumed
stabiliser equality onto $\langle\tau^t\rangle$ shows that $\tau^t$
fixes $\theta_i$. Set $\theta_{i^{\tau^j}}=\theta_i^{\tau^j}$ for
$0\leq j<t$ on each cycle. Their product belongs to $\mathcal O$ and is
fixed by $\tau$, hence by $E_{\mathcal O}$, which gives the stabiliser
equality.
\end{proof}

\begin{lemma}\label{lem:type-b-regular-levi-orbits}
Let $\mathbf G\hookrightarrow\widetilde{\mathbf G}$ be a regular
embedding, where $\mathbf G$ is simply connected of type $\mathsf B_n$.
Let $F$ be a Steinberg endomorphism of $\widetilde{\mathbf G}$
stabilising $\mathbf G$, and let $\mathbf L$ be an $F$-stable Levi
subgroup of $\mathbf G$. Set
\[
 \mathbf H=[\mathbf L,\mathbf L],\qquad
 L_0=\mathbf H^F,\qquad L=\mathbf L^F,\qquad
 \widetilde L=(\mathbf LZ(\widetilde{\mathbf G}))^F.
\]
Write $\mathbf H=\prod_{j\in I}\mathbf H_j$, where $I$ is a finite index set
for its simple components. Denote the $F$-orbits on $I$ by $C_1,\ldots,C_r$.
Choose $x_i\in C_i$, and set $n_i=|C_i|$ and $H_i=\mathbf H_{x_i}^{F^{n_i}}$.
Then the following hold.
\begin{enumerate}[label=\textup{(\arabic*)}]
\item There is an isomorphism
\[
 L_0\cong\prod_{i=1}^rH_i,\tag{\ref{lem:type-b-regular-levi-orbits}.1}
 \label{eq:type-b-regular-levi-product}
\]
where conjugation by $\widetilde L$ preserves each factor $H_i$.
Let $\rho_i:\widetilde L\longrightarrow\Aut(H_i)$ be the conjugation homomorphism on the $i$th factor of
\eqref{eq:type-b-regular-levi-product}, and set
$\Delta_i=\rho_i(\widetilde L)$.  Then $\Delta_i$ is the full group
of inner and diagonal automorphisms of $H_i$, and the homomorphism
\[
 \widetilde L\longrightarrow\prod_{i=1}^r\Delta_i,\qquad
 g\longmapsto\bigl(\rho_1(g),\ldots,\rho_r(g)\bigr)
\]
is surjective.
\item For $\chi_i\in\IBr(H_i)$, let $\mathcal O_i$ be the
$\Delta_i$-orbit of $\chi_i$.  Using
\eqref{eq:type-b-regular-levi-product}, the $\widetilde L$-orbit of
$\chi_1\times\cdots\times\chi_r$ is
\[
 \left\{\psi_1\times\cdots\times\psi_r\;\middle|\;
 \psi_i\in\mathcal O_i\text{ for }1\leq i\leq r\right\}.
\]
\item We have $L_0\lhd L\lhd\widetilde L$, and the quotient
$\widetilde L/L_0$ is abelian.
If $\sigma$ is a field endomorphism of $\widetilde{\mathbf G}$ commuting
with $F$ and stabilising $\mathbf G$ and $\mathbf L$, then the
automorphism induced by $\sigma$ on $\widetilde{\mathbf G}^F$
normalises this chain.
\end{enumerate}
\end{lemma}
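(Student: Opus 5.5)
We work inside the algebraic groups and deduce each finite statement from the corresponding statement for $\mathbf H$ and $\mathbf L Z(\widetilde{\mathbf G})$. Since $\mathbf G$ is simply connected and $\mathbf L$ is a Levi subgroup of $\mathbf G$, the derived subgroup $\mathbf H=[\mathbf L,\mathbf L]$ is simply connected. It is therefore the direct product $\prod_{j\in I}\mathbf H_j$ of its simple components, each simply connected. The endomorphism $F$ permutes these components. Grouping them into $F$-orbits gives $\mathbf H^F=\prod_i\bigl(\prod_{j\in C_i}\mathbf H_j\bigr)^F$. Projection onto the component $\mathbf H_{x_i}$ identifies $\bigl(\prod_{j\in C_i}\mathbf H_j\bigr)^F$ with $\mathbf H_{x_i}^{F^{n_i}}$, the standard restriction-of-scalars argument. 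This gives the isomorphism in~(1). Conjugation by $\widetilde L\leq\mathbf L Z(\widetilde{\mathbf G})$ normalises each $\mathbf H_j$, because $\mathbf L$ normalises each simple component of its derived subgroup and $Z(\widetilde{\mathbf G})$ acts trivially. Since $\widetilde L$ is $F$-fixed, this conjugation commutes with the projection, so each factor $H_i$ is preserved.

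For the assertion about $\Delta_i$, I would first show that $\mathbf L Z(\widetilde{\mathbf G})$ is a Levi subgroup of $\widetilde{\mathbf G}$ with connected centre. Indeed, it is $C_{\widetilde{\mathbf G}}(\mathbf S)$ for the torus $\mathbf S=Z^\circ(\mathbf L)Z(\widetilde{\mathbf G})$, and Levi subgroups of groups with connected centre have connected centre. Hence $\mathbf H\hookrightarrow\mathbf L Z(\widetilde{\mathbf G})$ is a regular embedding, and $\widetilde L$ induces on $L_0$ all inner-diagonal automorphisms, namely those coming from $(\mathbf H_{\mathrm{ad}})^F$. I would then use the fact that the adjoint group of $\mathbf H$ is the product of the adjoint groups of the $\mathbf H_j$. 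Restriction of scalars gives $(\mathbf H_{\mathrm{ad}})^F\cong\prod_i(\mathbf H_{x_i,\mathrm{ad}})^{F^{n_i}}$, the full product of the inner-diagonal groups of the $H_i$. Hence the image of $\widetilde L$ in $\prod_i\Aut(H_i)$ is the full product $\prod_i\Delta_i$, with $\Delta_i$ the inner-diagonal group of $H_i$. This is the step I expect to be the main obstacle. The key point is that the image of $\widetilde L\to(\mathbf H_{\mathrm{ad}})^F$ is surjective, and this follows from Lang's theorem applied to the connected kernel $Z(\mathbf L Z(\widetilde{\mathbf G}))$. Concretely, $\mathbf L Z(\widetilde{\mathbf G})\to\mathbf L Z(\widetilde{\mathbf G})/Z(\mathbf L Z(\widetilde{\mathbf G}))\cong\mathbf H_{\mathrm{ad}}$ has connected kernel, so it is surjective on $F$-points. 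The isomorphism to $\mathbf H_{\mathrm{ad}}$ holds because a reductive group modulo its centre is adjoint of the same derived type.

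Part~(2) follows at once from~(1). Brauer characters of a direct product are the outer products $\chi_1\times\cdots\times\chi_r$. The action of $g\in\widetilde L$ is componentwise, through $(\rho_1(g),\ldots,\rho_r(g))$. Surjectivity onto $\prod\Delta_i$ then shows that the orbit is the full product of the orbits $\mathcal O_i$.

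For~(3), $L_0$ is normal in $L$ and $L$ is normal in $\widetilde L$, since $\mathbf H$ is characteristic in $\mathbf L$ and $\mathbf L$ is normal in $\mathbf L Z(\widetilde{\mathbf G})$. For abelianness, $\widetilde L/L_0$ embeds in $\bigl(\mathbf L Z(\widetilde{\mathbf G})/\mathbf H\bigr)^F$, because $(\mathbf L Z(\widetilde{\mathbf G}))^F\cap\mathbf H=\mathbf H^F$; the quotient is a torus, so its fixed points are abelian. Finally, $\sigma$ stabilises $\mathbf G$, $\mathbf L$, $Z(\widetilde{\mathbf G})$ and hence $\mathbf H$, and it commutes with $F$. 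Therefore it stabilises the $F$-fixed points of each group in the chain.
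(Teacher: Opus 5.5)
Your proposal is correct and follows essentially the same route as the paper. Simple connectedness of $[\mathbf L,\mathbf L]$ and projection onto one component per $F$-orbit give (1), and Lang's theorem applied to the connected centre of the Levi subgroup $\mathbf LZ(\widetilde{\mathbf G})$ gives the surjectivity onto $\prod_i\Delta_i$. The only difference is cosmetic and occurs in (3): you embed $\widetilde L/L_0$ in the torus $(\mathbf LZ(\widetilde{\mathbf G})/\mathbf H)^F$, whereas the paper simply notes $[\widetilde L,\widetilde L]\leq[\mathbf M,\mathbf M]^F=L_0$.
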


\begin{proof}
\textup{(1)} Since $\mathbf G$ is simply connected,
\cite[Proposition~12.14]{MalleTesterman2011} shows that
$\mathbf H=[\mathbf L,\mathbf L]$ is simply connected.
It is therefore a direct product of its simple components.
For $1\leq i\leq r$, set
\[
 \mathbf K_i=\prod_{j\in C_i}\mathbf H_j.
\]
Each $\mathbf K_i$ is $F$-stable, and projection onto the component
$\mathbf H_{x_i}$ gives an isomorphism
\[
 \mathbf K_i^F\longrightarrow\mathbf H_{x_i}^{F^{n_i}}=H_i.
\]
Taking the product over the $F$-orbits gives
\eqref{eq:type-b-regular-levi-product}.

Set $\mathbf M=\mathbf LZ(\widetilde{\mathbf G})$.
This is an $F$-stable Levi subgroup of $\widetilde{\mathbf G}$.
Its centre is connected because $Z(\widetilde{\mathbf G})$ is connected.
We have $[\mathbf M,\mathbf M]=\mathbf H$ and
$\mathbf M=\mathbf HZ(\mathbf M)$. Thus conjugation by $\mathbf M$
preserves each direct factor of $\mathbf H$, and
\[
 \mathbf M/Z(\mathbf M)\cong\mathbf H/Z(\mathbf H)
 \cong\prod_i\mathbf K_i/Z(\mathbf K_i).
\]
Since $Z(\mathbf M)$ is connected, Lang's theorem makes the natural
homomorphism
\begin{equation}\tag{\ref{lem:type-b-regular-levi-orbits}.2}\label{eq:type-b-regular-levi-action}
 \widetilde L=\mathbf M^F\longrightarrow
 (\mathbf M/Z(\mathbf M))^F
 \cong\prod_i(\mathbf K_i/Z(\mathbf K_i))^F
\end{equation}
surjective. Conjugation by $(\mathbf K_i/Z(\mathbf K_i))^F$ induces
the full group of inner and diagonal automorphisms of
$\mathbf K_i^F\cong H_i$
(see, for example, \cite[Remark~1.5.12]{GeckMalle2020}). Hence $\Delta_i$
is this group, and the homomorphism $\widetilde L\longrightarrow\prod_i\Delta_i$
is surjective.

\textup{(2)} Products of characters identify $\IBr(L_0)$ with
$\prod_i\IBr(H_i)$.  The orbit description follows from
\textup{(1)}.

\textup{(3)} The normal inclusions follow from
$\mathbf H\lhd\mathbf L\lhd\mathbf M$, and
$[\widetilde L,\widetilde L]\leq[\mathbf M,\mathbf M]^F=L_0$.
Hence $\widetilde L/L_0$ is abelian.
Since $\sigma$ preserves $\mathbf L$, $[\mathbf L,\mathbf L]$ and
$Z(\widetilde{\mathbf G})$, and commutes with $F$, it also preserves
$L_0$, $L$ and $\widetilde L$.
\end{proof}

\Needspace{14\baselineskip}
\begin{lemma}\label{lem:type-b-char2-clifford}
Let $\ell$ be a prime and $N\leq H\leq G$ be finite groups. Assume that $N$ is
normal in $G$ and that the quotient group $G/N$ is abelian.
Let a finite group $E$ act on $G$ by automorphisms
stabilising $N$ and $H$.  For $\psi\in\IBr(H)$, let $\theta$ be an irreducible constituent of $\psi_N$.  Suppose that $\theta$ extends
to $G_\theta$ and that
\[
 (G\rtimes E)_\theta
 =G_\theta\rtimes E_\theta.
\]
Then
\[
 (G\rtimes E)_\psi
 =G_\psi\rtimes E_\psi.
\]
\end{lemma}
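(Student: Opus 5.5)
The inclusion $G_\psi\rtimes E_\psi\subseteq(G\rtimes E)_\psi$ is immediate. For the reverse, I will show that if $x=ge$ with $g\in G$ and $e\in E$ fixes $\psi$, then $g$ fixes $\psi$; it then follows that $e=g^{-1}x$ fixes $\psi$ as well. First note that $H\lhd G$: since $N\leq H$ and $G/N$ is abelian, $H/N$ is normal in $G/N$. Hence $G\rtimes E$ acts on $\IBr(H)$ and on $\IBr(N)$, and this action is compatible with restriction from $H$ to $N$.

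\emph{Step 1: replace $x$ by an element fixing $\theta$ as well.} Since $x$ fixes $\psi$, the character $\theta^x$ is again an irreducible constituent of $\psi_N$. By Clifford's theorem for Brauer characters, $\theta^x=\theta^h$ for some $h\in H$. Put $x'=xh^{-1}$. Since $h\in H$ fixes $\psi$, the element $x'$ fixes both $\psi$ and $\theta$. Rewrite $x'=ge h^{-1}=\bigl(g\,(eh^{-1}e^{-1})\bigr)e=g'e$ with $g'\in G$, because $E$ stabilises $H$. The hypothesis $(G\rtimes E)_\theta=G_\theta\rtimes E_\theta$ then gives $g'\in G_\theta$ and $e\in E_\theta$. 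It therefore suffices to show that every element of $G_\theta$ fixes $\psi$. Indeed, then $g'$ fixes $\psi$, so $e=g'^{-1}x'$ fixes $\psi$, and hence so does $g=xe^{-1}$.

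\emph{Step 2: elements of $G_\theta$ fix every $\psi\in\IBr(H\mid\theta)$.} Let $\widehat\theta\in\IBr(G_\theta)$ extend $\theta$. Its restriction to $H_\theta=H\cap G_\theta$ extends $\theta$. By the Clifford correspondence and Gallagher's theorem for Brauer characters \cite[Theorems~8.9 and~8.20]{Navarro1998}, we have
\[
 \psi=\operatorname{Ind}_{H_\theta}^H\bigl(\widehat\theta_{H_\theta}\beta\bigr)
\]
for some $\beta\in\IBr(H_\theta/N)$. Now take $y\in G_\theta$. Then $y$ normalises $H_\theta$, and $(\widehat\theta_{H_\theta})^y=(\widehat\theta^y)_{H_\theta}=\widehat\theta_{H_\theta}$, because $\widehat\theta$ is a class function on $G_\theta$ and $y\in G_\theta$. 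Moreover, $y$ acts trivially on $H_\theta/N$ by conjugation since $G/N$ is abelian, so $\beta^y=\beta$. Hence $y$ fixes the Clifford correspondent $\widehat\theta_{H_\theta}\beta$, and therefore fixes $\psi$. Applying this with $y=g'$ completes the argument.

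The main point to get right is Step 1. One must first adjust $x$ by an element of $H$ so that the factorisation hypothesis on $\theta$ can be applied, and then check that the $G$-component changes only by an element of $H$ when $E$ is commuted past. Once that is in place, Step 2 is routine Clifford theory; it uses only the extendibility of $\theta$ to $G_\theta$ and the fact that $G/N$ is abelian.
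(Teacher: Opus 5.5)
Your proposal is correct and follows essentially the same route as the paper: Clifford correspondence plus Gallagher's theorem show that $G_\theta$ fixes every $\psi\in\IBr(H\mid\theta)$, and an element of $H$ adjusts a $\psi$-stabilising element so that it also fixes $\theta$ and the factorisation hypothesis applies. The paper presents these two steps in the opposite order and leaves implicit the normality $H\lhd G$, which you correctly verify.
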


\begin{proof}
Set $I=H_\theta$, choose an extension
$\widehat\theta\in\IBr(G_\theta)$ of $\theta$, and let
$\eta\in\IBr(I\mid\theta)$ be the Clifford correspondent of $\psi$.
By Gallagher's theorem for Brauer characters, we have
$\eta=(\widehat\theta)_I\lambda$ for some
$\lambda\in\IBr(I/N)$, inflated to $I$.
The character $(\widehat\theta)_I$ is fixed by
$G_\theta$, as it is the restriction of a character of that
group.  Since $G/N$ is abelian, conjugation by
$G_\theta$ acts trivially on $I/N$ and therefore fixes
$\lambda$.  Thus $G_\theta$ fixes $\eta$ and, by induction
from $I$ to $H$, also fixes $\psi$.

Let $g\in(G\rtimes E)_\psi$.  The constituents of
$\psi_N$ form a single $H$-orbit, so there is $h\in H$ such that
$hg$ fixes $\theta$.  By hypothesis, write $hg=ae$ with
$a\in G_\theta$ and $e\in E_\theta$.
The elements $h$, $g$ and $a$ fix $\psi$, so $e$ fixes $\psi$.
Consequently $g=h^{-1}ae\in G_\psi E_\psi$.
The reverse inclusion is immediate.
\end{proof}

\subsection{The principal \texorpdfstring{$2$}{2}-block in rank at least three}
\label{subsec:type-b-two-principal}

Throughout this subsection, let $\mathbf G$ be a simply connected algebraic
group of type $\mathsf B_n$,
and let $F_0$ be a standard Frobenius endomorphism of $\mathbf G$
such that $\mathbf G^{F_0}=\operatorname{Spin}_{2n+1}(p)$.
Set $F=F_0^f$ and
$G=\mathbf G^F=\operatorname{Spin}_{2n+1}(q)$.  Choose a regular embedding
$\mathbf G\hookrightarrow\widetilde{\mathbf G}$ to which $F_0$ extends,
and denote this extension also by $F_0$.  Set
$\widetilde G=\widetilde{\mathbf G}^F$.
For the remainder of this section, write $B_0(Y)$ for the principal
$2$-block of a finite group $Y$. For
$\mathbf H\in\{\mathbf G,\widetilde{\mathbf G}\}$ and $u\in\mathbf H$,
write $A_{\mathbf H}(u)=C_{\mathbf H}(u)/C_{\mathbf H}(u)^\circ$ for the
component group of the centraliser of $u$.
For each unipotent element $u\in G$, let $\Gamma_u$ denote the generalised
Gelfand--Graev character of $G$ associated with $u$, and let $P_u$ be its
principal block component.

The following lemma uses Taylor's analysis of the Frobenius action
on the component groups of spin groups \cite[Section~2]{Taylor2013},
together with Taylor's equivariance formula for generalised Gelfand--Graev
characters \cite[Proposition~11.10]{Taylor2018Automorphisms}.

\begin{lemma}\label{lem:type-b-rational-field}
Let $G=\operatorname{Spin}_{2n+1}(q)$.
Then every field automorphism of $G$ induced by a power of $F_0$ fixes every
unipotent conjugacy class of $G$ and every character $\Gamma_u$.
\end{lemma}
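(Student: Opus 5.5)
The plan has two steps. First, every unipotent $G$-conjugacy class is stable under field automorphisms. Second, stability of the classes gives stability of the characters $\Gamma_u$ through Taylor's equivariance formula.

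Let $\sigma$ be the automorphism of $G$ induced by $F_0^j$. Fix a geometric unipotent class $\mathcal C$ of $\mathbf G$. Since geometric unipotent classes of $\mathbf G$ are parametrised by partitions, and $F_0$ acts on $\mathbf G$ as a standard Frobenius commuting with this parametrisation, $\mathcal C$ is $F_0$-stable. Hence $\sigma$ permutes the $G$-classes in $\mathcal C^F$.

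By the Lang--Steinberg theorem, these $G$-classes are in bijection with the $F$-conjugacy classes of $A_{\mathbf G}(u_0)$, for a chosen $u_0\in\mathcal C^F$. Following \cite[Section~2]{Taylor2013}, we choose $u_0$ as in \cite[Definition~2.1]{Taylor2013}; this is possible by \cite[Proposition~2.8]{Taylor2013}. For such a representative we may take $u_0\in\mathbf G^{F_0}=\operatorname{Spin}_{2n+1}(p)$, and $F_0$ acts trivially on $A_{\mathbf G}(u_0)$. For spin groups, $A_{\mathbf G}(u_0)$ need not be elementary abelian; it can be a central extension of $A_{\operatorname{SO}}(u_0)$ by $Z(\mathbf G)$. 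This is exactly where Taylor's analysis is needed: his description of the Frobenius action shows that $F_0$ acts trivially for this choice.

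With $u_0$ fixed by $F_0$, a class corresponding to $a\in A_{\mathbf G}(u_0)$ has a representative $u_a=gu_0g^{-1}$, where $g^{-1}F(g)=\dot a$. Then $\sigma(u_a)=\sigma(g)u_0\sigma(g)^{-1}$. Moreover $\sigma(g)^{-1}F(\sigma(g))=\sigma(\dot a)$, whose image in $A_{\mathbf G}(u_0)$ is $F_0^j(a)=a$. So $\sigma(u_a)$ lies in the $G$-class labelled by $a$. This uses that $F$-conjugacy classes correspond bijectively to $G$-classes, and that $\sigma$ acts on labels through its action on $A_{\mathbf G}(u_0)$. Hence $\sigma$ fixes every unipotent class of $G$.

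For the second step we apply \cite[Proposition~11.10]{Taylor2018Automorphisms}. It gives $\Gamma_u^\sigma=\Gamma_{\sigma^{-1}(u)}$, or equivalently $\Gamma_{\sigma(u)}$ up to our right-action convention. This holds because $\sigma$ is a Frobenius-type automorphism compatible with the Kawanaka data, that is, with the choice of $\mathbb F_p$-rational weighted Dynkin cocharacter and the fixed nondegenerate form on the Lie algebra. Since $\sigma(u)$ is $G$-conjugate to $u$, and $\Gamma_u$ depends only on the $G$-class of $u$, we get $\Gamma_u^\sigma=\Gamma_u$.

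The main obstacle is the first step for spin groups. It requires that $F_0$ act trivially on $A_{\mathbf G}(u_0)$ rather than on the orthogonal component group only. If a representative over $\mathbb F_p$ with trivial action were unavailable for some class, a fallback is to compare class counts: the number of $G$-classes in $\mathcal C^F$ and in its $\sigma$-image agree, and $\sigma$ acts compatibly with the fibres of $\operatorname{Spin}\to\operatorname{SO}$. I expect Taylor's results to make this unnecessary.
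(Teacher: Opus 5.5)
Your proposal is correct and follows essentially the same route as the paper. You choose $u_0\in\mathcal C^{F_0}$ via Taylor's analysis so that $F_0$ acts trivially on $A_{\mathbf G}(u_0)$, conclude that the labelling of $G$-classes in $\mathcal C^F$ by (here ordinary) conjugacy classes of $A_{\mathbf G}(u_0)$ is preserved by powers of $F_0$, and then apply \cite[Proposition~11.10]{Taylor2018Automorphisms}. Your explicit Lang--Steinberg computation $\sigma(g)^{-1}F(\sigma(g))=\sigma(\dot a)$ just spells out the paper's remark that this identification commutes with $F_0$.
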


\begin{proof}
Let $\mathcal C$ be a geometric unipotent class of $\mathbf G$.
Unipotent classes in type $\mathsf B$ are parametrised by the Jordan block
sizes in the natural orthogonal representation.  Since $F_0$ preserves
these sizes, $\mathcal C$ is $F_0$-stable.  By the discussion of spin groups in
\cite[Section~2]{Taylor2013}, choose
$u_0\in\mathcal C^{F_0}$ such that $F_0$ acts trivially on
$A=A_{\mathbf G}(u_0)$.  Then $F=F_0^f$ also acts trivially on $A$, so
\cite[Theorem~21.11]{MalleTesterman2011} identifies the $G$-classes in
$\mathcal C^F$ with the ordinary conjugacy classes of $A$.
This identification commutes with $F_0$, which acts trivially on $A$.
Hence every power of $F_0$ fixes every $G$-class in $\mathcal C^F$.

The equivariance of generalised Gelfand--Graev characters
\cite[Proposition~11.10]{Taylor2018Automorphisms} now shows that every
character $\Gamma_u$ is fixed by the field automorphisms.
\end{proof}

\begin{lemma}\label{lem:type-b-principal-series}
Let $H=\operatorname{Spin}_{2m+1}(q_0)$, where $m\geq2$ and $q_0$ is
an odd prime power. Then
\[
 \Irr(B_0(H))=\mathcal E_2(H,1).
\]
Moreover, every irreducible character of $H$ whose semisimple label is
quasi-isolated belongs to $B_0(H)$, and $B_0(H)$ is the only strictly
quasi-isolated $2$-block of $H$.
\end{lemma}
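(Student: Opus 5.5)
The plan is to combine Brou\'e--Michel and Cabanes--Enguehard with Bonnaf\'e's classification of quasi-isolated elements in odd characteristic. Let $\mathbf H$ be simply connected of type $\mathsf B_m$ with $H=\mathbf H^{F'}$, and note that $\mathbf H^*$ is adjoint of type $\mathsf C_m$, so $\mathbf H^*\cong\operatorname{PCSp}_{2m}$.

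The first step is the equality $\Irr(B_0(H))=\mathcal E_2(H,1)$. By \cite[Theorem~2.2]{BroueMichel1989}, $\mathcal E_2(H,1)$ is a union of $2$-blocks, and these are the unipotent $2$-blocks. By \cite[Theorem~21.14]{CabanesEnguehard2004}, which was used for the type $\mathsf C$ case in the proof of Proposition~\ref{prop:odd-two} and applies equally to type $\mathsf B$ at $\ell=2$ in odd characteristic, there is exactly one unipotent $2$-block, namely $B_0(H)$. Hence $\Irr(B_0(H))=\mathcal E_2(H,1)$.

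For the second assertion, let $\chi\in\mathcal E(H,t)$ with $t\in H^*$ semisimple and quasi-isolated. Write $t=t_{2}t_{2'}$ for its decomposition into $2$-part and $2'$-part. The $2'$-part $t_{2'}$ is a power of $t$, so $C_{\mathbf H^*}(t)\leq C_{\mathbf H^*}(t_{2'})$, and therefore $t_{2'}$ is again quasi-isolated. Bonnaf\'e's classification \cite[Proposition~4.11]{Bonnafe2005} for adjoint type $\mathsf C_m$ in odd characteristic says that a quasi-isolated element has order dividing $2$ modulo the centre, which is trivial here. The one step that needs care is checking that this covers the image in $\operatorname{PCSp}$ of $\operatorname{diag}(\zeta I,\zeta^{-1}I)$ with $\zeta^2=-1$, which has order $2$. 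An odd-order quasi-isolated element must therefore be $1$, so $t_{2'}=1$ and $t$ is a $2$-element. Consequently $\chi\in\mathcal E(H,t)\subseteq\mathcal E_2(H,1)=\Irr(B_0(H))$.

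For the last assertion, let $c$ be a strictly quasi-isolated $2$-block. It lies in $\mathcal E_2(H,s)$ for some strictly quasi-isolated semisimple $2'$-element $s$. By the argument above $s=1$, so $c$ is unipotent and hence $c=B_0(H)$. It remains to confirm that $B_0(H)$ itself counts as strictly quasi-isolated in the sense of \cite[Section~4.3]{FengLiZhang2022Jordan}. Its label $s=1$ has centraliser $\mathbf H^*$, which is not contained in any proper Levi subgroup, so $B_0(H)$ is indeed strictly quasi-isolated.

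The main obstacle I expect is the $2$-adic bookkeeping in Bonnaf\'e's classification for the adjoint type $\mathsf C$ dual. I need that every quasi-isolated element of $\operatorname{PCSp}_{2m}$ has order at most $2$, so that odd-order quasi-isolated elements are trivial. I would try to verify this directly from the extended Dynkin diagram of type $\mathsf C_m$: its coefficients are $1,2,\dots,2,1$, so every quasi-isolated class has order dividing $2$.
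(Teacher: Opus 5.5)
Your three steps follow the paper's proof. You use \cite[Theorem~21.14]{CabanesEnguehard2004} for $\Irr(B_0(H))=\mathcal E_2(H,1)$. You bound the orders of quasi-isolated elements of the adjoint dual of type~$\mathsf C_m$ via \cite{Bonnafe2005}. Finally you observe that a (strictly) quasi-isolated label of odd order must be $1$.

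The flaw is in the input you single out as the main obstacle. For $m\geq3$ it is false that every quasi-isolated element of $\mathbf H^*\cong\operatorname{PCSp}_{2m}$ has order dividing $2$. Let $\zeta^2=-1$, and take $s\in\Sp_{2m}$ whose eigenvalues are $1$ and $-1$, each with multiplicity $2a$, and $\zeta$ and $\zeta^{-1}=-\zeta$, each with multiplicity $c$, where $a,c\geq1$ and $2a+c=m$.
\begin{itemize}
\item The centraliser is $C_{\Sp_{2m}}(s)=\Sp_{2a}\times\Sp_{2a}\times\operatorname{GL}_c$.
\item A symplectic isometry interchanging the $(\pm1)$-eigenspaces and the $(\pm\zeta)$-eigenspaces conjugates $s$ to $-s$.
\item Its image therefore centralises the image $\bar s$ of $s$ in the adjoint group, and it inverts the one-dimensional torus $Z^\circ(C^\circ(\bar s))$.
\item Hence no nontrivial torus centralises $C(\bar s)$, so $\bar s$ is quasi-isolated.
\item Since $s^2\neq\pm I$, the element $\bar s$ has order $4$.
\end{itemize}
Your special element $\operatorname{diag}(\zeta I,\zeta^{-1}I)$ is only the case $a=0$. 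Checking the coefficients $1,2,\ldots,2,1$ of the extended Dynkin diagram accounts only for isolated elements. The remaining quasi-isolated elements of the adjoint group come from orbits of the fundamental group $C_2$ on the extended diagram, and an orbit of two nodes of coefficient $2$ gives order $4$. Note also that the paper uses \cite[Proposition~4.11(a)]{Bonnafe2005} (giving $s^2=1$) only for $\operatorname{SO}_{2r+1}$, the dual of $\Sp_{2r}$, not for the dual of the spin group.

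The repair is immediate and yields exactly the paper's argument. By \cite[Proposition~5.3(a)]{Bonnafe2005}, quasi-isolated elements of the adjoint group of type $\mathsf C_m$ have order dividing $4$. Since this is a power of $2$, every quasi-isolated label $t$ is itself a $2$-element. Thus $\mathcal E(H,t)\subseteq\mathcal E_2(H,1)=\Irr(B_0(H))$ directly, and your decomposition $t=t_2t_{2'}$ is unnecessary.

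For the last assertion, note that a strictly quasi-isolated label $s$ is in particular quasi-isolated, because $C^\circ_{\mathbf H^*}(s)C_{\mathbf H^*}(s)^{F'}\leq C_{\mathbf H^*}(s)$. Having odd order and order dividing $4$, it equals $1$. Your converse, that $B_0(H)$ is strictly quasi-isolated, is as in the paper.
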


\begin{proof}
The equality is \cite[Theorem~21.14]{CabanesEnguehard2004}.
The dual algebraic group is adjoint of type $\mathsf C_m$, and its
quasi-isolated semisimple elements have order dividing $4$ by
\cite[Proposition~5.3(a)]{Bonnafe2005}.
Hence every rational Lusztig series with such a label is contained
in $\mathcal E_2(H,1)$, proving the assertion about ordinary characters.
If a $2$-block $b$ is strictly quasi-isolated, its strictly
quasi-isolated semisimple label $s$ has odd order. Since $s$ is also
quasi-isolated, its order divides $4$, so $s=1$.
The equality then gives $b=B_0(H)$.
Conversely, the identity is strictly quasi-isolated, so $B_0(H)$ is
strictly quasi-isolated.
\end{proof}

For a Weyl group $W$ and $E\in\Irr(W)$, let $b(E)$ be the
least nonnegative integer $i$ for which $E$ occurs in the $i$th symmetric
power of the reflection representation \cite[4.1.1]{GeckMalle2020}.
Truncated induction of Weyl group characters, also called
$j$-induction, is denoted by $j_{W'}^W$
\cite[p.~476]{Taylor2014Maximal}.

\begin{proposition}\label{prop:type-b-gggr-rank}
Let $G=\operatorname{Spin}_{2n+1}(q)$, and let $\mathcal U$ be an ordered set
of representatives of the unipotent conjugacy classes of $G$. Assume that
representatives in the same geometric class are consecutive and that the
geometric classes are ordered by nondecreasing dimension. Then there are
characters $\eta_u\in\Irr(B_0(G))$, indexed by $u\in\mathcal U$, such that the
matrix
\[
 (\langle\eta_u,P_v\rangle_G)_{u,v\in\mathcal U}
\]
is block lower triangular, with one diagonal block for each geometric class,
and every diagonal block is a permutation matrix. Moreover, the $P_u$ form a
basis of the $\mathbb Q$-span of the projective indecomposable characters of
$B_0(G)$.
\end{proposition}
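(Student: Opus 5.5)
We follow the strategy of Chaneb and Taylor for symplectic groups, with Lemma~\ref{lem:type-b-isolated-coverage} supplying the character selection. First fix a geometric unipotent class $\mathcal C$ of $\mathbf G$. As in Lemma~\ref{lem:type-b-rational-field}, $\mathcal C$ is $F$-stable, and we may choose $u_0\in\mathcal C^F$ such that $F_0$ acts trivially on $A_{\mathbf G}(u_0)$. Let $u_1,\ldots,u_d$ represent the $G$-classes in $\mathcal C^F$. By Lusztig's correspondence between special unipotent classes and families, there is an isolated $2$-element $s$ of order dividing $2$ in the dual group and a family $\mathcal F$ with wave front set $\mathcal C$. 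We take the pair selected in \cite[Section~10]{Taylor2014Maximal}, transported to type $\mathsf B$, so that $\mathcal C$ is attached to $(s,\mathcal F)$ through $j$-induction and the $b$-invariant. Taylor's analysis gives that $\mathcal G_{\mathcal F}$ and $A_{\mathbf H}(u_0)$ are elementary abelian of the same order in the connected-centre group $\mathbf H=\widetilde{\mathbf G}$.

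Applying Lemma~\ref{lem:type-b-isolated-coverage} to $\widetilde G$ gives characters $\rho_i$ with $\langle\rho_i^*,\Gamma_{u_j}\rangle=\delta_{ij}$ at the level of $\widetilde G$-classes. We then restrict to $G$, exactly as in the proof of Proposition~\ref{prop:odd-two}. We use the induction identity for generalised Gelfand--Graev characters, compatibility of Alvis--Curtis duality with restriction, and Clifford theory. Together with Taylor's condition~(P5), these split each $\widetilde G$-class into $G$-classes and produce constituents $\eta_{u}$ of the restrictions of the $\rho_i^*$, one for each $G$-class in $\mathcal C^F$. The pairing matrix on the diagonal block for $\mathcal C$ is then a permutation matrix. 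The $\eta_u$ lie in $\mathcal E(G,s)$ with $s^2=1$, so they lie in $\mathcal E_2(G,1)$ and hence in $B_0(G)$ by Lemma~\ref{lem:type-b-principal-series}. Since $\eta_u\in B_0(G)$, we get $\langle\eta_u,\Gamma_v\rangle=\langle\eta_u,P_v\rangle$.

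For triangularity, we use that $\eta_u$ has unipotent support $\mathcal C$, so its dual has wave front set $\mathcal C$. The vanishing results \cite[Lemma~14.15 and Proposition~15.2]{Taylor2016} then give $\langle\eta_u,\Gamma_v\rangle=0$ whenever the class of $v$ does not lie in the closure of $\mathcal C$. With the given ordering of classes by dimension, this makes the matrix block lower triangular.

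For the final assertion, the matrix above is invertible, so the $P_u$ are linearly independent. Their number equals the number of unipotent classes of $G$. By Geck--Hiss, Lemma~\ref{lem:type-b-principal-series} and the basic set results in Lemma~\ref{lem:type-b-conlon-block}, this number is $|\mathcal E(G,1)|$ for principal $2$-blocks. More precisely, $|\IBr(B_0(G))|=|\{$unipotent classes$\}|$ by the count of Chaneb/Geck for $\ell=2$ in odd characteristic. Each $P_u$ is a projective character of $B_0(G)$ because $\Gamma_u$ vanishes off $p$-elements, hence off $2$-singular elements, so it is projective. Thus the $P_u$ span. The main obstacle is this count, $|\IBr(B_0(G))|$ equal to the number of unipotent classes of $G$, together with verifying the hypotheses of Lemma~\ref{lem:type-b-isolated-coverage} for every class. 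For the count, we would first try Geck's result that the number of unipotent classes equals $|\IBr(B_0)|$ at $\ell=2$ (Geck--Hiss/Chaneb).
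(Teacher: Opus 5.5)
There is a genuine gap in the restriction step from $\widetilde G$ to $G$. It concerns geometric classes $\mathcal C$ on which $Z(\mathbf G)$ has nontrivial image $Z_{\mathcal C}$ in $A=A_{\mathbf G}(u_0)$. For these, Lusztig's presentation of $A$ shows that either $A=Z_{\mathcal C}\cong C_2$ or $A$ is extraspecial with centre $Z_{\mathcal C}$. In both cases exactly one $\widetilde G$-class in $\mathcal C^F$ is a union of two $G$-classes $u_1,u_2$, so you need one more character than there are $\widetilde G$-classes.

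Lemma~\ref{lem:type-b-isolated-coverage} gives only one character $\widetilde\rho$ for that $\widetilde G$-class. If its restriction $\rho$ to $G$ is irreducible, then $\widetilde\Gamma_{\widetilde u}=\operatorname{Ind}_G^{\widetilde G}\Gamma_{u_1}=\operatorname{Ind}_G^{\widetilde G}\Gamma_{u_2}$ and Frobenius reciprocity give $\langle\rho^*,\Gamma_{u_1}\rangle_G=\langle\rho^*,\Gamma_{u_2}\rangle_G=1$. That is a row $(1,1)$ in the diagonal block, so the block is not a permutation matrix. Nothing in your selection prevents this, and it already happens for the regular class. There the isolated pair $(1,\{1_W\})$ gives $\widetilde\rho=1_{\widetilde G}$. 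Its dual $\operatorname{St}_{\widetilde G}$ restricts to $\operatorname{St}_G$, which occurs once in each of the two Gelfand--Graev characters of $G$. More generally, an isolated label conjugate to $\operatorname{diag}(I_a,-I_b,I_a,-I_b)$ with $a\neq b$ has connected centraliser in the adjoint group $\mathbf G^*$, so its whole series restricts irreducibly.

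Citing (P5) and the restriction argument of Proposition~\ref{prop:odd-two} does not repair this. There (P5) was checked for the type~$\mathsf C$ families of Lemma~\ref{lem:symplectic-gggr-selection}. ``Transporting'' that selection to type~$\mathsf B$ is not defined; the type~$\mathsf B$ isolated pairs come from \cite[Proposition~2.4]{GeckHezard2008}.

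The missing ingredient is a second, non-isolated source of characters for the split class. Take a pair $(\widetilde s,\psi)$ as in \cite[Theorem~3.2]{Taylor2013} with the following properties:
\begin{itemize}
\item the image $s$ is quasi-isolated, hence a $2$-element, so the constituents still lie in $B_0(G)$;
\item $\widetilde\psi$ has unipotent support $\mathcal C$ and $n_{\widetilde\psi}=|A/Z_{\mathcal C}|$;
\item $\operatorname{Res}^{\widetilde G}_G\widetilde\psi$ has exactly two constituents.
\end{itemize}
The multiplicity formula then gives $\widetilde\psi^*$ a unique nonzero pairing with the $\widetilde\Gamma_{\widetilde u}$. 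This pairing must lie on the split class, because on an unsplit class the two constituents contribute equally. An element of $\widetilde G$ swapping the two constituents must swap $u_1$ and $u_2$, so the block on these classes is $\bigl(\begin{smallmatrix}a&b\\ b&a\end{smallmatrix}\bigr)$ with $a+b=1$. The same equal-summands argument is also what shows that the characters from Lemma~\ref{lem:type-b-isolated-coverage} restrict irreducibly on the unsplit classes.

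There are two smaller points. First, your initial justification of $|\mathcal U|=|\IBr(B_0(G))|$ is wrong. Lemma~\ref{lem:type-b-conlon-block} concerns odd $\ell$, and at $\ell=2$ the number of unipotent classes need not equal $|\mathcal E(G,1)|$: already $\operatorname{Spin}_5(q)\cong\Sp_4(q)$ has seven unipotent classes and six unipotent characters. The correct input is \cite[Proposition~2.7]{Chaneb2021}, applied via $\mathcal E_2(G,1)=\Irr(B_0(G))$, which you mention at the end. Second, in the triangularity step it is $\eta_u$ itself that needs wave front set $\mathcal C$ for \cite[Proposition~15.2]{Taylor2016} to apply to $\langle\eta_u,\Gamma_v\rangle_G$. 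This holds because $\eta_u$ is a constituent of the restriction of $\rho_i^*$ and $\rho_i$ has unipotent support $\mathcal C$, not because $\eta_u$ has unipotent support $\mathcal C$.
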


\begin{proof}
Fix a geometric unipotent class $\mathcal C$. As in the proof of
Lemma~\ref{lem:type-b-rational-field}, choose $u_0\in\mathcal C^F$ such
that $F$ acts trivially on $A=A_{\mathbf G}(u_0)$.
Let $Z_{\mathcal C}$ be the image of $Z(\mathbf G)$ in $A$. The regular
embedding gives
\[
 V=A/Z_{\mathcal C}\cong A_{\widetilde{\mathbf G}}(u_0).
\]
By \cite[Section~2]{Taylor2013}, $V$ is an elementary abelian $2$-group.
Since $F$ acts trivially on $A$, it also acts trivially on $V$.
The parametrisation of rational classes identifies the map from
$G$-classes to $\widetilde G$-classes in $\mathcal C^F$ with the map
from conjugacy classes of $A$ to elements of $V$.
For each representative $\widetilde u$ of a $\widetilde G$-class in
$\mathcal C^F$, let $\widetilde\Gamma_{\widetilde u}$ denote the
corresponding generalised Gelfand--Graev character.

We first choose an isolated family for
$\widetilde{\mathbf G}$. By
\cite[Proposition~2.4]{GeckHezard2008}, there is a geometric isolated
pair associated with $\mathcal C$ whose family group has order $|V|$.
Let $\mathbf D=[\widetilde{\mathbf G}^*,\widetilde{\mathbf G}^*]
\cong\operatorname{Sp}_{2n}$. Write the semisimple element in the
chosen pair as $zt$, where $z$ is central and $t\in\mathbf D$.
Multiplication by $z$ leaves the centraliser unchanged, so $t$
is isolated. Its eigenvalues belong to $\{1,-1\}$, since any
other reciprocal pair of eigenvalues would give a general linear
factor and place the centraliser in a proper Levi subgroup.
Let $2a$ and $2b$ be the dimensions of its $1$ and $-1$
eigenspaces, respectively, where $a+b=n$.

Choose an $F$-stable maximal torus $\widetilde{\mathbf T}^*$
contained in an $F$-stable Borel subgroup, and set
$\mathbf T_{\mathbf D}=\widetilde{\mathbf T}^*\cap\mathbf D$.
Relative to a symplectic basis diagonalising
$\mathbf T_{\mathbf D}$, let
$t'=\operatorname{diag}(I_a,-I_b,I_a,-I_b)$.
Its centraliser in $\mathbf D$ is
$\operatorname{Sp}_{2a}\times\operatorname{Sp}_{2b}$, with root
system $\mathsf C_a\sqcup\mathsf C_b$, omitting factors of rank zero.
The eigenspace dimensions show that $t'$ is conjugate to $t$
in $\mathbf D$ and is therefore isolated.
Since $\mathsf C_n$ has no nontrivial graph automorphism, $F$
acts by the $q$-power map on $\mathbf T_{\mathbf D}$.
Thus $t'$ is fixed by $F$ for odd $q$.

Let $W$ be the Weyl group of $\widetilde{\mathbf G}^*$, and let $W_t\leq W$
be the Weyl group of $C_{\widetilde{\mathbf G}^*}(t)$.
Removing the central factor and replacing $t$ by $t'$ replace $W_t$
by a conjugate in $W$. Apply this conjugacy to the family and its
unique special irreducible character. Conjugating $W_t$ and its
character leaves induction from $W_t$ to $W$ unchanged and preserves
the $b$-invariant. Hence the constituent selected by $j$-induction
is unchanged. The associated unipotent class is the class corresponding
to this constituent under the Springer correspondence
\cite[p.~821]{GeckHezard2008}, so it remains $\mathcal C$.
The order of the group attached to the resulting family is also unchanged.

Set $\widetilde s'=t'$, and denote the resulting family by $\mathcal F'$.
Thus $\widetilde s'\in(\widetilde{\mathbf T}^*)^F$ is isolated and
$|\mathcal G_{\mathcal F'}|=|V|$. The group $\mathcal G_{\mathcal F'}$
is elementary abelian \cite[Remark~4.2.17]{GeckMalle2020}.
Lemma~\ref{lem:type-b-isolated-coverage} gives characters
$\widetilde\rho_{\widetilde u}$ belonging to the family in
$\mathcal E(\widetilde G,\widetilde s')$ corresponding to $\mathcal F'$
such that
\[
 \langle\widetilde\rho_{\widetilde u}^{*},
       \widetilde\Gamma_{\widetilde v}\rangle_{\widetilde G}
 =\delta_{\widetilde u,\widetilde v}
\]
for all chosen representatives $\widetilde u,\widetilde v$.

Independently of this family, choose a pair $(\widetilde s,\psi)$
associated with $\mathcal C$ as in \cite[Theorem~3.2]{Taylor2013},
with the image $s$ of $\widetilde s$ in $\mathbf G^*$ quasi-isolated.
The character $\psi$ is unipotent on $C^\circ_{\mathbf G^*}(s)^F$.
Let $\widetilde\psi\in\mathcal E(\widetilde G,\widetilde s)$ be the
character corresponding to $\psi$ under the bijections in
\cite[(3.2)]{Taylor2013}.
Property~(P3) of \cite[Theorem~3.2]{Taylor2013} shows that
$\widetilde\psi$ has unipotent support $\mathcal C$.
Property~(P1) of the same theorem and the degree calculation in the
proof of \cite[Theorem~3.1]{Taylor2013} give
$n_{\widetilde\psi}=n_\psi=|V|$.
Also $n_{\widetilde\psi^*}=n_{\widetilde\psi}$ by
\cite[Proposition~3.4.21]{GeckMalle2020}.
For each chosen representative $\widetilde u$ of a $\widetilde G$-class
in $\mathcal C^F$, the Frobenius action on its component group differs
from that at $u_0$ by an inner automorphism of $V$. Since $V$ is abelian,
$[A_{\widetilde{\mathbf G}}(\widetilde u):
A_{\widetilde{\mathbf G}}(\widetilde u)^F]=1$.
Choose the Frobenius structure on the constant local system on
$\mathcal C$ for which the trace function takes the value $1$ at every
point of $\mathcal C^F$.
The wave front set
of $\widetilde\psi^*$ is $\mathcal C$ by \cite[Lemma~14.15]{Taylor2016}.
Thus \cite[Proposition~15.4]{Taylor2016}, with the expansion in
\cite[(11.15)]{Taylor2016}, gives
\[
 \sum_{\widetilde u}
 \langle\widetilde\psi^*,\widetilde\Gamma_{\widetilde u}
 \rangle_{\widetilde G}
 =\frac{|A_{\widetilde{\mathbf G}}(u_0)|}{n_{\widetilde\psi^*}}=1,
\]
where the sum ranges over the chosen representatives.
Each summand is a nonnegative integer, so exactly one scalar product
is $1$ and all the others vanish.

Every irreducible character of $\widetilde G$ has multiplicity free
restriction to $G$ \cite[Theorem~1.7.15]{GeckMalle2020}.
For $\widetilde\rho=\widetilde\psi$ or one of the characters
$\widetilde\rho_{\widetilde u}$ just chosen, write
\[
 \operatorname{Res}^{\widetilde G}_G(\widetilde\rho)
 =\rho_1+\cdots+\rho_r.
\]
Alvis--Curtis duality commutes with restriction from $\widetilde G$ to
$G$ and is an involution
\cite[Proposition~3.4.3 and Corollary~3.4.5]{GeckMalle2020}.
Consequently, $\operatorname{Res}^{\widetilde G}_G(\widetilde\rho^*)$
is a sum of the distinct irreducible characters $\rho_i^*$ with
coefficients in $\{1,-1\}$. Since this restriction is a character,
every coefficient is $1$. Together with the identity
$\widetilde\Gamma_v=\operatorname{Ind}_G^{\widetilde G}\Gamma_v$
in the proof of \cite[Lemma~14.12]{Taylor2016} and Frobenius
reciprocity, this gives
\[
 \sum_{i=1}^r\langle\rho_i^*,\Gamma_v\rangle_G
 =\langle\widetilde\rho^*,\widetilde\Gamma_v\rangle_{\widetilde G}
 \qquad(v\in\mathcal C^F).
 \tag{\ref{prop:type-b-gggr-rank}.1}
 \label{eq:type-b-fibre-clifford}
\]
For these selected characters, the right hand side belongs to
$\{0,1\}$ by their construction above. We now choose characters $\rho_u$,
indexed by $u\in\mathcal U\cap\mathcal C^F$, such that
$\bigl(\langle\rho_u^*,\Gamma_v\rangle_G\bigr)_{u,v\in\mathcal U\cap\mathcal C^F}$
is a permutation matrix.

Whenever a $\widetilde G$-class in $\mathcal C^F$ is a single
$G$-class, restrict the character $\widetilde\rho_{\widetilde u}$
chosen for that class. Conjugation by $\widetilde G$ acts transitively
on the restriction constituents and fixes the $G$-class.
The equivariance of generalised Gelfand--Graev characters
\cite[Proposition~2.2]{Geck1993} makes the summands in
\eqref{eq:type-b-fibre-clifford} equal for $v=\widetilde u$.
They are nonnegative integers with sum one, so $r=1$.
Equation~\eqref{eq:type-b-fibre-clifford} also shows that zero scalar
products in $\widetilde G$ remain zero for every restriction constituent.

If $Z_{\mathcal C}=1$, then $A\cong V$ and every
$\widetilde G$-class in $\mathcal C^F$ is a single $G$-class.
The irreducible restrictions just obtained therefore give the required
characters $\rho_u$, indexed by $u\in\mathcal U\cap\mathcal C^F$,
for this geometric class.

Suppose now that $Z_{\mathcal C}\ne1$, and denote its nonidentity
element by $\vartheta$. If $A$ is abelian, Lusztig's presentation of
$A$, as recalled in \cite[pp.~45--46]{Taylor2013}, gives
$A=Z_{\mathcal C}\cong C_2$. Indeed, nontrivial central image requires
each odd part of the orthogonal partition to occur once. The number
of odd parts is odd. If there were at least three, the generators
$y_2,y_3$ in that presentation would have nontrivial commutator
$\vartheta$, contradicting commutativity. Thus there is only one odd
part and $A$ is generated by $\vartheta$. In this case $V=1$, and the
unique $\widetilde G$-class contains two $G$-classes.

If $A$ is nonabelian, Lusztig's presentation recalled in
\cite[pp.~45--46]{Taylor2013} shows that $A$ is an extraspecial
$2$-group, with
$Z(A)=Z_{\mathcal C}=\langle\vartheta\rangle$.
The $\widetilde G$-class corresponding to $0\in V$ contains the two
$G$-classes corresponding to $1$ and $\vartheta$.
Every other $\widetilde G$-class is a single $G$-class, since the
inverse image of a nonzero element of $V$ is a conjugacy class
$\{x,x\vartheta\}$ of $A$.

In both cases with $Z_{\mathcal C}\ne1$, there is exactly one $\widetilde
G$-class containing two $G$-classes, and the required characters have already
been chosen for every other $\widetilde G$-class. Property~(P2) of
\cite[Theorem~3.2]{Taylor2013} and the restriction
formula in the proof of \cite[Theorem~3.1]{Taylor2013} give
exactly $|Z_{\mathcal C}|=2$ irreducible constituents of
$\operatorname{Res}^{\widetilde G}_G\widetilde\psi$.
On a $\widetilde G$-class consisting of one $G$-class, conjugation
makes the two summands in \eqref{eq:type-b-fibre-clifford} equal.
They are nonnegative integers, so their sum cannot be one.
Hence the unique nonzero scalar product of $\widetilde\psi^*$ occurs
on the class containing two $G$-classes.
Let $u_1,u_2$ represent those classes, and write
$\operatorname{Res}^{\widetilde G}_G\widetilde\psi=\rho_1+\rho_2$.
Equation~\eqref{eq:type-b-fibre-clifford} gives
\[
 \langle\rho_1^*,\Gamma_{u_j}\rangle_G+
 \langle\rho_2^*,\Gamma_{u_j}\rangle_G=1
 \qquad(j=1,2).
\]
An element interchanging the two constituents must interchange the
two $G$-classes. Otherwise equivariance would make the two entries
in either column equal, although they are nonnegative integers with
sum one. The scalar product matrix on these classes is consequently
\[
 \begin{pmatrix}a&b\\ b&a\end{pmatrix},
 \qquad a,b\in\mathbb Z_{\geq0},\qquad a+b=1,
\]
and is a permutation matrix. Together with the characters chosen for
the $\widetilde G$-classes containing a single $G$-class, this gives characters $\rho_u$ for which
$\bigl(\langle\rho_u^*,\Gamma_v\rangle_G\bigr)_{u,v\in\mathcal U\cap\mathcal C^F}$
is a permutation matrix.

All the selected characters of $\widetilde G$ have unipotent support
$\mathcal C$. Compatibility with duality and
\cite[Lemmas~14.12 and~14.15]{Taylor2016} show that each restriction
constituent $\rho_u$ has the same support.
Its semisimple label is the image of $\widetilde s$ or
$\widetilde s'$ in $\mathbf G^*$, according to the character selected.
The image of $\widetilde s$ is quasi-isolated by its choice. The image of $\widetilde s'$ is isolated by
\cite[Proposition~2.3(b)]{Bonnafe2005}.
Both images are therefore quasi-isolated $2$-elements
\cite[Proposition~5.3(a)]{Bonnafe2005}.

For every $u\in\mathcal U$, set $\eta_u=\rho_u^*$.
The semisimple label of $\rho_u$ is quasi-isolated, so
Lemma~\ref{lem:type-b-principal-series} gives
$\rho_u\in\Irr(B_0(G))$.
Alvis--Curtis duality preserves rational Lusztig series
\cite[Proposition~9.8(iv)]{CabanesEnguehard2004}, and therefore
$\eta_u$ lies in the same series and also belongs to $B_0(G)$. Hence
$\langle\eta_u,P_v\rangle_G=\langle\eta_u,\Gamma_v\rangle_G$ for all
$u,v\in\mathcal U$.

Let $\mathcal C_u$ be the geometric class containing $u$.
By \cite[Lemma~14.15]{Taylor2016}, $\mathcal C_u$ is the wave front
set of $\eta_u$. The result of Achar and Aubert in the form of
\cite[Proposition~15.2]{Taylor2016} shows that
$\langle\eta_u,\Gamma_v\rangle_G\neq0$ only if
$\mathcal C_v\subseteq\overline{\mathcal C_u}$, where $\overline{\mathcal
C_u}$ denotes the Zariski closure of $\mathcal C_u$ in $\mathbf G$. For
distinct classes, this inclusion implies $\dim\mathcal C_v<\dim\mathcal C_u$.
The chosen ordering therefore makes the full scalar product matrix block lower
triangular, with the permutation matrices obtained above on its diagonal.

By the equality $\mathcal E_2(G,1)=\Irr(B_0(G))$ in
Lemma~\ref{lem:type-b-principal-series},
\cite[Proposition~2.7]{Chaneb2021} gives
$|\mathcal U|=|\IBr(B_0(G))|$.
Each $\Gamma_u$ is induced from a linear character of a $p$-subgroup.
Since $p$ is odd, $P_u$ is projective in characteristic $2$.
The scalar product matrix is nonsingular, so the $P_u$ are linearly
independent. Their number equals the dimension of the $\mathbb Q$-span
of the projective indecomposable characters of $B_0(G)$, and they
therefore form a basis of that space.
\end{proof}

\begin{corollary}\label{cor:type-b-principal-selector}
Let $G=\operatorname{Spin}_{2n+1}(q)$, and let $E$ be the group of field
automorphisms of $G$. Then every $\psi\in\IBr(B_0(G))$ is fixed by $E$, satisfies
\[
 (\widetilde G\rtimes E)_\psi=\widetilde G_\psi\rtimes E,
\]
and extends to $G\rtimes E$. If $(n,q)\neq(3,3)$, then the principal
$2$-block of $\Omega_{2n+1}(q)$ is BAW-good.
\end{corollary}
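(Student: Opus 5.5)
The plan is to derive field-invariance of $\IBr(B_0(G))$ from Proposition~\ref{prop:type-b-gggr-rank} and Lemma~\ref{lem:type-b-rational-field}, and then obtain BAW-goodness from the Jordan reduction for the remaining blocks.

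First, field-invariance. By Proposition~\ref{prop:type-b-gggr-rank}, the characters $P_u$ ($u\in\mathcal U$) form a $\mathbb Q$-basis of the span of the projective indecomposable characters $\Phi_\psi$, $\psi\in\IBr(B_0(G))$. By Lemma~\ref{lem:type-b-rational-field}, every $\sigma\in E$ fixes each $\Gamma_u$, and $\sigma$ stabilises $B_0(G)$; hence $\sigma$ fixes each $P_u$. Therefore $\sigma$ acts trivially on this $\mathbb Q$-span. Since $\sigma$ permutes the $\Phi_\psi$, and these form a basis of the span, the permutation must be trivial. So $\Phi_\psi^\sigma=\Phi_\psi$, and hence $\psi^\sigma=\psi$ for all $\psi\in\IBr(B_0(G))$.

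The stabiliser factorisation then follows directly. Given $g\in(\widetilde G\rtimes E)_\psi$, write $g=xe$ with $x\in\widetilde G$ and $e\in E$. Since $e$ fixes $\psi$, so does $x$. Thus $(\widetilde G\rtimes E)_\psi=\widetilde G_\psi\rtimes E$. Because $E$ is cyclic and fixes $\psi$, Lemma~\ref{lem:cyclic-extension} extends $\psi$ to $G\rtimes E$.

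For the BAW-goodness of the principal $2$-block of $\Omega_{2n+1}(q)$ with $(n,q)\ne(3,3)$, I would use the results of Feng, Yu, and Zhang \cite[Theorem~1 and Assumption~5.1]{FengYuZhang2024}. Their argument proves the inductive condition once the Brauer character stabiliser and extension property is known. For the principal block this property is exactly what was just established, transported to $\Omega_{2n+1}(q)=G/Z(G)$, where $Z(G)=O_2(G)$. The transport goes through Lemma~\ref{lem:type-b-normal-core}: the block $B_0(G)$ dominates $B_0(\Omega_{2n+1}(q))$, inflation identifies their Brauer characters, and the automorphism groups correspond.

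The main obstacle is checking that Assumption~5.1 of \cite{FengYuZhang2024} is needed only blockwise. Their weight-side equivariance and the Brough--Sp\"ath criterion also require an $\Aut$-equivariant bijection $\IBr(B_0)\to\Alp(B_0)$, and their counting presumably supplies this. The diagonal automorphism has order $2$ and $E$ acts trivially on both sides, provided the weight side is also field-fixed, which I would take from their principal weight parametrisation. The acting group is then $C_2$ on both sides, so equal numbers of fixed points yield an equivariant bijection. Condition~(iii) of the criterion then comes from the factorisation proved above.
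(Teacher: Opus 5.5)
Your first part is the paper's argument: $E$ fixes every $P_u$, the $P_u$ form a $\mathbb Q$-basis of the span of the projective indecomposable characters of $B_0(G)$, and triviality on that span forces every projective indecomposable character, hence every $\psi\in\IBr(B_0(G))$, to be fixed. The factorisation and the extension via Lemma~\ref{lem:cyclic-extension} then follow as you say.

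There is a genuine gap in the BAW-goodness part. You treat Assumption~5.1 as the only input Feng--Yu--Zhang need. The principal block result the paper actually applies, \cite[Theorem~5.16]{FengYuZhang2024}, also has a field invariance hypothesis. That hypothesis enters through \cite[Corollary~5.15]{FengYuZhang2024} and concerns $\IBr(B_0(H))$ for $H=\operatorname{SO}_{2n+1}(q)$, the group inducing the diagonal automorphism of $S=\Omega_{2n+1}(q)$. Your proposal never passes to $H$.

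Your substitute route does not close this. You leave the weight-side field invariance and the equality of diagonal fixed points as ``presumably''. More seriously, a bare $\Aut(S)$-equivariant bijection is not enough when $\Out(S)\cong C_2\times C_f$ is noncyclic. The Brough--Sp\"ath criterion also has a condition on the overgroup that induces the diagonal automorphisms: an $E$-equivariant bijection between its Brauer characters and its weights. Your condition~(iii) does not supply this, and verifying it requires knowing how $E$ acts on $\IBr(B_0(H))$.

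The missing step is short. Since $Z(G)$ is a central $2$-group, inflation identifies $\IBr(B_0(S))$ with $\IBr(B_0(G))$ \cite[Theorem~9.10]{Navarro1998}, so $E$ fixes $\IBr(B_0(S))$ pointwise. Since $|H:S|=2$ is a power of $\ell=2$, each $\theta\in\IBr(S)$ lies under a unique irreducible Brauer character of $H$ \cite[Theorem~8.11]{Navarro1998}. Moreover $B_0(H)$ covers only $B_0(S)$. So if $\varphi\in\IBr(B_0(H))$ lies over $\theta$, then $\varphi$ and $\varphi^\sigma$ both lie over $\theta^\sigma=\theta$, which forces $\varphi^\sigma=\varphi$.

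Together with your verification of Assumption~5.1 for $B_0(G)$, this is exactly what \cite[Theorem~5.16]{FengYuZhang2024} needs. That theorem then yields the inductive condition for $B_0(S)$, with the weight side and the diagonal counting handled internally. Theorem~\ref{thm:spath-triples} converts this into BAW-goodness.

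Note also that Lemma~\ref{lem:type-b-normal-core} runs in the wrong direction for your purpose. It lifts iBAW bijections from $G/P$ up to $G$, whereas you need to push a property of Brauer characters from $G$ down to $S$, which is plain inflation.
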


\begin{proof}
By Lemma~\ref{lem:type-b-rational-field}, $E$ fixes every $\Gamma_u$
and therefore every principal block component $P_u$.
Proposition~\ref{prop:type-b-gggr-rank} shows that the $P_u$ form a
basis over $\mathbb Q$ of the space spanned by the projective
indecomposable characters of $B_0(G)$.
Thus $E$ acts trivially on this space. It fixes every projective
indecomposable character and, by duality, every irreducible Brauer
character of $B_0(G)$. In particular, $E$ fixes every
$\widetilde G$-conjugate of $\psi$, which gives the stabiliser equality.
Since $E$ is cyclic, Lemma~\ref{lem:cyclic-extension} extends $\psi$
to $G\rtimes E$.

Suppose that $(n,q)\neq(3,3)$. By Table~\ref{tab:typeb-covers},
$S=\Omega_{2n+1}(q)$ is its own universal $2'$-covering group.
Set $H=\operatorname{SO}_{2n+1}(q)$.
Since $Z(G)$ is a central $2$-group, inflation identifies
$\IBr(B_0(S))$ with $\IBr(B_0(G))$
\cite[Theorem~9.10]{Navarro1998}. Thus field automorphisms
fix every character in $\IBr(B_0(S))$.
The principal block of $H$ covers $B_0(S)$.
As $|H:S|=2$, each irreducible Brauer character of $S$ lies
under a unique irreducible Brauer character of $H$
\cite[Theorem~8.11]{Navarro1998}.
Consequently field automorphisms also fix every character in
$\IBr(B_0(H))$.
This gives the instance of
\cite[Corollary~5.15]{FengYuZhang2024} used in the proof of
Theorem~5.16 of that paper.
The preceding argument verifies the field invariance hypothesis of
\cite[Theorem~5.16]{FengYuZhang2024} and Assumption~5.1 of the same paper
for $B_0(G)$. That theorem therefore gives the inductive BAW condition
for the principal $2$-block of $S$, which is BAW-good by
Theorem~\ref{thm:spath-triples}.
\end{proof}

By the equivalence stated after
\cite[Question~5.27]{FengYuZhang2024},
Corollary~\ref{cor:type-b-principal-selector} gives a positive answer
for $\operatorname{Spin}_{2n+1}(q)$ with $q$ odd and $n\geq3$.
Indeed, field automorphisms fix both the unipotent conjugacy classes and
the principal block Brauer characters, so only the diagonal involution
acts on these sets. Applying \cite[Proposition~2.7]{Chaneb2021} to
$\operatorname{Spin}_{2n+1}(q)$ and $\operatorname{SO}_{2n+1}(q)$,
together with \cite[Theorem~21.14]{CabanesEnguehard2004} and
\cite[Theorem~8.11]{Navarro1998}, gives equal numbers of diagonal
orbits of length two. For $n=3$, the field invariance is already obtained
in the proof of \cite[Corollary~5.17]{FengYuZhang2024}.

\subsection{The exceptional triple cover}
\label{subsec:type-b-rank-three}

For odd $q\ne3$, Feng, Yu, and Zhang established the inductive BAW
condition for the principal $2$-block of $\Omega_7(q)$ in the proof of
\cite[Corollary~5.17]{FengYuZhang2024}.  This also follows from
Corollary~\ref{cor:type-b-principal-selector}.  We now treat the exceptional
universal $2'$-covering group $3.\Omega_7(3)$.

\begin{proposition}\label{prop:type-b-q3}
Let $X=3.\Omega_7(3)$. Then every $2$-block of $X$ satisfies the inductive BAW
condition.
\end{proposition}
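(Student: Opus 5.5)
We split the $2$-blocks of $X=3.\Omega_7(3)$ into sectors in the sense of Definition~\ref{def:central-sector}, with $Z=Z(X)\cong C_3$. In the trivial sector, blocks of $X$ are blocks of $S=\Omega_7(3)$. These are handled as follows. For the principal block, Corollary~\ref{cor:type-b-principal-selector} does not directly apply since $(n,q)=(3,3)$. We would instead invoke the field-invariance established in the proof of \cite[Corollary~5.17]{FengYuZhang2024}, which is noted above to cover $n=3$. Note that $q=3$ is prime, so there are no field automorphisms and $\Out(S)\cong C_2$ is generated by the diagonal automorphism. Nonprincipal blocks of $S$ of positive defect are then treated via Jordan reduction, or by explicit calculation where needed. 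Defect-zero blocks are trivial, since $\chi^0\mapsto(1,\chi)$ works.

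The faithful sectors are the main new content. The two faithful characters $\nu,\bar\nu$ of $Z$ are interchanged by the outer involution of $\Aut(X)\cong\Aut(S)=S.2$, since that automorphism inverts the central $C_3$ (as in the ATLAS, $3.O_7(3).2$ is not a central product). Hence by Remark~\ref{rem:block-aggregation}(a) it suffices to treat the blocks in the $\nu$-sector, and for these $\Aut(X)_b$ acts through inner automorphisms only. Then equivariance is automatic. The condition reduces to the following two points:
\begin{enumerate}[label=\textup{(\alph*)}]
\item a count $|\IBr(b)|=|\Alp(b)|$ for each faithful block $b$;
\item the character triple condition with $A=X$, which is trivial once both triples live over the same central character $\nu$.
\end{enumerate}
For (b), we would apply \cite[Lemma~3.6]{MartinezRizoRossi2026} or Theorem~\ref{thm:spath-triples} with $G\rtimes\Aut(G)_\theta=G$; with no outer automorphisms in the stabiliser, block isomorphism of triples $(X,X,\psi)\succeq_b(N_X(Q),N_X(Q),\varphi^0)$ holds by the standard scalar/central-character argument.

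So the real work is (a). We would compute the $2$-modular Brauer characters of $3.O_7(3)$ in the faithful sector from the known decomposition matrices (GAP character table library, \texttt{3.O7(3) mod 2}), sorted by block. For weights, we would use that $2$-radical subgroups of $X$ correspond bijectively to those of $S$ under $Q\mapsto QZ/Z$, since $Z$ is a $2'$-group: take $\widehat Q$ to be the Sylow-$2$ of the preimage. Then $N_X(\widehat Q)$ is a central extension by $C_3$ of $N_S(Q)$. We would enumerate the radical $2$-subgroups of $S$ (from \cite{An1993}, corrected as in \cite{FengYuZhang2024}, or via GAP/MAGMA computation in a permutation representation of $3.O_7(3)$), and count faithful defect-zero characters of $N_X(\widehat Q)/\widehat Q$ over $\nu$. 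We would then assign them to blocks via Brauer's first main theorem and block induction, or via central character values.

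I expect the main obstacle to be this local computation. Radical subgroup normalisers are large, and distributing faithful weights into the correct blocks when several faithful blocks of positive defect exist requires care. I would attempt it first by direct machine computation, checking the block assignment through the central characters $\omega_b$ on $2$-regular class sums.
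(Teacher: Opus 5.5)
Your treatment of the faithful sectors is essentially the paper's. The outer involution interchanges $\omega$ and $\omega^2$, so $B_6\leftrightarrow B_7$ and $B_8\leftrightarrow B_9$ have stabiliser $\Inn(X)$, and only the counts matter. Since $X=X_2(S)$ and $\Out(S)\cong C_2$ is cyclic, \cite[Corollary~2.13]{FengLiZhang2019} reduces the condition for \emph{every} $2$-block $B$ to the existence of an $\Aut(X)_B$-equivariant bijection; this disposes of your point (b) cleanly. The paper also avoids your blockwise assignment of faithful weights. It uses the exhaustive radical computation only for the sector totals, which give $8$ weight classes in each faithful sector. The blocks $B_8,B_9$ have dihedral defect groups of order $8$ and two Brauer characters, hence two weights each by \cite[Theorem~4.1]{Sambale2012}. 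Subtraction gives $6$ weights for $B_6$ and for $B_7$, matching their $6$ Brauer characters.

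The genuine gap is in the trivial sector, which you handle by citing results that do not cover $(n,q)=(3,3)$.

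For the principal block, field invariance is vacuous at $q=3$, so invoking it from \cite{FengYuZhang2024} supplies nothing. What must be shown is that the diagonal involution has the same number of fixed points on $\IBr(B_0(S))$ as on $\Alp(B_0(S))$. The Feng--Yu--Zhang principal block result is established only for $q\neq3$. The paper proves the matching directly with $H=\operatorname{SO}_7(3)$:
\begin{enumerate}
\item By \cite[Lemma~5.7 and the proof of Proposition~5.11]{FengYuZhang2024}, the principal weight counts of $H$ and $S$ equal $|\IBr(B_0(H))|=10$ and $|\IBr(B_0(S))|=12$.
\item Each principal weight class of $S$ is covered by a unique class of $H$, and the covered classes form one $H/S$-orbit \cite[Corollary~2.22(2)]{FengYuZhang2024}. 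This gives eight fixed weights and two orbits of length two.
\item The restrictions from $\IBr(B_0(H))$, eight irreducible and two splitting, give the same orbit structure on $\IBr(B_0(S))$.
\item \cite[Lemma~2.5]{FengLiZhang2019} then lifts the resulting bijection to $X$.
\end{enumerate}

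For the nonprincipal blocks, Jordan reduction is not available. Lemma~\ref{lem:jordan-reduction} requires $G$ to be the universal covering group, and $\operatorname{Spin}_7(3)$ is not; this is exactly why Lemma~\ref{lem:type-b-localized-return} excludes $(3,3)$. The paper instead treats these blocks directly.
\begin{itemize}
\item For $B_2$, a height-one character excludes abelian defect groups by \cite[Theorem~1.1]{KessarMalle2013}, and a $Q_8$ block would have one or three Brauer characters \cite[Theorem~8.1]{Sambale2014}. So the defect group is $D_8$. Its two weights, from \cite[Theorem~4.1]{Sambale2012}, have radical subgroups of orders $8$ and $4$, so both are fixed by the involution. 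A computation shows both Brauer characters are fixed as well.
\item $B_3$ has defect group $C_2$, so it is nilpotent and \cite[Theorem~1.3]{KoshitaniSpath2016} applies.
\item $B_4$ and $B_5$ have defect zero.
\end{itemize}
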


\begin{proof}
Let $S=\Omega_7(3)$ and $H=\operatorname{SO}_7(3)$.  Since $X=X_2(S)$
and $\Out(S)\cong C_2$, \cite[Corollary~2.13]{FengLiZhang2019} shows
that a $2$-block $B$ of $X$ satisfies the inductive BAW condition whenever
there is an $\Aut(X)_B$-equivariant bijection
$\IBr(B)\to\Alp(B)$.  The calculations in
Section~\ref{app:type-b-computation} give the nine blocks
$B_1,\ldots,B_9$, with $B_1=B_0(X)$, and the data in
Table~\ref{tab:typeb-q3-blocks} below.

Conjugation by $H$ induces $\Aut(S)$.
By \cite[Lemma~5.7 and the proof of Proposition~5.11]{FengYuZhang2024},
the numbers of principal weight classes of $H$ and $S$ equal
$|\IBr(B_0(H))|$ and $|\IBr(B_0(S))|$, respectively.
The proof of \cite[Proposition~5.11]{FengYuZhang2024} applies to
$\operatorname{Spin}_7(3)$, whose principal
Brauer characters and principal weight classes correspond bijectively
to those of $S$ under the central quotient with kernel of order $2$.
The calculation in Section~\ref{app:type-b-computation} gives
$|\IBr(B_0(H))|=10$ and $|\IBr(B_0(S))|=12$.
The proof of \cite[Proposition~5.11]{FengYuZhang2024} also shows that
every principal weight class of $S$ is covered by a unique principal
weight class of $H$.
By \cite[Corollary~2.22(2)]{FengYuZhang2024}, the classes of $S$
covered by each class of $H$ form one orbit under $H/S$.
Consequently, $H/S\cong C_2$ has eight singleton orbits and two
orbits of size two on the principal weight classes of $S$.
The calculation in \path{o7brauer.g}, described in
Section~\ref{app:type-b-computation}, shows that eight of the ten
irreducible Brauer characters of $B_0(H)$ restrict irreducibly to $S$,
while the other two each restrict to a sum of two distinct irreducible
Brauer characters. The irreducible constituents of these ten
restrictions are precisely the characters in $\IBr(B_0(S))$.
By Clifford theory, the action of $H/S$ on this set has eight singleton
orbits and two orbits of size two.  Comparing this with the action on
$\Alp(B_0(S))$, we obtain an $\Aut(S)$-equivariant bijection
\[
 \IBr(B_0(S))\longrightarrow\Alp(B_0(S)).
\]
Since $X\to S$ has central
$2'$-kernel, \cite[Lemma~2.5]{FengLiZhang2019} lifts this to an
$\Aut(X)$-equivariant bijection $\IBr(B_1)\to\Alp(B_1)$.

For $B_2$, the character table calculation in
Section~\ref{app:type-b-computation} gives defect order $8$, five ordinary
characters and two Brauer characters. One ordinary character has
height $1$, so Kessar--Malle's theorem
\cite[Theorem~1.1]{KessarMalle2013} excludes abelian defect groups.
The defect group of $B_2$ is therefore dihedral or quaternion.
A block with quaternion defect group of order $8$ has one or three
irreducible Brauer characters
\cite[Theorem~8.1]{Sambale2014}.
Hence the defect group of $B_2$ is $D_8$.  By the proof of
\cite[Theorem~4.1]{Sambale2012}, $B_2$ has exactly two $X$-conjugacy
classes of weights, represented by weights whose radical subgroups
have orders $8$ and $4$, respectively.  Since automorphisms preserve
subgroup order, the outer involution fixes both classes.  The program
\path{o7block2.g} verifies that the outer involution also fixes
both irreducible Brauer characters of $B_2$.  Hence any bijection
$\IBr(B_2)\to\Alp(B_2)$ is $\Aut(X)_{B_2}$-equivariant.
The block $B_3$ has defect group $C_2$ and is therefore nilpotent. It satisfies the inductive BAW condition by
\cite[Theorem~1.3]{KoshitaniSpath2016}. The blocks $B_4$ and $B_5$
have defect zero and satisfy the condition trivially.

Table~\ref{tab:typeb-q3-blocks} below shows that $B_8$ and $B_9$
are the only blocks of defect $3$ with nontrivial central
characters. They are therefore the two faithful blocks with
dihedral defect groups of order $8$ described in
\cite[Section~5.5]{Macgregor2022}.  Each has two irreducible
Brauer characters, so \cite[Theorem~4.1]{Sambale2012} gives two
weight classes in each.
The exhaustive radical computation in
Section~\ref{app:type-b-computation} gives eight weight classes in
each faithful sector.  Since $B_8$ and $B_9$ account for two
in their respective sectors, $B_6$ and $B_7$ each have six, equal to
their numbers of Brauer characters.  The outer involution swaps
$B_6$ with $B_7$ and $B_8$ with $B_9$, so
$\Aut(X)_{B_i}=\Inn(X)$ for $i\in\{6,7,8,9\}$, so any bijection
$\IBr(B_i)\to\Alp(B_i)$ is $\Aut(X)_{B_i}$-equivariant for each such $i$.
The criterion of \cite[Corollary~2.13]{FengLiZhang2019} now proves the
inductive BAW condition for
$B_1,B_2,B_6,B_7,B_8$ and $B_9$.
\end{proof}

In Table~\ref{tab:typeb-q3-blocks} below, let $\omega$ be the nontrivial
linear character of $Z(X)\cong C_3$ over which $B_6$ lies.
Its square $\omega^2$ is the other nontrivial linear character.
For each block $B$, let $D_B$ be a defect group of $B$.

\begin{table}[ht]
\centering
\scriptsize
\caption{The $2$-blocks of $3.\Omega_7(3)$ in the ordering of
CTblLib~1.3.11 \cite{CTblLib2025}.}
\label{tab:typeb-q3-blocks}
\begin{tabular}{@{}cccll@{}}
\toprule
$B$ & $|D_B|$ & $|\IBr(B)|$ & central character & outer action \\
\midrule
$B_1$ & $2^9$ & 12 & $1$ & fixed \\
$B_2$ & $2^3$ & 2 & $1$ & fixed \\
$B_3$ & $2$ & 1 & $1$ & fixed \\
$B_4$ & $1$ & 1 & $1$ & fixed \\
$B_5$ & $1$ & 1 & $1$ & fixed \\
$B_6$ & $2^9$ & 6 & $\omega$ & $B_7$ \\
$B_7$ & $2^9$ & 6 & $\omega^2$ & $B_6$ \\
$B_8$ & $2^3$ & 2 & $\omega$ & $B_9$ \\
$B_9$ & $2^3$ & 2 & $\omega^2$ & $B_8$ \\
\bottomrule
\end{tabular}
\end{table}

\Needspace{12\baselineskip}
\subsection{All blocks at the prime \texorpdfstring{$2$}{2}}
\label{subsec:type-b-two-high-rank}

The following lemma uses the reduction in the proof of
\cite[Theorem~1]{FengYuZhang2024}, specialised to type~$\mathsf B$.
The proof includes the lower rank cases and uses
Lemma~\ref{lem:type-b-normal-core} to pass from the orthogonal
quotients to the spin groups.

\begin{lemma}
\label{lem:type-b-localized-return}
Let $\mathbf G$ be a simple, simply connected algebraic group of type
$\mathsf B_n$, where $n\geq3$, and let $F$ be a split Frobenius
endomorphism such that $\mathbf G^F=\operatorname{Spin}_{2n+1}(q)$,
with $(n,q)\neq(3,3)$.
Suppose that \cite[Assumption~5.3]{FengLiZhang2022Jordan} holds for
$(\mathbf G,F)$ at $\ell=2$. Then every $2$-block of
$\Omega_{2n+1}(q)$ is BAW-good.
\end{lemma}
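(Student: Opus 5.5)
The plan is to feed the hypothesis into the Jordan reduction of Lemma~\ref{lem:jordan-reduction} at $\ell=2$ and then descend from $\operatorname{Spin}_{2n+1}(q)$ to $\Omega_{2n+1}(q)$. Condition~\textup{(i)} of that lemma is precisely the assumed \cite[Assumption~5.3]{FengLiZhang2022Jordan} for the chosen groups $A_s$. By Table~\ref{tab:typeb-covers}, $G=\operatorname{Spin}_{2n+1}(q)$ is the full universal covering group of $S=\Omega_{2n+1}(q)$ when $(n,q)\neq(3,3)$, so the lemma applies. Its conclusion is that every $2$-block of $G$ is BAW-good. By the convention in Section~\ref{sec:preliminaries}, this means that the dominated block on $G/O_2(G)=S$ is BAW-good, which is exactly the assertion. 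So everything reduces to condition~\textup{(ii)}.

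For condition~\textup{(ii)}, the connected subdiagrams of $\mathsf B_n$ have type $\mathsf A_m$ or $\mathsf B_m$ with $m\leq n$, where $\mathsf B_1=\mathsf A_1$ and $\mathsf B_2=\mathsf C_2$. I split into cases as follows.

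\emph{Type $\mathsf A_m$.} Here $\mathbf K^{F'}$ is a special linear or unitary group. The result established after Lemma~\ref{lem:jordan-reduction} gives iBAW bijections for all of its $2$-blocks.

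\emph{Type $\mathsf B_2\cong\mathsf C_2$.} Here $\mathbf K^{F'}\cong\Sp_4(q_0)$ with $q_0$ odd. The strictly quasi-isolated $2$-block is principal, and the rank-two construction at the start of the proof of Proposition~\ref{prop:odd-two} gives its iBAW bijection.

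\emph{Type $\mathsf B_m$ with $m\geq3$.} Here $\mathbf K^{F'}=\operatorname{Spin}_{2m+1}(q_0)$ with $q_0$ odd; only split Frobenius maps occur because $\mathsf B_m$ has no graph symmetry. Lemma~\ref{lem:type-b-principal-series} shows that the only strictly quasi-isolated $2$-block is $B_0(\operatorname{Spin}_{2m+1}(q_0))$.
\begin{enumerate}[label=\textup{(\alph*)}]
\item If $(m,q_0)\neq(3,3)$, Corollary~\ref{cor:type-b-principal-selector} shows that the principal $2$-block of $\Omega_{2m+1}(q_0)$ is BAW-good, so it has an iBAW bijection. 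I then apply Lemma~\ref{lem:type-b-normal-core} with $P=Z(\operatorname{Spin}_{2m+1}(q_0))=O_2$ and $A=\operatorname{Spin}_{2m+1}(q_0)\rtimes\Aut_{B_0}$, using $\Aut(\operatorname{Spin})\cong\Aut(\Omega)$. This lifts the bijection to $B_0(\operatorname{Spin}_{2m+1}(q_0))$.
\item If $(m,q_0)=(3,3)$, which can occur when $n\geq3$ and $q=3$, I use Proposition~\ref{prop:type-b-q3}. It gives the inductive condition for $B_1=B_0(3.\Omega_7(3))$. Restricting to the trivial sector, this is the principal block of $\Omega_7(3)$. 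Passing through Theorem~\ref{thm:spath-triples} then gives an iBAW bijection for $B_0(\Omega_7(3))$, and Lemma~\ref{lem:type-b-normal-core} lifts it to $\operatorname{Spin}_7(3)$ as in case~(a).
\end{enumerate}

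I expect case~(b) to be the main obstacle. The inductive condition there is formulated on $X_2(S)=3.\Omega_7(3)$, and I have to extract a bijection compatible with $\Aut(\Omega_7(3))_{B_0}$ on the quotient by the central $3$-subgroup. My first attempt would use Corollary~\ref{cor:fixed-point-descent} applied to $\Omega_7(3)$. This needs iBAW for all blocks, so I would check that Proposition~\ref{prop:type-b-q3} covers every block of $X$. If that fails, I would fall back on inflation along the $2'$-central kernel, using \cite[Remark~4.4]{BroughSpath2022} as in Section~\ref{sec:preliminaries}.

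A secondary point to check in case~(a) is that the corollary's rank hypothesis holds for all $m\geq3$. Finally, I would confirm that restricting the automorphism group is harmless, via \cite[Lemma~3.6(i)]{MartinezRizoRossi2026}.
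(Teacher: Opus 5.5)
Your proposal is correct and follows the paper's proof essentially step for step. You use Lemma~\ref{lem:jordan-reduction} with the assumed \cite[Assumption~5.3]{FengLiZhang2022Jordan} as condition~(i), the $\operatorname{SL}/\operatorname{SU}$ result for type~$\mathsf A$, the $\Sp_4(q_0)$ construction for $\mathsf B_2$, and Corollary~\ref{cor:type-b-principal-selector} lifted by Lemma~\ref{lem:type-b-normal-core} for $\mathsf B_m$ with $m\geq3$, then pass to $\Omega_{2n+1}(q)$ by domination. For the exceptional case $(m,q_0)=(3,3)$, your ``first attempt'' is exactly the paper's argument: Proposition~\ref{prop:type-b-q3} covers all nine $2$-blocks of $3.\Omega_7(3)$, so Remark~\ref{rem:block-aggregation}\textup{(a)} gives iBAW for $\Omega_7(3)$. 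Corollary~\ref{cor:fixed-point-descent} then supplies the principal block bijection, which Lemma~\ref{lem:type-b-normal-core} lifts to $\operatorname{Spin}_7(3)=X_3(\Omega_7(3))$, using $\Aut(\operatorname{Spin}_7(3))\cong\Aut(\Omega_7(3))$.
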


\begin{proof}
Set $G=\mathbf G^F$ and $S=\Omega_{2n+1}(q)$. By
Section~\ref{subsec:type-b-covers}, $G$ is the universal covering group of $S$.
We apply Lemma~\ref{lem:jordan-reduction} with $\ell=2$.
The hypothesis gives the character representatives, stabiliser
factorisation and extensions required in part~\textup{(i)} of
Lemma~\ref{lem:jordan-reduction}.

Let $\mathbf K$ and $F'$ be as in
Lemma~\ref{lem:jordan-reduction}\textup{(ii)}. A connected Dynkin
subdiagram of $\mathsf B_n$ has type $\mathsf A_m$ or $\mathsf B_m$.
If $\mathbf K$ has type~$\mathsf A_m$, then $\mathbf K^{F'}$ is a
special linear or unitary group. The consequence of the type~$\mathsf A$ results stated after
Lemma~\ref{lem:jordan-reduction} gives the required iBAW bijection
for every $2$-block of $\mathbf K^{F'}$. This also covers $\mathsf B_1=\mathsf A_1$.

Suppose that $\mathbf K$ has type $\mathsf B_m$, where $m\geq2$.
Then $H=\mathbf K^{F'}\cong\operatorname{Spin}_{2m+1}(q_0)$ for some
power $q_0$ of $p$. By Lemma~\ref{lem:type-b-principal-series},
the only strictly quasi-isolated $2$-block of $H$ is its principal block.
For $m=2$, we have $\operatorname{Spin}_5(q_0)\cong\Sp_4(q_0)$.
The rank two construction at the start of the proof of
Proposition~\ref{prop:odd-two} gives the required iBAW bijection
for its principal block.

For $m\geq3$, set $T=\Omega_{2m+1}(q_0)$.
If $(m,q_0)\neq(3,3)$, Corollary~\ref{cor:type-b-principal-selector}
shows that the principal $2$-block of $T$ is BAW-good and therefore
admits an iBAW bijection. In the exceptional case, Proposition~\ref{prop:type-b-q3}
and Remark~\ref{rem:block-aggregation}\textup{(a)} show that $T$ satisfies iBAW at
$2$, so Corollary~\ref{cor:fixed-point-descent} gives the required
principal block bijection on $T$.
We have $Z(H)=O_2(H)$ and $H/Z(H)\cong T$.
By Section~\ref{subsec:type-b-covers}, $H$ is the universal covering group of
$T$ unless $(m,q_0)=(3,3)$, in which case
$H=\operatorname{Spin}_7(3)=X_3(T)$. Thus the natural isomorphism
$\Aut(H)\cong\Aut(T)$ identifies $(H\rtimes\Aut(H))/Z(H)$ with
$T\rtimes\Aut(T)$.
Lemma~\ref{lem:type-b-normal-core} therefore lifts this bijection to
an iBAW bijection for the principal $2$-block of $H$.

This verifies the second hypothesis of Lemma~\ref{lem:jordan-reduction},
which shows that every $2$-block of $G$ is BAW-good.
Since $G/O_2(Z(G))=S$, every $2$-block of $S$ is dominated by a
$2$-block of $G$ and is consequently BAW-good.
\end{proof}

\begin{proposition}\label{prop:type-b-brauer-hypothesis}
Let $\mathbf G$ be a simple, simply connected algebraic group of type
$\mathsf B_n$, where $n\geq3$, in odd characteristic $p$.
Let $\phi$ be a standard Frobenius endomorphism of $\mathbf G$
such that $\mathbf G^\phi=\operatorname{Spin}_{2n+1}(p)$.
Let $f\geq1$, and set $q=p^f$, $F=\phi^f$ and $G=\mathbf G^F$.
Choose a regular embedding
$\mathbf G\hookrightarrow\widetilde{\mathbf G}$ to which $\phi$
extends, set $\widetilde G=\widetilde{\mathbf G}^F$, and let $E$ be
the cyclic group generated by the restriction of $\phi$ to
$\widetilde G$.
Then, at the prime $2$, every $\widetilde G$-orbit in $\IBr(G)$ contains
a character $\psi$ such that
\[
 (\widetilde G\rtimes E)_\psi
   =\widetilde G_\psi\rtimes E_\psi
\]
and $\psi$ extends to $G\rtimes E_\psi$.
\end{proposition}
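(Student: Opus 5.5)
We reduce to the principal block via the $2$-Jordan decomposition of Brauer characters, and then to proper Levi subgroups. Fix $\varphi\in\IBr(G)$. By Broué--Michel, $\varphi$ lies in $\IBr(G,e_s^G)$ for a semisimple $2'$-element $s\in(\mathbf G^*)^F$, which is unique up to conjugacy. The case $s=1$ is handled directly. Here $\varphi$ lies in $B_0(G)$, the unique unipotent $2$-block. Corollary~\ref{cor:type-b-principal-selector} shows that $E$ fixes every character in $\IBr(B_0(G))$, so every character in the orbit satisfies the factorisation with $E_\psi=E$. Since $E$ is cyclic, Lemma~\ref{lem:cyclic-extension} supplies the extension.

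Now let $s\neq1$. By Lemma~\ref{lem:type-b-principal-series} and \cite[Proposition~5.3(a)]{Bonnafe2005}, $s$ is not quasi-isolated, because it has odd order. So $C_{\mathbf G^*}(s)$ lies in a proper $F$-stable Levi subgroup $\mathbf L^*$, and we choose it minimal. Replacing $s$ by a $\widetilde G$-conjugate and applying an element of $\widetilde G$, we may take the dual Levi subgroup $\mathbf L$ of $\mathbf G$ to be standard. Then the stabiliser in $E$ of the $G$-class of $(\mathbf L,e_s^L)$ is realised, up to $G$, by field endomorphisms $\sigma$ stabilising $\mathbf L$. We obtain these from the construction in \cite[Corollary~4.2 and Lemmas~4.3 and~4.5]{Ruhstorfer2022Derived}, as in Section~\ref{sec:symplectic}. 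Bonnafé--Dat--Rouquier Morita equivalence gives a bijection $\IBr(L,e_s^L)\to\IBr(G,e_s^G)$ that is compatible with $\widetilde L$ and with these $\sigma$. This compatibility is the equivariant Jordan correspondence used in \cite[Section~5]{FengLiZhang2022Jordan}. It is therefore enough to find, in each $\widetilde L$-orbit of $\IBr(L,e_s^L)$, a character $\psi_L$ with $(\widetilde L\rtimes\langle\sigma\rangle)_{\psi_L}=\widetilde L_{\psi_L}\rtimes\langle\sigma\rangle_{\psi_L}$. We then transport this factorisation to $G$. The stabiliser in $E$ of a Brauer character of $G$ is cyclic, so the extension again follows from Lemma~\ref{lem:cyclic-extension}.

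For the Levi subgroup we apply Lemma~\ref{lem:type-b-char2-clifford} with $N=L_0=[\mathbf L,\mathbf L]^F$, $H=L$ and $G=\widetilde L$. Part~(3) of Lemma~\ref{lem:type-b-regular-levi-orbits} shows that $\widetilde L/L_0$ is abelian and that $\sigma$ normalises the chain. Extendibility of $\theta\in\IBr(L_0)$ to $\widetilde L_\theta$ follows because $\widetilde L/L_0$ is abelian and $L_0$ is a product of groups of types $\mathsf A$ and $\mathsf B$. Here we will try to use Lemma~\ref{lem:cyclic-extension} factorwise together with the surjection in Lemma~\ref{lem:type-b-regular-levi-orbits}(1). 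This is the point that needs care, because $\widetilde L/L_0$ need not be cyclic. The fallback is to argue that the diagonal automorphism groups $\Delta_i$ act through cyclic quotients on each factor $H_i$. By part~(1) of Lemma~\ref{lem:type-b-regular-levi-orbits}, $L_0\cong\prod_iH_i$, where each $H_i$ is $\operatorname{SL}_{k}(q^{n_i})$, possibly with twist, or a single factor $\operatorname{Spin}_{2m+1}(q)$ with $m<n$.

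The required factorwise hypothesis of Lemma~\ref{lem:type-b-component-return} comes from two sources. For the $\operatorname{SL}/\operatorname{SU}$ factors it is the known Brauer stabiliser factorisation for type $\mathsf A$, which appears in the type~$\mathsf A$ iBAW results \cite{FengLiZhang2023Morita}. For the type $\mathsf B$ factor it comes from induction on $n$: the statement for $\operatorname{Spin}_{2m+1}$ with $m\geq3$, and for $m\leq2$ the isomorphisms $\mathsf B_2=\mathsf C_2$ and $\mathsf B_1=\mathsf A_1$ together with Corollary~\ref{cor:fixed-point-descent}. Lemma~\ref{lem:type-b-component-return} then produces a $\theta$ in the $\Delta$-orbit that is fixed by the relevant stabiliser $E_{\mathcal O}$, giving $(\widetilde L\rtimes\langle\sigma\rangle)_\theta=\widetilde L_\theta\rtimes\langle\sigma\rangle_\theta$. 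Lemma~\ref{lem:type-b-char2-clifford} lifts this to a constituent $\psi_L$ above $\theta$. Since every $\widetilde L$-orbit in $\IBr(L)$ lies over some $\Delta$-orbit in $\IBr(L_0)$, every orbit is covered.

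I expect the main obstacle to be the equivariance of the Jordan reduction with respect to $\widetilde G\rtimes E$ at the level of Brauer characters. We need the Morita bijection to intertwine both $\widetilde L$ with $\widetilde G$ and the chosen $\sigma$ with its $E$-coset, so that the factorisation on $L$ really transfers to $G$. I would take this from Ruhstorfer's equivariant Bonnafé--Dat--Rouquier theorem, in the form packaged in \cite[Section~5]{FengLiZhang2022Jordan}.
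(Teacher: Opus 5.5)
Your outline matches the paper's proof. You handle the principal block by Corollary~\ref{cor:type-b-principal-selector}, pass to the Levi subgroup given by a minimal $\mathbf L^*$ and a field endomorphism from Ruhstorfer's lemma, then apply Lemma~\ref{lem:type-b-component-return} and Lemma~\ref{lem:type-b-char2-clifford}, and transport along the equivariant Jordan bijection of \cite[Section~5]{FengLiZhang2022Jordan}. However, two inputs at the Levi level are not supplied.

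\textbf{Extension of $\theta$.} Lemma~\ref{lem:type-b-char2-clifford} needs $\theta\in\IBr(L_0)$ to extend to $\widetilde L_\theta$.
\begin{itemize}
\item Abelianness of $\widetilde L/L_0$ does not give this. The obstruction is a cohomology class of $\widetilde L_\theta/L_0$ with values in $k^\times$, and it need not vanish when that group is not cyclic.
\item Your fallback does not fix it. Cyclic extensions on the individual factors $H_i$ do not assemble into an extension over $\widetilde L_\theta$, because $\widetilde L$ only maps onto $\prod_i\Delta_i$ and is not a product of overgroups of the $H_i$.
\item The missing idea: $[\mathbf L,\mathbf L]\hookrightarrow\mathbf LZ(\widetilde{\mathbf G})$ is itself a regular embedding. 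The target is a Levi subgroup of $\widetilde{\mathbf G}$, so its centre is connected, and its derived subgroup is $[\mathbf L,\mathbf L]$. Hence restriction of irreducible $2$-modular representations from $\widetilde L$ to $L_0$ is multiplicity free. By modular Clifford theory, the Clifford correspondent in $\widetilde L_\theta$ of any character of $\widetilde L$ over $\theta$ restricts to $\theta$, so it is an extension.
\end{itemize}

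\textbf{The rank-two type $\mathsf B$ factor.} Your induction on $n$ is fine for factors $\operatorname{Spin}_{2m+1}(q)$ with $m\geq3$. For $m=2$, however, you need $(\Delta_iE_i)_\chi=(\Delta_i)_\chi(E_i)_\chi$ for Brauer characters of $\Sp_4(q)$.
\begin{itemize}
\item Corollary~\ref{cor:fixed-point-descent} does not provide this. It only yields iBAW bijections, which are equivariant and satisfy the character triple condition.
\item They say nothing about whether the stabiliser of $\chi$ in $\Delta_iE_i$ splits. A priori it could be a diagonal subgroup, with $\chi^\sigma=\chi^\delta\neq\chi$.
\end{itemize}

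\textbf{How the paper avoids needing arbitrary blocks.} It uses the minimality of $\mathbf L^*$.
\begin{itemize}
\item Every block of $L_0$ covered by a block belonging to $e_s^L$ is strictly quasi-isolated, and so are its factor blocks (proofs of \cite[Theorem~5.7 and Proposition~5.6]{FengLiZhang2022Jordan}).
\item By Lemma~\ref{lem:type-b-principal-series}, the type $\mathsf B$ factor block is therefore principal.
\item For $m\geq3$, Corollary~\ref{cor:type-b-principal-selector} gives the factorisation on principal blocks. For $m=2$, the verification of \cite[p.~1203]{BroughSchaefferFry2020} gives it for $\Sp_4(q)$ with $q\geq9$, which applies because $f>1$.
\item No induction on $n$ is needed.
\end{itemize}

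\textbf{Minor points.}
\begin{itemize}
\item $F$-stable Levi subgroups need not be $G$-conjugate to standard ones, so drop that normalisation. It is Ruhstorfer's lemma that yields a field endomorphism stabilising $\mathbf L$ and $e_s^L$.
\item For the type $\mathsf A$ factors, you need the factorisation with respect to field automorphisms possibly composed with graph automorphisms, since that is how $\tau^{|C|}$ can act. The paper takes this from \cite[Theorem~8.1]{FengLiZhang2021Equivariant}.
\end{itemize}
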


\begin{proof}
If $f=1$, then $E=1$ and both assertions are immediate.
Assume that $f>1$.
Corollary~\ref{cor:type-b-principal-selector} supplies the required
representatives for the principal block of $G$.

Let $b$ be a nonprincipal $2$-block of $G$.
We use the idempotents $e_s^G$ defined in Section~\ref{sec:symplectic}.
By \cite[Theorem~9.12]{CabanesEnguehard2004}, choose a semisimple
element $s\in(\mathbf G^*)^F$ of odd order such that $b$ belongs to
$e_s^G$.
Since $b$ is nonprincipal, Lemma~\ref{lem:type-b-principal-series}
shows that $s$ is not strictly quasi-isolated.

As in the proof of \cite[Theorem~5.7]{FengLiZhang2022Jordan}, choose
a minimal Levi subgroup $\mathbf L^*$ of $\mathbf G^*$ containing
$C_{\mathbf G^*}^{\circ}(s)C_{\mathbf G^*}(s)^F$.
This product contains a maximal torus, and intersections of Levi
subgroups containing a common maximal torus are Levi subgroups.
Hence $\mathbf L^*$ is unique and therefore $F$-stable. It is proper
because $s$ is not strictly quasi-isolated.

The action of $\widetilde G$ fixes $e_s^G$ by
\cite[Lemma~7.4]{BonnafeDatRouquier2017}. Set $D=E_{e_s^G}$.
Choose a positive divisor $c$ of $f$ such that $F_0=\phi^c$ induces
a generator of $D$, and set $h=f/c$, so that $F=F_0^h$.
By \cite[Lemma~4.5(a)--(b)]{Ruhstorfer2022Derived}, there is a Frobenius
endomorphism conjugate to $F_0$ by an element of $G$ which stabilises
a Levi subgroup $\mathbf L$ in duality with $\mathbf L^*$ and its
idempotent $e_s^L$. Conjugating this Levi subgroup and making the
corresponding rational conjugation of $(s,\mathbf L^*)$, we may therefore
assume that $F_0$ itself stabilises $\mathbf L$ and $e_s^L$,
where $L=\mathbf L^F$. Set
\[
 L_0=[\mathbf L,\mathbf L]^F,\qquad
 \widetilde L=(\mathbf LZ(\widetilde{\mathbf G}))^F.
\]

Write $L_0\cong\prod_i H_i$ as in
Lemma~\ref{lem:type-b-regular-levi-orbits}.
The factors arising from components of type $\mathsf A$ are special
linear or unitary groups over extension fields.
There is at most one component containing a short root.
If such a component occurs, then $F_0$ preserves it because $F_0$
preserves root lengths. Its Dynkin diagram has no nontrivial
automorphism preserving root lengths. The group of points fixed by
$F$ on this component is therefore $\operatorname{Spin}_{2m+1}(q)$
for some $m<n$.
If $d$ is a block of $L$ belonging to $e_s^L$ and $d_0$ is a block
of $L_0$ covered by $d$, then $d_0$ is strictly quasi-isolated by
the proof of \cite[Theorem~5.7]{FengLiZhang2022Jordan}.
Write $d_0=d_1\otimes\cdots\otimes d_r$, where $d_i$ is a block of
$H_i$. The proof of \cite[Proposition~5.6]{FengLiZhang2022Jordan}
shows that each $d_i$ is strictly quasi-isolated.
If $H_i$ has type $\mathsf B_m$ with $m\geq2$, then $d_i$ is
principal by Lemma~\ref{lem:type-b-principal-series}.

For a type $\mathsf A$ factor, the representatives and stabiliser
factorisation for the diagonal, field and graph automorphisms are
provided by \cite[Theorem~8.1]{FengLiZhang2021Equivariant}.
This includes $\mathsf B_1=\mathsf A_1$ and the group
$\operatorname{SL}_2(3)$.
For a factor $H_i$ of type $\mathsf B_m$, let $\Delta_i$ be the
group induced by $\widetilde L$ on $H_i$, as in
Lemma~\ref{lem:type-b-regular-levi-orbits}, and let $E_i$ denote
the group of field automorphisms of $H_i$.
For $m=2$, identify $H_i$ with $\Sp_4(q)$.
Here $f>1$, so $q\geq9$.
The verification following Lemma~4.5 of Brough and Schaeffer Fry
\cite[p.~1203]{BroughSchaefferFry2020} gives a representative in each
diagonal orbit with the required stabiliser factorisation.
Here $\Delta_i/\Inn(H_i)$ has order two, so field automorphisms
centralise this quotient and the factorisation is preserved by
diagonal conjugation. Thus
\begin{equation}\label{eq:type-b-factor-stabiliser}
 (\Delta_i E_i)_\chi=(\Delta_i)_\chi(E_i)_\chi
 \qquad(\chi\in\IBr(B_0(H_i))).
\end{equation}
For $m\geq3$, Corollary~\ref{cor:type-b-principal-selector} shows
that $E_i$ fixes every character in $\IBr(B_0(H_i))$, so
\eqref{eq:type-b-factor-stabiliser} also holds. In particular, it holds
with $E_i$ replaced by any subgroup of $E_i$.

Recall that $\IBr(L,e_s^L)$ is the union of $\IBr(d)$ over the
$2$-blocks $d$ of $L$ belonging to $e_s^L$.
Fix an $\widetilde L$-orbit $\mathcal P$ in $\IBr(L,e_s^L)$ and choose
$\psi_0\in\mathcal P$. Let $\theta_0$ be an irreducible constituent
of $(\psi_0)_{L_0}$, and let $\mathcal O$ be its $\widetilde L$-orbit.
Let $d=\bl(\psi_0)$, and let $d_0$ be the block containing $\theta_0$.
Then $d$ covers $d_0$. Since $d$ belongs to $e_s^L$, the argument above
shows that every factor block of $d_0$ is strictly quasi-isolated.
The type $\mathsf A$ factors therefore satisfy
\cite[Theorem~8.1]{FengLiZhang2021Equivariant}, while the type
$\mathsf B$ factors of rank at least two are principal and satisfy
\eqref{eq:type-b-factor-stabiliser}.
By Lemma~\ref{lem:type-b-regular-levi-orbits}, the image of the
conjugation action of $\widetilde L$ on $L_0$ is $\prod_i\Delta_i$. If
$\theta_0=\theta_{0,1}\times\cdots\times\theta_{0,r}$, then
$\mathcal O$ consists of the products of characters in the
$\Delta_i$-orbits of $\theta_{0,i}$.

Let $D_{\mathcal O}$ be the stabiliser of $\mathcal O$ in $D$, and
choose a generator $\tau$ induced by $F_0^a$, with $a>0$.
Consider a cycle $C$ of the permutation of the factors $H_i$
induced by $\tau$, and set
$t=|C|$. Choose one index $i$ from the cycle $C$, and let
$\mathbf H_j$ be the algebraic component used to define $H_i$ in
Lemma~\ref{lem:type-b-regular-levi-orbits}.
Thus $H_i=\mathbf H_j^{F^{n_i}}$, where $n_i$ is the length of the
$F$-orbit of $\mathbf H_j$.
Since $\tau^t$ stabilises this factor, there is an integer $k$ such
that $F_0^{at}(\mathbf H_j)=F^k(\mathbf H_j)$.
Replacing $k$ by a congruent integer modulo $n_i$, choose $k\leq0$,
which gives $at-hk\geq0$.
Under the projection onto $\mathbf H_j$ in the proof of
Lemma~\ref{lem:type-b-regular-levi-orbits}, the action of $\tau^t$ on
$\mathbf H_j^{F^{n_i}}$ is induced by
\[
 F^{-k}F_0^{at}=F_0^{at-hk}.
\]
If $u$ is the $F_0$-orbit length of $\mathbf H_j$, both $at-hk$
and $hn_i$ are multiples of $u$.
Thus $F_0^{at-hk}$ and $F^{n_i}$ are powers of $F_0^u$ on
$\mathbf H_j$.
By the classification of Steinberg endomorphisms
\cite[Theorem~22.5]{MalleTesterman2011} and Lang's theorem,
we may conjugate $F_0^u$ to a standard Frobenius endomorphism, possibly
composed with a graph automorphism.
The same conjugation identifies both powers with powers of this
standard endomorphism.
Consequently, $\tau^t$ acts on $\mathbf H_j^{F^{n_i}}$ as a field
automorphism in type $\mathsf B$, and as a field automorphism possibly
composed with a graph automorphism in type $\mathsf A$.
Let $\Phi_i\leq\Aut(H_i)$ be the group generated by the field
automorphisms and, when $H_i$ has type $\mathsf A$, the graph
automorphisms. Thus $\Phi_i=E_i$ in type $\mathsf B$.
The automorphism induced by $\tau^t$ belongs to $\Phi_i$.
In type $\mathsf B$,
$\Delta_i\cap E_i=1$, so \eqref{eq:type-b-factor-stabiliser} holds in
$\Delta_i\rtimes\Phi_i$. In type $\mathsf A$,
\cite[Theorem~8.1]{FengLiZhang2021Equivariant} gives the corresponding
factorisation in $\Delta_i\rtimes\Phi_i$ for a suitable representative
of each $\Delta_i$-orbit. The action just described defines a homomorphism
$\alpha_i:\langle\tau^t\rangle\longrightarrow\Phi_i$.
Taking inverse images under the homomorphism
$(\delta,\sigma)\longmapsto(\delta,\alpha_i(\sigma))$ from
$\Delta_i\rtimes\langle\tau^t\rangle$ to $\Delta_i\rtimes\Phi_i$
verifies the hypothesis of Lemma~\ref{lem:type-b-component-return},
which gives $\theta\in\mathcal O$ fixed by $D_{\mathcal O}$. Hence
\[
 (\widetilde L D_{\mathcal O})_\theta
 =\widetilde L_\theta D_{\mathcal O}.
\]
Choose $y\in\widetilde L$ such that $\theta_0^y=\theta$, and set
$\psi=\psi_0^y$. Then $\psi\in\mathcal P\subseteq\IBr(L,e_s^L)$,
and $\theta$ is a constituent of $\psi_{L_0}$.

The proof of Lemma~\ref{lem:type-b-regular-levi-orbits} shows that
$[\mathbf L,\mathbf L]\hookrightarrow\mathbf LZ(\widetilde{\mathbf G})$
is a regular embedding. Apply \cite[Theorem~1.7.15]{GeckMalle2020}
to this embedding.
The restriction to $L_0$ of every irreducible $2$-modular
representation of $\widetilde L$ is multiplicity free.
Choosing a representation lying over $\theta$, modular Clifford
theory therefore shows that $\theta$ extends to $\widetilde L_\theta$.
Lemma~\ref{lem:type-b-regular-levi-orbits} also shows that
$L_0\lhd L\lhd\widetilde L$, that $\widetilde L/L_0$ is abelian,
and that $D_{\mathcal O}$ normalises this chain.
Lemma~\ref{lem:type-b-char2-clifford} now gives
\[
 (\widetilde L D_{\mathcal O})_\psi
 =\widetilde L_\psi(D_{\mathcal O})_\psi.
\]
If $x=\widetilde g\delta\in(\widetilde L D)_\psi$, with
$\widetilde g\in\widetilde L$ and $\delta\in D$, then Clifford
theory shows that $\theta^x$ is an $L$-conjugate of $\theta$.
Thus $\mathcal O^\delta\cap\mathcal O\neq\varnothing$, so
$\delta\in D_{\mathcal O}$.
Consequently,
\[
 (\widetilde L D)_\psi=\widetilde L_\psi D_\psi,
 \qquad (D_{\mathcal O})_\psi=D_\psi.
\]

Let $\Psi$ be the image of $\psi$ under the bijection
$\IBr(L,e_s^L)\longrightarrow\IBr(G,e_s^G)$ induced by Jordan
decomposition. This bijection preserves the block correspondence and is
$\widetilde L D$-equivariant by
\cite[Lemma~5.1 and the proof of Proposition~5.2]{FengLiZhang2022Jordan}.
Since $Z(\mathbf G)\leq\mathbf L$, the intersection
$\mathbf G\cap\mathbf LZ(\widetilde{\mathbf G})=\mathbf L$ is connected.
Lang's theorem applied to
$\widetilde{\mathbf G}=\mathbf G(\mathbf LZ(\widetilde{\mathbf G}))$
therefore gives $\widetilde G=G\widetilde L$, with
$G\cap\widetilde L=L$. Inclusion induces an isomorphism
$\widetilde L/L\cong\widetilde G/G$.
Under the equivariant Jordan bijection, this isomorphism identifies
the diagonal actions on $\psi$ and $\Psi$. Hence
\[
 (\widetilde G D)_\Psi=\widetilde G_\Psi D_\Psi,
 \qquad D_\Psi=D_\psi.
\]
The equivariance of rational Lusztig series
\cite[Proposition~7.2]{Taylor2018Automorphisms} also shows that field
automorphisms permute the idempotents $e_s^G$.
Since each block belongs to exactly one of these idempotents,
every element of $(\widetilde G E)_\Psi$ fixes $e_s^G$.
The group $\widetilde G$ fixes this idempotent, so the image of any
such element under the projection $\widetilde G\rtimes E\longrightarrow E$
lies in $D$. Consequently,
\[
 (\widetilde G E)_\Psi=(\widetilde G D)_\Psi
 =\widetilde G_\Psi E_\Psi,\qquad E_\Psi=D_\Psi.
\]
The group $E_\Psi$ is cyclic, so Lemma~\ref{lem:cyclic-extension}
extends $\Psi$ to $G\rtimes E_\Psi$.

Since $\mathcal P$ was arbitrary, the required representatives
exist on every $\widetilde G$-orbit in $\IBr(G,e_s^G)$.
Varying $s$ covers all nonprincipal orbits, while
Corollary~\ref{cor:type-b-principal-selector} covers the principal
ones. This proves the assertion.
\end{proof}

\begin{proposition}\label{prop:type-b-two-high-rank}
Let $S=\Omega_{2n+1}(q)$.
Then every $2$-block of $X_2(S)$ satisfies the inductive BAW condition.
\end{proposition}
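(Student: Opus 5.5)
The plan splits into the generic case and the one exceptional cover. By Table~\ref{tab:typeb-covers}, $X_2(S)=S=\Omega_{2n+1}(q)$ when $(n,q)\neq(3,3)$, while $X_2(\Omega_7(3))=3.\Omega_7(3)$. The exceptional case is exactly Proposition~\ref{prop:type-b-q3}, so only $(n,q)\neq(3,3)$ remains.

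For the generic case I would apply Lemma~\ref{lem:type-b-localized-return}. Its only hypothesis is \cite[Assumption~5.3]{FengLiZhang2022Jordan} for $(\mathbf G,F)$ at $\ell=2$, with $\mathbf G$ simply connected of type $\mathsf B_n$ and $F$ split. So the main step is to deduce that assumption from Proposition~\ref{prop:type-b-brauer-hypothesis}. That proposition supplies, in each $\widetilde G$-orbit of $\IBr(G)$, a character $\psi$ with $(\widetilde G\rtimes E)_\psi=\widetilde G_\psi\rtimes E_\psi$ that extends to $G\rtimes E_\psi$, where $E=\langle\phi\rangle$ is the full group of field automorphisms.

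The Jordan reduction chooses, for each semisimple $2'$-element $s$, a group $A_s$ of automorphisms. As in the proofs of Proposition~\ref{prop:odd-two} and Proposition~\ref{prop:even-bridge}, after conjugating the Levi subgroup and its series idempotent by an element of $G$, \cite[Corollary~4.2 and Lemmas~4.3 and~4.5]{Ruhstorfer2022Derived} let us take $A_s\leq E$. There are no graph automorphisms in type $\mathsf B_n$ with $n\geq3$, and $\Out(S)\cong C_2\times C_f$ is generated by diagonal and field automorphisms. For $A_s\leq E$, intersecting the factorisation with $\widetilde G A_s$ gives $(\widetilde G A_s)_\psi=\widetilde G_\psi(A_s)_\psi$. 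The group $(A_s)_\psi$ is cyclic, so the extension to $G(A_s)_\psi$ follows from Lemma~\ref{lem:cyclic-extension}, or by restricting the extension to $G\rtimes E_\psi$. This is condition~\textup{(i)} of Lemma~\ref{lem:jordan-reduction}, which is the form of Assumption~5.3 used in Lemma~\ref{lem:type-b-localized-return}.

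Lemma~\ref{lem:type-b-localized-return} then shows that every $2$-block of $S=X_2(S)$ is BAW-good. By Theorem~\ref{thm:spath-triples} and the discussion following it, BAW-goodness is equivalent to the inductive BAW condition for blocks of the universal $\ell'$-covering group, which completes this case.

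The step I expect to be the main obstacle is matching the choice of $A_s$ in \cite[Section~5]{FengLiZhang2022Jordan} with the group $E$ of Proposition~\ref{prop:type-b-brauer-hypothesis}. The Frobenius generating the stabiliser of $e_s^G$ may only be $G$-conjugate to a power of $\phi$, not equal to one. I would handle this as in the proof of Proposition~\ref{prop:type-b-brauer-hypothesis}: transport the stabiliser factorisation along that $G$-conjugation. Inner automorphisms act trivially on $\IBr(G)$ and normalise $\widetilde G$, so both the factorisation and the extension property are preserved.
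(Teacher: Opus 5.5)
Your proposal is correct and follows the paper's proof essentially step for step. You dispose of $(n,q)=(3,3)$ by Proposition~\ref{prop:type-b-q3}, use Ruhstorfer's results to take $A_s$ inside the field automorphism group $E$, and restrict the factorisation and extensions of Proposition~\ref{prop:type-b-brauer-hypothesis} to $A_s$ to obtain Assumption~5.3. You then conclude through Lemma~\ref{lem:type-b-localized-return} and Theorem~\ref{thm:spath-triples}; your remark about transporting along a $G$-conjugation amounts to the same normalisation the paper performs by conjugating the Levi subgroup and its idempotent.
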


\begin{proof}
If $(n,q)=(3,3)$, the result is Proposition~\ref{prop:type-b-q3}.
Otherwise, $X_2(S)=S$ by Table~\ref{tab:typeb-covers}.
Let $\mathbf G$ be simply connected of type $\mathsf B_n$, and
choose $F$ with $G=\mathbf G^F=\operatorname{Spin}_{2n+1}(q)$.
For every semisimple $2'$-element $s\in(\mathbf G^*)^F$,
\cite[Corollary~4.2]{Ruhstorfer2022Derived} describes the stabiliser
of $e_s^G$ in $\Out(G)$ using diagonal automorphisms and a power of
the standard field automorphism, since type~$\mathsf B$ has no
graph automorphisms.
By \cite[Lemma~4.5(a)--(b)]{Ruhstorfer2022Derived}, after conjugation
by $G$, the associated Levi subgroup and its idempotent may be chosen
stable under this field automorphism. Thus the group $A_s$ in
Lemma~\ref{lem:jordan-reduction} may be chosen inside the field
automorphism group $E$ of Proposition~\ref{prop:type-b-brauer-hypothesis}.
Restricting the stabiliser factorisation and extensions supplied by
that proposition to $A_s$ verifies
\cite[Assumption~5.3]{FengLiZhang2022Jordan} at the prime $2$.
Lemma~\ref{lem:type-b-localized-return} therefore shows that every
$2$-block of $S$ is BAW-good, and
Theorem~\ref{thm:spath-triples} gives the inductive BAW condition.
\end{proof}

\subsection{Proof of Theorem~\ref{thm:type-b}}
\label{subsec:type-b-completion}

\begin{proof}
The defining prime case follows from \cite[Theorem~C]{Spath2013}.
Proposition~\ref{prop:type-b-odd-primes} treats odd nondefining primes,
and Proposition~\ref{prop:type-b-two-high-rank} treats the prime $2$.
Thus every $\ell$-block of $X_\ell(S)$ satisfies the inductive BAW
condition. The final assertion follows from
Remark~\ref{rem:block-aggregation}\textup{(a)}.
\end{proof}

\begin{corollary}\label{cor:type-b-census}
Every finite simple group of type $\mathsf B$ satisfies iBAW at every prime
dividing its order.
\end{corollary}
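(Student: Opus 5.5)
The plan is to reduce Corollary~\ref{cor:type-b-census} to Theorem~\ref{thm:type-b} together with the type~$\mathsf A$ and type~$\mathsf C$ results, organised by the characteristic and the rank. A finite simple group of type~$\mathsf B$ is either $\Omega_{2n+1}(q)$ with $q$ odd and $n\geq1$ (excluding small nonsimple cases), or a group $\mathsf B_n(q)$ with $q$ even. In every case we need iBAW at each prime $\ell$ dividing the order, on the universal $\ell'$-covering group.

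The first step is the case $q$ even. Here $\mathsf B_n(q)\cong\mathsf C_n(q)$, since the exceptional isogeny identifies $\operatorname{Spin}_{2n+1}(q)$, $\operatorname{SO}_{2n+1}(q)$ and $\Sp_{2n}(q)$ as abstract groups. These groups are simple for $n\geq2$ and $(n,q)\neq(2,2)$. The universal $\ell'$-covering groups and automorphism groups are therefore those of $\PSp_{2n}(q)$, and Theorem~\ref{thm:symplectic} applies directly. For $n=1$, the group $\mathsf B_1(q)=\operatorname{SL}_2(q)$ is of type~$\mathsf A$.

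The second step is the case $q$ odd. For $n\geq3$, Theorem~\ref{thm:type-b} applies, since its final assertion already gives iBAW at every $\ell$ dividing $|S|$; the defining prime is covered by \cite[Theorem~C]{Spath2013} inside that theorem. For $n=2$, use $\Omega_5(q)\cong\PSp_4(q)$ and apply Proposition~\ref{prop:rank-two}, which needs $q>2$ and therefore holds for odd $q$. For $n=1$, we have $\Omega_3(q)\cong\operatorname{PSL}_2(q)$. This case is covered by the type~$\mathsf A$ results \cite{FengLiZhang2023LowRankA,FengLiZhang2023Morita} recalled after Lemma~\ref{lem:jordan-reduction}, together with \cite[Theorem~C]{Spath2013} at $\ell=p$. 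One may also decide, by convention, to count these only as type~$\mathsf A$ and $\mathsf C$ groups.

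The main point needing care is that the isomorphisms used are isomorphisms of simple groups. Since iBAW at $\ell$ is formulated intrinsically in terms of $S$, its universal $\ell'$-cover $X_\ell(S)$ and $\Aut(S)$, it transfers along any isomorphism $S\cong S'$. So no further compatibility argument is needed beyond recording the identifications $\mathsf B_2=\mathsf C_2$, $\mathsf B_1=\mathsf A_1$ and $\mathsf B_n(2^f)\cong\mathsf C_n(2^f)$, noted at the start of Section~\ref{sec:type-b}. The exceptional multipliers, such as that of $\Omega_7(3)$, and exceptional isomorphisms, such as $\PSp_4(3)$, are already handled inside the cited results, so the proof amounts to this case split.
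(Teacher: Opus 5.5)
Your case split is the paper's own: Theorem~\ref{thm:type-b} for odd $q$ and $n\geq3$, and Theorem~\ref{thm:symplectic} via $\mathsf B_n(2^f)\cong\mathsf C_n(2^f)$ and $\mathsf B_2=\mathsf C_2$. Using Proposition~\ref{prop:rank-two} for odd $q$ in rank two is the same thing. Rank one is handled by a type~$\mathsf A$ result.

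\textbf{Missing case.} You drop $\Sp_4(2)'\cong A_6\cong\operatorname{PSL}_2(9)$. When you discard $(n,q)=(2,2)$ as nonsimple, you never treat its simple derived subgroup. Neither Theorem~\ref{thm:symplectic} nor Proposition~\ref{prop:rank-two} covers it, since both exclude $q=2$ in rank two. The paper counts this group as type~$\mathsf B$ and settles it with the rank-one result \cite[Proposition~4.6]{FengLiZhang2023LowRankA}, through $\operatorname{PSL}_2(9)$.

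\textbf{Citation in rank one.} A smaller point: cite that simple-group statement directly, as the paper does. The consequence recalled after Lemma~\ref{lem:jordan-reduction} is stated as iBAW bijections for the blocks of $\operatorname{SL}_d(q_0)$ with $\ell\nmid q_0$. That is not literally the iBAW condition on $X_\ell(\operatorname{PSL}_2(q))$; for instance, $X_2(\operatorname{PSL}_2(q))=\operatorname{PSL}_2(q)$ when $q$ is odd.
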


\begin{proof}
Theorem~\ref{thm:type-b} treats odd characteristic in ranks at least
three. In even characteristic, the isomorphisms between the finite simple
groups of types $\mathsf B_n$ and $\mathsf C_n$ reduce the assertion to
Theorem~\ref{thm:symplectic}, which also covers rank two through
$\mathsf B_2=\mathsf C_2$. The case of rank one follows from
\cite[Proposition~4.6]{FengLiZhang2023LowRankA}, since
$\mathsf B_1=\mathsf A_1$. The same result covers the derived subgroup
$\Sp_4(2)'\cong A_6\cong\operatorname{PSL}_2(9)$.
\end{proof}

\section{Sporadic groups}\label{sec:sporadic}

Let $S$ be one of the $26$ sporadic simple groups and let
$\ell\mid |S|$.  As before, $X=X_\ell(S)$ denotes the universal
$\ell'$-covering group.

\begin{theorem}\label{thm:sporadic}
Every sporadic simple group $S$ satisfies iBAW at every prime
$\ell\mid |S|$ on $X_\ell(S)$.
\end{theorem}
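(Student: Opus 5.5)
The plan is to reduce the theorem to the four groups singled out in the introduction, $J_4$, $Fi'_{24}$, $\Baby$ and $\Monster$. For the other $22$ sporadic groups, and for the primes handled in earlier work on these four, I will cite the existing verifications: Sp\"ath's cyclic defect results, \cite[Theorem~1.1]{KoshitaniSpath2016} for blocks with cyclic defect groups, the low-rank and CTBlocks computations, and the published literature as summarised in \cite{BreuerSporadicOverview}. By Remark~\ref{rem:block-aggregation}\textup{(a)} it is enough to treat one block in each $\Aut(X)$-orbit. So the remaining work is, for each of the four groups, each prime $\ell$ dividing $|S|$, and each block $b$ of $X_\ell(S)$, to produce an $\Aut(X)_b$-equivariant bijection $\IBr(b)\to\Alp(b)$ satisfying the extension and block conditions.

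Blocks of defect zero are trivial. Blocks with cyclic defect groups, in particular all blocks at primes $\ell$ with $\ell^2\nmid|S|$, follow from \cite[Theorem~1.1]{KoshitaniSpath2016}. For the remaining blocks I use the following counts:
\begin{enumerate}[label=\textup{(\alph*)}]
\item published weight counts, from An and collaborators, for $J_4$, $Fi_{24}'$, $\Baby$ and $\Monster$ and their relevant covers;
\item $|\IBr(b)|$, either read off from known decomposition matrices in CTblLib or computed as the number of $\ell$-regular classes in the block, using block orthogonality on the ordinary character table.
\end{enumerate}
When $\Out(S)=1$, as for $J_4$, $\Baby$ and $\Monster$, the Schur multiplier of $\Baby$ is $C_2$. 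Then $X_\ell(S)$ is $S$ when $\ell=2$, and $2.\Baby$ when $\ell$ is odd. In this case $\Aut(X)=\Inn(X)$, so any bijection is equivariant. With all extension groups trivial, Lemma~\ref{lem:cyclic-outer-baw}, or \cite[Corollary~2.13]{FengLiZhang2019} for the covers, turns equality of the counts into the inductive condition. For faithful blocks of $2.\Baby$, I will count in each central sector separately, in the sense of Definition~\ref{def:central-sector}.

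For $Fi'_{24}$, $\Out(S)\cong C_2$ and the multiplier is $C_3$. The plan is to apply \cite[Corollary~2.13]{FengLiZhang2019} directly, since the outer group is cyclic of order $2$. It then suffices to find an equivariant bijection for each block stable under the outer automorphism. I will compute the action of the outer involution on $\IBr(b)$ using the character table of $Fi_{24}$ and the restrictions of its Brauer characters to $Fi'_{24}$: a character fixed by the involution is the restriction of two characters of $Fi_{24}$, and a swapped pair comes from one induced character, as in the $3.\Omega_7(3)$ argument of Proposition~\ref{prop:type-b-q3}. I will count the fixed points of the same involution on $\Alp(b)$ by comparing the weights of $Fi_{24}$ with those of $Fi'_{24}$, using the Clifford-type covering statement for weights. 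For blocks of $3.Fi'_{24}$ lying over $\omega$, the involution swaps the sectors $\omega$ and $\omega^2$. These blocks therefore have inner stabiliser, and equal counts suffice.

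The main obstacle is the large-prime-power cases, namely $\ell=2$ and $\ell=3$ for $\Baby$, $\Monster$ and $Fi'_{24}$. There the Brauer character counts of the principal and other non-cyclic blocks are not fully known from decomposition matrices. My first step there is to take $|\IBr(b)|$ as the number of $\ell$-regular classes attributed to $b$, computed from central characters on the ordinary table via the \GAP\ block partition, which needs no decomposition matrix. I will then compare this with the published radical-subgroup weight counts. For $Fi'_{24}$, I must also certify the fixed-point count of the outer involution on $\IBr(b)$ without decomposition matrices. I would attempt this via Brauer's permutation lemma applied to $\ell$-regular classes, using the basic set given by the restrictions of ordinary characters in $b$ and the resulting permutation-lattice comparison. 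I expect this to be the most delicate step.
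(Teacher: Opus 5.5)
\textbf{Verdict: the outline is right, but there are genuine gaps at the two steps that carry the content of the paper's proof.}

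Your architecture agrees with the paper's. You use the CTBlocks verification for the other $22$ groups. When $\Out(X)=1$ you reduce to blockwise equalities $|\IBr(b)|=|\Alp(b)|$. For $\Baby$ at $2$, use Lemma~\ref{lem:complete-collapse} or \cite[Corollary~2.13]{FengLiZhang2019}, not Lemma~\ref{lem:cyclic-outer-baw}, since $\Baby$ is not its own universal covering group. For $Fi'_{24}$ you use \cite[Corollary~2.13]{FengLiZhang2019} and the fact that the faithful sectors of $3.Fi'_{24}$ are swapped.

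\textbf{Brauer side.} ``The number of $\ell$-regular classes attributed to $b$'' is not defined: central characters partition $\Irr(X)$, not the classes. What you need is \eqref{eq:restriction-rank}: $|\IBr(b)|$ is the rank of the matrix of restrictions of $\Irr(b)$ to the $\ell$-regular classes.

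For the outer involution $\sigma$ of $Fi'_{24}$, your restriction/induction method needs Brauer tables of $Fi_{24}$, which, as you note, are not available at $\ell=2,3$. At $\ell=2$ your description is also wrong: a $\sigma$-invariant Brauer character extends uniquely. One must instead distinguish irreducible restrictions from restrictions with two constituents, as in the $\operatorname{SO}_7(3)$ calculation.

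Your fallback does not work as stated:
\begin{enumerate}[label=\textup{(\roman*)}]
\item Brauer's permutation lemma on $\ell$-regular classes counts only the $\sigma$-fixed Brauer characters of the whole group, not of one block.
\item The restrictions of $\Irr(b)$ are not a basic set.
\item Conlon's theorem would need a $\sigma$-stable integral basic set, which you do not have.
\end{enumerate}
The missing observation is elementary. $\IBr(b)$ is a basis of $V_b$ permuted by $\sigma$, so $|\IBr(b)^\sigma|=2\dim V_b^{\sigma}-\dim V_b$. Moreover $V_b^{\sigma}$ is spanned by the functions $\chi^0+(\chi^\sigma)^0$ with $\chi\in\Irr(b)$. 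Two rank computations on the ordinary tables of $X$ and its outer extension therefore suffice, and no lattice argument is needed for $C_2$.

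\textbf{Weight side.} The published data are organised by radical class and central sector, not by block. This applies to \cite[Table~8]{AnDietrich2012} for $Fi'_{24}$, and to the corrected totals $27$ and $61$ for $\Baby$ and $\Monster$ at $2$. You never say how to obtain blockwise weight counts, let alone blockwise counts of $\sigma$-fixed weights.

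A Clifford comparison with $Fi_{24}$ does not supply them. You invoke no blockwise weight data for $Fi_{24}$. At $\ell=2$ the index is divisible by $\ell$, so the simple covering statement for weights is not available.

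The paper closes this gap as follows.
\begin{enumerate}[label=\textup{(\alph*)}]
\item Blocks of small defect get their weight counts from \cite[Theorem~13.7]{Sambale2014}.
\item At $\ell=2$, radical subgroups of weights lie in defect groups. This assigns the $(2^2)_a$-, $(2^2)_b$- and $D_8$-entries of Table~8 to the blocks with defect groups of orders $4$ and $8$, giving the pairs $(3,1)$ and $(3,3)$.
\item The principal block then follows from the sector totals by Lemma~\ref{lem:block-cancellation}.
\item At $\ell=3$, a block-induction calculation from the normaliser of $Q\cong C_3^2$ identifies the four weights of the defect-$2$ block.
\item At $\ell=5$, the block distribution and outer action of the weights are computed from the normaliser tables.
\item At $\ell=7$, one subtracts inside An--Dietrich's sectorwise equivariant bijection.
\item For $\Baby$ and $\Monster$ at $2$, the small nonprincipal blocks are removed using Sambale's theorem before subtracting from the totals.
\end{enumerate}
Without an argument of this kind, your plan yields only sectorwise equalities. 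These do not give $\Aut(X)_b$-equivariant bijections block by block.
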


The overview distributed with CTBlocks~0.9.5
\cite{BreuerSporadicOverview} states that the inductive blockwise
Alperin weight condition in \cite[Definition~4.1]{Spath2013} has been
checked for every sporadic simple group other than $J_4$, $Fi'_{24}$,
the Baby Monster and the Monster. We use the CTBlocks verification for
those $22$ groups and prove the remaining four cases below.

\subsection{Lemmas for the sporadic proof}

When the outer automorphism group is trivial, the criterion in
\cite[Corollary~2.13]{FengLiZhang2019} gives the following.

\begin{lemma}\label{lem:complete-collapse}
Let $S$ be a nonabelian finite simple group and let
$X=X_\ell(S)$ be its universal $\ell'$-covering group.  Suppose that
$\Out(X)=1$ and $|\IBr(b)|=|\Alp(b)|$ for every $\ell$-block $b$ of
$X$.  Then $S$ satisfies iBAW at $\ell$ on $X$.
\end{lemma}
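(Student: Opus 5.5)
The plan is to apply the criterion \cite[Corollary~2.13]{FengLiZhang2019} for each block separately. As used in the proof of Proposition~\ref{prop:type-b-q3}, that criterion says a block $b$ of $X$ satisfies the inductive BAW condition as soon as there is an $\Aut(X)_b$-equivariant bijection $\IBr(b)\to\Alp(b)$, at least when the outer automorphism group is cyclic. Here $\Out(X)=1$, so that hypothesis holds trivially. Once every block is known to satisfy the inductive BAW condition, Remark~\ref{rem:block-aggregation}\textup{(a)} shows that $S$ satisfies iBAW at $\ell$ on $X$.

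The first step is to identify $\Aut(X)_b$. The quotient map $X\to S$ induces $\Aut(X)\cong\Aut(S)$ and $\Out(X)\cong\Out(S)$, as recalled in Section~\ref{sec:preliminaries}. Hence $\Out(X)=1$ gives $\Aut(X)=\Inn(X)$, and so $\Aut(X)_b=\Inn(X)$ for every block $b$. Inner automorphisms fix every irreducible Brauer character. They also fix every $X$-conjugacy class of weights, because $(Q,\varphi)^{\alpha}$ is $X$-conjugate to $(Q,\varphi)$ when $\alpha$ is inner. So $\Aut(X)_b$ acts trivially on both $\IBr(b)$ and $\Alp(b)$.

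Consequently any bijection between these two sets is $\Aut(X)_b$-equivariant. Such a bijection exists by the hypothesis $|\IBr(b)|=|\Alp(b)|$. The remaining requirements of the criterion (compatibility with central characters, extension, and block induction) should be automatic in this situation. Every extension problem lives over the trivial quotient $\Aut(X)_b/\Inn(X)$, so the characters themselves serve as their own extensions, exactly as in the proof of Lemma~\ref{lem:cyclic-outer-baw}.

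The main point to check is that \cite[Corollary~2.13]{FengLiZhang2019} really applies to a universal $\ell'$-covering group $X$ with possibly nontrivial centre, and not just when $Z(X)=1$. In particular, the weights in the chosen bijection must lie in the same central-character sector as the Brauer characters. This is guaranteed by working blockwise: every weight belonging to $b$ lies over the same $\nu\in\Irr(Z)$ as $b$, as observed after Definition~\ref{def:central-sector}. If the criterion is formulated only for blocks with faithful central character, I would pass to the dominated block $\overline b$ of $X/\ker(\nu)$ as in the block form of the condition following \cite[Definition~3.2]{KoshitaniSpath2016}. This quotient still has trivial outer automorphism group and the same counts, so the same argument applies.
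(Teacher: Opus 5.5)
Your proposal is correct and follows the paper's proof. Since $\Out(X)=1$, every automorphism is inner, so any bijection $\IBr(b)\to\Alp(b)$ supplied by the cardinality hypothesis is $\Aut(X)_b$-equivariant, and then \cite[Corollary~2.13]{FengLiZhang2019} together with Remark~\ref{rem:block-aggregation}\textup{(a)} finishes, exactly as you describe; your extra discussion of central sectors and the passage to $X/\ker(\nu)$ is harmless but unnecessary, since the paper applies the criterion directly to the blocks of $X$.
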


\begin{proof}
Since $X$ is the universal $\ell'$-covering group of $S$, we have
$\Out(S)\cong\Out(X)=1$. For each $\ell$-block $b$ of $X$, the
cardinality assumption gives a bijection
$\Omega_b:\IBr(b)\longrightarrow\Alp(b)$.
Every automorphism of $X$ is inner and therefore fixes every
irreducible Brauer character and every $X$-conjugacy class of weights.
Thus $\Omega_b$ is $\Aut(X)_b$-equivariant.
\cite[Corollary~2.13]{FengLiZhang2019} therefore shows that every
$\ell$-block of $X$ satisfies the inductive BAW condition. Remark~\ref{rem:block-aggregation}\textup{(a)} gives the conclusion.
\end{proof}

The following lemma is an elementary consequence of the orbit decomposition
of finite $C_2$-sets.

\begin{lemma}\label{lem:block-cancellation}
Let $\mathcal B$ be a finite $C_2$-set with orbits
$\mathcal B_0,\ldots,\mathcal B_r$. Let $Y$ and $Y'$ be finite
$C_2$-sets with decompositions
\[
 Y=\bigsqcup_{b\in\mathcal B}Y_b,\qquad
 Y'=\bigsqcup_{b\in\mathcal B}Y'_b
\]
such that $Y_b^a=Y_{b^a}$ and $(Y'_b)^a=Y'_{b^a}$ for
$a\in C_2$ and $b\in\mathcal B$.
Set $U_i=\bigsqcup_{b\in\mathcal B_i}Y_b$ and
$U'_i=\bigsqcup_{b\in\mathcal B_i}Y'_b$.
If $Y\cong Y'$ and $U_i\cong U'_i$ as $C_2$-sets for
$1\leq i\leq r$, then $U_0\cong U'_0$.
Moreover, this isomorphism can be chosen to map $Y_b$ onto $Y'_b$
for every $b\in\mathcal B_0$.
\end{lemma}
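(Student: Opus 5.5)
The statement is purely combinatorial, so I would work with finite $C_2$-sets and their isomorphism invariants. A finite $C_2$-set is determined up to isomorphism by two numbers: its cardinality and its number of fixed points. Equivalently, it is determined by the numbers of orbits of size one and of size two; this is the injectivity of the mark homomorphism for $B(C_2)$. Write $a$ for the nontrivial element of $C_2$.

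The first step is to check that each $U_i$ is a $C_2$-subset of $Y$. Since $\mathcal B_i$ is an orbit, $U_i^a=\bigsqcup_{b\in\mathcal B_i}Y_{b^a}=U_i$, and the same holds for $U'_i$. Hence $Y=\bigsqcup_{i=0}^r U_i$ and $Y'=\bigsqcup_{i=0}^r U'_i$ are decompositions into $C_2$-stable pieces. Cardinalities and fixed-point counts are additive over such decompositions. From $Y\cong Y'$ and $U_i\cong U'_i$ for $i\ge1$, subtraction gives $|U_0|=|U'_0|$ and $|U_0^{C_2}|=|(U'_0)^{C_2}|$. Therefore $U_0\cong U'_0$.

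For the refined statement, I would split according to the shape of $\mathcal B_0$.

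If $\mathcal B_0=\{b\}$ is a fixed point, then $U_0=Y_b$ and $U'_0=Y'_b$, so the isomorphism just obtained already maps $Y_b$ onto $Y'_b$.

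If $\mathcal B_0=\{b,b^a\}$ with $b\ne b^a$, then $a$ maps $Y_b$ bijectively onto $Y_{b^a}$. Consequently $U_0$ has no fixed points, and $|U_0|=2|Y_b|$. The same holds for $U'_0$, so $|Y_b|=|Y'_b|$. Choose any bijection $\phi:Y_b\to Y'_b$ and define $\Phi(y^a)=\phi(y)^a$ for $y\in Y_b$. This is exactly the construction of Remark~\ref{rem:block-aggregation}\textup{(b)} with trivial stabiliser $A_b=1$. It is a $C_2$-equivariant bijection $U_0\to U'_0$ mapping $Y_b$ onto $Y'_b$ and $Y_{b^a}$ onto $Y'_{b^a}$.

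The only point needing care is the refinement, specifically whether the fixed-point counts of the individual pieces $Y_b$ match. The orbit-size dichotomy above settles this, so I expect no real obstacle.
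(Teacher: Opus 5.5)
Your proof is correct and follows the paper's argument essentially verbatim. You cancel the orbit-type counts to get $U_0\cong U'_0$, and when $\mathcal B_0=\{b,b^a\}$ you use $|Y_b|=\tfrac12|U_0|=\tfrac12|U'_0|=|Y'_b|$ to extend an arbitrary bijection $Y_b\to Y'_b$ equivariantly; your explicit check that each $U_i$ is $C_2$-stable is a detail the paper leaves implicit.
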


\begin{proof}
A finite $C_2$-set is determined up to isomorphism by its numbers
of singleton orbits and orbits of size two. Cancelling the known
parts therefore gives $U_0\cong U'_0$.
If $\mathcal B_0$ is a singleton, this isomorphism has the required
property. Otherwise, write $\mathcal B_0=\{b,b^t\}$, where $t$
generates $C_2$. The action of $t$ exchanges the two parts on
each side, so $|Y_b|=\tfrac12|U_0|=\tfrac12|U'_0|=|Y'_b|$.
Choose a bijection $f_b:Y_b\to Y'_b$ and extend it by
$f(y^t)=f_b(y)^t$ for $y\in Y_b$.
Together with $f|_{Y_b}=f_b$, this defines the required
$C_2$-equivariant bijection.
\end{proof}

\Needspace{12\baselineskip}
\subsection{The four remaining groups}

The Schur multipliers and outer automorphism groups used below are taken from
\cite[Table~1]{Atlas1985}.

\begin{proposition}\label{prop:four-boundary}
For each of the $45$ pairs $(S,\ell)$ listed below, $S$ satisfies iBAW at
$\ell$ on its universal $\ell'$-covering group.
\[
\begin{array}{c|c}
S&\text{primes }\ell\\
\hline
J_4&2,3,5,7,11,23,29,31,37,43\\
Fi'_{24}&2,3,5,7,11,13,17,23,29\\
\Baby&2,3,5,7,11,13,17,19,23,31,47\\
\Monster&2,3,5,7,11,13,17,19,23,29,31,41,47,59,71
\end{array}
\]
\end{proposition}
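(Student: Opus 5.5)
I will split the $45$ pairs according to the outer automorphism group and the Schur multiplier, using \cite[Table~1]{Atlas1985}. For $J_4$, $\Baby$ and $\Monster$ the outer automorphism group is trivial. The multiplier is trivial for $J_4$ and $\Monster$ and is $C_2$ for $\Baby$, so $X_\ell(\Baby)=2.\Baby$ for odd $\ell$ and $X_2(\Baby)=\Baby$. In all these cases $\Out(X)=1$, and Lemma~\ref{lem:complete-collapse} reduces the claim to the blockwise equality $|\IBr(b)|=|\Alp(b)|$ for every $\ell$-block $b$ of $X$. I will verify this equality in four ways, depending on the block.

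\textbf{Defect zero blocks} are immediate. For blocks with \textbf{cyclic defect groups}, which covers every prime $\ell$ with $\ell^2\nmid|X|$ and many blocks at the middle primes, I will invoke \cite[Theorem~1.1]{KoshitaniSpath2016} directly. For the remaining blocks of the \textbf{large primes} $\ell\geq5$, I will use the published weight counts: An and collaborators for $J_4$ and $\Baby$, and An--Dietrich and related work on radical subgroups for $\Monster$. I will compare these with $|\IBr(b)|$. This number is read off from the known Brauer tables, or, when those tables are unavailable, from $|\IBr(b)|=$ the number of $\ell$-regular classes in $b$. The latter can be computed from the ordinary character table via central characters: it is the rank of the decomposition data, equivalently $\sum_b|\IBr(b)|$ equals the number of $\ell$-regular classes, with the blockwise distribution obtained from block idempotent supports in the CTblLib tables.

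For the \textbf{principal blocks at $\ell=2,3$} of $\Baby$ and $\Monster$, the Brauer tables are not fully known. Here I will use the proof of Alperin's weight conjecture in these cases from the literature (An--O'Brien--Wilson and related computational work), which gives the numerical equality blockwise. Where only the total is known, I will deduce the principal block count by subtracting the non-principal blocks. Those have small defect and are handled by the cyclic or dihedral methods, as in Proposition~\ref{prop:type-b-q3}.

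For $Fi'_{24}$, we have $\Out(S)=C_2$ and multiplier $C_3$, so $X_3=Fi'_{24}$ and $X_\ell=3.Fi'_{24}$ otherwise. Here I will use \cite[Corollary~2.13]{FengLiZhang2019}, which requires $\Aut(X)_b$-equivariant bijections. Blocks swapped by the outer involution need only equal counts, for instance faithful blocks over $\omega$ and $\omega^2$, which are interchanged since the outer automorphism inverts $Z(X)$. For stable blocks I need to compare the numbers of fixed points. On the Brauer side, I will read these off from restriction of $\IBr(Fi_{24})$ (or $3.Fi_{24}$) and Clifford theory, as in Proposition~\ref{prop:type-b-q3}: characters that split versus those that extend. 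On the weight side, I will count the weights of $Fi_{24}$ covering weights of $Fi'_{24}$, from the radical subgroup classifications for $Fi_{24}$. Lemma~\ref{lem:block-cancellation} lets me handle the principal block when only global orbit data are available. I compute the $C_2$-set structure of $\IBr(X)$ and $\Alp(X)$ in total, subtract the known contributions of all non-principal orbits of blocks, and conclude that the principal block sets are isomorphic $C_2$-sets with blocks respected.

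The \textbf{main obstacle} I expect is the principal $2$- and $3$-blocks of $\Monster$, $\Baby$ and $Fi'_{24}$. There neither Brauer tables nor complete weight lists are assembled in one place. For these I would first try to obtain the total $\ell$-regular class count and total weight count, and then isolate the principal block by the cancellation above, verifying the small-defect blocks explicitly. For $Fi'_{24}$ at $\ell=2,3$, I would additionally have to determine how the outer involution acts on the principal block weights, via the normalisers in $Fi_{24}$.
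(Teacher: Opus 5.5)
Your skeleton is the paper's. It uses Lemma~\ref{lem:complete-collapse} for $J_4$, $\Baby$ and $\Monster$, with $X_2(\Baby)=\Baby$ and $X_\ell(\Baby)=2.\Baby$ for odd $\ell$. It takes published noncyclic counts: \cite{AnOBrienWilsonJ4} for $J_4$, \cite{AnWilsonBaby,AnWilsonMonster} at odd $\ell$, and at $\ell=2$ only the total weight counts of \cite{AnDietrich2012} with \cite{AnDietrichErratum}. Cyclic blocks are handled by known results, and small nonprincipal $2$-blocks are subtracted from the totals. For $Fi'_{24}$ it uses \cite[Corollary~2.13]{FengLiZhang2019}, the exchange of the faithful sectors, and Lemma~\ref{lem:block-cancellation}.

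Several concrete steps fail as written, however.
\begin{itemize}
\item $|\IBr(b)|$ is not a count of ``$\ell$-regular classes in $b$'', and block idempotent supports do not determine it. The blockwise invariant is the rank of the restrictions of $\Irr(b)$ to the $\ell$-regular classes, \eqref{eq:restriction-rank}. This is how the paper gets $25$ and $2$ for the $2$-blocks of $\Baby$.
\item More seriously, the nonprincipal $2$-blocks you subtract are not all cyclic or dihedral. The defect-$4$ block of $\Monster$ has $8$ ordinary characters. A $2$-block with dihedral defect group of order $16$ has $7$, and one with cyclic defect group of order $16$ is nilpotent with $16$. You need a general small-defect result. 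The paper uses \cite[Theorem~13.7]{Sambale2014} for this block, for the defect-$3$ block of $\Baby$, and for the $2$-blocks of defect $2$ and $3$ of $Fi'_{24}$ and $3.Fi'_{24}$.
\end{itemize}

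For $Fi'_{24}$, reading fixed points off $\IBr(Fi_{24})$ presupposes $2$- and $3$-modular Brauer tables that you do not have. The paper never uses a Brauer table. For each invariant block it computes the rank $r$ of the characters $\chi^0$ and the rank $r_\sigma$ of the restrictions of $\chi+\chi^\sigma$. The latter rank is the number of $\sigma$-orbits on $\IBr(b)$, so exactly $2r_\sigma-r$ Brauer characters are fixed. Your Clifford idea can be rescued the same way, by computing the number of Brauer characters of $X.2$ over $b$ as a rank.

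On the weight side, going through weights of $Fi_{24}$ is not a Clifford-type count at $\ell=2$. In any case the $\sigma$-action per radical class is already recorded in \cite[Table~8]{AnDietrich2012}. What your outline leaves open is how weights are distributed among the stable nonprincipal blocks. The paper settles this using that the radical subgroup of a weight in a block lies in a defect group \cite[Theorem~4.14]{Navarro1998}:
\begin{itemize}
\item At $2$, the defect-$2$ block, whose defect groups lie in class $(2^2)_b$, takes three of the four $(2^2)_b$-weights. These are necessarily one orbit of length two and one fixed weight. The $D_8$ block is then left with three fixed weights.
\item At $3$, the four weights of the $C_3^2$ block are located by block induction from $N_G(C_3^2)$.
\item At $5$, the weights are located by direct computation with the normalisers.
\item At $7$, and in the faithful sectors at $2$ and $7$, they are obtained by subtraction from An--Dietrich's sector totals.
\end{itemize}
Only then does Lemma~\ref{lem:block-cancellation} give the principal blocks. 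No direct analysis of the outer action on principal-block weights is needed.
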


\begin{proof}
For $J_4$ the Schur multiplier and outer automorphism group are trivial.
The blockwise Alperin weight conjecture for every block with noncyclic
defect group is \cite[Theorem~5.2]{AnOBrienWilsonJ4}.  Blocks with cyclic
defect group, including blocks of defect zero, are covered by
\cite[Proposition~6.2]{Spath2013}.  Lemma~\ref{lem:complete-collapse}
shows that $J_4$ satisfies iBAW at every prime dividing its order.

We next consider $Fi'_{24}$. The blockwise weight counts for the blocks of
$Fi'_{24}$ with noncyclic defect groups are established in
\cite[Theorem~4.3]{AnCannonOBrienUngerFi24}. We also determine the actions
of the outer involution needed for the equivariant bijections.
Set $X=X_\ell(Fi'_{24})$, so that
$X=Fi'_{24}$ at $3$ and $X=3.Fi'_{24}$ at every other prime.
For $\ell\ne3$, let $Z=Z(X)$ and let
$\pi:X\longrightarrow\overline X:=X/Z=Fi'_{24}$ be the quotient map.
For every $\ell$-subgroup $\overline Q$ of $\overline X$,
Schur--Zassenhaus gives a complement $Q$ to $Z$ in
$\pi^{-1}(\overline Q)$. Since $Z$ is central, this complement is
unique. Moreover, $Q$ is radical if and only if $\overline Q$ is radical
\cite[Lemma~2.3(c)]{NavarroTiep2011}, and
\begin{equation}\label{eq:fi24-central-quotient}
 \begin{aligned}
 \overline{N_X(Q)}&=N_{\overline X}(\overline Q),\\
 N_X(Q)/(QZ)&\cong N_{\overline X}(\overline Q)/\overline Q.
 \end{aligned}
\end{equation}
By \cite[Theorem~9.9(c)]{Navarro1998}, the block correspondence for
$X\longrightarrow\overline X$ gives a bijection between the blocks of
$\overline X$ and the blocks of $X$ lying over $1_Z$. The latter are
precisely the blocks in the trivial sector of
Definition~\ref{def:central-sector}.
For corresponding blocks $\overline b$ and $b$, inflation gives a bijection
\[
 \IBr(\overline b)\longrightarrow\IBr(b).
\]
For a weight $(\overline Q,\overline\theta)$ of $\overline b$, let $Q$
be the unique lift described above, and let $\theta$ be the inflation
of $\overline\theta$ along the quotient homomorphism
$N_X(Q)/Q\longrightarrow N_{\overline X}(\overline Q)/\overline Q$,
$nQ\longmapsto\overline{nQ}$.
Passing to conjugacy classes gives a bijection
\[
 \Alp(\overline b)\longrightarrow\Alp(b)
\]
by \cite[Lemma~2.3(iii)]{FengLiZhang2019}.
These bijections commute with the induced automorphism actions.
Thus an equivariant bijection $\IBr(\overline b)\to\Alp(\overline b)$
gives an equivariant bijection $\IBr(b)\to\Alp(b)$
\cite[Lemma~2.5]{FengLiZhang2019}.

For $\ell\neq3$, the outer involution exchanges the two faithful sectors
of $X=3.Fi'_{24}$ \cite[Lemma~4.7]{AnDietrich2012}. The stabiliser of a
block in either sector is therefore $\Inn(X)$, so any bijection between
its irreducible Brauer characters and weight classes is equivariant.
Remark~\ref{rem:block-aggregation}\textup{(b)} extends this bijection to the other
block in its orbit.

For a finite $C_2$-set $Y$, we use the pair $(|Y|,|Y^{C_2}|)$, with
$C_2$ acting through the nontrivial outer automorphism. We compare
these pairs for $\IBr(b)$ and $\Alp(b)$ whenever the outer involution
fixes the block $b$.
Throughout the following argument, we use Table~8 of
\cite{AnDietrich2012}. In the column headed $|\Irr^0|$, an entry $a/b$
denotes $a+b$ weights, of which $a$ are fixed by the outer involution.  For
the blocks specified below, the program
\path{fi24blocks.g} computes the number of Brauer characters, the number
fixed by the outer involution, and the action of the outer involution on blocks.  It
lists the block defects separately.  At $2$,
the pairs of total and fixed Brauer character counts for the blocks
in the trivial sector are
\begin{equation}\label{eq:fi24-p2-signatures}
 (33,25),\quad(3,1),\quad(3,3),\quad(1,1),\quad(1,1).
\end{equation}
The last two blocks have defect zero. For each of them we use the
bijection $\chi^0\mapsto(1,\chi)$, where $\chi$ is its unique ordinary
irreducible character. The second and third blocks have defect
groups of order $4$ and $8$ by
\cite[Lemma~4.2(d)]{AnCannonOBrienUngerFi24}.
The Alperin weight conjecture holds for both blocks by
\cite[Theorem~13.7]{Sambale2014}. Together with
\eqref{eq:fi24-p2-signatures}, this gives three weights for each block.

The radical subgroup of a weight belonging to either block is conjugate
to a subgroup of its defect group \cite[Theorem~4.14]{Navarro1998}.
The row labelled $1$ in \cite[Table~8]{AnDietrich2012} accounts for
the two blocks of defect zero, and the row labelled $2$ has entry $0/0$
in the trivial column.
The classes $(2^2)_a$ and $(2^2)_b$ in
\cite[Table~8]{AnDietrich2012} are denoted by $2^2$ and $(2^2)^*$,
respectively, in \cite[Table~2]{AnCannonOBrienUngerFi24}.
The defect groups of the block of defect~$2$ lie in the class $(2^2)_b$
by \cite[Lemma~4.2(d)]{AnCannonOBrienUngerFi24}.
Its three weights therefore lie among the four weights in the
$(2^2)_b$ row of \cite[Table~8]{AnDietrich2012}.
The entry $2/2$ means that these four weights consist of two fixed
weights and one orbit of length two. Since the block is fixed by the outer
involution, its three weights consist of the orbit of length two and one
fixed weight, giving the pair $(3,1)$.

For the block with dihedral defect group of order $8$, the possible radical
classes are $(2^2)_a$, $(2^2)_b$ and $D_8$, with entries $1/0$, $2/2$ and $1/0$
in the trivial column of \cite[Table~8]{AnDietrich2012}.
Three of the four $(2^2)_b$-weights belong to the block of defect~$2$.
Hence the only weights that can belong to the block with defect group
$D_8$ are the $(2^2)_a$-weight, the remaining fixed $(2^2)_b$-weight and
the $D_8$-weight. Since this block has three weights, it contains all three.
Each is fixed, giving the pair $(3,3)$.
These pairs agree with the corresponding Brauer character counts
in \eqref{eq:fi24-p2-signatures}, so each of the two blocks $b$ admits an
$\Aut(X)_b$-equivariant bijection $\IBr(b)\to\Alp(b)$.
The program \path{fi24blocks.g}, described in Section~\ref{app:fischer},
independently confirms this distribution of the $(2^2)_a$- and
$(2^2)_b$-weights for every candidate fusion.

Summing the entries in the trivial column of
\cite[Table~8]{AnDietrich2012} gives the pair $(41,31)$ for the union
of the weight sets of all blocks in the trivial sector.
Lemma~\ref{lem:block-cancellation}, after removal of the two blocks just
treated and the two blocks of defect zero, gives an equivariant bijection
for the principal block.  Its pair of total and fixed weight counts is
$(33,25)$, in agreement with \eqref{eq:fi24-p2-signatures}.

At $2$, each faithful sector contains exactly two blocks.
They have defects $21$ and $3$ and contain $23$ and $2$ Brauer
characters, respectively. Summing the column labelled $\lambda_3$ in
\cite[Table~8]{AnDietrich2012} gives $25$ weights in each faithful sector.  The
block of defect $3$ has two weights by
\cite[Theorem~13.7]{Sambale2014}.  The other block therefore has $23$
weights.

At $3$, set $G=Fi'_{24}$. The classification of radical subgroups and
their normalisers in \cite[Proposition~4.1 and Table~1]{AnCannonOBrienUngerFi24}
gives a radical subgroup $Q\cong C_3^2$ with
$N_G(Q)\cong(3^2\!:\!2\times G_2(3)).2$.
By \cite[Lemma~4.2(c) and Theorem~4.3]{AnCannonOBrienUngerFi24}, $G$
has a unique nonprincipal block $B_1$ of positive defect, with defect
group $Q$ and four weights. In CTblLib~1.3.11 \cite{CTblLib2025},
the table \path{F3+} represents $G$. The table
\path{(3^2:2xG2(3)).2} is the seventeenth entry in CTblLib's list of
character tables of maximal subgroups of $G$, and represents $N_G(Q)$.

Let $b$ be the $3$-block of $N_G(Q)$ numbered $2$ in the CTblLib
calculation described in Section~\ref{app:fischer}. By \cite[Theorem~4.14]{Navarro1998}, the induced block $b^G$ is defined.
It can be identified from the reduced central characters of induced
ordinary characters using \cite[Lemma~6.1]{Navarro1998}.  The programs \path{fi24p3.g} and \path{fi24blocks.g} select the four
characters of $N_G(Q)/Q$ that have defect zero and whose inflations
lie in $b$. The program \path{fi24blocks.g} applies the central character test to
every class fusion returned by \texttt{PossibleClassFusions} for the
indicated tables.
For each of the six ordinary characters in $b$, every candidate
identifies the block of $G$ numbered $2$. The three blocks of $G$ have defects
$16,2,0$, respectively, and the first is principal. Thus the second is
$B_1$, and $b^G=B_1$. The four selected characters therefore give all
four $B_1$-weights.

The $3^2$ entry of \cite[Table~8]{AnDietrich2012} shows that two of these
weights are fixed by the outer involution, giving the pair $(4,2)$.
The program \path{fi24blocks.g} gives the same pair of total and fixed
Brauer character counts.  Thus the two $C_2$-sets are isomorphic, giving an equivariant
bijection for $B_1$. For the block of defect zero, use $\chi^0\mapsto(1,\chi)$.  Lemma~\ref{lem:block-cancellation}, applied to the equivariant bijection
constructed in \cite[Section~4.3.4]{AnDietrich2012} between the irreducible
Brauer characters and the conjugacy classes of weights in the entire sector,
gives an equivariant bijection for the principal block.  Its pair of total and fixed weight counts is $(25,25)$, equal to the
corresponding pair for Brauer characters computed by the program.

At $5$, the nontrivial radical subgroups of $Fi'_{24}$ have orders
$5$ and $25$ \cite[Proposition~4.1 and Table~1]{AnCannonOBrienUngerFi24}.
The central quotient correspondence in
\eqref{eq:fi24-central-quotient} gives their unique radical lifts to
$3.Fi'_{24}$ and the corresponding normalisers. Weight characters in the
trivial sector are obtained by the inflation described there. The program \path{fi24weights.g}, described in
Section~\ref{app:fischer}, determines the block distributions and outer
actions of these weights. It verifies that the resulting pairs of total and fixed weight counts are
independent of the candidate class fusions and that every weight associated
with a radical subgroup of order $5$ belongs to a block with cyclic defect. The classification in
\cite[Proposition~4.1 and Table~1]{AnCannonOBrienUngerFi24}, together
with the corresponding CTblLib character tables, accounts for the weights
associated with all nontrivial radical subgroups.  In the
trivial sector the program gives the pairs $(16,16)$, $(14,6)$ and
$(16,16)$ for the three blocks with defect group $C_5^2$, in the block
order of the character table. The program \path{fi24blocks.g} gives the
same pairs for Brauer characters, so these three blocks admit
equivariant bijections. It also gives the
pairs of blocks $45,46$ and $47,48$ interchanged by the outer involution.
CTblLib's identifications of the character tables and the outer action show
that these are the two pairs of blocks exchanged between the faithful sectors. The blocks in these pairs have $16$ and
$14$ weights and Brauer characters, respectively. The blocks in the faithful sectors are
treated by applying Remark~\ref{rem:block-aggregation}\textup{(b)} to their outer
automorphism orbits.  All remaining $5$-blocks have cyclic defect or
defect zero and are covered by \cite[Proposition~6.2]{Spath2013}.

At $7$, each of the three sectors has one block of noncyclic defect.
Use the equivariant bijection for each entire sector constructed in
\cite[Section~4.3.4]{AnDietrich2012}, with the counts in \cite[Table~8]{AnDietrich2012}.  The blocks with cyclic defect
and the blocks of defect zero admit equivariant bijections by
\cite[Proposition~6.2]{Spath2013}.  In the trivial sector, apply
Lemma~\ref{lem:block-cancellation} to the bijection on the entire sector, using these
blockwise bijections for the cyclic and defect zero blocks.  This gives an
equivariant bijection for the unique block of noncyclic defect in the
trivial sector. Its pair of total and fixed weight counts is $(22,12)$, equal to the
corresponding pair for Brauer characters computed by \path{fi24blocks.g}.
Each faithful sector has $63$ weights and, by the sector bijection, $63$
Brauer characters. The program \path{fi24blocks.g} gives $22$ Brauer
characters in its unique block of noncyclic defect. The remaining blocks
therefore contain $41$ Brauer characters and, by
\cite[Proposition~6.2]{Spath2013}, $41$ weights. Thus the block of
noncyclic defect has $22$ weights. Choose a bijection for one of these
two blocks and define the bijection for the other by conjugating with
the outer involution.

At $11$, $13$, $17$, $23$, and $29$, every block of positive defect has cyclic
defect, as verified by \path{fi24blocks.g}. These blocks and the blocks of
defect zero are covered by \cite[Proposition~6.2]{Spath2013}. We have therefore constructed an
$\Aut(X)_b$-equivariant bijection $\IBr(b)\to\Alp(b)$ for every
$\ell$-block $b$ of $X=X_\ell(Fi'_{24})$.
Since $\Out(Fi'_{24})\cong C_2$, \cite[Corollary~2.13]{FengLiZhang2019}
shows that every such block satisfies the inductive BAW condition.
Remark~\ref{rem:block-aggregation}\textup{(a)} completes the proof for $Fi'_{24}$.

For $\Baby$, the universal $\ell'$-covering group is $\Baby$ at $2$ and
$2.\Baby$ at odd primes.  At $2$ the ordinary character table has two
blocks.  They contain $179$ and $5$ ordinary irreducible characters and
have defects $41$ and $3$, respectively.  The correctness of
the ordinary character table is proved in
\cite[Section~7]{BreuerMagaardWilsonBaby2020}.  The exact restriction
matrices on the $2$-regular classes have ranks $25$ and $2$.  The
calculation is given in Section~\ref{app:baby-monster}.  By
\eqref{eq:restriction-rank}, it gives $|\IBr(\Baby)|=27$.  The weight calculation in
\cite[proof of Theorem~4.2 and Table~2]{AnDietrich2012}, with the
corrections in \cite{AnDietrichErratum}, gives $27$ conjugacy classes
of $2$-weights.  The nonprincipal block $b$ of
defect $3$ has $|\IBr(b)|=2$ by \eqref{eq:restriction-rank}, and
$|\IBr(b)|=|\Alp(b)|$ by \cite[Theorem~13.7]{Sambale2014}.
Subtraction gives $25$ weights for the principal block, which has $25$
Brauer characters by \eqref{eq:restriction-rank}.  Thus
$|\IBr(b)|=|\Alp(b)|$ for every $2$-block $b$ of $\Baby$.  Since
$\Out(\Baby)=1$, Lemma~\ref{lem:complete-collapse} shows that $\Baby$
satisfies iBAW at $2$.
At $\ell=7$, the total $220$ printed for the number of
$7$-regular conjugacy classes of $2.\Baby$ in the proof of
\cite[Lemma~5.2]{AnWilsonBaby} should be $222$.  The supplementary
calculation described in Section~\ref{app:baby-monster} verifies the
corrected total and the restriction ranks $24,24,21,24$ for the four
blocks with noncyclic defect.  The blockwise values used in
\cite[Theorem~5.3]{AnWilsonBaby} are therefore unaffected.
At odd primes, every block $b$ of $2.\Baby$ with noncyclic defect
satisfies $|\IBr(b)|=|\Alp(b)|$ by \cite[Theorem~5.3]{AnWilsonBaby}.
Since $\Out(\Baby)=1$, \cite[Proposition~6.2]{Spath2013}, applied to
the simple group $\Baby$, gives the same equality for every block of
$2.\Baby$ with cyclic defect, including defect zero.  Since
$\Out(2.\Baby)\cong\Out(\Baby)=1$,
Lemma~\ref{lem:complete-collapse} applies.

For $\Monster$ the universal $\ell'$-covering group is $\Monster$ at
every prime.  At $2$ there are five blocks, of defects $46,0,0,4,0$ and
numbers of ordinary irreducible characters $183,1,1,8,1$.  The correctness
of the ordinary character table is proved in
\cite[Section~5]{BreuerMagaardWilsonMonster2026}.
The program in Section~\ref{app:baby-monster} verifies that $\Monster$ has
$61$ $2$-regular conjugacy classes, so $|\IBr(\Monster)|=61$ by
\cite[Corollary~2.10]{Navarro1998}.  The weight calculation in
\cite[proof of Theorem~4.2 and Table~17]{AnDietrich2012}, with the
corrections in \cite{AnDietrichErratum}, gives $61$ conjugacy classes
of $2$-weights.  Each of the three blocks $b$ of defect zero satisfies
$|\IBr(b)|=|\Alp(b)|=1$.  The unique nonprincipal block $b$ of defect $4$ satisfies
$|\IBr(b)|=|\Alp(b)|$ by \cite[Theorem~13.7]{Sambale2014}.  These four blocks are all the nonprincipal $2$-blocks. Subtracting their
Brauer character and weight counts from the respective totals gives
$|\IBr(B_0(\Monster))|=|\Alp(B_0(\Monster))|$ for the principal block.  Thus
$|\IBr(b)|=|\Alp(b)|$ for every $2$-block $b$ of $\Monster$, and
Lemma~\ref{lem:complete-collapse} shows that $\Monster$ satisfies iBAW
at $2$.
For every odd $\ell\mid|\Monster|$, every $\ell$-block $b$ of
$\Monster$ with noncyclic defect satisfies
$|\IBr(b)|=|\Alp(b)|$ by \cite[Theorem~4.4]{AnWilsonMonster}.  Since
$\Out(\Monster)=1$, \cite[Proposition~6.2]{Spath2013}, applied to the
simple group $\Monster$, gives the same equality for every block with
cyclic defect, including defect zero.  Thus
$|\IBr(b)|=|\Alp(b)|$ for every $\ell$-block $b$ of $\Monster$, and
Lemma~\ref{lem:complete-collapse} shows that $\Monster$ satisfies iBAW
at every odd prime dividing its order.
\end{proof}

\Needspace{6\baselineskip}
\subsection{Proof of the sporadic theorem}

\begin{proof}[Proof of Theorem~\ref{thm:sporadic}]
The result follows from Proposition~\ref{prop:four-boundary} and the
CTBlocks verification \cite{BreuerSporadicOverview} for the other
$22$ sporadic groups.
\end{proof}

\section{Proof of Theorem~\ref{thm:main} and
Corollary~\ref{cor:reduction}}

\begin{proof}
Theorem~\ref{thm:main} follows from Theorem~\ref{thm:symplectic},
Corollary~\ref{cor:type-b-census}, and Theorem~\ref{thm:sporadic}.
For Corollary~\ref{cor:reduction}, let $H$ be the finite group in its
statement. Sp\"ath's reduction theorem
\cite[Theorem~A]{Spath2013} reduces the assertion to the nonabelian
simple groups involved in $H$.
Theorem~\ref{thm:main} applies when $\ell$ divides the order of such a group $S$.
Otherwise, every $\ell$-block of its universal $\ell'$-covering group
has defect zero and satisfies the inductive BAW condition trivially.
Remark~\ref{rem:block-aggregation}\textup{(a)} gives the conclusion.
\end{proof}

\appendix\suppressfloats[t]
\section{Finite computations}\label{app:computation}

The computational supplement contains eleven programs in the directory
\path{code} and their complete transcripts in \path{outputs}.
The programs were run with \GAP~4.16.0 \cite{GAP2026}, CTblLib~1.3.11
\cite{CTblLib2025}, and AtlasRep~2.1.11 \cite{AtlasRep2026}.
They are listed in Table~\ref{tab:computation-programs}.
The identifications of named character tables and stored fusions with the
groups and inclusions below are taken from CTblLib and the classifications
cited in each calculation. The computations with these tables do not
establish those identifications.

\begin{table}[tbp]
\centering
\caption{Programs in the computational supplement.}
\label{tab:computation-programs}
\small
\begin{tabular}{@{}p{0.22\textwidth}p{0.68\textwidth}@{}}
\toprule
Program & Calculation \\
\midrule
\path{sp6.g} &
The $3$-radical subgroups and blockwise weight counts for $2.\Sp_6(2)$. \\
\path{o7weights.g} &
Two principal weight classes of $\operatorname{SO}_7(3)$ that each
cover two principal weight classes of $\Omega_7(3)$. \\
\path{o7brauer.g} &
Restrictions of principal block Brauer characters from
$\operatorname{SO}_7(3)$ to $\Omega_7(3)$. \\
\path{o7blocks.g} &
The nine $2$-blocks of $3.\Omega_7(3)$ and their permutation under the
outer involution. \\
\path{o7block2.g} &
The decomposition matrix and outer action for the block $B_2$. \\
\path{o7radical.g} &
The $2$-radical subgroups of $3.\Omega_7(3)$ and weight counts in its
three central character sectors. \\
\path{fi24blocks.g} &
Block data, Brauer character counts, outer actions, and block induction
for $Fi'_{24}$ and $3.Fi'_{24}$. \\
\path{fi24p3.g} &
The quotient character table and inflated characters for the normaliser
of a radical subgroup $C_3^2$ of $Fi'_{24}$. \\
\path{fi24weights.g} &
Block distributions and outer actions of weights associated with radical
subgroups of orders $5$ and $25$. \\
\path{baby.g} &
Numbers of irreducible Brauer characters of the Baby Monster at $2$ and
its double cover at $7$. \\
\path{monster.g} &
The ordinary $2$-blocks and number of $2$-regular conjugacy classes of
the Monster. \\
\bottomrule
\end{tabular}
\end{table}

\subsection{Numbers of irreducible Brauer characters}
\label{app:restriction-counts}

For a block $b$, let $V_b$ be the complex span of the restrictions to the
$\ell$-regular classes of the ordinary characters in $b$.  The irreducible Brauer characters in $b$ are linearly independent on the
$\ell$-regular classes. Since the decomposition matrix of $b$ has full column
rank, these characters span $V_b$.
Consequently
\begin{equation}\label{eq:restriction-rank}
                         \dim V_b=|\IBr(b)|.
\end{equation}

\subsection{\texorpdfstring{$2.\Sp_6(2)$ at the prime $3$}{2.Sp6(2) at the prime 3}}
\label{app:sp6}

Let $X=2.\Sp_6(2)$.  The program \path{sp6.g} supplies the numerical
details of the \GAP{}
calculation for $X$ at $3$ in
\cite[proof of Theorem~5.5]{SchaefferFry2014}.  It obtains the faithful
permutation representation of degree $240$ from AtlasRep.  For a Sylow
$3$-subgroup $P$, it chooses representatives $Q$ of the $20$
$P$-conjugacy classes of subgroups, tests the condition
$Q=O_3(N_X(Q))$ for each representative, and then fuses the surviving
classes in $X$.  There are five conjugacy classes in $X$ of $3$-radical
subgroups, including the trivial subgroup, of orders
$1,3,27,27,81$, whose respective normalisers have orders
$2903040,8640,2592,2592,648$.
CTblLib supplies the named character tables used for these normalisers,
together with stored fusions into the character table of $X$.
The program verifies the orders of these tables, their
$3$-cores, and the corresponding quotient orders.  For each named table, it
selects the characters which are trivial on $Q$ and have defect zero after
passage to $N_X(Q)/Q$.  It induces these characters to $X$, uses the
resulting central characters to determine the blocks of $X$ to which they belong, and checks
that each induced character determines a unique block.  In the
block order of the character table, the resulting
weight counts are $10,2,2,1,6$.
In the same order, the matrices of restrictions of the ordinary characters in
these blocks to the $3$-regular classes have ranks $10,2,2,1,6$.  By \eqref{eq:restriction-rank},
these are the numbers of irreducible Brauer characters.  The Brauer table
supplied by CTblLib confirms these numbers.
Thus the program verifies the numerical blockwise equalities in the
exceptional calculation.  The inductive BAW conclusion is supplied by
\cite[Theorem~5.5]{SchaefferFry2014}.

\subsection{\texorpdfstring{$3.\Omega_7(3)$ at the prime $2$}{3.Omega7(3) at the prime 2}}
\label{app:type-b-computation}

Set $X=3.\Omega_7(3)$. The programs \path{o7blocks.g} and
\path{o7block2.g} use the CTblLib tables \path{3.O7(3)} and
\path{3.O7(3).2} of $X$ and its extension by the outer involution,
together with the stored fusion for this inclusion, to determine the
compatible table automorphism and its action on blocks and Brauer
characters.

\subsubsection{The principal block}

Set $H=\operatorname{SO}_7(3)$ and $S=\Omega_7(3)$.
The program \path{o7weights.g} enumerates the $P$-conjugacy
classes of subgroups of orders $2^7$ and $2^8$ in a Sylow
$2$-subgroup $P$ of $S$. Among these classes, it retains one at each
order whose representatives $Q$ satisfy $C_P(Q)=Z(Q)$ and are
$2$-radical in $H$. For each retained representative $Q$, it verifies
$C_H(Q)=Z(Q)$ and finds one ordinary character of $2$-defect zero in
$N_H(Q)/Q$.
These give principal weights by \cite[Lemma~2.3]{FengYuZhang2024}.
Since $Q\leq S$, \cite[Corollary~2.22(2)]{FengYuZhang2024} shows that
each resulting principal $H$-weight class covers two $S$-weight classes.

The program \path{o7brauer.g} computes the
restrictions of the ten principal block Brauer characters of $H$ to $S$.
Eight restrictions are irreducible and two have two distinct constituents.
Modular Clifford theory identifies the two pairs of constituents as the two
orbits of size two.  Thus the principal Brauer characters and principal
weights of $S$ have isomorphic $C_2$-actions.

\subsubsection{The nine blocks of the triple cover}

The program \path{o7blocks.g} determines the nine $2$-blocks of $X$,
their defects, central character sectors, Brauer characters, and their
permutation under the outer involution.  These data give
Table~\ref{tab:typeb-q3-blocks}.

For $B_2$, \path{o7block2.g} gives five ordinary characters with
respective heights $(0,0,1,0,0)$ and two Brauer characters.  Its
decomposition matrix is
\[
 \begin{pmatrix}1&0\\1&0\\0&1\\1&1\\1&1\end{pmatrix}.
\]
It also shows that the outer involution fixes both Brauer characters.
Proposition~\ref{prop:type-b-q3} uses these data to identify the defect
group as $D_8$ and to show that the outer involution fixes both weight
classes.

In the AtlasRep permutation representation of degree $1134$, the
program \path{o7radical.g} enumerates the
$3021$ conjugacy classes of subgroups of a Sylow $2$-subgroup.  It retains
the $26$ representatives satisfying
$Q=O_2(N_X(Q))$ and fuses them into $12$ $X$-classes.
Every $2$-radical subgroup of $X$ is conjugate into the chosen Sylow
subgroup, so the enumeration is exhaustive.  For each fused
class it forms $N_X(Q)/Q$, finds its ordinary characters of defect zero, and
lists the restrictions of their inflations to $Z(X)\cong C_3$.  The resulting totals in
the three central character sectors are $(17,8,8)$.
The proof of Proposition~\ref{prop:type-b-q3} uses the two faithful
totals to determine the weight counts of $B_6$ and $B_7$.

\subsection{The Fischer group}
\label{app:fischer}

The program \path{fi24blocks.g} loads the named ordinary character
tables \path{F3+}, \path{3.F3+}, and their outer extensions, with the
corresponding maximal subgroup data from CTblLib.
For a fixed character table and prime, block numbers refer to the
numbering returned by \texttt{PrimeBlocks}, separately for each table.
The program checks the covering group order and the set of prime
divisors of the group order. At $2$, $3$, $5$ and $7$, it treats the blocks considered in
Section~\ref{sec:sporadic}. At $5$ and $7$, these are precisely the blocks of
noncyclic defect.  For an invariant selected block, it applies
\eqref{eq:restriction-rank} to the restricted
ordinary characters and to their sums with their images under the outer
involution. The second rank is the dimension of the subspace fixed by the
involution, so twice that rank minus the first is the number of fixed
Brauer characters.  For these blocks, the program checks the
stated numbers of Brauer characters and fixed points, the central
characters of the blocks, and the permutations of their labels under the
outer involution.  It separately checks the complete distributions of block defects at $5$, $7$, $11$, $13$, $17$, $23$, and $29$.  The weight
counts and sector totals are supplied separately by \path{fi24weights.g}
or the cited sources.
At $2$, the program \path{fi24blocks.g} also gives an independent check
of the distribution among blocks of the $(2^2)_a$- and $(2^2)_b$-weights
obtained in the proof of Proposition~\ref{prop:four-boundary}.
It uses the identifications, supplied by \cite{AnCannonOBrienUngerFi24}
and CTblLib, of the tables \path{2^2.U6(2).3.2} and
\path{(A4xO8+(2).3).2} with the normalisers in $Fi'_{24}$ of
representatives of these two radical classes, respectively.
The selected characters are the inflations of the corresponding defect
zero characters of the normaliser quotients. The program uses induced
central characters to determine their blocks in $Fi'_{24}$. The
fusions induced by the corresponding inclusions are among the candidates
tested. For every fusion returned by \texttt{PossibleClassFusions}, the
program verifies that each central character determines a unique block
and that the resulting distribution is independent of the candidate.

For $G=Fi'_{24}$, the classification of radical subgroups and their
normalisers in \cite[Proposition~4.1 and Table~1]{AnCannonOBrienUngerFi24}
gives a radical subgroup $Q\cong C_3^2$ with
$N_G(Q)\cong(3^2\!:\!2\times G_2(3)).2$.
The table \path{(3^2:2xG2(3)).2} is the seventeenth entry in CTblLib's
list of character tables of maximal subgroups of $G$. CTblLib also
supplies its fusion into \path{F3+}.

The program \path{fi24p3.g} performs a separate calculation using the
character table of $N_G(Q)$. It forms the character table of the quotient by
the computed $3$-core and selects the four irreducible ordinary characters
belonging to $3$-blocks of defect zero. It inflates these characters to
$N_G(Q)$, determines their block numbers, and verifies that each corresponding
row of the decomposition matrix has exactly one nonzero entry. Character
numbers refer to positions in the ordered lists of irreducible ordinary
characters. Each list of six entries in the output records, in order, the
character number in the quotient table, its degree and block number, the
character number and block number of its inflation, and the column containing
the nonzero decomposition number.

The program \path{fi24blocks.g} then considers each entry in the list of
$32$ class fusions returned by \texttt{PossibleClassFusions} between the
character tables of $N_G(Q)$ and $G$. No assertion that the entries are
distinct or form an orbit is used.  The stored CTblLib fusion is
one of these.  For each of the six irreducible ordinary characters at positions
$23,24,51,52,65,76$ in the character table of $N_G(Q)$, all belonging to block
$2$, the induced central character determines block $2$ of the character table
of $G$ for every returned candidate. The three
blocks of $G$ have
defects $16,2,0$.  By
\cite[Theorem~4.14 and Lemma~6.1]{Navarro1998}, these calculations with central characters give the block induction used in Section~\ref{sec:sporadic}.
This calculation treats $Q\cong C_3^2$ and does not enumerate all
nontrivial radical subgroups of $G$.

The program \path{fi24weights.g} uses the character tables of the
normalisers of radical subgroups of orders $5$ and $25$ in $Fi'_{24}$
and $3.Fi'_{24}$.  It selects the characters inflated from characters of defect zero
of the quotient by the radical subgroup. The classification in
\cite[Proposition~4.1 and Table~1]{AnCannonOBrienUngerFi24} gives the
normalisers, whose named character tables are supplied by CTblLib.  For each entry returned by \texttt{PossibleClassFusions}
for the named tables, the program induces every selected character to
the corresponding group, $Fi'_{24}$ or $3.Fi'_{24}$, requires its central character to determine a unique block,
and obtains the same block distribution.  For each returned outer fusion entry, it verifies an involutory permutation
preserving the set of selected characters.  The resulting pairs of total and fixed weight counts are independent of the
candidate choices. The program obtains the pairs $(16,16)$, $(14,6)$, and
$(16,16)$ of the three invariant blocks and the counts in table positions
$45$--$48$.  The program \path{fi24blocks.g} gives the pairs of blocks interchanged by the outer involution,
$45\leftrightarrow46$ and $47\leftrightarrow48$.  Using CTblLib's identification of the tables and the outer action,
these are the two pairs of blocks exchanged between the faithful sectors,
with $16$ and $14$ weights per block, respectively.
For the radical subgroup of order $5$, the same selection of quotient
characters and central character test place every selected weight in a
block of cyclic defect.

\subsection{The Baby Monster and the Monster}
\label{app:baby-monster}

For the Baby Monster,
\path{baby.g} uses the ordinary character table in
CTblLib, whose correctness is proved in
\cite[Section~7]{BreuerMagaardWilsonBaby2020}.  It uses \texttt{PrimeBlocks} to determine the block containing each ordinary
irreducible character. For each block, it restricts its ordinary characters to
the $2$-regular classes and computes the rank of the resulting matrix.  The two blocks have defects
$41,3$, numbers of ordinary characters $179,5$, and ranks $25$ and $2$.
Both the matrix formed by combining bases for the row spaces of these two
matrices and the matrix of all the restricted characters have rank $27$.  The calculation does not use a Baby
Monster $2$-Brauer table from CTblLib.
The program \path{baby.g} also loads the ordinary
table of $2.\Baby$ at $7$ and verifies that it has $222$ $7$-regular
classes, rather than the $220$ printed in the proof of
\cite[Lemma~5.2]{AnWilsonBaby}, and that its four blocks of defect $2$ have
restriction ranks $24,24,21,24$.  Hence the blockwise values used in
\cite[Theorem~5.3]{AnWilsonBaby} are unchanged.

For the Monster, \path{monster.g} uses the ordinary
character table in CTblLib, whose correctness is proved in
\cite[Section~5]{BreuerMagaardWilsonMonster2026}.  It verifies that the
ordinary irreducible characters form five blocks with respective defects
$(46,0,0,4,0)$ and numbers of ordinary irreducible characters
$(183,1,1,8,1)$.
It also checks the three characters of defect zero, all heights in the block
of defect $4$, and that there are $61$ $2$-regular classes.  The total number
of Brauer characters is obtained from these $61$ classes, rather than from the
number of ordinary characters.  The verified block defects and numbers of
ordinary characters
are used in Proposition~\ref{prop:four-boundary}, together with the total
number of Brauer characters and the result for the block of defect $4$.

\section{Proofs of Lemmas~\ref{lem:type-b-isolated-coverage} and~\ref{lem:symplectic-gggr-selection}}
\label{app:gggr-proofs}

\begin{proof}[Proof of Lemma~\ref{lem:type-b-isolated-coverage}]
For sufficiently large $p$ and $q$, Geck and H\'ezard prove the
conclusion in \cite[Proposition~4.3]{GeckHezard2008}. We adapt their
proof to the present hypotheses.
We first prove that the restrictions of
$D_H(\Gamma_{u_1}),\ldots,D_H(\Gamma_{u_d})$ to $\mathcal C^F$
are linearly independent. We then show that the restrictions of the
characters in $\Irr_{s,\mathcal F}(H)$ span the $H$-invariant
functions on $\mathcal C^F$. We obtain characters satisfying
\eqref{eq:type-b-isolated-pairings} by combining these facts with the
multiplicity sum formula~\eqref{eq:gggr-family-multiplicity-sum}.

Fix a prime $\ell_0\ne p$. For each $F$-stable pair
$\iota=(\mathcal C,\mathscr E_\iota)$, where $\mathscr E_\iota$ is an
irreducible $\mathbf H$-equivariant $\overline{\mathbb Q}_{\ell_0}$-local
system on $\mathcal C$, choose a Frobenius isomorphism with the
normalisation of \cite[10.1--10.8]{Taylor2016}.
Let $Y_\iota$ be the trace function associated with $\mathscr E_\iota$,
and let $X_\iota$ be the trace function associated with the corresponding
intersection cohomology complex. Extend them by zero outside
$\mathcal C^F$ and $\overline{\mathcal C}^{\,F}$, respectively.
Following \cite[11.12]{Taylor2016}, we obtain these functions from those
of \cite[10.8]{Taylor2016} by composition with the Springer isomorphism
chosen for the construction of the generalised Gelfand--Graev characters
\cite[Proposition~4.6 and Section~5]{Taylor2016}.
This is an $\mathbf H$-equivariant isomorphism from the variety of
unipotent elements of $\mathbf H$ to the variety of nilpotent elements
in its Lie algebra, chosen to commute with $F$.
Let $\Gamma_\iota^{\mathrm{mod}}$ be the modified class function defined in
\cite[(11.15)]{Taylor2016}, which is a linear combination of
$\Gamma_{u_1},\ldots,\Gamma_{u_d}$. We will prove the required linear
independence on $\mathcal C^F$ by calculating
$\langle D_H(\Gamma_\iota^{\mathrm{mod}}),X_\kappa\rangle_H$
for pairs $\iota,\kappa$ supported on $\mathcal C$, as in
\cite[Lemma~3.5]{Geck1994BasicIII}, and using the vanishing assertion in
\cite[Corollary~3.6(b)]{Geck1994BasicIII}.

To carry out Geck--H\'ezard's calculation for every odd $q$, we use
Taylor's expansions of the generalised Gelfand--Graev characters and the
modified functions in terms of intersection cohomology trace functions
\cite[Theorem~11.13 and Lemma~11.16]{Taylor2016}. These formulas assume
that $p$ is \emph{acceptable} for $\mathbf H$. Since $p$ is odd, it is
\emph{very good} for $\mathbf H$, and hence acceptable in Taylor's sense
\cite[Definition~6.1 and Lemma~6.3(i)]{Taylor2016}.
Taylor's acceptability condition also implies Letellier's condition
\cite[Assumption~5.0.14]{Letellier2005}, as explained in
\cite[6.2]{Taylor2016}.
We will use this implication later to justify the application of
Letellier's Frobenius equivariant induction isomorphism
\cite[(6.2.11)]{Letellier2005} in the comparison of Fourier constants.
We also use Lusztig's duality identities
\cite[Proposition~8.5 and Corollary~8.6]{Lusztig1992Support}.
Because $Z(\mathbf H)$ is connected, \cite[Corollary~13.6]{Taylor2016}
shows that Lusztig's duality identities hold without a restriction on $q$.

The equivariant local systems on $\mathcal C$ are parametrised by
$\Irr(A_{\mathbf H}(u_0))$ \cite[(4.2)]{Geck1999GGGR}.
Since $F$ acts trivially on $A_{\mathbf H}(u_0)$, the coefficient matrix
expressing the modified functions in terms of
$\Gamma_{u_1},\ldots,\Gamma_{u_d}$ is the character table of
$A_{\mathbf H}(u_0)$, up to nonzero scalar factors
\cite[Section~3(A)]{GeckMalle2000}. It is therefore invertible.
The orthogonality and triangularity relations and the inverse matrix
formula needed in the proof of \cite[Lemma~3.5]{Geck1994BasicIII} are
supplied by \cite[Lemma~10.9, (10.11), Theorem~10.14 and (10.20)]{Taylor2016},
without assuming that $F$ is split \cite[10.1]{Taylor2016}.
Using these identities with Taylor's expansions and Lusztig's duality
identities, the calculation of \cite[Lemma~3.5]{Geck1994BasicIII} gives
\begin{equation}\label{eq:gggr-dual-pairing}
 \bigl\langle D_H(\Gamma_\iota^{\mathrm{mod}}),X_\kappa\bigr\rangle_H
 =c_\iota\delta_{\iota\kappa},\qquad c_\iota\ne0,
\end{equation}
for pairs $\iota,\kappa$ supported on $\mathcal C$.

The proof of \cite[Corollary~3.6(b)]{Geck1994BasicIII}, using these
expansions, shows that
\begin{equation}\label{eq:gggr-dual-support}
 D_H(\Gamma_\iota^{\mathrm{mod}})(v)\ne0
 \quad\Longrightarrow\quad
 \mathcal C\subseteq\overline{\mathcal C_v}
\end{equation}
for every unipotent element $v\in H$, where $\mathcal C_v$ is its
$\mathbf H$-conjugacy class. The functions
$D_H(\Gamma_\iota^{\mathrm{mod}})$ also vanish on nonunipotent elements.
By \eqref{eq:gggr-dual-support}, they vanish on
$\overline{\mathcal C}^{\,F}\setminus\mathcal C^F$. Since $X_\kappa$
vanishes outside $\overline{\mathcal C}^{\,F}$, only values on
$\mathcal C^F$ contribute to the scalar product in
\eqref{eq:gggr-dual-pairing}. Pairing a linear relation between the
restricted modified functions with each $X_\kappa$ forces every
coefficient to vanish. The invertible coefficient matrix consequently
proves that
\[
 D_H(\Gamma_{u_1})|_{\mathcal C^F},\ldots,
 D_H(\Gamma_{u_d})|_{\mathcal C^F}
\]
are linearly independent. The same change of basis gives the vanishing property
\eqref{eq:gggr-dual-support} and vanishing on nonunipotent elements for
each $D_H(\Gamma_{u_j})$.

We next prove that the restrictions of the characters in
$\Irr_{s,\mathcal F}(H)$ span the $H$-invariant functions on
$\mathcal C^F$. For this purpose, we use the restriction theorem
\cite[Theorem~4.5]{Geck1999GGGR}, which identifies the relevant
restrictions of character sheaves with the irreducible equivariant local
systems on $\mathcal C$. To apply Geck's proof of
\cite[Theorem~4.5]{Geck1999GGGR} in odd characteristic, we first verify
the scalar product and vanishing formulas in \cite[(2.4)]{Geck1999GGGR}
and the compatibility of their constants with cuspidal induction in
\cite[(3.3)(b)]{Geck1999GGGR}. The large characteristic assumption in
\cite{Geck1999GGGR} enters through these character formulas, as
explained in \cite[Basic assumptions, p.~140]{Geck1999GGGR} and
\cite[(4.3)]{Geck1999GGGR}.

Let $\mathscr L$ be the finite collection of groups required in Geck's
reduction: $\mathbf H$, the Levi subgroups supporting the cuspidal pairs
used in \cite[(4.9)]{Geck1999GGGR}, and the adjoint quotients and further
supporting Levi subgroups occurring in the proofs of
\cite[Propositions~5.3 and 5.5]{Geck1999GGGR}.
Recall that a pair is cuspidal if its generalised Springer block consists
of that pair alone \cite[(3.3)]{Geck1999GGGR}. Every group in
$\mathscr L$ has connected centre and is either a torus or of type
$\mathsf B$ modulo its centre. Indeed, a Levi subgroup in type
$\mathsf B$ has components of type $\mathsf A$ apart from at most one
of type $\mathsf B$, and a nontrivial component of type $\mathsf A$
cannot support a cuspidal pair
when the centre is connected
\cite[Remark~3.4 and the proof of Theorem~3.8]{Geck1999GGGR}.
Passing to an adjoint quotient does not change the Dynkin type, and the
same restriction on components applies to its supporting Levi subgroups.

The restriction multiplicities are independent of the rational structure
\cite[p.~151]{Geck1999GGGR}. We therefore choose a Frobenius endomorphism
$F'$ defining a split $\mathbb F_{q'}$-structure, with $q'\equiv1\pmod4$.
This congruence will give the value $1$ for the cuspidal constants in
\cite[Theorem~3.8]{Geck1999GGGR}, once we have justified the use of its
calculation in odd characteristic. It can be ensured by taking an even
power of a split Frobenius endomorphism.
Choose $F'$ to satisfy \cite[(4.7)(a)--(c)]{Geck1999GGGR} for the chosen
semisimple elements in the dual groups and standard Levi subgroups
required in Geck's argument. As in the opening paragraph of the proof of
\cite[Proposition~5.3]{Geck1999GGGR}, we impose these conditions also for
the additional semisimple elements used in that proof.
Each chosen semisimple element is defined over a finite extension of
$\mathbb F_p$, so a common power of a split Frobenius endomorphism fixes
all these elements. This power permutes the finite sets of character
sheaves in the corresponding series. A further common power therefore
fixes every required character sheaf, as explained in
\cite[(4.7)]{Geck1999GGGR}.
Taking a sufficiently large further power makes $q'$ large enough for
geometric induction to coincide with Deligne--Lusztig induction
\cite[p.~115]{Letellier2005}.
The standard Levi subgroups and their adjoint quotients inherit split
structures from $F'$. Since $p$ is odd, it is very good for every group
in $\mathscr L$, and Taylor's formulas apply by
\cite[Lemma~6.3(i)]{Taylor2016}. Thus Letellier's characteristic hypothesis
also holds for every group in $\mathscr L$. With $q'$ chosen as above,
we may use the Frobenius equivariant isomorphism
\cite[(6.2.11)]{Letellier2005}.

To verify the formulas in \cite[(2.4)]{Geck1999GGGR} used in the proof
of Geck's restriction theorem \cite[Theorem~4.5]{Geck1999GGGR}, we need
the precise scalar product coefficients for every $\mathbf L\in\mathscr L$.
Fix $\mathbf L\in\mathscr L$, and set $L'=\mathbf L^{F'}$.
For each $F'$-stable pair $\iota=(\mathcal D,\mathscr E_\iota)$, choose
the Frobenius isomorphisms with the compatible normalisations of
\cite[10.1--10.8]{Taylor2016}. Write $Y_\iota$ for the trace function of
$\mathscr E_\iota$ and $X_\iota$ for the trace function of its
intersection cohomology complex, extended by zero outside
$\mathcal D^{F'}$ and $\overline{\mathcal D}^{\,F'}$, respectively.
As before, $\Gamma_\iota^{\mathrm{mod}}$ is the linear combination of
generalised Gelfand--Graev characters defined in \cite[(11.15)]{Taylor2016},
also given in \cite[(2.3)(a)]{Geck1999GGGR}.
Let $\mathcal I$ be the generalised Springer block containing $\iota$,
induced from a cuspidal pair $\iota_0$ of a standard Levi subgroup
$\mathbf M$. Set
\[
 a=|A_{\mathbf L}(v)|\quad(v\in\mathcal D),
 \qquad
 b_\iota=\frac{\dim\mathbf L-\dim\mathcal D-\dim Z(\mathbf M)}2.
\]
Let $\zeta_{\mathcal I}$ be the fourth root of unity in
\cite[Proposition~11.5]{Taylor2016}, and set
\[
 \delta_{\mathcal I}=(-1)^{\operatorname{rank}(\mathbf M/Z(\mathbf M))},
 \qquad
 \zeta'_{\mathcal I}=\delta_{\mathcal I}\zeta_{\mathcal I}^{-1},
\]
as in \cite[(8.4)(a)]{Lusztig1992Support} and \cite[(2.4)]{Geck1999GGGR}.

Taylor's expansions \cite[Theorem~11.13 and Lemma~11.16]{Taylor2016}
and Lusztig's duality identities
\cite[Proposition~8.5 and Corollary~8.6]{Lusztig1992Support}, valid here
by \cite[Corollary~13.6]{Taylor2016}, allow us to repeat the calculations in
\cite[Lemma~3.5 and Corollary~3.6(b)]{Geck1994BasicIII} for
$(\mathbf L,F')$. In particular, $D_{L'}(\Gamma_\iota^{\mathrm{mod}})$
vanishes on nonunipotent elements and satisfies
\begin{equation}\label{eq:gggr-auxiliary-support}
 D_{L'}(\Gamma_\iota^{\mathrm{mod}})(x)\ne0
 \quad\Longrightarrow\quad
 \mathcal D\subseteq\overline{\mathcal C_x}
\end{equation}
for unipotent $x\in L'$, where $\mathcal C_x$ is its
$\mathbf L$-conjugacy class. By \eqref{eq:gggr-auxiliary-support},
$D_{L'}(\Gamma_\iota^{\mathrm{mod}})$ vanishes on
$\overline{\mathcal D}^{\,F'}\setminus\mathcal D^{F'}$.
For a pair $\kappa$ supported on $\mathcal D$, we have
$X_\kappa=Y_\kappa$ on $\mathcal D^{F'}$, and $X_\kappa$ vanishes
outside $\overline{\mathcal D}^{\,F'}$. Replacing $X_\kappa$ by
$Y_\kappa$ therefore does not change its scalar product with
$D_{L'}(\Gamma_\iota^{\mathrm{mod}})$.
The calculation of \cite[Lemma~3.5]{Geck1994BasicIII} gives
\begin{equation}\label{eq:gggr-auxiliary-pairing}
 \bigl\langle D_{L'}(\Gamma_\iota^{\mathrm{mod}}),Y_\kappa\bigr\rangle_{L'}
 =a\,\zeta'_{\mathcal I}\,q'^{-b_\iota}\delta_{\iota\kappa}
\end{equation}
for $F'$-stable pairs $\iota,\kappa$ supported on $\mathcal D$.
Formula~\eqref{eq:gggr-auxiliary-pairing} verifies
\cite[(2.4)(a)]{Geck1999GGGR}, while
\eqref{eq:gggr-auxiliary-support}, together with vanishing on
nonunipotent elements, provides the required vanishing assertion in
\cite[(2.4)(c)]{Geck1999GGGR}. If $\mathbf M$ is a torus, then
$\zeta'_{\mathcal I}=1$, giving \cite[(2.4)(b)]{Geck1999GGGR}.

It remains to compare $\zeta'_{\mathcal I}$ with the constant attached
to the cuspidal pair $\iota_0$, computed in $\mathbf M$.
We prove the compatibility required in \cite[(3.3)(b)]{Geck1999GGGR}
using the Frobenius equivariant induction isomorphism
\cite[(6.2.11)]{Letellier2005}.
Let $\gamma_{\mathbf L}$ be the normalised Fourier constant in
\cite[Lemma~11.3]{Taylor2016} for the complex induced from $\iota_0$,
and let $\gamma_{\mathbf M}$ be the corresponding constant for the
complex on the Lie algebra of $\mathbf M$ constructed from $\iota_0$.
Thus $\gamma_{\mathbf L}$ is the scalar remaining after extracting
the factor $q'^{(\dim\mathbf L+\dim Z(\mathbf M))/2}$, with the
analogous convention for $\gamma_{\mathbf M}$.
Choose compatible Frobenius structures and an invariant form on the Lie
algebra of $\mathbf L$, restricted to that of $\mathbf M$.
Choose an $F'$-stable parabolic subgroup $\mathbf P$ of $\mathbf L$
with Levi subgroup $\mathbf M$ and unipotent radical $U_{\mathbf P}$.

For $w=1$, the induction isomorphism \cite[(6.2.11)]{Letellier2005}
includes a Tate twist by $\dim U_{\mathbf P}$, which multiplies the
Frobenius trace by $q'^{-\dim U_{\mathbf P}}$
\cite[Remark~4.4.6]{Letellier2005}.
Since $\dim\mathbf L-\dim\mathbf M=2\dim U_{\mathbf P}$, the powers
of $q'$ in these normalisations satisfy
\begin{equation}\label{eq:gggr-fourier-normalisation}
 q'^{-\dim U_{\mathbf P}}\,
 q'^{(\dim\mathbf L+\dim Z(\mathbf M))/2}
 =q'^{(\dim\mathbf M+\dim Z(\mathbf M))/2}.
\end{equation}
Taking Frobenius traces in \cite[(6.2.11)]{Letellier2005}, with the
normalisation of \cite[Lemma~11.3]{Taylor2016}, compares the Fourier
constants for $\mathbf L$ and $\mathbf M$.
Cancelling the powers of $q'$ by \eqref{eq:gggr-fourier-normalisation}
gives
\begin{equation}\label{eq:gggr-fourier-constants}
 \gamma_{\mathbf L}=\gamma_{\mathbf M}.
\end{equation}
This comparison also occurs in the proof of
\cite[Proposition~7.2]{Lusztig1992Support}, with the invariant form
restricted to the Lie algebra of the cuspidal Levi subgroup.
By \cite[Lemma~11.3]{Taylor2016},
$\gamma_{\mathbf L}^{\,2}=\nu_{\mathbf L}$, where
$\nu_{\mathbf L}$ is the sign in that lemma.
The proof of \cite[Proposition~11.5]{Taylor2016} identifies
$\zeta_{\mathcal I}=\nu_{\mathbf L}\gamma_{\mathbf L}^{-1}$.
Hence $\zeta_{\mathcal I}=\gamma_{\mathbf L}$.
The same argument in $\mathbf M$ gives
$\zeta_{\iota_0}=\gamma_{\mathbf M}$.
The signs attached to $\mathcal I$ in $\mathbf L$ and to $\iota_0$
in $\mathbf M$ satisfy
\[
 \delta_{\mathcal I}=\delta_{\iota_0}
 =(-1)^{\operatorname{rank}(\mathbf M/Z(\mathbf M))}
\]
by \cite[(8.4)(a)]{Lusztig1992Support}.
Combining these equalities with \eqref{eq:gggr-fourier-constants} and
the definition $\zeta'=\delta\zeta^{-1}$ gives
\begin{equation}\label{eq:gggr-cuspidal-compatibility}
 \zeta'_{\mathcal I}=\zeta'_{\iota_0}.
\end{equation}
This is the compatibility required in \cite[(3.3)(b)]{Geck1999GGGR}.

To determine the constants $\zeta'_{\iota_0}$ by Geck's induction, we first
establish the integrality conclusion of \cite[Corollary~3.2]{Geck1999GGGR}
for the groups in $\mathscr L$.
Let $\mathbf L\in\mathscr L$ be a group other than a torus.
For every $F'$-stable unipotent class $\mathcal D$ of $\mathbf L$,
the theorem of H\'ezard and Lusztig stated in
\cite[Theorem~1.1]{Taylor2013} gives a character $\chi\in\Irr(L')$
with unipotent support $\mathcal D$ and generic denominator
$n_\chi=|A_{\mathbf L}(v)|$ for $v\in\mathcal D$.
The degree polynomial relation for Alvis--Curtis duality gives
$n_{\chi^*}=n_\chi$ \cite[Proposition~3.4.21]{GeckMalle2020}. If $v_1,\ldots,v_r$
represent the $L'$-classes in $\mathcal D^{F'}$, then \cite[(11.15), Lemma~14.15 and
Proposition~15.4]{Taylor2016} give
\begin{equation}\label{eq:gggr-weighted-integrality-sum}
 \sum_{j=1}^{r}
 [A_{\mathbf L}(v_j):A_{\mathbf L}(v_j)^{F'}]
 \langle\chi^*,\Gamma_{v_j}\rangle_{L'}
 =\frac{|A_{\mathbf L}(v)|}{n_{\chi^*}}=1.
\end{equation}
The multiplicities are nonnegative integers and the indices are positive
integers, so exactly one multiplicity is nonzero, and it and its corresponding
index equal $1$. Expanding a modified
generalised Gelfand--Graev class function indexed by a local system on
$\mathcal D$ then expresses its scalar product with $\chi^*$ as the value
of that local system's trace function at the unique representative with
nonzero multiplicity. This is the
remaining assertion of \cite[Proposition~3.1]{Geck1999GGGR}.
For a torus, the only unipotent element is $1$ and every irreducible
character has generic denominator $1$, so the same multiplicity conclusions hold.
Together with the identities proved above, this establishes
\cite[Corollary~3.2]{Geck1999GGGR} for every group in $\mathscr L$.

The induction in the proof of \cite[Theorem~3.8]{Geck1999GGGR} therefore applies
to every group in $\mathscr L$. For a cuspidal pair $\iota_0$ on $\mathbf M$,
it gives
\begin{equation}\label{eq:gggr-cuspidal-constant}
 \zeta'_{\iota_0}
 =\varepsilon^{(\operatorname{rank}\mathbf M-\dim Z(\mathbf M))/2},
 \qquad q'\equiv\varepsilon\pmod4,\quad \varepsilon\in\{1,-1\}.
\end{equation}
Our choice $q'\equiv1\pmod4$ gives $\varepsilon=1$, so $\zeta'_{\iota_0}=1$.
By \eqref{eq:gggr-cuspidal-compatibility}, $\zeta'_{\mathcal I}=1$ for every
relevant generalised Springer block. The support properties in \cite[(4.3)]{Geck1999GGGR}
follow from \cite[Theorem~10.7]{Lusztig1992Support} by
\cite[Corollary~13.6]{Taylor2016}. For this $F'$,
\cite[Theorem~3.2]{Shoji1995II} gives the equality of spans stated in
\cite[Theorem~7.1]{Geck1999GGGR} for these classical groups with connected centre.
For a torus, each rational Lusztig series consists of one linear character,
and the trace function of the corresponding character sheaf is a nonzero
scalar multiple of it. Thus the same equality of spans holds. This verifies the formulas and comparisons used in Geck's
proof of the restriction theorem \cite[Theorem~4.5]{Geck1999GGGR}.

We now verify the conditions in \cite[(4.4)(a)--(b)]{Geck1999GGGR} for $(s,\mathcal F)$.
The elementary abelian groups $\mathcal G_{\mathcal F}$ and
$A_{\mathbf H}(u_0)$ have the same order and are therefore isomorphic,
giving \cite[(4.4)(a)]{Geck1999GGGR}. Isolation together with
this order equality gives \cite[(4.4)(b)]{Geck1999GGGR}, by H\'ezard's
result \cite[Proposition~2.3]{GeckHezard2008}. By the restriction theorem \cite[Theorem~4.5]{Geck1999GGGR}, restriction to
$\mathcal C$, up to shift, gives a bijection between the character sheaves in
the family indexed by $(s,\mathcal F)$ whose restriction to $\mathcal C$ is
nonzero and the irreducible $\mathbf H$-equivariant local systems on $\mathcal
C$. The multiplicities
in these geometric restrictions are independent of the rational structure,
as explained after \cite[Theorem~4.5]{Geck1999GGGR}.

We now return to the given Frobenius endomorphism $F$ and use this restriction
bijection to show that the restrictions of the characters in
$\Irr_{s,\mathcal F}(H)$ span the $H$-invariant functions on $\mathcal C^F$.
The choice $\mathbf T^*\subseteq\mathbf B^*$ and the absence of nontrivial
graph automorphisms in type $\mathsf C_n$ imply that $F$ acts trivially
on the Weyl group of $\mathbf H^*$ and on $W_s$. Since $s$ is fixed by $F$,
$F$ preserves the family of character sheaves indexed by $(s,\mathcal F)$.
Every irreducible $\mathbf H$-equivariant local system on $\mathcal C$ is
$F$-stable because $F$ acts trivially on $A_{\mathbf H}(u_0)$.
Restriction to $\mathcal C$ commutes with pullback by $F$.
The uniqueness in the restriction bijection therefore makes the corresponding
character sheaves $F$-stable, as in the proof of
\cite[Corollary~3.5]{GeckHezard2008}.

Since $F$ acts trivially on $A_{\mathbf H}(u_0)$, the values at
$u_1,\ldots,u_d$ of the trace functions of these local systems form the
character table of $A_{\mathbf H}(u_0)$, up to nonzero scalar multiples
of the rows. The same holds for the restrictions to $\mathcal C^F$ of
the trace functions of the corresponding character sheaves.
By Shoji's theorem \cite[Theorem~3.2]{Shoji1995II}, the trace functions
of these character sheaves are nonzero scalar multiples of almost
characters lying in the span of $\Irr_{s,\mathcal F}(H)$. It follows that there are characters
$\chi_1,\ldots,\chi_d\in\Irr_{s,\mathcal F}(H)$ such that
\[
 M=(\chi_i(u_j))_{1\leq i,j\leq d}
\]
is nonsingular.

To apply the multiplicity formula to every character in $\Irr_{s,\mathcal F}(H)$,
we identify its unipotent support and generic denominator.
Let $W$ be the Weyl group of $\mathbf H^*$ and let $E_0$ be the unique special
character in $\mathcal F$. Its truncated induction $j_{W_s}^W(E_0)$
is the unique constituent of
$\operatorname{Ind}_{W_s}^W(E_0)$ with $b$-invariant $b(E_0)$
\cite[p.~476]{Taylor2014Maximal}.
Under the Springer correspondence, $j_{W_s}^W(E_0)$ corresponds to
$\mathcal C$ \cite[Section~3(C)]{GeckMalle2000}. Consequently, every
$\rho\in\Irr_{s,\mathcal F}(H)$ has unipotent support $\mathcal C$
\cite[Theorem~3.7]{GeckMalle2000}.
Since $\mathcal G_{\mathcal F}$ is abelian, the denominator formula
\cite[(6.1)(c)]{Geck1999GGGR} and the degree polynomial relation for
Alvis--Curtis duality \cite[Proposition~3.4.21]{GeckMalle2020} give
\[
 n_{\rho^*}=n_\rho=|\mathcal G_{\mathcal F}|.
\]
By \cite[Lemma~14.15]{Taylor2016}, $\mathcal C$ is the wave front set of $\rho^*$.

Conjugation in $\mathbf H$ identifies the component groups of $u_j$ and $u_0$.
Under this identification, the actions of $F$ differ by an inner automorphism
of $A_{\mathbf H}(u_0)$. Since this group is abelian and $F$ acts trivially on it,
$F$ also acts trivially on $A_{\mathbf H}(u_j)$. Consequently,
\[
 [A_{\mathbf H}(u_j):A_{\mathbf H}(u_j)^F]=1
 \qquad(1\leq j\leq d).
\]
Apply the formula of Lusztig and Geck--Malle in \cite[Proposition~15.4]{Taylor2016}
to $\rho^*$, using the constant local system on $\mathcal C$ with trace function
identically $1$. In the expansion \cite[(11.15)]{Taylor2016}, the coefficient
of $\Gamma_{u_j}$ is the index just calculated. Hence
\begin{equation}\label{eq:gggr-family-multiplicity-sum}
 \sum_{j=1}^{d}\langle\rho^*,\Gamma_{u_j}\rangle_H
 =\frac{|A_{\mathbf H}(u_0)|}{n_{\rho^*}}=1.
\end{equation}
Each multiplicity is a nonnegative integer, so there is a unique index $j$
for which $\langle\rho^*,\Gamma_{u_j}\rangle_H=1$, and all the other
multiplicities vanish. For $1\leq j\leq d$, define
\[
 I_j=\{\rho\in\Irr_{s,\mathcal F}(H):
       \langle\rho^*,\Gamma_{u_j}\rangle_H=1\}.
\]
It remains to prove that every $I_j$ is nonempty.

Suppose that $I_r$ is empty. Then $\langle\rho^*,\Gamma_{u_r}\rangle_H=0$
for every $\rho\in\Irr_{s,\mathcal F}(H)$.
Since $D_H(\rho)=\pm\rho^*$, self-adjointness of Alvis--Curtis duality gives
\[
 \langle\rho,D_H(\Gamma_{u_r})\rangle_H
 =\langle D_H(\rho),\Gamma_{u_r}\rangle_H=0.
\]
As proved after \eqref{eq:gggr-dual-support}, $D_H(\Gamma_{u_r})$ vanishes on
nonunipotent elements. By \eqref{eq:gggr-dual-support}, its value at a
unipotent element $v$ can be nonzero only if
$\mathcal C\subseteq\overline{\mathcal C_v}$.
If $\mathcal C_v\ne\mathcal C$, this inclusion implies
$\dim\mathcal C_v>\dim\mathcal C$.
On such a class, $\rho$ vanishes by \cite[Theorem~11.2(iv)]{Lusztig1992Support},
applied to $\rho^*$. This theorem applies here by
\cite[Corollary~13.6]{Taylor2016}.
Thus only elements of $\mathcal C^F$ contribute to
$\langle\rho,D_H(\Gamma_{u_r})\rangle_H$, and
\[
 \sum_{j=1}^{d}
 \frac{\rho(u_j)\,
       \overline{D_H(\Gamma_{u_r})(u_j)}}{|C_H(u_j)|}=0
 \qquad(\rho\in\Irr_{s,\mathcal F}(H)).
\]
Apply these equalities to $\chi_1,\ldots,\chi_d$.
The nonsingularity of $M=(\chi_i(u_j))_{1\leq i,j\leq d}$ implies that
$D_H(\Gamma_{u_r})(u_j)=0$ for every $j$.
Its restriction to $\mathcal C^F$ is therefore zero, contradicting the linear
independence established after \eqref{eq:gggr-dual-pairing}.
Hence every $I_j$ is nonempty. Choosing $\rho_j\in I_j$ gives
\eqref{eq:type-b-isolated-pairings}.
\end{proof}

\begin{proof}[Proof of Lemma~\ref{lem:symplectic-gggr-selection}]
We adapt the proof of Lemma~\ref{lem:type-b-isolated-coverage} to the selected
type~$\mathsf C$ families. We first verify that the central multiple
$\widetilde s$ in the statement exists and is fixed by $F_0$.
Condition~\textup{(P4)} of \cite[Theorem~2.11]{Taylor2014Maximal} will replace
the isolation hypothesis used in Lemma~\ref{lem:type-b-isolated-coverage}.
By \cite[6.19]{Taylor2014Maximal}, the image of $s_0$ in
$\mathbf H^*/Z(\mathbf H^*)\cong\operatorname{SO}_{2m+1}$ is
quasi-isolated, so its square is one
\cite[Proposition~4.11(a)]{Bonnafe2005}. Thus $s_0^2\in Z(\mathbf H^*)$.
Since $\mathbf H$ has simply connected derived subgroup,
$Z(\mathbf H^*)$ is a torus \cite[Lemma~1.5.22]{GeckMalle2020}
and contains a square root of $s_0^{-2}$. This proves that $z$ can
be chosen as in the statement.
Multiplication by $z$ leaves the centraliser unchanged, so
$W_{\widetilde s}=W_{s_0}$ and the selected family is still $\mathcal F$.
The Frobenius endomorphism $F_0$ acts on $\mathbf T^*$ by the
$q_0$-power map. Since $q_0$ is odd and $\widetilde s^2=1$,
$F_0(\widetilde s)=\widetilde s$.
The torus $\mathbf T^*$ is split, so $F_0$ acts trivially on the
Weyl group of $\mathbf H^*$ and hence on $W_{\widetilde s}$.

We next verify the conditions in \cite[(4.4)(a)--(b)]{Geck1999GGGR}
for $(\widetilde s,\mathcal F)$.
By \cite[10.3]{Taylor2014Maximal}, we identify
$W_{\widetilde s}$ with $W(\mathsf D_a\mathsf B_b)$, where $a+b=m$.
When $a=1$, the corresponding factor is a torus with trivial Weyl group,
so $W_{\widetilde s}=W(\mathsf B_{m-1})$.
Let $E_0$ be the unique special character in $\mathcal F$,
using the terminology of \cite[p.~476]{Taylor2014Maximal}, and let
$W$ be the Weyl group of $\mathbf H^*$.
H\'ezard's construction \cite[Proposition~10.2]{Taylor2014Maximal} gives
$j_{W_{\widetilde s}}^W(E_0)=E_{\mathcal C}$, where
$E_{\mathcal C}$ is the Springer character corresponding to
$\mathcal C$ and its trivial local system.
The construction gives
$|\mathcal G_{\mathcal F}|=|A_{\mathbf H}(u)|$ for $u\in\mathcal C$
by \cite[7.1 and Proposition~10.2]{Taylor2014Maximal}.
The component group $A_{\mathbf H}(u)$ is an elementary abelian
$2$-group by \cite[proof of Proposition~2.4]{Taylor2013}, and the
same holds for $\mathcal G_{\mathcal F}$ by
\cite[Remark~4.2.17]{GeckMalle2020}.
These two groups are therefore isomorphic, giving
\cite[(4.4)(a)]{Geck1999GGGR}.
The selected pair satisfies condition~\textup{(P4)} of
\cite[Theorem~2.11]{Taylor2014Maximal}. For these type~$\mathsf C$
pairs, the verification in \cite[7.4]{Taylor2014Maximal} uses the equality
$\widetilde s^{\,2}=1$ and does not require $\widetilde s$ to be isolated.
By \cite[Definition~2.7]{Taylor2014Maximal}, condition~\textup{(P4)} gives
\[
 \operatorname{Ind}_{W(\mathsf D_a\mathsf B_b)}^W(E_0)
 =E_{\mathcal C}
  +\sum_{\substack{E\in\Irr(W)\\d(E)>b(E_0)}}m_EE,
 \qquad m_E\in\mathbb Z_{\geq0},
\]
where $d(E)$ is the dimension of the Springer fibre, the variety of
Borel subgroups containing a representative of the unipotent class
corresponding to $E$. This is the induction condition
\cite[(4.4)(b)]{Geck1999GGGR}.

To apply Geck's restriction theorem \cite[Theorem~4.5]{Geck1999GGGR}
in odd characteristic, we verify analogues of \eqref{eq:gggr-auxiliary-support}
and \eqref{eq:gggr-auxiliary-pairing} for $\mathbf H$, its standard Levi subgroups
supporting cuspidal pairs, their adjoint quotients, and the further supporting
Levi subgroups used in \cite[(4.9)]{Geck1999GGGR}.
Let $\mathbf L$ be any one of these algebraic groups.
These groups have connected centre and are either tori or simple modulo
their centre of type~$\mathsf C$. Indeed, a nontrivial component of type
$\mathsf A$ cannot support a cuspidal pair when the centre is connected
\cite[Remark~3.4 and the proof of Theorem~3.8]{Geck1999GGGR}.
Odd characteristic is very good for these groups and hence acceptable in
Taylor's sense by \cite[Lemma~6.3(i)]{Taylor2016}.
Choose a sufficiently large and divisible positive integer $N$ such that
$F'=F_0^{2N}$ satisfies \cite[(4.7)(a)--(c)]{Geck1999GGGR} and the
conditions on the additional objects in the proofs of
\cite[Propositions~5.3 and 5.5]{Geck1999GGGR}.
As in the proof of Lemma~\ref{lem:type-b-isolated-coverage}, take $N$ large
enough for geometric induction to coincide with Deligne--Lusztig induction
\cite[p.~115]{Letellier2005} for every group used in the proofs of
\cite[Propositions~5.3 and~5.5]{Geck1999GGGR}.
The field size satisfies $q'=q_0^{2N}\equiv1\pmod4$.
The calculations in the proof of Lemma~\ref{lem:type-b-isolated-coverage},
using \cite[Theorem~11.13 and Lemma~11.16]{Taylor2016}, now give
the analogues of \eqref{eq:gggr-auxiliary-support} and
\eqref{eq:gggr-auxiliary-pairing} for these groups.
The comparison of Fourier constants giving
\eqref{eq:gggr-fourier-constants} and
\eqref{eq:gggr-cuspidal-compatibility} also applies.
These calculations establish the formulas in \cite[(2.4)]{Geck1999GGGR} and
the compatibility of the constants $\zeta'_{\mathcal I}$ with cuspidal
induction in \cite[(3.3)(b)]{Geck1999GGGR}.

If $\mathbf L$ is not a torus,
\cite[Theorem~1.1]{Taylor2013} again supplies a character with unipotent
support $\mathcal D$ and generic denominator $|A_{\mathbf L}(v)|$ for
every $F'$-stable unipotent class $\mathcal D$ and $v\in\mathcal D$.
The calculation of \eqref{eq:gggr-weighted-integrality-sum} therefore applies
to these groups. In this sum, the unique nonzero multiplicity and its
corresponding index both equal $1$, as in the proof of
Lemma~\ref{lem:type-b-isolated-coverage}. Expanding the modified class
functions then gives the scalar products in
\cite[Proposition~3.1]{Geck1999GGGR}.
The torus case was included in that proof. Together with the analogues of
\eqref{eq:gggr-auxiliary-support} and \eqref{eq:gggr-auxiliary-pairing}
obtained above, this gives the integrality conclusion of
\cite[Corollary~3.2]{Geck1999GGGR} for all these groups.
Formula~\eqref{eq:gggr-cuspidal-constant} consequently applies and gives
$\zeta'_{\iota_0}=1$, since $q'\equiv1\pmod4$.
By \eqref{eq:gggr-cuspidal-compatibility}, $\zeta'_{\mathcal I}=1$
for every relevant generalised Springer block.

It remains to verify the support properties and compare character sheaf trace
functions with ordinary characters.
The support properties in \cite[(4.3)]{Geck1999GGGR} follow from
\cite[Theorem~10.7]{Lusztig1992Support} by
\cite[Corollary~13.6]{Taylor2016}.
For the groups above other than tori, Shoji's theorem
\cite[Theorem~3.2]{Shoji1995II} gives the equality of spans in
\cite[Theorem~7.1]{Geck1999GGGR}, since their centres are connected and
their quotients by their centres are simple of classical type.
The equality of spans for tori was established in the proof of
Lemma~\ref{lem:type-b-isolated-coverage}.
Having verified the conditions in \cite[(4.4)(a)--(b)]{Geck1999GGGR}, we may
therefore apply the proof of \cite[Theorem~4.5]{Geck1999GGGR}.
Restriction to $\mathcal C$, up to shift, gives a bijection between the
character sheaves in the family indexed by $(\widetilde s,\mathcal F)$
with nonzero restriction to $\mathcal C$ and the irreducible
$\mathbf H$-equivariant local systems on $\mathcal C$.

We now return to $F_0$ and use this restriction bijection to show that the
restrictions of the characters in $\Irr_{\widetilde s,\mathcal F}(H)$ span
the $H$-invariant functions on $\mathcal C^{F_0}$.
The multiplicities in the restrictions of character sheaves are independent
of the rational structure, as explained after
\cite[Theorem~4.5]{Geck1999GGGR}.
Choose $u_0\in\mathcal C^{F_0}$ as in \cite[Proposition~2.4]{Taylor2013},
with $F_0$ acting trivially on $A_{\mathbf H}(u_0)$.
Since $F_0$ fixes $\widetilde s$ and acts trivially on
$W_{\widetilde s}$, it preserves the chosen family of character sheaves.
The trivial action on $A_{\mathbf H}(u_0)$ makes the local systems on
$\mathcal C$ stable under $F_0$. Since restriction commutes with pullback
by $F_0$, the restriction bijection makes the corresponding character
sheaves stable under $F_0$ as well. The values at $u_1,\ldots,u_d$ of
the local system trace functions form the character table of
$A_{\mathbf H}(u_0)$, up to multiplication of each row by a nonzero scalar
\cite[Section~3(A)]{GeckMalle2000}. The restriction bijection therefore
shows that the corresponding character sheaf trace functions restrict to
a basis of the $H$-invariant functions on $\mathcal C^{F_0}$.
By Shoji's theorem \cite[Theorem~3.2]{Shoji1995II}, these trace functions
are nonzero scalar multiples of almost characters in the span of
$\Irr_{\widetilde s,\mathcal F}(H)$. Thus there are
$\chi_1,\ldots,\chi_d\in\Irr_{\widetilde s,\mathcal F}(H)$ for which
$M=(\chi_i(u_j))_{1\leq i,j\leq d}$ is nonsingular, as in the proof of
\cite[Corollary~3.5]{GeckHezard2008}.

It remains to select the characters with the required multiplicities.
The calculations giving \eqref{eq:gggr-dual-pairing} and
\eqref{eq:gggr-dual-support} in the proof of
Lemma~\ref{lem:type-b-isolated-coverage} apply to $\mathbf H$ with $F_0$.
They give linear independence of
$D_H(\Gamma_{u_1})|_{\mathcal C^{F_0}},\ldots,
D_H(\Gamma_{u_d})|_{\mathcal C^{F_0}}$ by the same change from the modified
class functions to the individual generalised Gelfand--Graev characters.
Since $j_{W_{\widetilde s}}^W(E_0)=E_{\mathcal C}$, the description in
\cite[Section~3(C) and Theorem~3.7]{GeckMalle2000} shows that every
$\rho\in\Irr_{\widetilde s,\mathcal F}(H)$ has unipotent support $\mathcal C$.
The action of $F_0$ on $W_{\widetilde s}$ is trivial and
$\mathcal G_{\mathcal F}$ is abelian, so the generic denominator formula
\cite[(6.1)(c)]{Geck1999GGGR} and the degree polynomial relation
for Alvis--Curtis duality \cite[Proposition~3.4.21]{GeckMalle2020} give
\[
 n_{\rho^*}=n_\rho
 =|\mathcal G_{\mathcal F}|=|A_{\mathbf H}(u_0)|.
\]
Changing the rational representative from $u_0$ to $u_j$ changes the
action of $F_0$ on the component group by an inner automorphism of
$A_{\mathbf H}(u_0)$. This group is abelian, so $F_0$ acts trivially
on every $A_{\mathbf H}(u_j)$ and
$[A_{\mathbf H}(u_j):A_{\mathbf H}(u_j)^{F_0}]=1$.
By \cite[Lemma~14.15]{Taylor2016}, $\mathcal C$ is the wave front set of
$\rho^*$. Apply \cite[Proposition~15.4]{Taylor2016} to $\rho^*$ and the
constant local system on $\mathcal C$ with trace function identically $1$.
The expansion \cite[(11.15)]{Taylor2016} now gives
\[
 \sum_{j=1}^{d}\langle\rho^*,\Gamma_{u_j}\rangle_H
 =\frac{|A_{\mathbf H}(u_0)|}{n_{\rho^*}}=1.
\]
The multiplicities are nonnegative integers. Thus there is a unique index
$j$ for which $\langle\rho^*,\Gamma_{u_j}\rangle_H=1$, and all the other
multiplicities vanish. We must show that every index occurs for some
$\rho\in\Irr_{\widetilde s,\mathcal F}(H)$.

Suppose that $\langle\rho^*,\Gamma_{u_r}\rangle_H=0$ for every
$\rho\in\Irr_{\widetilde s,\mathcal F}(H)$.
Self-adjointness of $D_H$ and $D_H(\rho)=\pm\rho^*$ give
$\langle\rho,D_H(\Gamma_{u_r})\rangle_H=0$.
As in the final argument in the proof of
Lemma~\ref{lem:type-b-isolated-coverage}, the support calculation
corresponding to \eqref{eq:gggr-dual-support} and the pointwise vanishing of
$\rho$ on larger unipotent classes show that only elements of $\mathcal
C^{F_0}$ contribute to $\langle\rho,D_H(\Gamma_{u_r})\rangle_H$.
Taking $\rho=\chi_i$ for $1\leq i\leq d$ in these equalities and using the
nonsingularity of $M=(\chi_i(u_j))_{1\leq i,j\leq d}$ gives
$D_H(\Gamma_{u_r})|_{\mathcal C^{F_0}}=0$, as in the final matrix calculation
in the proof of Lemma~\ref{lem:type-b-isolated-coverage}.
This contradicts the linear independence of the restricted functions.
For each $j$, we may therefore choose $\rho_j$ with
$\langle\rho_j^*,\Gamma_{u_j}\rangle_H=1$.
All its other multiplicities vanish, proving the stated identities.
\end{proof}

\section*{Acknowledgements}

OpenAI Codex was used to assist with the preparation of this manuscript
(using GPT 5.6 Sol) and the development of the accompanying Lean
formalisation (using GPT 5.6 Sol and GPT 6 Astra). The text generated by AI
has been reviewed and revised. The formalisation contains more than two
thousand Lean source files. This work has also produced a reusable Lean
library for the ordinary and modular representation theory of finite groups,
which continues to be expanded. The Lean sources and a report describing the
formalised results, external assumptions and verification procedures will
be made available through the
\href{https://github.com/BaoyuZhang747}{\textcolor{blue!60!black}{\mbox{GitHub page}}}
once the manuscript appears on arXiv. The Claude models Opus 5, Fable 5 and
Fable 5.1 were also used to assist with the review and revision of the
Lean files.

I would like to express my sincere appreciation to those who laid the
foundations of the modern representation theory of finite groups and
contributed to its development. The advances and corrections presented here
build largely on methods and results established before the widespread use
of AI. The Lean project initially aimed to develop, from basic foundations, the
representation theory needed for arguments concerning Alperin's weight
conjecture. A further motivation was to clarify the assumptions and
dependencies of commonly used results and to help identify and resolve
inaccuracies in the literature. It has been known for decades that inaccuracies in earlier work have
caused further errors and misunderstandings in subsequent work. However,
it remains unclear how many arguments in the foundational literature
might benefit from further clarification or correction. Systematic investigation demands
substantial time and effort and can be difficult to sustain alongside
other research. We believe that AI assistance now makes such work more
practicable and can deepen our understanding of the mathematics involved.
AI assistance may also help resolve misunderstandings and correct
misattributions in the literature by tracing results and arguments to
their original sources.
This is a valuable use of AI in mathematics, although the cost of sustained
AI assistance remains a significant constraint.
After weeks of cumulative working time recorded by Codex for the
formalisation project, it became clear that pursuing this aim would still
require considerable time to identify and formalise a substantial body of
prerequisite theory, even with AI assistance. For example, we estimate that
formalising Deligne--Lusztig theory alone would require several people
working for at least several months. This illustrates the scale of the work involved in
formalising the background needed for modern modular representation theory.
The present formalisation therefore checks selected arguments under
explicitly stated external assumptions, while work continues on the
underlying foundations and on extending the Lean library. The reusability of the library's constructions has been tested in ongoing work
on formalising
the much longer proofs concerning the iBAW condition for the remaining families of
types $\mathsf D$ and $\mathsf E$, for which the problem was previously
largely open. The Lean library is also being used to formalise and check
proofs of new results on other problems of current interest in the
modular representation theory of finite groups.

Using AI in the modular representation theory of finite groups has also
provided insights into how to approach mathematical problems.
I regard this as a meaningful exercise because modern modular
representation theory rests on many layers of prerequisite theory,
with very long chains of arguments connecting its results to the basic
foundations of mathematics. In my experience, these very long chains of dependencies
make it harder for current AI models to develop and test new theory here
than in areas whose arguments lie closer to the basic foundations of
logic. In experiments involving hundreds of billions of tokens, I have found that AI
models, when tackling some problems directly, repeatedly return to the same
approaches without making progress, whereas
developing a suitable mathematical architecture, with the intermediate
results and their dependencies organised in an effective order, can lead
to rapid advances. These experiments suggest that the order in which
related conjectures are approached matters. Establishing one key
conjecture may make problems that previously resisted AI assistance
substantially more accessible to it. When appropriate, I hope to share
these observations and strategies for approaching proofs of the remaining
major conjectures in modular representation theory after Alperin's weight
conjecture. Writing out all these
arguments in full detail could require hundreds of billions of additional
tokens and result in more than a thousand pages of proofs, making it an
impractical undertaking for one person. Sharing the proposed strategies would allow mathematicians with greater
expertise in these conjectures than I have to assess and develop them.

\section*{Declaration of competing interest}

The author declares no competing interests. This research is not funded
by any institution or organisation.

\end{document}